\documentclass{article}

\usepackage[a4paper, total={6in, 8in}]{geometry}
\usepackage{emptypage}
\usepackage[utf8]{inputenc}
\usepackage[T1]{fontenc}
\usepackage{lmodern}
\usepackage[english]{babel}
\usepackage{amsmath, amssymb}
\usepackage{amsthm}
\usepackage{longtable}
\usepackage{placeins}

\usepackage{algorithm}
\usepackage{algpseudocode}
\algrenewcommand\algorithmicrequire{\textbf{Input:}}
\algrenewcommand\algorithmicensure{\textbf{Output:}}

\usepackage{enumerate}
\usepackage{xcolor}
\RequirePackage[colorlinks,allcolors=blue]{hyperref}
\RequirePackage{graphicx}
\usepackage[nottoc]{tocbibind}

\usepackage{titlesec}
\titleformat{\section}{\large\bfseries}{\thesection}{1em}{}

\usepackage[round]{natbib} 
\let\oldcite\cite
\renewcommand{\cite}[1]{\mbox{\oldcite{#1}}}

\newtheorem{theorem}{Theorem}
\newtheorem{lemma}{Lemma}
\newtheorem{corollary}{Corollary}
\newtheorem{proposition}{Proposition}

\theoremstyle{definition}
\newtheorem{assumption}{Assumption}

\theoremstyle{remark}
\newtheorem{remark}{Remark}
\newtheorem*{remark*}{Remark}

\newtheorem{model}{Model}

\DeclareMathOperator*{\esssup}{ess\,sup}

\title{A Bootstrap Test for  Independence of Time Series Based on the Distance Covariance.}
\date{\today}
\author{Annika Betken\footnote{Faculty of Electrical Engineering, Mathematics and Computer Science (EEMCS), University of Twente, \href{mailto:a.betken@utwente.nl}{a.betken@utwente.nl}} \and Herold Dehling\footnote{Faculty of Mathematics, Ruhr-Universität Bochum, \href{mailto:herold.dehling@rub.de}{herold.dehling@rub.de}} \and Marius Kroll\footnote{Faculty of Mathematics, Ruhr-Universität Bochum, \href{mailto:marius.kroll@rub.de}{marius.kroll@rub.de}}}
\begin{document}
\maketitle

\begin{abstract}
  We present a test for independence of two strictly stationary time series based on a bootstrap procedure for the distance covariance. Our test detects any kind of dependence between the two time series within an arbitrary maximum lag $L$. In simulation studies, our test outperforms alternative testing procedures. In proving the validity of the underlying bootstrap procedure, we generalise bounds for the Wasserstein distance between an empirical measure and its marginal distribution under the assumption of $\alpha$-mixing. Previous results of this kind only existed for i.i.d. processes.
\end{abstract}

\section{Introduction}
Consider a strictly stationary process $(X_k,Y_k)_{k\in \mathbb{N}}$, where $X_k$  and $Y_k$ take  values in $\mathbb{R}^{\ell_1}$ and $\mathbb{R}^{\ell_2}$, respectively.  For ease of notation, we denote by $(X,Y)$ a random vector that has the same distribution as $(X_k,Y_k)$. Our goal is to test the hypothesis that the processes
$(X_k)_{k\in \mathbb{N}}$  and  $(Y_k)_{k\in \mathbb{N}}$ are independent against the alternative that these two processes are dependent. We construct a test based on the distance covariance, defined by \cite{srb} as
\begin{equation}
  \mathrm{dcov}(X,Y) := \iint \left| \phi_{X,Y}(s,t)-\phi_X(s) \phi_Y(t)   \right|^2 w(s,t)\, \mathrm{d}s\, \mathrm{d}t,
\end{equation}
where $w(s,t)$ is some positive weight function, and where $\phi_{X,Y}(s,t)$, $\phi_X(s)$, and $\phi_Y(t)$ denote the joint characteristic function of $(X,Y)$, and the marginal characteristic functions of $X$ and $Y$, respectively. In contrast with the classical Pearson correlation, which is only able to detect linear dependence between $X$ and $Y$, distance covariance will detect any kind of dependence, i.e. $\mathrm{dcov}(X,Y)=0$ if and only if the random variables $X$ and $Y$ are independent.  In our paper, we will take the specific weight function $w(s,t)=c(\|s\|_2^{\ell_1+1} \|t\|_2^{\ell_2+1})^{-1}$, recommended by \cite{srb} because of  a number of desirable properties.

Given the sample $(X_1,Y_1),\ldots, (X_n,Y_n)$, we define the empirical distance covariance
\begin{equation}
  \mathrm{dcov}_n(X,Y) := \iint \left| \phi_{X,Y}^{(n)} (s,t) - \phi_X^{(n)}(s) \phi_Y^{(n)}(t)  \right|^2 w(s,t) \, \mathrm{d}s\, \mathrm{d}t,
\end{equation}
where $\phi_{X,Y}^{(n)}(s,t)=\frac{1}{n} \sum_{j=1}^n e^{ i(sX_j+tY_j) }$, $\phi_X^{(n)}(s) =\frac{1}{n} \sum_{j=1}^n e^{isX_j}$, and $\phi_Y^{(n)}=\frac{1}{n}\sum_{j=1}^n e^{itY_j}$ are the empirical (joint) characteristic functions of the data. For i.i.d. processes $(X_k,Y_k)_{k\geq 1}$ with finite first moments, \cite{srb} show that $\mathrm{dcov}_n(X,Y)$ is a consistent estimator of $\mathrm{dcov}(X,Y)$. \cite{davistime} extend this result to stationary ergodic processes with values in a Euclidean space, and \cite{kroll} establishes consistency for stationary ergodic processes with values in  separable metric spaces.

\cite{srb} derive the large sample distribution of the empirical distance covariance in the case of i.i.d. processes $(X_k,Y_k)_{k\geq 1}$, and under the hypothesis that $X_i$ and $Y_i$ are independent. Under suitable moment conditions they show that, as $n\rightarrow \infty$,
\begin{equation}
  \label{eq:schwache_konergenz_unter_h0}
  n\, \mathrm{dcov}_n(X,Y) \xrightarrow[n \to \infty]{\mathcal{D}} \zeta,
\end{equation}
where $\zeta$ is some non-degenerate limiting distribution. This result is extended to the case of mixing data  by \cite{davistime} for processes with values in a Euclidean space, and by \cite{kroll} to general separable metric spaces.

By Eq.\@ \eqref{eq:schwache_konergenz_unter_h0}, the test that rejects the independence hypothesis when $n\, \mathrm{dcov}_n(X,Y) \geq \zeta_\alpha$, where $\zeta_\alpha$ denotes the upper $\alpha$-quantile of the limiting distribution  $\zeta$, has asymptotically level $\alpha$. Unfortunately this approach is not practically feasible, because $\zeta$ depends in a complex way on the distribution of $(X,Y)$, which in addition is unknown. In order to remedy this situation, \cite{dehlingprozesse} propose a bootstrap procedure to calculate asymptotic critical values of $\zeta$. In their paper, the authors establish the validity of the bootstrap in the case of i.i.d. processes $(X_k,Y_k)_{k\geq 1}$. However, their bootstrap does not capture the serial dependence in the data, and is thus not valid for time series. In addition, their technique of proof depends heavily on the i.i.d. assumption and cannot easily be generalized to dependent data.

In the present paper, we propose a block bootstrap technique to determine the quantiles of $\zeta$  that works for a large class of weakly dependent data. More precisely, we perform two independent non-overlapping block bootstrap procedures, namely one for each of the samples $X_1,\ldots,X_n$, and $Y_1,\ldots, Y_n$. In this way, we obtain two independent bootstrap samples $X_1^\ast,\ldots, X_n^\ast$  and $Y_1^\ast,\ldots, Y_n^\ast$. Under the hypothesis that the processes $(X_k)_{k\geq 1}$ and $(Y_k)_{k\geq 1}$ are independent,  the distribution of the paired bootstrap sample $(X_1^\ast,Y_1^\ast),\ldots, (X_n^\ast,Y_n^\ast)$ mimics the unknown distribution of the original sample $(X_1,Y_1),\ldots, (X_n,Y_n)$. In our theoretical analysis, we derive a bound for the Wasserstein distance between the bootstrap and the sample distribution of the empirical distance covariance. An important technical ingredient in the proof is a bound for the Wasserstein distance between the empirical distribution and the theoretical distribution of samples from a multivariate weakly dependent process. Our results generalize earlier results obtained by \cite{dereich} for i.i.d. data.

In its original version, our test is only able to capture dependence between $X_i$ and $Y_i$, but not between $X_i$ and $Y_{i+k}$ for $k > 0$. This problem is not new, and is treated in various contexts in the literature. \cite{haugh:1976} presents a Portmanteau-type test for independence based on linear combinations of Pearson's cross correlations for short-range dependent time series, while \cite{shao:2009} covers long-range dependent processes. \cite{betkendehling:2022} extend these ideas to tests for independence via linear combinations of distance cross covariances, again in the case of long-range dependent data. \cite{wang_et_al:2021} propose a test based on sums of Hilbert-Schmidt independence criteria (HSIC), which by \cite{sejdinovic:2013} is equivalent to the distance covariance in a certain sense. Most recently, \cite{chu:2023} constructs a test based on a variation of the distance covariance which, like that of \cite{wang_et_al:2021}, presupposes specific data generating models. \cite{edelman_et_al:2019} give an extensive literature survey of the distance covariance in the context of time series.

In the present paper, we present an alternative approach by applying our original bootstrap test to the vectors
$X_k^\prime=(X_k,\ldots,X_{k+L})$ and $Y_k^\prime=(Y_k,\ldots,Y_{k+L})$ of $L$-lagged data. In this way, we obtain a test that is consistent against arbitrary deviations from independence, as long as they occur within a time lag of at most $L$. Furthermore, we do not require any specific model assumptions other than absolute regularity and some mild conditions on the Lebesgue densities of the samples.

The remainder of this paper is structured as follows: In the next section, we give an alternative representation of the distance covariance as a V-statistic, and present details of the bootstrap procedure which is based on this representation. Section \ref{sec:chapter3_mains} contains the statements of the main theoretical results and a discussion of some technical assumptions, preceded by a brief summary of the mathematical tools used in this article. In Section \ref{sec:chapter3_simulations_and_data_analysis}, we present the results of an extensive simulation study, as well as an application to financial data. Section \ref{sec:chapter3_proof_outlines} contains outlines of the proofs of our main theoretical results. Details of proofs and auxiliary results are available in an online supplement to this paper.

\section{Test Statistic and Bootstrap Technique}
\label{sec:test_statistic_and_bs_technique}
We will base our analysis of the distance covariance on a V-statistic representation introduced by \cite{lyons}. We denote by $\theta=\mathcal{L}(X,Y)$ the joint distribution of $(X,Y)$, and by $\mu=\mathcal{L}(X)$ and $\nu=\mathcal{L}(Y)$ the distributions of $X$ and $Y$, respectively. By $\theta_n, \mu_n$ and $\nu_n$, we denote the corresponding empirical distributions.  As the distance covariance depends only on the joint distribution of $(X,Y)$, we will also use the notation $\mathrm{dcov}(\theta)$ instead of $\mathrm{dcov}(X,Y)$. In this notation, the empirical distance covariance is simply the distance covariance of the empirical distribution $\theta_n$, i.e. $\mathrm{dcov}_n(X,Y)=\mathrm{dcov}(\theta_n)$.

In order to obtain the V-statistics representation of the empirical distance covariance, we define the functions $f_\ell:(\mathbb{R}^\ell)^4\to \mathbb{R}$ by $f_\ell(s_1,s_2,s_3,s_4) := \|s_1-s_2\|_2-\|s_1-s_3\|_2 -\|s_2-s_4\|_2+\|s_3-s_4\|_2$, $\ell \in \{\ell_1, \ell_2\}$, and the kernel $h^\prime: (\mathbb{R}^{\ell_1+\ell_2})^6\rightarrow \mathbb{R}$ by
\[
  h^\prime((s_1,t_1),\ldots, (s_6,t_6)):= f_{\ell_1}(s_1,s_2,s_3,s_4)\, f_{\ell_2}(t_1,t_2,t_5, t_6).
\]
For the distance covariance with the specific weight function
\[
  w(s,t) = \frac{ \Gamma\left(\frac{\ell_1+1}{2}\right) \Gamma\left(\frac{\ell_2+1}{2}\right) }{\pi^{(1+\ell_1+\ell_2)/2} \|s\|_2^{l_1+1} \|t\|_2^{l_2+1}},
\]
\cite{lyons} showed that $ \mathrm{dcov}(\theta)=\int h^\prime ~\mathrm{d}\theta^6$.
As a consequence, one obtains a V-statistic representation of the empirical distance covariance
\[
  \mathrm{dcov}(\theta_n) = \int h^\prime ~\mathrm{d}\theta_n^6
  = \sum_{1\leq i_1,\ldots,i_6 \leq n} h^\prime( (X_{i_1},Y_{i_1}), \ldots, (X_{i_6},Y_{i_6}) )
\]
We will use this representation in our bootstrap procedure and also in the proofs of our main results. To simplify some calculations, we will often use the symmetrisation of $h^\prime$, defined as
\[
  h((s_1,t_1),\ldots, (s_6,t_6) ) := \frac{1}{6!}  \sum_{\sigma\in \mathcal{S}_6}
  h^\prime( (s_{\sigma(1)},t_{\sigma(1)}),\ldots, (s_{\sigma(6)},t_{\sigma(6)})  ),
\]
where $\mathcal{S}_6$ denotes the symmetric group of order $6$. Since any V-statistic is equal to the V-statistic with the corresponding symmetrized kernel, the empirical distance covariance may also be expressed as V-statistic with the above symmetric kernel $h$.

As it is central to our test, let us also explain the weak limit $\zeta$ from Eq.\@ \eqref{eq:schwache_konergenz_unter_h0} in more detail. \cite{davistime} and \cite{kroll} show (for Euclidean and metric data, respectively) that
$$
  \zeta = \sum_{k=1}^\infty \lambda_k \zeta_k^2,
$$
where $(\lambda_k, \varphi_k)_{k \in \mathbb{N}}$ is a sequence of non-negative eigenvalues and matching eigenfunctions of a certain integral operator, and $(\zeta_k)_{k \in \mathbb{N}}$ is a centred Gaussian process whose covariance function is given by
\begin{equation}
  \label{eq:kovarianz_von_zeta_formel}
  \mathrm{Cov}(\zeta_i, \zeta_j) = \lim_{n \to \infty} \frac{1}{n} \sum_{s,t=1}^n \mathrm{Cov}\left(\varphi_i(X_s, Y_s), \varphi_j(X_t, Y_t)\right).
\end{equation}
The integral operator mentioned depends on the joint distribution $\theta$. Thus, the distribution of $\zeta$ depends on the unknown distribution $\theta$ as well as the unknown covariances occurring in Eq.\@ \eqref{eq:kovarianz_von_zeta_formel}.

Let us now present the block bootstrap procedure as well as the resulting test in detail. In the following pseudocode, $d = d(n)$ will denote the block length that the two separate non-overlapping block bootstraps are based upon. We give instructions on how to choose this block length in Section \ref{sec:chapter3_mains}.

\begin{algorithm}
  \caption{\texttt{IBB} (Independent Block Bootstrap)}
  \label{alg:ibb}
  \begin{algorithmic}[1]
    \Require $(X_1, \ldots, X_n)$, $(Y_1, \ldots, Y_n)$, $d$
    \State $N \gets \lfloor n/d \rfloor$
    \For{$k = 1, \ldots, N$}
    \State $B_{X,k} \gets (X_{(k-1)d + 1}, \ldots, X_{kd})$
    \State $B_{Y,k} \gets (Y_{(k-1)d + 1}, \ldots, Y_{kd})$
    \EndFor
    \For{$k = 1, \ldots, N$}
    \State $B_{X,k}^* \gets$ random element from $\{B_{X,1}, \ldots, B_{X,N}\}$ drawn with replacement
    \State $B_{Y,k}^* \gets$ random element from $\{B_{Y,1}, \ldots, B_{Y,N}\}$ drawn with replacement
    \EndFor
    \State $\left(X_1^*, \ldots, X_{Nd}^*\right) \gets \left(B_{X,1}^*, \ldots, B_{X,N}^*\right)$
    \State $\left(Y_1^*, \ldots, Y_{Nd}^*\right) \gets \left(B_{Y,1}^*, \ldots, B_{Y,N}^*\right)$
    \Ensure $\left(X_1^*, \ldots, X_{Nd}^*\right)$, $\left(Y_1^*, \ldots, Y_{Nd}^*\right)$
  \end{algorithmic}
\end{algorithm}

The resulting bootstrap sample is of length $Nd$, where $N$ denotes the number of blocks. Thus, if the sample size $n$ is not divisible by the block length $d$, we disregard some of our observations (but always fewer than $d$). With an appropriate choice of $d = d(n)$, this does not impact the consistency of our bootstrap procedure.

In the pseudocode for the $L$-lag-test (Algorithm \ref{alg:llt}), $\mathrm{dcov}_{n-L}(X', Y')$ denotes the empirical distance covariance of the samples $X'_1, \ldots, X'_{n-L}$ and $Y'_1, \ldots, Y'_{n-L}$. Likewise, $\mathrm{dcov}_{n-L}\left(X^*, Y^*\right)$ is the empirical distance covariance of the corresponding bootstrap samples $X^*$ and $Y^*$. Note that the number of bootstrap blocks is given by $N = \lfloor (n-L)/d\rfloor$ because we are performing the bootstrap procedure on the vectorised sequence $X'_1, \ldots, X'_{n-L}$ and $Y'_1, \ldots, Y'_{n-L}$, not on the original observed sample of length $n$. \texttt{IBB} denotes the independent block bootstrap as described in Algorithm \ref{alg:ibb}.

\begin{algorithm}
  \caption{$L$-lag-test (empirical version)}
  \label{alg:llt}
  \begin{algorithmic}[1]
    \Require $(X_1, \ldots, X_n)$, $(Y_1, \ldots, Y_n)$, $d$, $\alpha$, $L$, $B$
    \For{$k = 1, \ldots, n-L$}
    \State $X'_k \gets \left(X_k \ldots, X_{k+L}\right)$
    \State $Y'_k \gets \left(Y_k, \ldots, Y_{k+L}\right)$
    \EndFor
    \State $N \gets \lfloor (n-L)/d\rfloor$
    \For{$b = 1, \ldots, B$}
    \State $\left(X^*, Y^*\right) \gets \texttt{IBB}\left((X'_1, \ldots, X'_{n-L}), (Y'_1, \ldots, Y'_{n-L}), d\right)$
    \State $D_b \gets Nd \, \mathrm{dcov}_{n-L}\left(X^*, Y^*\right)$\label{algline:dcov1}
    \EndFor
    \State $c_\alpha^* \gets$ empirical upper $\alpha$-quantile of $\{D_1, \ldots, D_B\}$
    \If{$(n-L) \, \mathrm{dcov}_{n-L}(X', Y') > c_\alpha^*$}\label{algline:dcov2}
    \State $\texttt{Decision} \gets$ `Reject $H_0$'
    \Else
    \State $\texttt{Decision} \gets$ `Do not reject $H_0$'
    \EndIf
    \Ensure \texttt{Decision}
  \end{algorithmic}
\end{algorithm}

In Algorithm \ref{alg:llt}, we are performing the independent block bootstrap $B$ times and take the empirical upper $\alpha$-quantile as our critical value. This is standard in the application of bootstrap procedures. However, one can also treat the result of the independent block bootstrap as a random variable, where the randomness comes from the sampling involved, as well as from the observed (random) sample $X_1, \ldots, X_n$ and $Y_1, \ldots, Y_n$. Denoting the empirical measure of this random bootstrap sample by $\theta_n^*$, we define
$$
  V^* := \mathrm{dcov}\left(\theta_n^*\right).
$$
The main results of this paper mostly concern the asymptotic behaviour of this bootstrap statistic.

\section{Main Theoretical Results}
\label{sec:chapter3_mains}
\subsection{Summary}
\label{sec:summaryofmains}
In Theorem \ref{thm:hyp_bootstrap} we show that, under certain technical assumptions, most importantly absolute regularity, $nV^*$ has the same limiting distribution in probability as $n\,\mathrm{dcov}(\theta_n)$ under $H_0$. From this it follows that the non-vectorised test has asymptotic level $\alpha$. Furthermore, because $\theta \neq \mu \otimes \nu$ implies that $n\,\mathrm{dcov}(\theta_n) \xrightarrow[n \to \infty]{a.s.} \infty$, this test is consistent against every alternative in which $X$ and $Y$ are not independent. Thus, the only alternatives against which this test is not consistent are those in which the marginals $X_k$ and $Y_k$ are independent for every $k \in \mathbb{N}$ but the entire processes $(X_k)_{k \in \mathbb{N}}$ and $(Y_k)_{k \in \mathbb{N}}$ are not independent. By first vectorising the observations, i.e.\@ by considering $X'_k = (X_k, \ldots, X_{k+L})$ and $Y'_k = (Y_k, \ldots, Y_{k+L})$, we are able to achieve consistency against any alternative for which $X_s$ and $Y_t$ are dependent for some pair $s,t \in \mathbb{N}$ such that $|s-t| \leq L$, where $L$ is a pre-specified parameter. Corollaries \ref{cor:asymptoticlevelalpha} and \ref{cor:L-lag-test} present the necessary details.

In Theorem \ref{thm:wasserstein_gesamt} we give an explicit bound on $\mathbb{E}d_p^p(\xi_n, \xi)$, where $d_p$ denotes the Wasserstein distance of order $p$, $\xi$ is some measure on $\mathbb{R}^d$ and $\xi_n$ is the empirical measure of a stationary and strongly mixing process with marginal distribution $\xi$. It is known that for growing dimension $d = d(n)$, this expected value does not necessarily converge to $0$, since the number of observations required for an approximation of fixed precision grows with $d$ -- this phenomenon is sometimes known as the curse of dimensionality. Since the upper bound in Theorem \ref{thm:wasserstein_gesamt} is explicit, it allows us to find a growth rate for $d = d(n)$ that still results in $\mathbb{E}d_p^p(\xi_n, \xi)$ converging to $0$. Previous results of this type, such as those in \cite{dereich}, require i.i.d. data as opposed to our weaker assumption of stationary and strongly mixing sample data. Finally, Corollary \ref{cor:boundwasserstein} is a handy consequence of Theorem \ref{thm:wasserstein_gesamt} for the special case where $\xi$ is the distribution of the first $d'$ observations of a strictly stationary process.

\subsection{Technical Preliminaries}
The following definitions are taken from \cite{bradley}. Let $(\Omega, \mathcal{F}, \mathbb{P})$ be a probability space. For any two sub-$\sigma$-algebras $\mathcal{A}, \mathcal{B} \subseteq \mathcal{F}$, we define the following objects:
\begin{align*}
  \alpha(\mathcal{A}, \mathcal{B}) & := \sup_{A \in \mathcal{A}, B \in \mathcal{B}} \left| \mathbb{P}(A \cap B) - \mathbb{P}(A)\mathbb{P}(B)\right|,                       \\
  \phi(\mathcal{A}, \mathcal{B})   & := \sup_{A \in \mathcal{A}, B \in \mathcal{B}, \mathbb{P}(A) > 0} \left| \mathbb{P}(B|A) - \mathbb{P}(B)\right|,                      \\
  \beta(\mathcal{A}, \mathcal{B})  & := \sup \left\{ \frac{1}{2} \sum_{i=1}^I \sum_{j=1}^J \left|\mathbb{P}(A_i \cap B_j) - \mathbb{P}(A_i)\mathbb{P}(B_j)\right|\right\},
\end{align*}
where the last supremum is taken over all possible partitions $\{A_1, \ldots, A_I\} \subseteq \mathcal{A}$ and $\{B_1, \ldots, B_J\} \subseteq \mathcal{B}$ with $I, J \in \mathbb{N}$. These coefficients characterise independence of $\mathcal{A}$ and $\mathcal{B}$ in the sense that, for any $\gamma \in \{\alpha, \beta, \phi\}$, $\gamma(\mathcal{A}, \mathcal{B}) = 0$ if and only if $\mathcal{A}$ and $\mathcal{B}$ are independent (\cite{bradley}, Proposition 3.4).

Now let $U := (U_k)_{k \in \mathbb{N}}$ be a stochastic process on $(\Omega, \mathcal{F}, \mathbb{P})$. For $1 \leq i,j \leq \infty$ denote by $\mathcal{F}_i^j$ the $\sigma$-algebra $\sigma(U_k, i \leq k \leq j) \subseteq \mathcal{F}$ generated by $U_i, U_{i+1}, \ldots, U_j$. Then, for any $\gamma \in \{\alpha, \beta, \phi\}$, we define $\gamma(n) := \sup_{j \in \mathbb{N}} \gamma\left(\mathcal{F}_1^j, \mathcal{F}_{j+n}^\infty\right)$, and we say that the process $(U_k)_{k \in \mathbb{N}}$ is $\gamma$-mixing if $\gamma(n) \to 0$ for $n \to \infty$. Instead of saying $\alpha$-mixing, we may use the term `strongly mixing'. Instead of saying $\beta$-mixing, we may use the term `absolutely regular'. By assuming a certain rate of growth for the mixing coefficients, one can control how much an observation may influence another observation a fixed distance away. A common assumption in our results will be absolute regularity with polynomial decay, i.e.\@ $\beta(n) = \mathcal{O}\left(n^{-r}\right)$ for some growth rate $r > 0$. Some of our results only require a strong mixing condition. This is a weaker assumption than absolute regularity; in fact, for any two sub-$\sigma$-algebras $\mathcal{A}, \mathcal{B}$ it holds that $2\alpha(\mathcal{A}, \mathcal{B}) \leq \beta(\mathcal{A}, \mathcal{B}) \leq \phi(\mathcal{A}, \mathcal{B})$. This is Proposition 3.11 in \cite{bradley}.

Finally, let us define the Wasserstein distance, which is a certain metric on the space of measures in which we will prove our convergence results. Let $\eta$ and $\xi$ be two measures on some separable metric space $(\mathcal{S}, d_\mathcal{S})$ and $p \geq 1$ a real constant. If $\eta$ and $\xi$ have finite $p$-th moments, then we define the Wasserstein distance $d_p(\eta, \xi)$ as
$$
  d_p(\eta, \xi) := \left(\inf_{\gamma \in \Gamma} \left\{ \int d_\mathcal{S}(s,s')^p ~\mathrm{d}\gamma(s,s')\right\}\right)^\frac{1}{p},
$$
where $\Gamma = \Gamma(\eta, \xi)$ is the set of all couplings of $\eta$ and $\xi$, i.e.\@ the set of all measures $\gamma$ on $\mathcal{S}^2$ with marginals $\eta$ and $\xi$. By abuse of notation, for any two random variables $U$ and $V$, we may write $d_p(U,V)$ instead of $d_p(\mathcal{L}(U), \mathcal{L}(V))$.

Convergence in the Wasserstein distance is stronger than weak convergence. By Theorem 6.9 in \cite{villani:optimaltransport}, if $(\xi_n)_{n \in \mathbb{N}}$ is a sequence of measures on some Polish space $(\mathcal{S}, d_\mathcal{S})$, then $(\xi_n)_{n \in \mathbb{N}}$ converges to some measure $\xi$ in the Wasserstein distance $d_p$ if and only if it converges weakly and for any (and thus all) $s_0 \in \mathcal{S}$ it holds that
$$
  \int d_\mathcal{S}(s,s_0)^p ~\mathrm{d}\xi_n(s) \to \int d_\mathcal{S}(s,s_0)^p ~\mathrm{d}\xi(s),
$$
i.e.\@ convergence in $d_p$ is equivalent to weak convergence plus convergence of the $p$-th moments.

\subsection{Results on the Test for Independence}
Our central result is the validity of our proposed bootstrap method, stated in Theorem \ref{thm:hyp_bootstrap}. More precisely, we prove convergence of the bootstrap in the Wasserstein distance.

For a sequence of measures $(\xi_d)_{d \in \mathbb{N}}$ on $\mathbb{R}^{\ell_d}$, we define the following assumption.
\begin{assumption}
  \label{ass:folgecubeassumption}
  Every $\xi_d$ has an essentially bounded Lebesgue density, and the corresponding sequence of essential suprema $M(d)$ grows at most exponentially in $\ell_d$.
\end{assumption}

To state our main result, we adopt the notation $Z_k := (X_k, Y_k)$. We will use this notation throughout the remainder of this article.
\begin{theorem}
  \label{thm:hyp_bootstrap}
  Suppose that $X_1$ and $Y_1$ both have finite $(4+\delta)$-th moments for some $\delta > 0$, and that the process $(Z_k)_{k \in \mathbb{N}}$ is strictly stationary and absolutely regular with $\beta(n) = \mathcal{O}\left(n^{-r}\right)$ for some $r > 18$. Furthermore, suppose that the joint distributions of the vectors $(Z_1, \ldots, Z_d)$ fulfil Assumption \ref{ass:folgecubeassumption}, and that $d = \mathcal{O}\left(\log(n)^\gamma\right)$ for some $0 < \gamma < 1/2$ and $d \to \infty$ as $n \to \infty$.

  Then it holds that
  $$
    d_1(\zeta,nV^{*}) \xrightarrow[n \to \infty]{\mathbb{P}} 0,
  $$
  where $\zeta$ is the weak limit of $n\, \mathrm{dcov}(\theta_n)$ under the hypothesis
  $$
    H_0 : (X_k)_{k \in \mathbb{N}} \textrm{ and } (Y_k)_{k \in \mathbb{N}} \textrm{ are independent}.
  $$
\end{theorem}

Theorem \ref{thm:hyp_bootstrap} also holds if the distributions of the vectors $(Z_1, \ldots, Z_d)$ do not fulfil Assumption \ref{ass:folgecubeassumption}, if instead we assume the sample generating process $(Z_k)_{k \in \mathbb{N}}$ to be $\phi$-mixing. We provide the necessary theoretical tools for this in Appendix \ref{app:proofs}.

One can also use the independent block bootstrap to approximate the limiting distribution of different $V$-statistics of absolutely regular sample data. For instance, the empirical Pearson covariance of $X$ and $Y$ (with normalisation $1/n$ instead of $1/(n-1)$) can be expressed as a $V$-statistic with kernel function
$$
  (z,z') \mapsto \frac{1}{2}(x - x')(y - y').
$$
Using an appropriate normalisation factor of $\sqrt{n}$, the bootstrap procedure yields the appropriate limiting distribution (in this case, a Gaussian distribution).

\begin{corollary}
  \label{cor:asymptoticlevelalpha}
  Suppose the assumptions from Theorem \ref{thm:hyp_bootstrap} are satisfied. Consider a test which rejects $H_0$ if $n\,\mathrm{dcov}(\theta_n) > c_\alpha^*$, where $c_\alpha^*$ is the upper $\alpha$-quantile of $nV^*$. Then this test has asymptotic level $\alpha$, and the only alternatives against which it is not consistent are those for which $X_k$ and $Y_k$ are independent for every $k \in \mathbb{N}$, but the entire processes $(X_k)_{k \in \mathbb{N}}$ and $(Y_k)_{k \in \mathbb{N}}$ are not independent.
\end{corollary}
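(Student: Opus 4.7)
The plan is to combine Theorem \ref{thm:bootstrap} with the dichotomy from Theorem 1 of \citet{kroll}, which asserts that $n\mathrm{dcov}(\theta_n) \xrightarrow{a.s.} \infty$ whenever $\theta \neq \mu \otimes \nu$. A central observation to exploit is that the hypotheses of Theorem \ref{thm:bootstrap} do not themselves reference $H_0$: the bootstrap construction resamples blocks of $X$'s and $Y$'s independently, so the conclusion $d_1(\zeta, nV^*) \xrightarrow{\mathbb{P}} 0$ stays valid whether or not $(X_k)$ and $(Y_k)$ are actually independent, provided only that $(Z_k)$ meets the stated mixing and moment assumptions.

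I would first handle the asymptotic level. Under $H_0$, $n\mathrm{dcov}(\theta_n) \xrightarrow{\mathcal{D}} \zeta$, and by Theorem \ref{thm:bootstrap} we also have $d_1(\zeta, nV^*) \xrightarrow{\mathbb{P}} 0$. Since $d_1$-convergence implies weak convergence, the (conditional) distribution function of $nV^*$ converges in probability to that of $\zeta$ at every continuity point. The limit $\zeta = \sum_k \lambda_k \zeta_k^2$ has a continuous distribution function, so the upper $\alpha$-quantiles $c_\alpha^*$ converge in probability to the deterministic upper $\alpha$-quantile $c_\alpha$ of $\zeta$. A routine Slutsky-type argument then yields
$$
\mathbb{P}\bigl(n\mathrm{dcov}(\theta_n) > c_\alpha^*\bigr) \xrightarrow[n\to\infty]{} \mathbb{P}(\zeta > c_\alpha) = \alpha.
$$

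For the consistency statement, suppose we are under an alternative with $\theta \neq \mu \otimes \nu$. Theorem 1 in \citet{kroll} gives $n\mathrm{dcov}(\theta_n) \to \infty$ almost surely, while Theorem \ref{thm:bootstrap} -- applied in its hypothesis-free reading above -- still provides $d_1(\zeta, nV^*) \xrightarrow{\mathbb{P}} 0$. In particular, $c_\alpha^*$ is stochastically bounded, so $\mathbb{P}(n\mathrm{dcov}(\theta_n) > c_\alpha^*) \to 1$. Conversely, if $X_k$ and $Y_k$ are independent for every $k$, then at the level of marginal distributions we have $\theta = \mu \otimes \nu$ and thus $\mathrm{dcov}(\theta) = 0$. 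The test statistic does not diverge and the bootstrap limit is the same distribution it targets under $H_0$, so the rejection probability tends at most to $\alpha$; consistency is therefore impossible against these ``pathological'' alternatives, which exactly delineates the exceptional set described in the corollary.

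The only nontrivial point in this plan is passing from the $d_1$-convergence $d_1(\zeta,nV^*)\xrightarrow{\mathbb{P}}0$ to convergence of the bootstrap quantile $c_\alpha^*$ to $c_\alpha$ in probability; this I would settle via continuity of $F_\zeta$ together with the classical quantile--distribution function continuity lemma (cf.\ \citet{villani:optimaltransport}, Theorem 6.9) applied along subsequences. Once this is in place, the three assertions (level $\alpha$, consistency against $\theta \neq \mu \otimes \nu$, and inconsistency otherwise) follow immediately from Theorem \ref{thm:bootstrap} and the divergence statement from \citet{kroll}.
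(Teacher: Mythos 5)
Your argument is essentially the paper's own: asymptotic level follows from Theorem \ref{thm:bootstrap} (you merely spell out the standard step from $d_1$-convergence to convergence of the bootstrap quantiles $c_\alpha^*$), and consistency against any alternative with $\theta \neq \mu \otimes \nu$ follows from Theorem 1 in \citet{kroll} (a.s.\ convergence of $\mathrm{dcov}(\theta_n)$ to $\mathrm{dcov}(\theta)>0$, hence divergence of $n\,\mathrm{dcov}(\theta_n)$) together with stochastic boundedness of $c_\alpha^*$, exactly as in the paper's proof. One minor caveat concerns your added discussion of the pathological alternatives (which the corollary does not require and the paper treats only in a remark): there $n\,\mathrm{dcov}(\theta_n)$ converges to a \emph{different} non-degenerate limit $\zeta'$, determined by the cross-dependence of $(Z_k)_{k\in\mathbb{N}}$, so the limiting rejection probability is $\mathbb{P}(\zeta' > c_\alpha)$, which is $<1$ but need not be at most $\alpha$ as you claim.
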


\begin{corollary}
  \label{cor:L-lag-test}
  Suppose the assumptions from Theorem \ref{thm:hyp_bootstrap} are satisfied. Let $L \in \mathbb{N}$ be some arbitrary but fixed parameter. For any $1 \leq k \leq n-L$, let $X'_k = (X_k, \ldots, X_{k+L})$ and $Y'_k = (Y_k, \ldots, Y_{k+L})$. Then, performing the test from Corollary \ref{cor:asymptoticlevelalpha} on the new sample $(X'_1, Y'_1), \ldots, (X'_{n-L}, Y'_{n-L})$ yields a test of asymptotic level $\alpha$ that is consistent against any alternative for which $X_s$ and $Y_t$ are dependent for some $s,t \in \mathbb{N}$ such that $|s-t| \leq L$.
\end{corollary}

\subsection{A Bound for the Wasserstein Distance}
The following theorem is concerned with bounding the Wasserstein distance between an empirical measure of strongly mixing sample data and its marginal distribution. It provides a valuable tool in determining a rate of growth of the block length $d = d(n)$ which is slow enough that it does not impair the convergence of $\xi_n$ to $\xi$. Traditional Glivenko-Cantelli type results usually break down when one allows for a growing dimension since they implicitly depend on this dimension for their asymptotic bounds. Prior results of this kind, e.g.\@ those in \cite{dereich}, require i.i.d. sample data.

\begin{theorem}
  \label{thm:wasserstein_gesamt}
  Let $d \in \mathbb{N}$ and $1 \leq p \leq d/2$ and $q > p$ be fixed. Let $\xi$ be a probability measure on $\mathbb{R}^d$ with finite $q$-moments whose Lebesgue density exists and is essentially bounded by some $M > 0$. Then, for any $n,K \in \mathbb{N}$, it holds that
  $$
    \mathbb{E}d_p^p(\xi_n, \xi) \leq 3^{p-1}\left\{2^{p}\left(\left(1-\xi\left(U_K\right)\right)^\frac{q-p}{q}m_q^{p/q} + \left(1-\xi\left(U_K\right)\right)K^p\right) + K^{d/2}\cdot\mathfrak{M}^p\right\},
  $$
  where $U_K$ is the open sphere centred around the origin with radius $K$, $m_q$ is the $q$-th moment of $\xi$, and $\mathfrak{M}$ is given by
  $$
    \mathfrak{M}^p = c_0n^{-\frac{p-2}{2d}} 2^{3d/2-p} d^\frac{p}{2}\left(\frac{1 + M^\frac{d/2 - p}{d}}{1-2^{p - d/2}} + \frac{1}{1 - 2^{-p}}  +  4M^\frac{1}{d}\right),
  $$
  for some uniform constant $c_0$, where $\xi_n$ is the empirical measure of a strictly stationary and $\alpha$-mixing process $(U_i)_{i \in \mathbb{N}}$ with marginal distribution $\xi$ and $\alpha(n) \leq f(n) = \mathcal{O}(n^{-r_0})$ for some function $f$ and some constant $r_0 > 1$. The constant $c_0$ only depends on $f$ and $r_0$.
\end{theorem}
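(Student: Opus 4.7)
The strategy follows the now-classical combination of a truncation-to-ball step with a dyadic coupling on the bounded region, as in \citet{dereich} for iid sample data. The crucial new ingredient is the replacement of the iid variance estimate for empirical masses of small cubes by an analogue for strongly mixing observations, derived from a covariance inequality.

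I would begin by constructing a coupling of $\xi_n$ with $\xi$ that treats the ball $B := U_K(0)$ and its complement separately, bounding $d_p^p(\xi_n,\xi)$ by the transport cost of three pieces: the $\xi$-mass outside $B$ sent to a reference point in $\overline{B}$, the $\xi_n$-mass outside $B$ sent to the same reference point, and the residual coupling on $B$. The elementary inequality $(a+b+c)^p \leq 3^{p-1}(a^p+b^p+c^p)$ produces the factor $3^{p-1}$. The first piece is bounded by Hölder's inequality with conjugate exponents $q/(q-p)$ and $q/p$, yielding $\xi(B^C)^{(q-p)/q} m_q^{p/q}$; for the second I would bound each stray point's contribution by $K^p$ and use $\mathbb{E}\,\xi_n(B^C) = \xi(B^C)$. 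These furnish the two summands in the $2^p(\cdots)$ bracket.

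The bounded piece $\mathbb{E}\,d_p^p(\xi_n|_B,\xi|_B)$ I would handle with a dyadic-partition bound. Enclosing $B$ in a hypercube of side $2K$ and partitioning it dyadically at scales $\ell = 0,1,\ldots,L$, one obtains an estimate of the form
$$
d_p^p(\xi_n|_B,\xi|_B) \leq c_1 \sum_{\ell=0}^L (K\mathfrak{d}\,2^{-\ell})^p \sum_{F\in\mathcal{P}_\ell} |\xi_n(F)-\xi(F)| + c_2 (K\mathfrak{d}\,2^{-L})^p,
$$
so that after taking expectation the task reduces to bounding $\mathbb{E}|\xi_n(F)-\xi(F)| \leq \sqrt{\operatorname{Var}\xi_n(F)}$ for each cube $F$. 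Here I would invoke a covariance inequality for $\alpha$-mixing sequences (e.g.\ Theorem~1.4 in \citet{bradley}), $|\operatorname{Cov}(\mathbf{1}_F(U_0),\mathbf{1}_F(U_k))| \leq 4\min(\xi(F),\alpha(k))$, and sum over $k$ using $\alpha(k) \leq f(k)$ with $f$ summable because $r_0 > 1$. This yields $\operatorname{Var}\xi_n(F) \leq c_0\,\xi(F)/n$ with a constant $c_0 = c_0(f,r_0)$ independent of $F$, $n$, and $d$.

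Substituting Assumption~\ref{ass:cubeassumption} to control $\xi(F) \leq M\cdot\mathrm{vol}(F)$, then applying Cauchy--Schwarz to $\sum_{F\in\mathcal{P}_\ell}\sqrt{\xi(F)}$ with $|\mathcal{P}_\ell| \lesssim 2^{\ell d}$, produces $\sum_F\sqrt{\xi(F)} \lesssim K^{d/2} 2^{\ell d/2}$, with the $M^{(d/2-p)/d}$ correction arising from the boundary cubes of $B$ and the additive $4M^{1/d}$ from the coarsest level. Summation over $\ell$ produces two geometric series: one for $\ell\leq L$ with ratio $2^{p-d/2}$ (convergent for $p<d/2$), yielding $(1-2^{p-d/2})^{-1}$, and the residual $(K\mathfrak{d}\,2^{-L})^p$ contribution summed with ratio $2^{-p}$, yielding $(1-2^{-p})^{-1}$. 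Balancing these with an optimal $L$ delivers both the factor $n^{-(p-2)/(2d)}$ and the geometric prefactor $2^{3d/2-p}$ in $\mathfrak{M}^p$. The main obstacle is securing the \emph{uniform} variance estimate $\operatorname{Var}\xi_n(F) \leq c_0\,\xi(F)/n$ with a constant depending only on $f$ and $r_0$ and not on $F$, $n$, $d$, or $\xi$; once that estimate is in hand, the subsequent dyadic book-keeping, though intricate, is routine.
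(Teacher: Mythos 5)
Your overall architecture matches the paper's: truncate to the ball $U_K(0)$ by collapsing the exterior mass to a boundary point (giving the $3^{p-1}$ via the triangle/Jensen step and the two Hölder terms in the bracket), and handle the bounded part by the dyadic scheme of \citet{dereich}. But the step you yourself identify as the crux is exactly where the argument breaks: the uniform estimate $\mathrm{Var}\,\xi_n(F) \leq c_0\,\xi(F)/n$ is \emph{false} under $\alpha$-mixing with only a polynomial rate. From $|\mathrm{Cov}(\mathbf{1}_F(U_0),\mathbf{1}_F(U_k))| \leq \min\{\xi(F),\alpha(k)\}$ with $\alpha(k)\lesssim k^{-r_0}$ one gets, optimising the split of the sum at $k_0 \asymp \xi(F)^{-1/r_0}$,
\begin{equation*}
\sum_{k\geq 1}\min\{\xi(F),\alpha(k)\} \;\lesssim\; \xi(F)^{1-1/r_0},
\end{equation*}
which for small cubes is much larger than $\xi(F)$; summing $\alpha(k)$ alone only gives $\mathrm{Var}\big(\sum_i\mathbf{1}_F(U_i)\big)\lesssim n$, with no $\xi(F)$ factor at all. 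A bound of the product form $\mathrm{const}\cdot\phi(k)\,\xi(F)$ for the covariance is available only under $\phi$-mixing (this is the paper's Lemma \ref{lem:varianzsumme} in its $\phi$-mixing version, via Theorem 3.9 of \citet{bradley}); under $\alpha$-mixing the paper proves only the weaker bound $\mathrm{Var}\big(\sum_i \mathbf{1}_F(U_i)\big)\leq c_0\,n\,t_0^{-1}\xi(F)$ valid for sets with $\xi(F)\geq t_0$. This is precisely why the actual proof needs the three-regime estimate $\zeta_{n,r}$ of Lemma \ref{lem:zetalemma}, why Assumption \ref{ass:cubeassumption} together with the entropy count of Lemma \ref{lem:entropie} is needed to control how many dyadic levels can contain cubes of measure exceeding $n^{-1}$ or $n^{-1/2}$ (this, not ``boundary cubes'', is where the $M^{(d/2-p)/d}$ and $4M^{1/d}$ terms come from), and why the resulting rate is only $n^{-(p-2)/(2d)}$, decaying only for $p>2$, rather than an iid-type rate. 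Indeed, if your uniform variance bound were true, Assumption \ref{ass:cubeassumption} would be superfluous and one would recover essentially the rate of Proposition \ref{prop:paperprop1phi} without $\phi$-mixing, contradicting the distinction the paper draws between the two mixing regimes.

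A secondary, fixable slip: for the empirical exterior piece you propose to bound ``each stray point's contribution by $K^p$'' and use $\mathbb{E}\,\xi_n\big(U_K(0)^C\big)=\xi\big(U_K(0)^C\big)$. A stray observation $U_i$ with $\|U_i\|_2\gg K$ costs about $\|U_i\|_2^p$, not $K^p$, so both pieces (the $\xi$-side and the $\xi_n$-side) must carry \emph{both} the moment term and the $K^p$ term; the paper does this by bounding $d_p^p\big(\xi_n,\xi_n^{(K)}\big)$ exactly as on the $\xi$-side and then applying Hölder in expectation, and this is also where the factor $2^p = 2^{p-1}+2^{p-1}$ in the bracket comes from. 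Your finite-level truncation at $L$ with a remainder term and optimisation over $L$ is a harmless cosmetic deviation from the paper's use of the full series in Lemma \ref{lem:paperlem2} split at $l^*=\lfloor \log_2(n^{1/d})\rfloor$, but the missing variance estimate is a genuine gap that the rest of your bookkeeping cannot absorb.
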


\begin{remark}
  \begin{enumerate}
    \item If $\xi$ is the measure of a random vector $(U_1', \ldots, U'_{d'})$ with equal marginal distributions, then $m_q$ can be bounded by $(d')^{q/2}\cdot\|U_1'\|_{L_q}^q$.
    \item $\xi$ having finite $q$-moments implies that $\xi\left(U_K^C\right) = o\left(K^{-q}\right)$, where $A^C$ denotes the complement of a set $A$.
    \item For a useful bound, we want to choose $K = n^{\delta(p-2)/(d^2)}$ for some $0 < \delta < 1$. This ensures that $K^{d/2}\cdot \mathfrak{M}^p$ will still converge to $0$.
    \item The observations above, combined with the fact that $d$ needs to be of order $\log(n)^\gamma$ with $0 < \gamma < 1/2$ for $\mathfrak{M}$ to converge to $0$, yield a useful bound for $\mathbb{E}d_p^p(\xi_n, \xi)$ which allows for changing $d$.
  \end{enumerate}
\end{remark}

Corollary \ref{cor:boundwasserstein} is an application of Theorem \ref{thm:wasserstein_gesamt} to segments taken from strictly stationary processes.

\begin{corollary}
  \label{cor:boundwasserstein}
  Suppose that the assumptions of Theorem \ref{thm:wasserstein_gesamt} are satisfied, and that $\xi$ is the measure of a random vector $(U_1', \ldots, U'_{d'})$ with equal marginal distributions (such as the first $d'$ observations of a stationary process) with finite $q$-moments. Then there is some $n_0 \in \mathbb{N}$ such that for all $n \geq n_0$,
  $$
    \mathbb{E}d_p^p(\xi, \xi_n) \leq c_p(d) \,n^{-\frac{p-2}{4d}} + c_{p,q}(d) \,n^{(p-2)(p-q)/(2d^2)},
  $$
  where $c_{p,q}(d) = 2c' 6^p d^{1+q/2}$ for some constant $c'$ and
  \begin{align*}
    c_{p}(d) & = 6^p c_0 2^{3d/2 - p} d^\frac{p}{2} \left(\frac{1 + M^\frac{d/2 - p}{d}}{1 - 2^{p-d/2}} + \frac{1}{1 - 2^{-p}} + 4M^\frac{1}{d}\right).
  \end{align*}
  The threshold $n_0$ and the constant $c'$ only depend on $\mathcal{L}(U_1')$, and are therefore independent of $d$ and $d'$. The constant $c_0$ only depends on $f$ and $r_0$.
\end{corollary}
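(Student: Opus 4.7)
The plan is to apply Theorem \ref{thm:wasserstein_gesamt} with the specific choice $K = n^{(p-2)/(2d^2)}$, which corresponds to $\delta = 1/2$ in part (iii) of the remark following that theorem, and then to simplify each of the three summands using the product structure of $\xi$.

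For the Wasserstein contribution $3^{p-1} K^{d/2}\mathfrak{M}^p$, this choice yields $K^{d/2} = n^{(p-2)/(4d)}$, which combines with the factor $n^{-(p-2)/(2d)}$ already present in $\mathfrak{M}^p$ to produce exactly the target rate $n^{-(p-2)/(4d)}$. Bounding the prefactor $3^{p-1}$ by $6^p$ then reproduces the first summand of the claim verbatim, with the quantity in parentheses and the other explicit constants $c_0, 2^{3d/2-p}, \mathfrak{d}^p$ simply being inherited from the definition of $\mathfrak{M}^p$.

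For the two tail contributions, I would bound $\xi\bigl(U_K(0)^C\bigr)$ by Markov's inequality applied to $\|u\|_2^q$, giving $\xi(U_K(0)^C) \leq m_q/K^q$. Both $\xi(U_K(0)^C)^{(q-p)/q} m_q^{p/q}$ and $\xi(U_K(0)^C)\,K^p$ then collapse to the same quantity $m_q K^{p-q}$. Part (i) of the remark following Theorem \ref{thm:wasserstein_gesamt} supplies the moment estimate $m_q \leq d^{q/2}\|U_1'\|_{L_q}^q$, and substituting $K = n^{(p-2)/(2d^2)}$ turns $K^{p-q}$ into $n^{(p-2)(p-q)/(2d^2)}$. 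A final bookkeeping step (the trivial $d^{q/2} \leq d^{1+q/2}$ for $d \geq 1$, together with $3^{p-1}\cdot 2^{p+1} \leq 2\cdot 6^p$) absorbs the remaining constants into the second summand, with $c'$ a multiple of $\|U_1'\|_{L_q}^q$ depending only on $\mathcal{L}(U_1')$.

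The threshold $n_0$ enters only to guarantee that Markov's estimate $m_q/K^q$ is non-trivial (below $1$), so that the moments-vs-radius trade-off can be cleanly separated; since $m_q$ is controlled by a function of $\mathcal{L}(U_1')$ and $K \to \infty$ with $n$, this is eventually satisfied. There is no genuine technical obstacle here --- the corollary is essentially bookkeeping on top of Theorem \ref{thm:wasserstein_gesamt}. The only observation with any content is that both tail terms collapse to the same quantity $m_q K^{p-q}$, which is what makes the clean exponent $(p-2)(p-q)/(2d^2)$, rather than something worse, appear in the final bound.
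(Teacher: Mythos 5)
Your argument is correct and follows the paper's skeleton: apply Theorem \ref{thm:wasserstein_gesamt} with $K = n^{(p-2)/(2d^2)}$, note that $K^{d/2}\mathfrak{M}^p$ gives exactly the first summand, and reduce both tail terms to a single quantity of order $d^{1+q/2}K^{p-q}$ via the moment bound $m_q \leq (d')^{q/2}\|U_1'\|_{L_q}^q$; the constant bookkeeping ($3^{p-1}2^{p+1}\leq 2\cdot 6^p$, $d^{q/2}\leq d^{1+q/2}$) checks out. The one place you genuinely deviate is the tail estimate: you apply Markov directly to $\|u\|_2^q$ under $\xi$, so that $\xi\left(U_K(0)^C\right)\leq m_q K^{-q}$ and both tail terms collapse to $m_q K^{p-q}$, with $c'$ a multiple of $m'_q$ and the bound valid for \emph{all} $n$ (your $n_0$ remark is superfluous rather than needed). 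The paper instead uses a coordinate-wise union bound, $\xi\left(U_K(0)^C\right)\leq d'\,\mathbb{P}\bigl(\|U_1'\|_2^q > K^q/(d')^{q/2}\bigr)$, combined with the tail decay $\mathbb{P}(\|U_1'\|_2^q > t) = o\left(t^{-1}\right)$ implied by $m'_q<\infty$; this is what produces the threshold $n_0$ and the power $(d')^{1+q/2}$ in the stated bound. Your route is simpler, avoids $n_0$ altogether, and in fact yields the slightly better factor $(d')^{q/2}$, while still keeping $c'$ dependent only on $\mathcal{L}(U_1')$ as required; so the proposal proves the corollary as stated (indeed a marginally stronger version).
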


\subsection{The Condition of Bounded Densities}
\label{subsec:bounded_densities}
In our results we make use of Assumption \ref{ass:folgecubeassumption}. We now show that this assumption is fulfilled most importantly by Gaussian processes, but also by elements of the more general class of elliptical distributions, i.e.\@ distributions whose Lebesgue densities are of the form
$$
  f(x) = c\cdot g\left((x-\Lambda)^T \Sigma^{-1} (x-\Lambda)\right),
$$
where $c$ is some normalisation constant, $\Sigma$ is a covariance matrix and $\Lambda$ is a location parameter (i.e.\@ a multidimensional expected value). Here, we assume that the random vectors are centred for the sake of simplicity. Some analysis shows that
$$
  \int g\left(x^T \Sigma^{-1} x\right) ~\mathrm{d}x = \int g\left(\left\|\sqrt{\Sigma^{-1}}x\right\|_2^2\right) ~\mathrm{d}x = \frac{1}{\sqrt{|\mathrm{det}\left(\Sigma^{-1}\right)|}} \int g\left(\|x\|_2^2\right) ~\mathrm{d}x,
$$
so in order for the density to integrate to a total measure of one, the normalisation constant $c$ must be given by
$$
  c = \sqrt{|\mathrm{det}\left(\Sigma^{-1}\right)|} \left(\int g\left(\|x\|_2^2\right) ~\mathrm{d}x\right)^{-1} = \sqrt{|\mathrm{det}\left(\Sigma^{-1}\right)|} \cdot \frac{\Gamma\left(\frac{d}{2}\right)}{2\pi^\frac{d}{2}} \left(\int_0^\infty t^{d-1} g\left(t^2\right) ~\mathrm{d}t\right)^{-1},
$$
where in the last equality we have used identity 3.3.2.1, Chapter 5, in \cite{intandseries} and $\Gamma$ denotes the $\Gamma$-function. The function $g$ is known, and so the factor
\begin{equation}
  \label{eq:normconstant}
  \Gamma\left(\frac{d}{2}\right)\left(\int_0^\infty t^{d-1} g\left(t^2\right) ~\mathrm{d}t\right)^{-1}
\end{equation}
can be evaluated.

The most important members of this class of distributions are the Gaussian distributions. In this case, the function $g$ is given by $g(t) = \exp(-t/2)$, and so the product in Eq.\@ \eqref{eq:normconstant} is equal to $1$. This gives us the bound of $\mathrm{det}\left(\Sigma^{-1/2}\right)\pi^{-d/2}$ for the multivariate Gaussian density. The following lemma therefore implies that the condition of bounded densities is fulfilled by any process whose first $d$ observations are elliptically distributed with a sufficiently well-behaved function $g$, most importantly Gaussian processes.

\begin{lemma}
  Let $\Sigma$ be the covariance matrix of the first $d$ observations $U_1, \ldots, U_d$ of a strictly stationary and absolutely regular process $(U_k)_{k \in \mathbb{N}}$ such that $\sum_{h=0}^\infty |\mathrm{Cov}(U_1, U_{1+h})| < \infty$.  Then it holds that
  $$
    \mathrm{det}\left(\Sigma^{-1}\right) \leq K^d,
  $$
  where $K > 0$ is a constant independent of $d$.
\end{lemma}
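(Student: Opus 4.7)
My plan is to prove the equivalent statement that $\det(\Sigma) \geq K^{-d}$ for some $K > 0$ independent of $d$, which I will obtain from a uniform lower bound on the smallest eigenvalue of $\Sigma$. The natural route is through the spectral density of the process, since the summability hypothesis makes this object particularly well-behaved.

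First, setting $c(h) := \mathrm{Cov}(U_1, U_{1+h})$, the hypothesis $\sum_{h \geq 0} |c(h)| < \infty$ guarantees that the spectral density
$$
f(\lambda) := \frac{1}{2\pi}\sum_{h \in \mathbb{Z}} c(h) e^{-ih\lambda}
$$
is a continuous, bounded, non-negative function on $[-\pi, \pi]$. Since $\Sigma$ is the $d$-th principal Toeplitz matrix generated by $f$, we have the representation $\Sigma = \int_{-\pi}^{\pi} f(\lambda)\, v(\lambda) v(\lambda)^\ast \,d\lambda$, where $v(\lambda) := (1, e^{i\lambda}, \ldots, e^{i(d-1)\lambda})^T$. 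For any unit vector $x \in \mathbb{R}^d$, Parseval's identity then yields
$$
x^T \Sigma x = \int_{-\pi}^{\pi} f(\lambda)\, |x^T v(\lambda)|^2 \,d\lambda \;\geq\; \bigl(\inf_\lambda f(\lambda)\bigr) \int_{-\pi}^{\pi} |x^T v(\lambda)|^2 \,d\lambda \;=\; 2\pi \inf_\lambda f(\lambda).
$$
Setting $m := \inf_\lambda f(\lambda)$, this gives $\lambda_{\min}(\Sigma) \geq 2\pi m$ independently of $d$. In particular, $\det(\Sigma) \geq (2\pi m)^d$ and hence $\det(\Sigma^{-1}) \leq (2\pi m)^{-d} = K^d$ with $K := (2\pi m)^{-1}$.

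The main obstacle is establishing $m > 0$. The summability condition alone delivers continuity of $f$ but not strict positivity, so this must be either assumed as a mild non-degeneracy or extracted from the stronger structure of the target applications. For the processes the paper is actually concerned with (Gaussian processes and the related elliptical class, treated just before the lemma) this is automatic, as the spectral density is strictly positive and hence bounded below on the compact torus. If one prefers a purely distributional argument one can instead invoke the Cholesky factorisation $\det(\Sigma) = \prod_{k=1}^d \sigma_k^2$ with $\sigma_k^2 := \mathrm{Var}(U_k \mid U_1, \ldots, U_{k-1})$; stationarity makes the sequence $\sigma_k^2$ non-increasing and the Kolmogorov--Szegő formula identifies its limit as $\sigma_\infty^2 = \exp\bigl(\tfrac{1}{2\pi}\int \log(2\pi f)\bigr) > 0$ whenever $\log f \in L^1$. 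This lower bound on the innovation variances then yields $\det(\Sigma) \geq \sigma_\infty^{2d}$ and $K = \sigma_\infty^{-2}$ works.
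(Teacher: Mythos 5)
Your second route is essentially the paper's own argument: the paper also passes to the spectral density and invokes Szeg\H{o}'s first limit theorem, $\det(\Sigma)^{1/d} \to \exp\bigl(\frac{1}{2\pi}\int_{-\pi}^{\pi}\log f(\lambda)\,\mathrm{d}\lambda\bigr)$, and concludes $\det\left(\Sigma^{-1}\right) \le \exp\bigl(-\frac{1}{2\pi}\int_{-\pi}^{\pi}\log f(\lambda)\,\mathrm{d}\lambda\bigr)^{d}$ for sufficiently large $d$. Your Kolmogorov--Szeg\H{o}/Cholesky variant is, if anything, slightly cleaner in form: since the innovation variances are non-increasing by stationarity, $\det(\Sigma)\ge\sigma_\infty^{2d}$ holds for every $d$, whereas the paper's limit-theorem argument only covers large $d$ (small $d$ then needs the trivial patch of nonsingularity for finitely many matrices, absorbed into the constant $K$). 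Two caveats. First, your primary route via $\lambda_{\min}(\Sigma)\ge 2\pi\inf_\lambda f(\lambda)$ needs $\inf_\lambda f(\lambda)>0$, and your claim that this is automatic for Gaussian processes is false: a Gaussian MA(1) with unit root, $U_k=\varepsilon_k+\varepsilon_{k-1}$, has summable covariances but spectral density vanishing at $\lambda=\pi$. So that route genuinely requires an extra non-degeneracy assumption, and it is only your fallback that actually proves the lemma. Second, the fallback, exactly like the paper's proof, implicitly assumes $\log f\in L^1$ (equivalently, that the Szeg\H{o} limit is positive); absolute summability of the covariances alone does not guarantee this, since $f$ may vanish on a set of positive measure. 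You flag this correctly, and since the paper's argument carries the same unstated hypothesis, your proof is on the same footing as the published one. A minor point of precision: the diagonal entries in the Cholesky factorisation are the error variances of the best \emph{linear} predictors rather than conditional variances; the monotonicity argument is unchanged if phrased with linear projections.
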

\begin{proof}
  Since the covariances are absolutely summable, there exists a spectral density $f$ such that
  $$
    \Sigma_{jk} = \mathrm{Cov}(U_j, U_k) = \frac{1}{2\pi}\int_{-\pi}^{\pi} f(\lambda)\exp(-i(j-k)\lambda) ~\mathrm{d}\lambda.
  $$
  The determinants of such Toeplitz forms are well studied. In particular, Szeg\H o's limit theorem (cf.\@ \cite{toeplitz}) gives us
  $$
    \lim_{d \to \infty} \mathrm{det}(\Sigma)^\frac{1}{d} = \exp\left(\frac{1}{2\pi} \int _{-\pi}^{\pi} \log f(\lambda) ~\mathrm{d}\lambda\right),
  $$
  which is finite by Theorem 5.8.1 in \cite{brockwell_davis:time_series} combined with Theorem 3.3.2 in \cite{bradley}. Thus,
  $$
    \mathrm{det}\left(\Sigma^{-1}\right) \leq \exp\left(-\frac{1}{2\pi} \int _{-\pi}^{\pi} \log f(\lambda) ~\mathrm{d}\lambda\right)^d
  $$
  for sufficiently large $d$.
\end{proof}

\section{Simulations and Data Analysis}
\label{sec:chapter3_simulations_and_data_analysis}
\subsection{Simulations}
\label{subsec:sims}

\begin{figure}[t]
  \begin{center}
    \includegraphics[width=\textwidth]{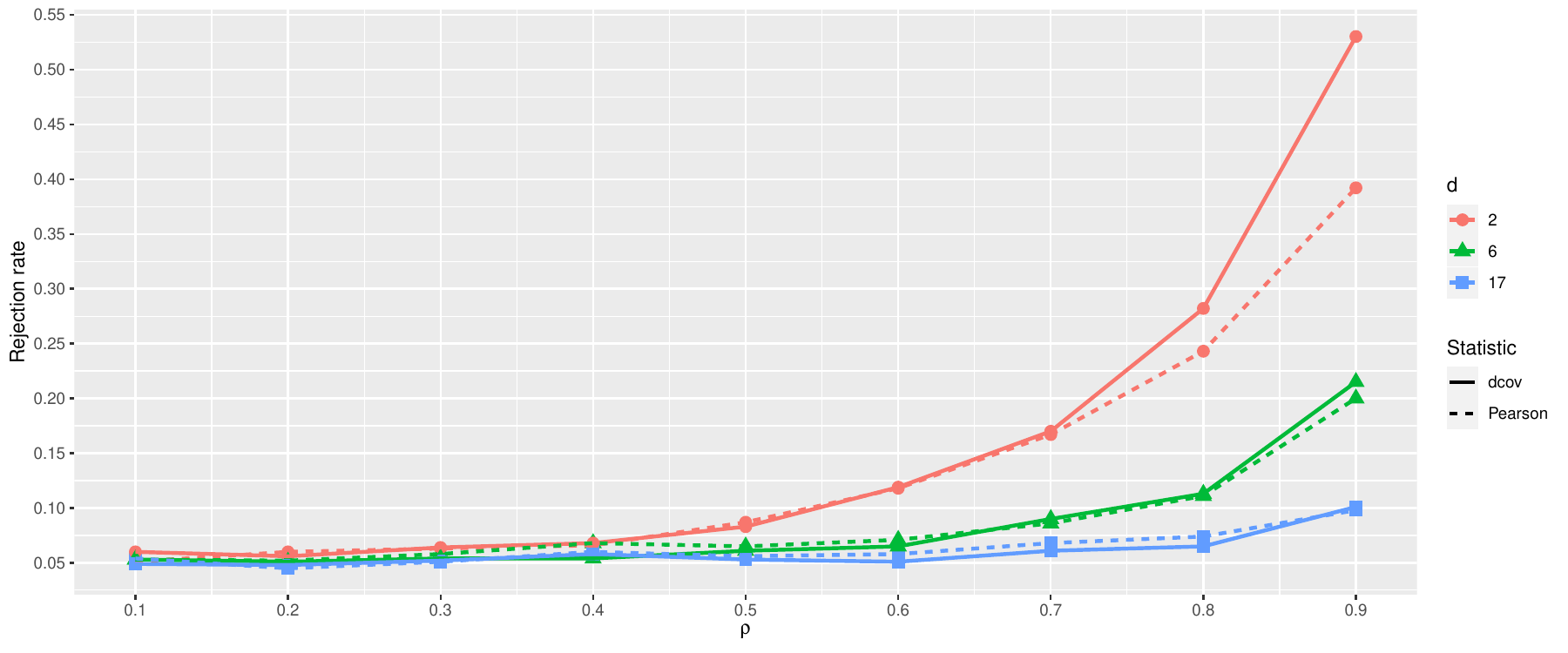}
    \caption{\small Rejection rates of the test based on Pearson's correlation and the distance covariance for two independent AR(1) processes with parameter $\rho$  and length $n = 300$. As block length we choose $d\in \{2, n^{1/3}, \sqrt{n}\} = \{2, 6, 17\}$.}
    \label{fig:ar1}
  \end{center}
\end{figure}

\begin{figure}[t]
  \begin{center}
    \includegraphics[width=\textwidth]{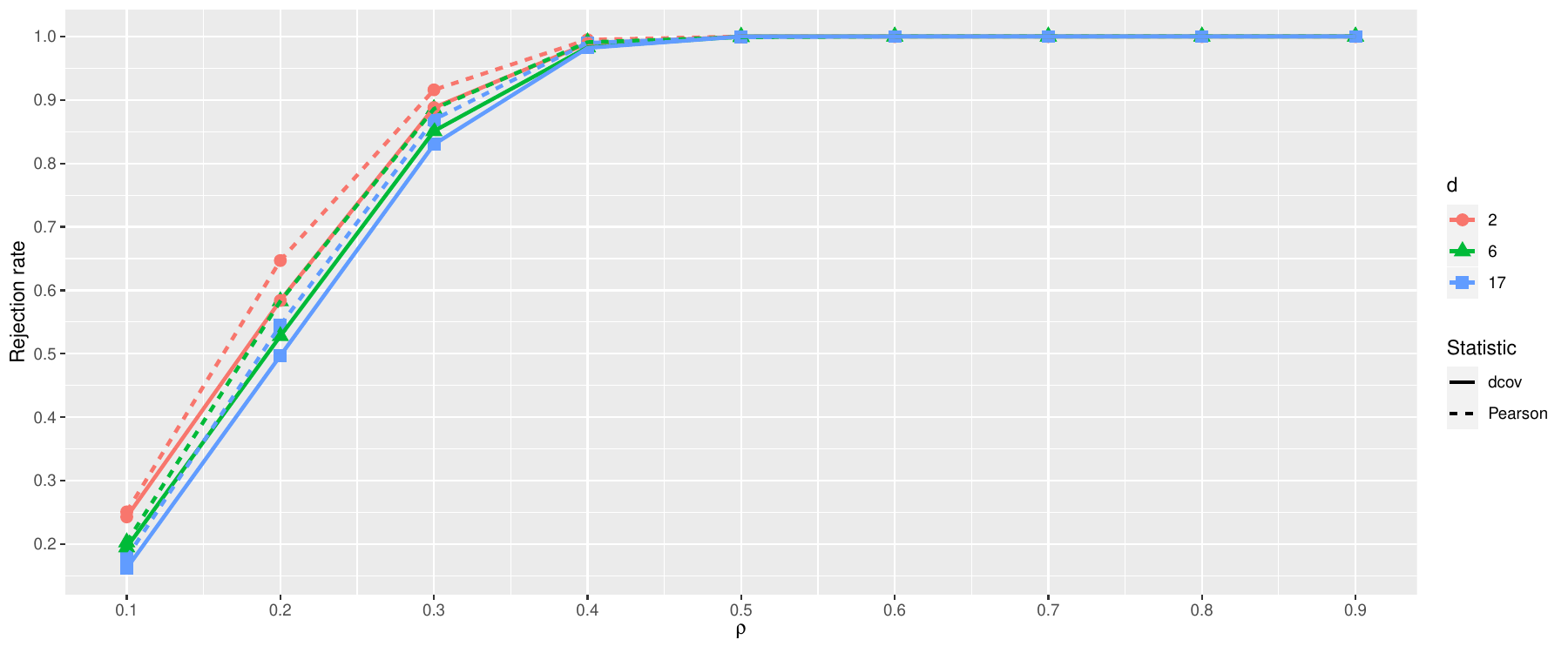}
    \caption{\small Rejection rates of the test based on Pearson's correlation and the distance covariance for  $\mathrm{VAR}(1)$ processes with parameters $a=0.5$, $b=0$, correlation matrix $\Gamma(0)$ as in Eq.\@ \eqref{eq:corr_matrix}, and length $n = 300$. As block length we choose $d\in \{2, n^{1/3}, \sqrt{n}\} = \{2, 6, 17\}$.}
    \label{fig:var1}
  \end{center}
\end{figure}

\begin{figure}[t]
  \begin{center}
    \includegraphics[width=\textwidth]{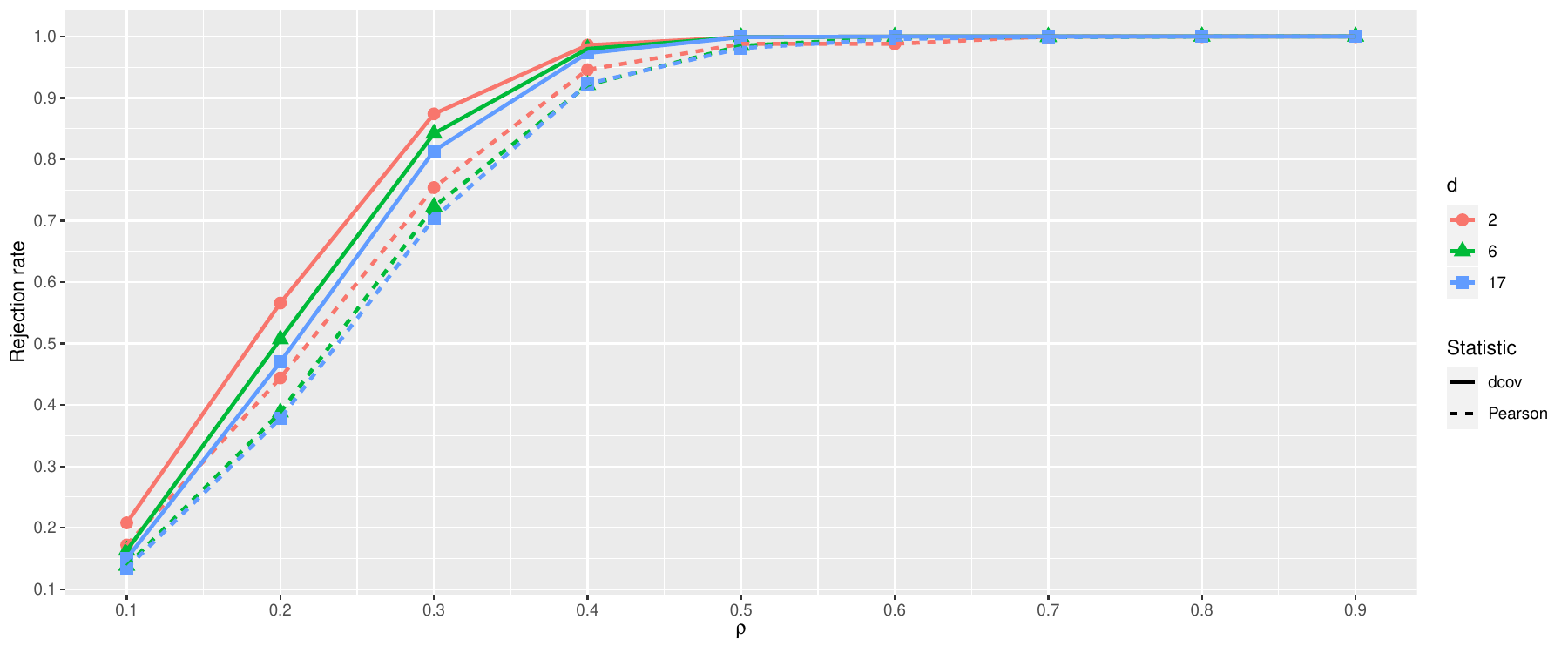}
    \caption{\small Rejection rates of the test based on Pearson's correlation and the distance covariance for  $\mathrm{VAR}(1)$ processes with parameters $a=0.5$, $b=0$, correlation matrix $\Gamma(0)$ as in Eq.\@ \eqref{eq:corr_matrix}, and length $n = 300$. We consider a quantile transformation that yields a $t$-distribution with $\nu = 3$ degrees of freedom. As block length we choose $d\in \{2, n^{1/3}, \sqrt{n}\} = \{2, 6, 17\}$. }
    \label{fig:t3}
  \end{center}
\end{figure}

\begin{figure}[t]
  \begin{center}
    \includegraphics[width=\textwidth]{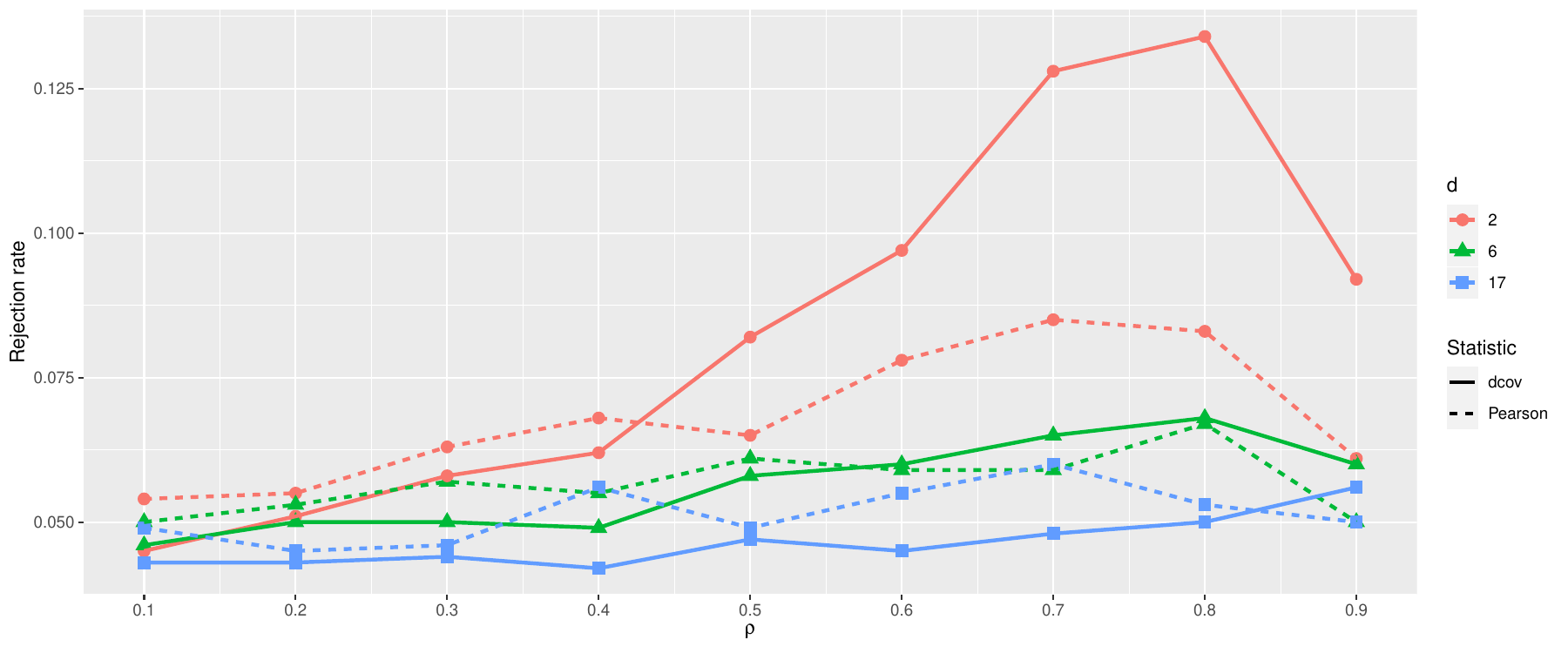}
    \caption{\small Rejection rates of the test based on Pearson's correlation and the distance covariance for two independent AR(1) processes with parameter $\rho$  and length $n = 300$. We consider a quantile transformation that yields a $t$-distribution with $\nu = 3$ degrees of freedom. As block length we choose $d\in \{2, n^{1/3}, \sqrt{n}\} = \{2,6,17\}$.}
    \label{fig:t_ind_1}
  \end{center}
\end{figure}

\begin{figure}[t]
  \begin{center}
    \includegraphics[width=\textwidth]{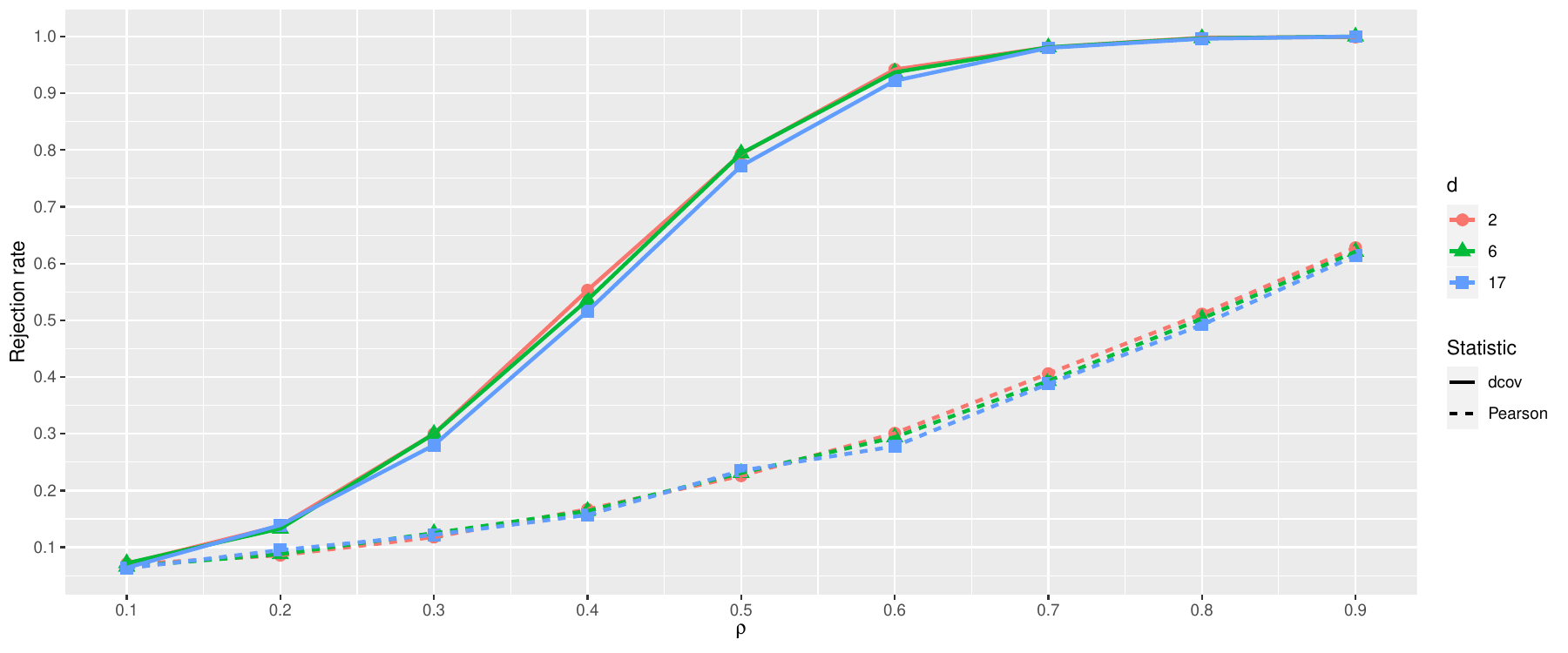}
    \caption{\small Rejection rates of the test based on Pearson's correlation and the distance covariance for stochastic volatility  time series of length $n = 300$, i.e.\@ for $Y_i = X_{i+1}$, $X_j=(\xi_j-\mathbb{E}\xi_j)\exp(\eta_j)$, $j=1, \ldots, n$ where $\eta_j$, $j=1, \ldots, n$, stems from an AR(1) process with parameter $\rho$ and $\xi_j$, $j=1, \ldots, n$, from i.i.d. Pareto distributed random variables with shape parameter $\alpha = 5$. As block length we choose $d\in \{2, n^{1/3}, \sqrt{n}\} = \{2, 6, 17\}$.}
    \label{fig:SV_2}.
  \end{center}
\end{figure}

\begin{figure}[t]
  \begin{center}
    \includegraphics[width=\textwidth]{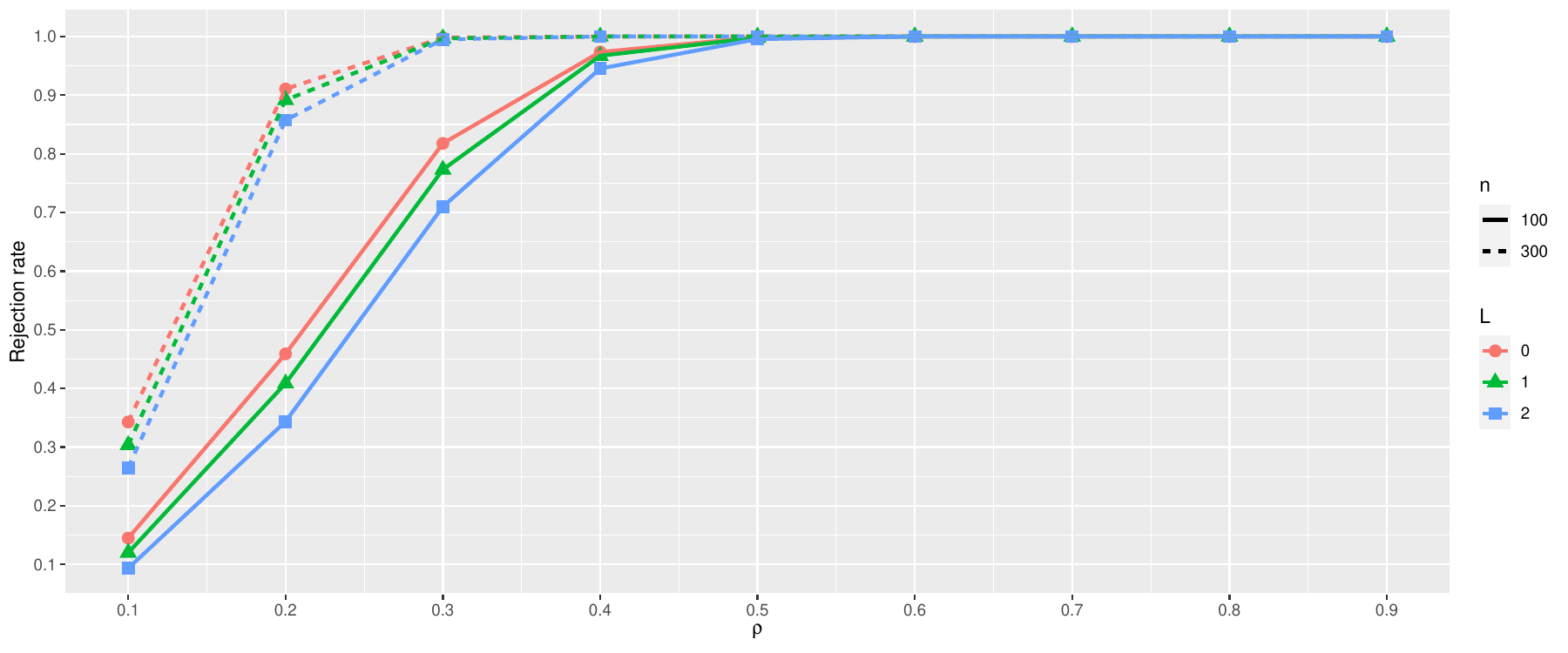}
    \caption{Rejection rates of the $L$-lag-test for data simulated according to Model \ref{model:shao} with parameter $l = 0$. In this model, $X_k$ and $Y_k$ are jointly normal with $\mathrm{Cov}(X_i, Y_j) = \rho$ if $|i-j| = l$ and $0$ otherwise. As block length we choose $d = \lfloor \sqrt{n}\rfloor $.}
    \label{fig:shao}.
  \end{center}
\end{figure}

\begin{figure}[t]
  \begin{center}
    \includegraphics[width=\textwidth]{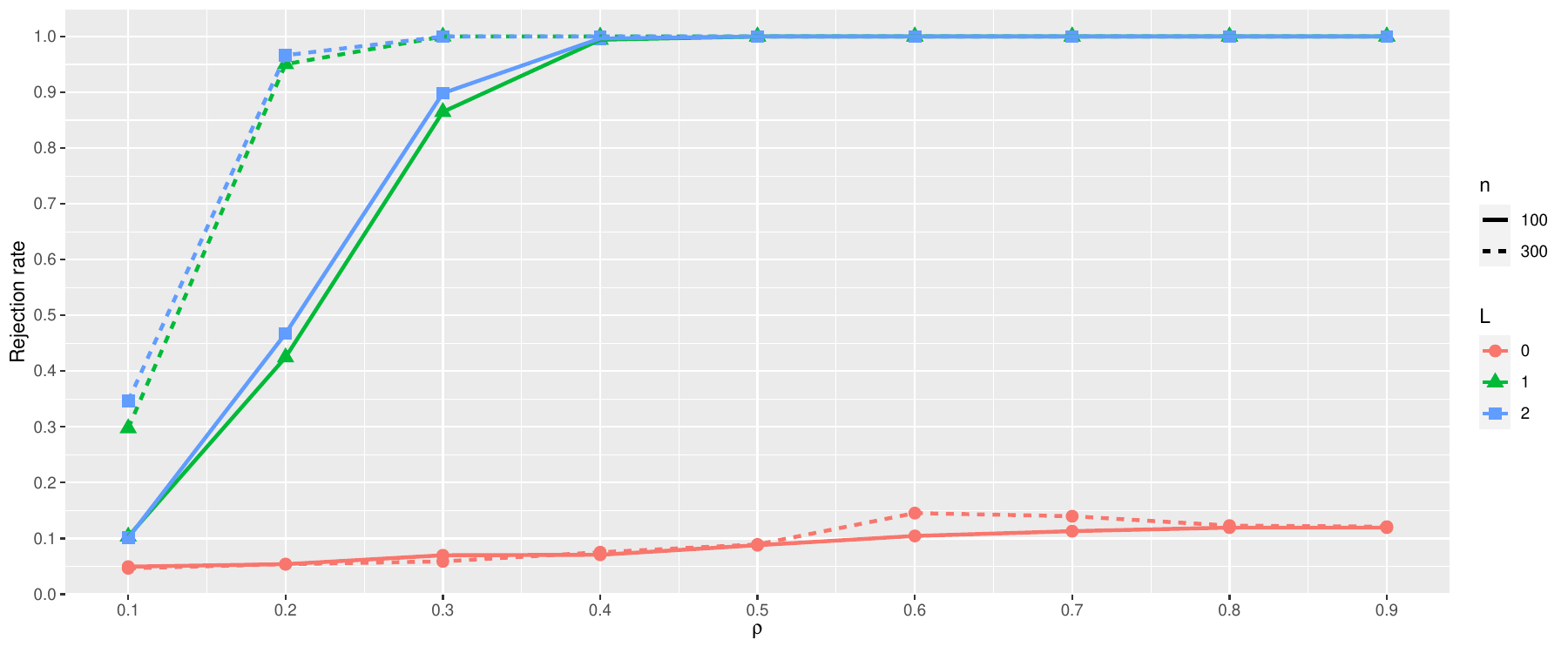}
    \caption{Rejection rates of the $L$-lag-test for data simulated according to Model \ref{model:shao} with parameter $l = 1$. In this model, $X_k$ and $Y_k$ are jointly normal with $\mathrm{Cov}(X_i, Y_j) = \rho$ if $|i-j| = l$ and $0$ otherwise. As block length we choose $d = \lfloor \sqrt{n}\rfloor $.}
    \label{fig:shao_lag2}.
  \end{center}
\end{figure}

In the following, the finite sample performance of the bootstrap procedure established in Section \ref{sec:chapter3_mains} is analysed in the context of testing for dependence between the components  $X_1, \ldots, X_n$ and  $Y_1, \ldots, Y_n$ of a bivariate time series $Z_1, \ldots, Z_n$ with $Z_k=(X_k, Y_k)^{T}$. For this, we perform Algorithm \ref{alg:llt} $k = 5000$ times (with $B = 500$ bootstrap repetitions) and determine the average number of rejections. In the case of $L = 0$, we can adapt Algorithm \ref{alg:llt} to use the Pearson covariance instead of the distance covariance by changing Lines \ref{algline:dcov1} and \ref{algline:dcov2} accordingly. This modified version of Algorithm \ref{alg:llt} serves as a comparison for our $0$-lag-test. Our $L$-lag-test for $L > 0$ has no obvious comparison since the Pearson covariance is not readily generalised to multivariate data.

We have used the following models to generate the data for our simulation study.

\begin{model}
  \label{model:ar_independent}
  We simulate $X_1, \ldots, X_n$ and $Y_1, \ldots, Y_n$ as two independent AR(1) processes, each with parameter $\rho$.
\end{model}
\begin{model}
  \label{model:var}
  We simulate $(X_1, Y_1), \ldots, (X_n, Y_n)$ as a VAR(1) process. A stationary, bivariate Gaussian process $W_i=(X_i,Y_i)$ is called $\mathrm{VAR}(1)$ process if there exists a matrix $A\in \mathbb{R}^{2\times 2}$, and independent, $\mathcal{N}(0,\Sigma)$-distributed Gaussian random variables $\xi_i=(\varepsilon_i,\eta_i)$  such that the equation
  \begin{equation}
    W_{i}=AW_{i-1} +\xi_{i}
    \label{eq:biv-ar1}
  \end{equation}
  is satisfied. Such a process exists if and only if all eigenvalues of $A$ are strictly less than $1$ in absolute value.  In the special example when the $\mathrm{VAR}(1)$ matrix is given by
  \begin{equation}
    A = \begin{pmatrix}
      a & b \\
      b & a
    \end{pmatrix},
    \label{eq:ar1-matrix}
  \end{equation}
  for two parameters $a$ and $b$, the eigenvalues of $A$ are given by $\lambda_1= a+b$ and $\lambda_2=a-b$. Let $\Gamma(h)$ denote the autocovariance matrix of $W_i$ at lag $h$. Then it holds that
  \begin{align*}
    vec \ \Gamma(0)=\left(I-A\otimes A\right)^{-1}vec \ \Sigma,
  \end{align*}
  where $\otimes$ denotes the Kronecker product or direct product of two matrices and the $vec$ operator transforms a matrix $A$ into an $(mn \times 1)$ vector by stacking the columns; cf.\@ \cite{lutkepohl:2005}. We choose $b=0$, $a = 1/2$, and
  \begin{align*}
    \Sigma=\begin{pmatrix}
             1    & \rho \\
             \rho & 1
           \end{pmatrix}.
  \end{align*}
  For this choice, the autocovariance matrix of $(X_i, Y_i)$ at lag $0$ is given by
  \begin{align}
    \Gamma(0)= \begin{pmatrix}
                 (1-0.5^2)^{-1}     & (1-0.5^2)^{-1}\rho \\
                 (1-0.5^2)^{-1}\rho & (1-0.5^2)^{-1}
               \end{pmatrix} = \frac{4}{3} \Sigma.
    \label{eq:corr_matrix}
  \end{align}
\end{model}

\begin{model}
  \label{model:t-verteilung}
  We simulate $(\tilde{X}_1, \tilde{Y}_1), \ldots, (\tilde{X}_n, \tilde{Y}_n)$ according to Model \ref{model:var} and set $X_i=F_{\nu}^{-1}(\Phi(\sqrt{3/4}X_i))$ and $Y_i=F_{\nu}^{-1}(\Phi(\sqrt{3/4}Y_i))$, where $F_{\nu}^{-1}$ denotes the inverse of the distribution function $F_{\nu}$ of a $t$-distribution with $\nu$ degrees of freedom. The resulting time series $(X_k)_{k \in \mathbb{N}}$ and $(Y_k)_{k \in \mathbb{N}}$ are dependent and have $t_\nu$-distributed marginals.
\end{model}

\begin{model}
  \label{model:t-verteilung_indep}
  We simulate $(\tilde{X}_1, \tilde{Y}_1), \ldots, (\tilde{X}_n, \tilde{Y}_n)$ according to Model \ref{model:ar_independent} and then proceed as in Model \ref{model:t-verteilung}, resulting in two independent time series with $t_\nu$-distributed marginals.
\end{model}

\begin{model}
  \label{model:stochastic_volatility}
  We simulate
  \begin{equation}
    \label{eq:vol_process}
    X_i = (\xi_i-\mathbb{E}\xi_i)\exp(\eta_i), \quad Y_i=X_{i+1},
  \end{equation}
  where $\eta_i$ stems from an $\mathrm{AR}(1)$ process with parameter $\rho$, and $\xi_i$ from independent, identically Pareto distributed random variables with shape parameter $\alpha$.

  This process $(X_k)_{k \in \mathbb{N}}$ is an example of a so-called stochastic volatility time series. These are typically defined via
  \begin{align}\label{eq: LMSV}
    X_j=\xi_j\varepsilon_j \ \ \text{with} \ \ \varepsilon_j=\exp\left(\eta_j\right),
  \end{align}
  where $(\xi_j)_{j \in \mathbb{N}}$ is a centred i.i.d. process, and $(\eta_j)_{j \in \mathbb{N}}$, is a  Gaussian process, independent of $(\xi_j)_{j \in \mathbb{N}}$.

  Stochastic volatility models are considered, for example, in \cite{taylor:1986}. Initially, this  model had been introduced by \cite{breidt:crato:delima:1998} and, independently, by \cite{harvey:2002}. Among other things, stochastic volatility time series can be used to model financial time series such as log-returns of stock market indices. (Examples for such time series are considered in Section \ref{subsec:data}.) A characteristic of stochastic volatility time series is that their autocorrelations equal $0$, while there is, nevertheless, (non-linear) dependence; cf.\@ \cite{cont:2005}.

  Another characteristic of financial time series is  the occurrence of heavy tails. In particular, probability distributions of log-returns of stock market indices often exhibit tails which are heavier than those of a normal distribution. This is the reason for our use of the Pareto distribution in this model.
\end{model}

\begin{model}
  \label{model:shao}
  We simulate a sequence $Z_i$, $i=1, \ldots, 2n$, of independent normally distributed observations.
  We define the covariance matrix
  \begin{align*}
    \Sigma=\begin{pmatrix}
             I_n & A   \\
             A   & I_n \\
           \end{pmatrix},
  \end{align*}
  where $I_n$ is the $n\times n$-identity matrix and $A=(a_{ij})_{1\leq i, j \leq n}$ is an $n\times n$-matrix with $a_{ij}=\rho$ if $|i-j|=l$ and $0$ otherwise. We generate $X_1, \ldots, X_n$ and $Y_1, \ldots, Y_n$ as follows:
  \begin{align*}
    (X_1, \ldots, X_n, Y_1, \ldots, Y_n)^{T}=\Sigma^{\frac{1}{2}}\mathbf{Z}, \ \text{where} \ \mathbf{Z}:=(Z_1, \ldots, Z_{2n}).
  \end{align*}
  As a result,
  \begin{align*}
    \mathrm{Cov}(X_i, Y_j)=\begin{cases}
                             \rho & \text{if $|i-j|=l$} \\
                             0    & \text{else}
                           \end{cases}
  \end{align*}
  This model has also been considered in \cite{shao:2009}.
\end{model}

To evaluate the test's performances under the hypothesis of no dependence between the two component series, we simulate $X_1, \ldots, X_n$ and $Y_1, \ldots, Y_n$ according to Model \ref{model:ar_independent}. Corresponding simulation results can be found in \mbox{Figure \ref{fig:ar1}}. Based on these, we observe that for both dependence measures an increase of dependence within each time series results in an increase of (false) rejections. Moreover, the simulation results indicate that an increase of the block length leads to a decrease of rejections. Generally speaking, the rejection rates tend to be lower for the test based on Pearson's correlation, which is expected as the observations follow a normal distribution. \mbox{Figure \ref{fig:ar1}} also illustrates the need for a growing block length, as the fixed choice $d = 2$ results in an oversized test even for moderate values of $\rho$ -- a problem which does not occur for, say, $d = \sqrt{n}$.

To evaluate the tests' performances under the alternative, we simulated data according to Models \ref{model:var} and \ref{model:t-verteilung}. The corresponding simulation results are given in Figures \ref{fig:var1} and \ref{fig:t3}. In both of these cases, the empirical power of the tests increases as the dependence within each time series increases. Additionally, the simulation results indicate consistency of the tests as an increasing sample size goes along with an increase of empirical power. In both cases, a smaller block length yields higher empirical power. For normally distributed margins (the first example) this effect is more pronounced than  for $t$-distributed margins (the second example). For normally distributed margins, the test based on Pearson's correlation yields a slightly higher power than the test based on distance covariance. Again, this may be attributed to the fact that Pearson's correlation is designed for the identification of dependence between Gaussian observations. For $t$-distributed margins the test based on distance covariance yields the higher power.

Figure \ref{fig:t3} should be compared with Figure \ref{fig:t_ind_1}. Here we have generated the data according to Model \ref{model:t-verteilung_indep}, resulting in two independent processes whose marginals are equally distributed as those in Figure \ref{fig:t3}. For block lengths growing with $n$, the rejection rates are reasonably close to the tests' level of $\alpha = 5\%$, i.e.\@ the better performance of the tests based on the distance covariance shown in Figure \mbox{\ref{fig:t3}} does not compromise the tests' behaviour under the hypothesis. In this sense, we consider the tests based on the distance covariance superior to those based on Pearson's correlation. Simulations for $\nu = 5$ are provided in Appendix \ref{app:sims_hypothesentest}.

Simulation results for data generated according to Model \ref{model:stochastic_volatility} can be found in Figure \ref{fig:SV_2}. Again, the empirical power of the test increases as the dependence within each time series increases. Additionally, the simulation results indicate consistency of the tests as an increasing sample size goes along with an increase of empirical power. With respect to the choice of block length there does not seem to be any pronounced effect on the tests' performances. Due to the fact that $\mathrm{Cov}(X_1, X_{j+1})=0$ for every fixed point in time $j$, $X_j$ and $Y_j$ are uncorrelated, but dependent. It is therefore not surprising that a test based on  distance covariance clearly outperforms a test based on Pearson's correlation when testing for dependence between the two components of an accordingly generated time series. Results for $\alpha = 3$ are provided in Appendix \ref{app:sims_hypothesentest}.

Figures \ref{fig:shao} and \ref{fig:shao_lag2} demonstrate the performance of our $L$-lag-test. For $n = 300$, the empirical rejection rates are essentially $1$ for most values of $\rho$. In Figure \ref{fig:shao}, corresponding to Model \ref{model:shao} with parameter $l = 0$, the power drops slightly if we choose $L$ too large compared to the real lag $l$ (which is unknown in practice). Although this effect is not replicated with $l = 1$ (see Figure \ref{fig:shao_lag2}), it may suggest that some caution is warranted when determining the maximum detectable lag $L$. However, the impact on the rejection rate gets smaller as $n$ grows larger, and, in our example, is not very significant even for $n = 300$. Figure \ref{fig:shao_lag2} also illustrates the need for vectorising the observation in the presence of lagged cross-correlations, as the rejection rate of our test is small if $L < l$.

\subsection{Data Analysis}
\label{subsec:data}

\begin{figure}[h]
  \begin{center}
    \includegraphics[width=\textwidth]{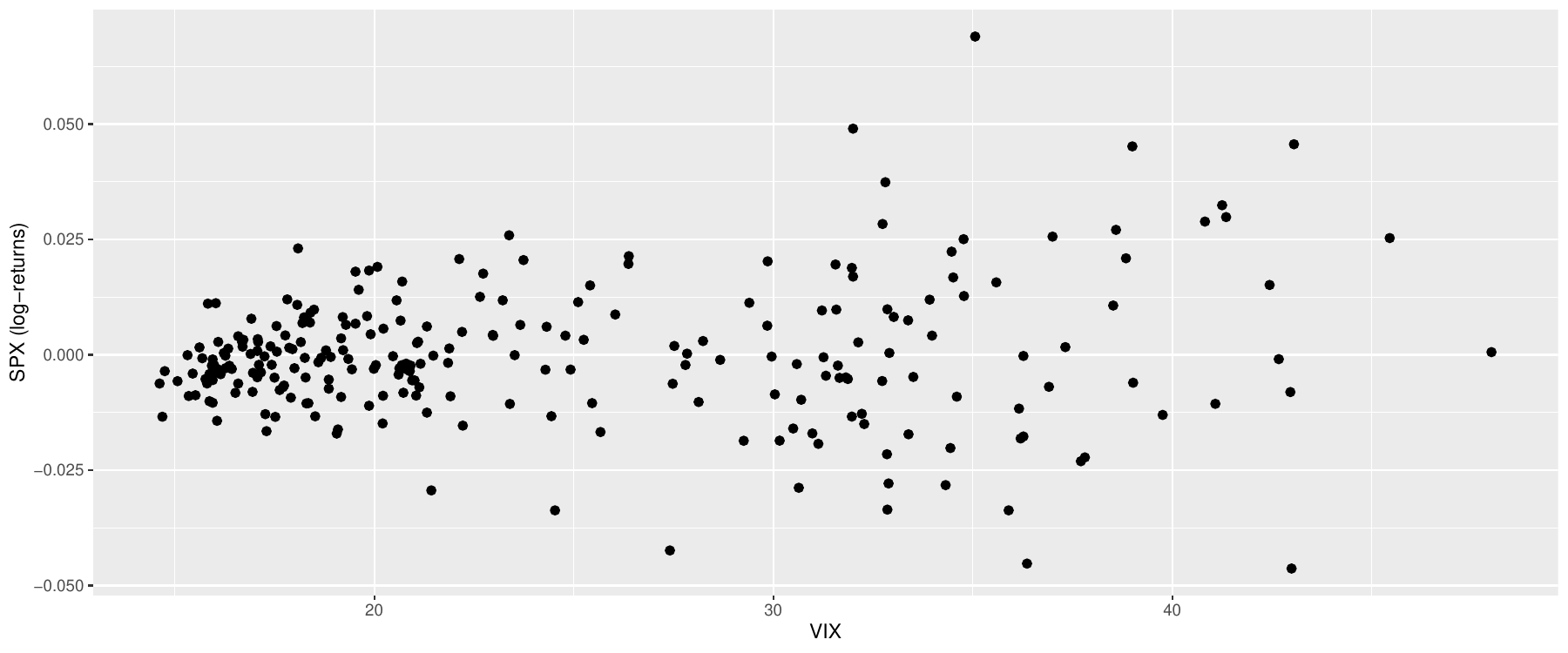}
    \caption{Scatterplot of the VIX and the log-returns of the SPX from January 2011 to January 2012.}
    \label{fig:scatterplot}
  \end{center}
\end{figure}

As an example for data analysis, we consider Standard \& Poor's 500  (S\&P 500 or SPX, in short), a stock market index tracking the performance of 500 of the largest companies listed on stock exchanges in the United States, and the corresponding Chicago Board Options Exchange (CBOE) Volatility Index (VIX, in short), a measure of the stock market's expectation of volatility based on S\&P 500 index options introduced in \cite{whaley:1993}.

The  analysis of the two indices in this section is based on open-source historical data of the closing indices recorded daily over a time period of one year from 26 January 2011 to 23 January 2012; cf. \cite{marketwatchSPX} and \cite{marketwatchVIX}.
As the indices are not recorded every single day and as the recorded days for VIX and SPX do not coincide, we deleted the recordings for which there are no values for both SPX \textit{and} VIX, resulting in time series of 252 observations each.
Since in general stock prices do not follow a stationary process, whereas their log-returns  display features of  stationarity,  we analyse the S\&P 500 index' log-returns. Formally, the log-returns are defined by
\begin{align*}
  L_t:= \log R_t, \ R_t:=\frac{P_t}{P_{t-1}},
\end{align*}
where $P_t$ denotes the value of the index on day $t$.
The VIX, however,  is generally assumed to be stationary, cf. \cite{avellaneda2019}, such that we consider its actual values, instead of its log-returns,  denoted by $V_t$, $t=1, \ldots, 252$. In order to remove the so-called `look-ahead bias', we consider $(V_t, L_{t+1})$, $t=1, \ldots, 250$ for our analysis.

In established literature, the SPX and VIX are typically found to be negatively correlated: if the SPX increases, the VIX decreases and the other way around; see, e.g. \cite{madan:2011} and \cite{schnurr:2014}. As stated by Robert Whaley, who introduced the VIX in 1993 (see \cite{whaley:1993}),  this relation between the rates of change in the VIX and the SPX is asymmetric, i.e. negative returns correlate with  a sharp rise of volatility while positive returns go along with a smaller drop in volatility; see \cite{whaley:2009}. In particular, empirical evidence disproves  linear dependence between the indices, which could be modelled by correlation; see, e.g. \cite{russon:2017}.

An application of the bootstrap procedure (described in Section \ref{sec:chapter3_mains}) with respect to both Pearson's correlation coefficient and the  distance correlation, based on 500 Bootstrap samples and a block length $d = 2$, yields quantiles as given in Table \ref{tab:datenquantile}.
Since the value of  Pearson's correlation coefficient of VIX and   SPX' log-returns from  January 2011 to  January 2012 is given by $0.1135$, the hypothesis that the two time series are independent is not rejected at a significance level of 5\%. This result confirms that the dependence of the two time series may be non-linear and as such cannot be easily  detected by a statistical test based on Pearson's correlation coefficient.

On the other hand, since the value of the distance correlation of VIX and SPX' log-returns  from January 2011 to  January 2012 is given by $0.3184$, the hypothesis that the two time series are independent is clearly rejected at a significance level of 1\%, as per the quantiles given in Table \ref{tab:datenquantile}. In this regard, our data analysis can be considered as proof of the time series' dependence.

Additionally, we applied the hypothesis test to two time series that we expect to be independent. For this, we chose the log-returns based on the SPX recorded daily from 26 January 2011 to 23 January 2012 and the same index recorded after a lag of one year, i.e.\@ recorded daily from 28 January 2013 to 23 January 2014.

For these data, the quantiles as determined by the bootstrap procedure with respect to both Pearson's correlation coefficient and the  distance correlation, based on 500 bootstrap samples and block length $d=2$, are given in Table \ref{tab:datenquantile}.

Since the value of the  distance correlation  of the indices' log-returns is given by $0.1222$, the hypothesis that the two time series are independent is not rejected at a significance level of 10\%. The value of  Pearson's correlation coefficient of the indices' log-returns  is given by $0.0869$, and so the hypothesis that the two time series are independent is not rejected at a significance level of 10\%.

\begin{table}[htb]
  \centering
  \begin{tabular}{cccccccc}
                          & \multicolumn{3}{c}{SPX 11/12 \& VIX 11/12} &        & \multicolumn{3}{c}{SPX 11/12 \& SPX 13/14}                               \\
    \cline{2-8}
                          &                                            &        &                                            &  &        &                 \\
                          & 90 \%                                      & 95\%   & 99\%                                       &  & 90 \%  & 95\%   & 99\%   \\
    \hline
    Pearson's correlation & 0.1014                                     & 0.1171 & 0.1626                                     &  & 0.1125 & 0.1329 & 0.1569 \\
    Distance correlation  & 0.1392                                     & 0.1494 & 0.1870                                     &  & 0.1421 & 0.1565 & 0.1798 \\
    \hline
  \end{tabular}
  \caption{\small Bootstrap quantiles for Pearson's covariance and the distance correlation. As a data example for the dependent case, we have used the VIX and the log-returns of the SPX for the same year (24 January 2011 to 24 January 2012; left side). For the independent case, we have used the log-returns of the SPX for different years (26 January 2011 to 24 January 2012 and 28 January 2013 to 24 January 2014; right side). Each quantile was calculated based on $500$ bootstrap samples with block length $d = 2$.}
  \label{tab:datenquantile}
\end{table}

\section{Outline of Proofs}
\label{sec:chapter3_proof_outlines}
\subsection{Outline for Theorem \ref{thm:hyp_bootstrap}}
\label{subsec:proofoutline_bs}
Consider that any two distinct blocks $B_i^*$ and $B_j^*$ of our bootstrap sequence correspond to concatenations $(B_{X,k_1}, B_{Y,l_1})$ and $(B_{X,k_2}, B_{Y,l_2})$ of blocks from our sample sequences $X_1, \ldots, X_n$ and $Y_1, \ldots, Y_n$. Roughly speaking, for large $n$, these four sample blocks will be far apart from each other with high probability, and because we assume $(Z_k)_{k \in \mathbb{N}}$ to be absolutely regular, this implies that they will be `almost' independent. The sequence of bootstrap samples $Z_1^*, \ldots, Z_n^*$ can therefore be expected to behave similarly to a collection of random variables $\tilde{Z}_{1,n}, \ldots, \tilde{Z}_{n,n}$ consisting of $N$ i.i.d. random vectors of length $d$ with marginal distribution
$$
  \mathcal{L}(X_1, \ldots, X_d) \otimes \mathcal{L}(Y_1, \ldots, Y_d) \overset{H_0}{=} \mathcal{L}(Z_1, \ldots, Z_d).
$$

Denote by $\tilde{V}$ the empirical distance covariance of $\tilde{Z}_{1,n}, \ldots, \tilde{Z}_{n,n}$, then a simple application of the triangle inequality yields
$$
  d_1(\zeta, nV^*) \leq d_1(\zeta, n\tilde{V}) + d_1(n\tilde{V}, nV^*).
$$
We can then employ the following theorem which we prove in Appendix \ref{app:proofs}.

\begin{theorem}
  \label{thm:asymptotik}
  If $X$ and $Y$ are independent and have finite $(2+\varepsilon)$-moments for some $\varepsilon > 0$, and the sequence $(Z_k)_{k \in \mathbb{N}}$ is absolutely regular with mixing coefficients $\beta(n) = \mathcal{O}(n^{-r})$ for some growth rate $r > 6(1+2\varepsilon^{-1})$, it holds that
  $$
    d_1\left(n\,\mathrm{dcov}(\theta_n), \zeta\right) \xrightarrow[n \to \infty]{} 0,
  $$
  where $\zeta := \sum_{k=1}^\infty \lambda_k \zeta_k^2$ with $(\zeta_k)_{k \in \mathbb{N}}$ being a centred Gaussian process whose covariance function is determined by the dependence structure of the sequence $(Z_k)_{k \in \mathbb{N}}$, and the parameters $\lambda_k > 0$ are determined by the underlying distribution $\theta$. If furthermore $d = d(n) \to \infty$ for $n \to \infty$ and $d^3 = o\left(n\right)$, then it also holds that
  $$
    d_1\left(n\tilde{V}, \zeta\right) \xrightarrow[n \to \infty]{} 0.
  $$
\end{theorem}

It remains to control the distance $d_1(n\tilde{V}, nV^*)$. By some standard theory for $V$-statistics, we can instead investigate the simpler object
$$
  d_1\left(n\tilde{V}_n^{(2)}(h; \mu \otimes \nu), nV_n^{*(2)}(h;\mu_n \otimes \nu_n)\right),
$$
where $V_n^{(c)}$ denotes the $c$-th term in the Hoeffding decomposition of $V$; see \cite{vandervaart} Section 12.3.

To specify our intuitive reasoning above, note that $\text{dcov}(\theta_n)$ can be expressed as a $V$-statistic with kernel
\begin{equation}
  \label{eq:definition_H}
  H(B_1, \ldots, B_6) := \frac{1}{d^6} \sum_{1 \leq i_1, \ldots, i_6 \leq d} h(B_{1,i_1}, \ldots, B_{6,i_6}),
\end{equation}
where $B_i := (Z_{(i-1)d + 1}, \ldots, Z_{id })$, $1 \leq i \leq N := n/d$, denotes the blocks of length $d = d(n)$ and $B_{j, i_j}$ denotes the $i_j$-th coordinate of $B_j$. More precisely, this means that
\begin{align}
  \label{eq:emp_dcov_H}
  \text{dcov}(\theta_n)=V_{H}(B_1, \ldots, B_N) ={\frac {1}{N^{6}}}\sum_{1\leq i_1, \ldots, i_6 \leq N}H(B_{i_{1}},\ldots ,B_{i_{6}}).
\end{align}

The bootstrapped empirical distance covariance $V^*$ and the statistic $\tilde{V}$ allow for analogous representations, e.g.\@
\begin{equation}
  \label{eq:definition_V*}
  V^* = V_H\left(B_1^*, \ldots, B_N^*\right) = \frac {1}{N^{6}} \sum_{1 \leq i_1, \ldots, i_6 \leq N}H(B^*_{i_{1}}, \ldots ,B^*_{i_{6}}).
\end{equation}

Furthermore, we can establish a link between the Hoeffding decompositions of the $V$-statistic with kernel $h$ and that of the blockwise `$V$-statistic' with kernel $H$. For this, let
\begin{align*}
  F^{(X)} := F_d^{(X)} & := \mathcal{L}(X_1, \ldots, X_d), \\
  F^{(Y)} := F_d^{(X)} & := \mathcal{L}(Y_1, \ldots, Y_d),
\end{align*}
and let $F_N^{(X)}$ and $F_N^{(Y)}$ be their empirical versions, i.e.\@ the empirical measures of $B_{X,1}, \ldots, B_{X,N}$ and $B_{Y,1}, \ldots, B_{Y,N}$, respectively. We show in Appendix \ref{app:proofs} that
\begin{align}
  \label{eq:vrangzwei}
  \begin{split}
    \tilde{V}_n^{(2)}(h, \mu \otimes \nu) &= \tilde{V}_N^{(2)}\left(H, F^{(X)} \otimes F^{(Y)}\right), \\
    V_n^{*(2)}(h, \mu_n \otimes \nu_n) &= V_N^{*(2)}\left(H, F_N^{(X)} \otimes F_N^{(Y)}\right).
  \end{split}
\end{align}
Conditionally on $Z_1, \ldots, Z_n$, both objects on the right-hand side are $V$-statistics of i.i.d. sample data. To control their Wasserstein distance, we will use the following proposition.

\begin{proposition}
  \label{prop:abstandvstatistik}
  Let $U_i \overset{i.i.d.}{\sim} \eta$, $V_i \overset{i.i.d.}{\sim} \xi$. Then there exists a universal constant $C$ such that
  \begin{align*}
     & d_2 \left(\mathcal{L}_\eta\left(\frac{1}{n} \sum_{1\leq i, j\leq n} H_2(U_i,U_j;\eta) \right), \mathcal{L}_\xi\left( \frac{1}{n} \sum_{1\leq i, j\leq n} H_2(V_i,V_j;\xi) \right)\right) \\
     & \qquad \leq   C \cdot  d_4(\eta,\xi) \left(\|U_1\|_{L_4} + \|V_1\|_{L_4}\right).
  \end{align*}
\end{proposition}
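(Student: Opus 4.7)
The plan is to work with an optimal $L_4$-coupling between $\eta$ and $\xi$ and to bound $d_2$ by the $L_2$-norm of the pointwise difference. Abbreviating
$$
A := \tfrac{1}{n}\sum_{i,j=1}^{n} H_2(U_i,U_j;\eta), \qquad B := \tfrac{1}{n}\sum_{i,j=1}^{n} H_2(V_i,V_j;\xi),
$$
we fix a coupling $\pi$ of $\eta$ and $\xi$ attaining $\int \|u-v\|^4 \, d\pi(u,v) = d_4^4(\eta,\xi)$ and draw $(U_i,V_i)_{i=1}^{n}$ iid from $\pi$. The marginals of this joint construction match those required in the statement, so this induces a valid coupling of $\mathcal{L}_\eta(A)$ and $\mathcal{L}_\xi(B)$ under which $d_2^2(\mathcal{L}_\eta(A),\mathcal{L}_\xi(B)) \leq \mathbb{E}\bigl[(A-B)^2\bigr]$.

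We split each double sum into a diagonal part ($i=j$) and an off-diagonal part ($i \neq j$) and treat them separately. For the off-diagonal piece, the key tool is Hoeffding orthogonality: $H_2(\cdot,\cdot;\eta)$ is degenerate under $\eta$ and $H_2(\cdot,\cdot;\xi)$ is degenerate under $\xi$, while the pairs $(U_i,V_i)$ are iid across $i$. A short case analysis on the overlap of the index sets $\{i,j\}$ and $\{k,l\}$ then shows that the same-side expectations $\mathbb{E}[H_2(U_i,U_j;\eta)H_2(U_k,U_l;\eta)]$ and $\mathbb{E}[H_2(V_i,V_j;\xi)H_2(V_k,V_l;\xi)]$, as well as the cross expectations $\mathbb{E}[H_2(U_i,U_j;\eta)H_2(V_k,V_l;\xi)]$, all vanish unless $\{i,j\}=\{k,l\}$. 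This collapses the $L_2$-difference of the off-diagonal parts to
$$
\mathbb{E}\bigl[(A_{\text{off}} - B_{\text{off}})^2\bigr] \;=\; \tfrac{2(n-1)}{n}\,\mathbb{E}\!\left[\bigl(H_2(U_1,U_2;\eta) - H_2(V_1,V_2;\xi)\bigr)^2\right].
$$

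It remains to bound the surviving pairwise term and the diagonal contribution by $C\,d_4^2(\eta,\xi)$. We expand $H_2(z,z';\mu) = g(z,z') - g_1(z;\mu) - g_1(z';\mu) + g_0(\mu)$, where $g$ denotes the underlying kernel whose second-order Hoeffding projection is $H_2$, and bound the four resulting differences via a Lipschitz-type estimate $|g(u,u')-g(v,v')| \leq L(\|u-v\| + \|u'-v'\|)$, supplemented by Kantorovich duality to control $|g_1(u;\eta)-g_1(v;\xi)|$ by $L(\|u-v\|+d_1(\eta,\xi))$ and $|g_0(\eta)-g_0(\xi)|$ by $2L\,d_1(\eta,\xi)$. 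Squaring, taking expectation under the coupling, and using $\mathbb{E}\|U_1-V_1\|^2 \leq d_4^2(\eta,\xi)$ together with $d_1 \leq d_4$ yields the desired $O(d_4^2)$ bound. The diagonal part $\tfrac{1}{n}\sum_i [H_2(U_i,U_i;\eta) - H_2(V_i,V_i;\xi)]$ is treated similarly: as a normalised sum of iid random variables its variance scales like $d_4^2/n$, while its mean is of order $d_1(\eta,\xi)$, so its squared expectation is also $O(d_4^2)$. Combining the two pieces and taking square roots gives $d_2(\mathcal{L}_\eta(A),\mathcal{L}_\xi(B)) \leq C\,d_4(\eta,\xi)$.

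The main difficulty we anticipate is the careful bookkeeping in the Hoeffding-orthogonality step: even though the pairs $(U_i,V_i)$ are coupled, one must use the degeneracy of each $H_2$ under the \emph{correct} reference measure, and verifying that all cross expectations vanish outside of the $\{i,j\}=\{k,l\}$ case requires conditioning and a clean split on the size of the overlap $|\{i,j\}\cap\{k,l\}|$. Once this orthogonality is in place, the remaining Lipschitz-type estimate is routine, provided the smoothness assumption on the underlying kernel $g$ is stated precisely enough that the constant $C$ depends only on $g$ and not on $n$ or the measures $\eta,\xi$.
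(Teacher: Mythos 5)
Your overall architecture matches the paper's: couple $\eta$ and $\xi$ by a (near-)optimal transport plan, dominate $d_2$ by the $L_2$-norm of the coupled difference, split the double sum into diagonal and off-diagonal parts, and use degeneracy of $H_2(\cdot,\cdot;\eta)$ under $\eta$ and of $H_2(\cdot,\cdot;\xi)$ under $\xi$ to kill all mixed products with $\{i,j\}\neq\{k,l\}$, so that only a single pairwise term and the diagonal survive. This is exactly the content of the paper's Theorem \ref{thm:heroldstheorem}, and your orthogonality bookkeeping (conditioning on the coupled pairs, which is legitimate because each factor is integrated against its own correct reference measure) is sound.

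The genuine gap is in the final step, where you bound the surviving term by assuming a global Lipschitz estimate $|g(u,u')-g(v,v')|\leq L(\|u-v\|+\|u'-v'\|)$ for ``the underlying kernel''. The kernel to which the proposition is applied is the blockwise kernel $H$ built from $h'(z_1,\ldots,z_6)=f(x_1,x_2,x_3,x_4)f(y_1,y_2,y_5,y_6)$, a \emph{product} of two unbounded Lipschitz factors; such a product is not globally Lipschitz, so your estimate fails for the actual kernel. This is precisely the non-routine part of the paper's argument (Lemma \ref{lem:lipschitz}): one writes the difference of products as $f(U\cdots)\bigl(f(V\cdots)-f(V'\cdots)\bigr)+f(V'\cdots)\bigl(f(U\cdots)-f(U'\cdots)\bigr)$ and applies H\"older with exponents $r,s$ satisfying $\frac{1}{r}+\frac{1}{s}=\frac{1}{2}$, which is exactly why the bound is in $d_4(\eta,\xi)$ with a constant containing the $L_4$-moments $\|U_1\|_{L_4}+\|V_1\|_{L_4}$ (so $C$ does depend on $\eta,\xi$ through their moments, contrary to your closing remark). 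A telltale sign that your assumption is too strong: if $g$ really were globally Lipschitz, your argument would deliver a bound by $C\,d_2(\eta,\xi)$ (even $d_1$ in places), which cannot hold for a kernel of quadratic growth. A secondary, smaller issue: since $H$ has order $6$, the second-order Hoeffding projection involves integrating out four arguments against $\eta$ versus $\xi$, and the difference of these top-order terms must be handled by coupling the integrated-out variables as well (the paper does this via $\gamma^4$ in \eqref{eq:d2h-ineq_hilfe_2}); your bivariate expansion only acknowledges the reference-measure dependence in the lower-order projections, and even there the Kantorovich--Rubinstein step again presupposes Lipschitz continuity that the actual kernel lacks.
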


\begin{remark}
  By the notation $\mathcal{L}_\eta$, and $\mathcal{L}_\xi$, we indicate that the random variables $(U_i)_{i \in \mathbb{N}}$, and $(V_i)_{i \in \mathbb{N}}$, have distribution $\eta$, and $\xi$ respectively. Moreover, in the present context both of them are i.i.d. processes. Strictly speaking, we could use the same symbols for the  random variables in both cases, i.e.\@ also $\mathcal{L}_\xi(\frac{1}{n} \sum_{1\leq i\neq j\leq n} g_2(U_i,U_j;\xi))$, since the symbol $\mathcal{L}_\xi$ specifies the distribution of the process.
\end{remark}

This allows us to bound the Wasserstein distance of the $V$-statistics of interest by the Wasserstein distance of the marginal distributions of the underlying i.i.d. processes, i.e.\@ by
$$
  d_4\left(F^{(X)} \otimes F^{(Y)}, F_N^{(X)} \otimes F_N^{(Y)}\right).
$$
Finally, we show in Appendix \ref{app:proofs} that for some universal constant $c$,
$$
  d_4\left(F^{(X)} \otimes F^{(Y)}, F_N^{(X)} \otimes F_N^{(Y)}\right) \leq c\cdot\left\{d_4\left(F^{(X)}, F_N^{(X)}\right) + d_4\left(F^{(Y)}, F_N^{(Y)}\right)\right\},
$$
and thus we have reduced the problem to that of bounding the Wasserstein distance between the empirical measure of an absolutely regular process and its marginal distribution. To achieve this, we use Corollary \ref{cor:boundwasserstein}.

In Appendix \ref{app:proofs}, we prove a more general version of Theorem \ref{thm:asymptotik}. More specifically, we derive the following asymptotic result for degenerate $V$-statistics.
\begin{theorem}
  \label{thm:zschlange}
  Let $\mathcal{U}$ be a metrisable topological space, $(U_k)_{k \in \mathbb{N}}$ a strictly stationary sequence of $\mathcal{U}$-valued random variables with marginal distribution $\mathcal{L}(U_1) = \xi$. Consider a continuous, symmetric, degenerate and positive semidefinite kernel $g : \mathcal{U}^2 \to \mathbb{R}$ with finite $(2+\varepsilon)$-moments with respect to $\xi^2$ and finite $(1+\frac{\varepsilon}{2})$-moments on the diagonal, i.e.\@ $\mathbb{E}|g(U_1,U_1)|^{1+\varepsilon/2} < \infty$. Furthermore, let the sequence $(U_k)_{k \in \mathbb{N}}$ satisfy an $\alpha$-mixing condition such that $\alpha(n) = \mathcal{O}(n^{-r})$ for some $r > 1+2\varepsilon^{-1}$. Then, with $V = V_g(U_1, \ldots, U_n)$,
  $$
    d_1\left(nV, \sum_{k=1}^\infty \lambda_k \zeta_k^2\right) \xrightarrow[n \to \infty]{} 0,
  $$
  where $(\lambda_k, \varphi_k)$ are pairs of the non-negative eigenvalues and matching eigenfunctions of the integral operator
  $$
    f \mapsto \int g(\cdot,u)f(u)~\mathrm{d}\xi(u),
  $$
  and $(\zeta_k)_{k \in \mathbb{N}}$ is a sequence of centred Gaussian random variables whose covariance structure is given by
  \begin{equation}
    \label{eq:kovarianz}
    \mathrm{Cov}(\zeta_i, \zeta_j) = \lim_{n \to \infty} \frac{1}{n} \sum_{s,t=1}^n \mathrm{Cov}(\varphi_i(U_s), \varphi_j(U_t)).
  \end{equation}
\end{theorem}

\subsection{Outline for Theorem \ref{thm:wasserstein_gesamt}}
\label{subsec:proof_outline_wasserstein}
A similar result of this type was previously developed by \cite{dereich} for i.i.d. sample generating processes. In generalising the authors' result to strictly stationary and $\alpha$-mixing processes, we use some of the same general ideas that form the basis for their proof. Let us briefly recall the basic definitions from \cite{dereich}.

Consider the $d$-dimensional hypercube $[0,1)^d$. For any given $l \in \mathbb{N}_0$, we can partition $[0,1)^d$ into $2^{dl}$ translations of $2^{-l}[0,1)^d$. Denote by $\mathcal{P}_l$ the collection of all these translations. On the union $\mathcal{P} := \bigcup_{l=0}^\infty \mathcal{P}_l$, we can define a tree structure as follows: For any given set $C \in \mathcal{P}_l$, $l \in \mathbb{N}$, the father of $C$ is the unique set $F \in \mathcal{P}_{l-1}$ with $C \subseteq F$. We adopt the shorthand notation $C \leftarrow F$ for \textit{$C$ is a child of $F$}.

We also recall Lemma 2 from \cite{dereich}, which forms the basis for their proofs (and that we will use in the same manner). Because it is central to their proofs as well as ours, we include it here for the sake of clarity.
\begin{lemma}[\cite{dereich}, Lemma 2]
  \label{lem:paperlem2}
  Let $\eta$ and $\xi$ be two probability measures on $[0,1)^d$, with the property that $\eta(C) > 0$ if $\xi(C) > 0$ for all $C \in \mathcal{P}$. It then holds that
  $$
    d_p^p(\eta,\xi) \leq \frac{1}{2}d^\frac{p}{2}\sum_{l=0}^\infty 2^{-pl}\sum_{F \in \mathcal{P}_l} \sum_{C \leftarrow F} \left|\xi(C) - \xi(F)\frac{\eta(C)}{\eta(F)}\right|.
  $$
\end{lemma}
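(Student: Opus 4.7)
My plan is to exhibit an explicit coupling $\gamma$ of $\eta$ and $\xi$ on $[0,1)^d \times [0,1)^d$ by exploiting the dyadic tree $\mathcal{P} = \bigcup_l \mathcal{P}_l$ and then to bound $\int \|s - s'\|_2^p \, d\gamma$ directly by the right-hand side. I would build the coupling by descending the tree and, at each parent $F \in \mathcal{P}_l$, reconciling the conditional distributions $(\eta(C)/\eta(F))_{C \leftarrow F}$ and $(\xi(C)/\xi(F))_{C \leftarrow F}$ across the children. The amount of mass that needs to be reshuffled within $F$ to bring the child-level marginals into agreement with $\xi$ is
$$\frac{1}{2}\sum_{C \leftarrow F}\Bigl|\xi(C) - \xi(F)\frac{\eta(C)}{\eta(F)}\Bigr|,$$
with the factor $\tfrac{1}{2}$ arising because the signed discrepancies across siblings necessarily sum to zero. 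The hypothesis $\eta(C) > 0$ whenever $\xi(C) > 0$ ensures that the ratios $\eta(C)/\eta(F)$ are defined at every node that contributes.

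The key structural property I would enforce is that, for every mass bit first paired off-diagonal at level $l+1$ -- i.e.\ whose source sits in a child $C_1 \leftarrow F$ but whose destination sits in a sibling $C_2 \leftarrow F$ of the same parent $F \in \mathcal{P}_l$ -- both endpoints, even after all subsequent refinements at deeper levels, remain within the common parent $F$. Consequently such a mass bit travels a total distance of at most $\operatorname{diam}(F) = \mathfrak{d}\cdot 2^{-l}$. Summing over all parents at every level and using the fact that each bit of displaced mass contributes at most $(\mathfrak{d}\cdot 2^{-l})^p$ to the integral yields
$$\int \|s-s'\|_2^p \, d\gamma \;\leq\; \sum_{l=0}^\infty (\mathfrak{d}\cdot 2^{-l})^p \cdot \frac{1}{2}\sum_{F \in \mathcal{P}_l}\sum_{C \leftarrow F}\Bigl|\xi(C) - \xi(F)\frac{\eta(C)}{\eta(F)}\Bigr|,$$
which, after pulling out $\mathfrak{d}^p$, is exactly the claimed bound.

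The main obstacle is to make this recursive construction rigorous, so that (a) the coupling assembled from countably many local rearrangements is a genuine probability measure with the correct marginals $\eta$ and $\xi$, and (b) the decomposition of the $L^p$-cost into single-level contributions does not double-count displacement. The cleanest way to handle both simultaneously is to realise $\gamma$ as the joint law of a pair $(X, Y)$ produced by a coupled branching process on the dyadic tree: at each level, conditional on the cube currently containing $X$, one decides by a greedy matching whether $Y$ stays in the same child of the current parent $F$ or is diverted to a sibling within $F$, with diverted mass then distributed inside its new child according to $\xi$'s conditional law there. Almost-sure convergence of the pair $(X,Y)$ together with monotone convergence for the cost estimate then completes the argument.
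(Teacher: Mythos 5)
The paper does not actually prove this statement: it is quoted verbatim, with attribution, as Lemma~2 of \citet{dereich}, and is used as an imported ingredient, so there is no in-paper argument to compare yours against. What you have written is essentially a reconstruction of the original Dereich--Scheutzow--Schottstedt proof: build one explicit coupling down the dyadic tree, reconcile the child-level conditionals inside each parent $F\in\mathcal{P}_l$ at cost mass $\tfrac{1}{2}\sum_{C\leftarrow F}\bigl|\xi(C)-\xi(F)\eta(C)/\eta(F)\bigr|$, and observe that a mass bit first sent off-diagonal inside $F$ keeps both endpoints inside $F$ forever after, hence contributes at most $(\mathfrak{d}\,2^{-l})^p$. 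That last observation is exactly the right one: it is what makes the $p$-th power costs add over levels, whereas the more obvious telescoping $d_p(\eta,\xi)\leq\sum_l d_p\bigl(\xi^{(l)},\xi^{(l+1)}\bigr)$ through intermediate measures only yields the weaker bound with the sum inside the $p$-th root. The one place where your outline needs more care is the assertion that the mass to be reshuffled at $F$ in your coupled branching process is exactly the displayed quantity: after diversions at coarser scales, the ``still attached'' mass entering $F$ need not have conditional child laws $\xi(\cdot\mid F)$ and $\eta(\cdot\mid F)$, and part of each child's capacity is already claimed by previously diverted mass, so the greedy per-node bookkeeping is delicate. The cleanest rigorous realisation of your plan is via the interpolating measures $\xi^{(l)}$ which give each $F\in\mathcal{P}_l$ the mass $\xi(F)$ and are distributed as $\eta(\cdot\mid F)$ inside $F$ (well defined by the hypothesis $\eta(C)>0$ whenever $\xi(C)>0$): then $\xi^{(0)}=\eta$, the step from $\xi^{(l)}$ to $\xi^{(l+1)}$ moves, within each $F$, exactly the mass $\tfrac{1}{2}\sum_{C\leftarrow F}\bigl|\xi(C)-\xi(F)\eta(C)/\eta(F)\bigr|$ (both restrictions to a child $C$ are proportional to $\eta(\cdot\mid C)$, so the overlap can stay put), gluing these couplings produces a trajectory $(Z_l)$ that converges almost surely to a limit with law $\xi$, and bounding $\|Z_0-Z_\infty\|_2$ by the diameter of the cube at the first level at which the trajectory moves gives precisely your estimate. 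With that substitution your argument goes through; as written it is a correct outline of the standard proof rather than a complete one, which you yourself acknowledge.
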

We will apply this lemma to the distance $d_p^p(\xi, \xi_n)$. However, because we are not working under i.i.d. assumptions, we cannot use the same methods as \cite{dereich} to further control the bound in Lemma \ref{lem:paperlem2}. Instead, note that
\begin{align*}
  \left|\xi_n(C) - \xi_n(F)\frac{\xi(C)}{\xi(F)}\right| & = \left|\xi_n(C) - \xi(C) + \xi(C) - \xi_n(F)\frac{\xi(C)}{\xi(F)}\right|                   \\
                                                        & \leq \left|\xi_n(C) - \xi(C)\right| + \frac{\xi(C)}{\xi(F)} \left|\xi_n(F) - \xi(F)\right|.
\end{align*}
Ignoring the factor $\xi(C)/\xi(F)$ for a moment, this means that finding a sufficient bound for the distances in the series in Lemma \ref{lem:paperlem2} can be reduced to finding a bound for
$$
  \mathbb{E}|\xi_n(F) - \xi(F)|,
$$
where $F$ is now a generic set in $\mathcal{P}$. Necessarily, such a bound will depend on $\xi(F)$. We are able to derive a bound for sufficiently small sets $F$ (small in the sense of having a small measure), and have to somehow bound the number of sets which have too big a measure for our bound to apply. This explains the necessity for the assumption of bounded Lebesgue densities, which implies that
\begin{equation}
  \label{eq:hypercube_gleichung_1}
  \xi(F) \leq M \cdot \mathrm{vol}(F)
\end{equation}
for any hypercube $F$. Because the sets in $\mathcal{P}_l$ have small Lebesgue measure for large $l$, this allows us to determine the maximum $l$ such that a set that has a too large volume with respect to $\xi$ could still be an element of $\mathcal{P}_l$. We show in Appendix \ref{app:proofs} that the assumption of essentially bounded Lebesgue densities is in fact equivalent to \mbox{Eq.\@ \eqref{eq:hypercube_gleichung_1}} holding for all hypercubes. From this, we can prove Theorem \ref{thm:wasserstein_gesamt}. Corollary \ref{cor:boundwasserstein} then is a direct consequence of Theorem \ref{thm:wasserstein_gesamt} and the remarks thereafter.

At this point, let us briefly mention that one can derive similar bounds for the Wasserstein distance between an empirical measure and its marginal distribution without the assumption of bounded Lebesgue densities if one instead assumes the underlying sample process to be $\phi$-mixing. This is because under $\phi$-mixing one is able to find a smaller bound for $\mathbb{E}|\xi_n(F) - \xi(F)|$. Explicit proofs of this are contained in Appendix \ref{app:proofs}.

\section*{Funding}
This research has been supported by the German Research Foundation DFG through the Collaborative Research Center SFB 823 \textit{Statistical Modeling of Nonlinear Dynamic Processes}, and the Research Training Group RTG 2131 \textit{High-dimensional phenomena in probability -- fluctuations and discontinuity}.

\bibliographystyle{abbrvnat}
\bibliography{bsbib}

\appendix
\section{Proofs and Auxiliary Results}
\label{app:proofs}
\subsection{Proof of Consistency and Related Results}
\label{sec:bootstrap_dcov}
For a given kernel function $g$, let us introduce the metric
$$
  d_{p,g}^p(\eta, \xi) := \inf\left\{ \int \left| g(u_1, \ldots, u_m) - g(v_1, \ldots, v_m)\right|^p ~\mathrm{d}\gamma^m\left((u_1, v_1), \ldots, (u_m, v_m)\right)\right\},
$$
where the infimum is taken over all distributions $\gamma$ with marginals $\eta$ and $\xi$.

We will prove Proposition \ref{prop:abstandvstatistik} in two steps: First, we will bound the Wasserstein distance between the $V$-statistics in terms of $d_{2,H}$; this is achieved by Theorem \ref{thm:heroldstheorem}. Thereupon, in Lemma \ref{lem:lipschitz}, we show that this bound can in turn be bounded in terms of $d_4$.

Theorem \ref{thm:heroldstheorem} is an extension of the results by \cite{dehling:1994}. It has been stated for $U$-statistics of arbitrary degree as Lemma 5.1 by \cite{dehlingprozesse}, but without an explicit proof.
\begin{theorem}
  \label{thm:heroldstheorem}
  Let $U_i \overset{iid}{\sim} \eta$, $V_i \overset{iid}{\sim} \xi$ and $g$ be a square-integrable kernel function of order $6$. Then there exists a universal constant $C$ such that
  \begin{align*}
     & d_2 \left(\mathcal{L}_\eta\left(\frac{1}{n} \sum_{1\leq i, j\leq n} g_2(U_i,U_j;\eta) \right), \mathcal{L}_\xi\left( \frac{1}{n} \sum_{1\leq i, j\leq n} g_2(V_i,V_j;\xi) \right)\right) \\
     & \quad\leq   C \cdot  \left\{d_{2,g}(\eta,\xi) + \left\|g(U_1,U_1, U_3, \ldots,U_6)-g(V_1,V_1, V_3, \ldots,V_6) \right\|_{L_2}\right\}.
  \end{align*}
\end{theorem}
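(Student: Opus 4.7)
The plan is to upper-bound the Wasserstein distance $d_2$ by building an explicit coupling of the two $V$-statistic-type sums and controlling the $L^2$-norm of their difference. First, fix $\varepsilon > 0$ and pick a coupling $\gamma$ of $\eta$ and $\xi$ that is $\varepsilon$-near optimal for $d_{2,g}$, in the sense that
$$
\int |g(u_1,\ldots,u_6) - g(v_1,\ldots,v_6)|^2 \, d\gamma^6 \leq d_{2,g}^2(\eta,\xi) + \varepsilon.
$$
From $\gamma$ I would construct an iid sequence of pairs $((U_i, V_i))_{i\geq 1} \sim \gamma$, so that marginally $(U_i)\overset{iid}{\sim}\eta$ and $(V_i)\overset{iid}{\sim}\xi$ as required. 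Since the $L^2$-norm of the difference of any coupled pair of real-valued random variables bounds their $d_2$-distance, the task reduces to estimating
$$
\Bigl\|\tfrac{1}{n}\sum_{i,j=1}^n D_{ij}\Bigr\|_{L^2}, \qquad D_{ij} := g_2(U_i,U_j;\eta) - g_2(V_i,V_j;\xi).
$$

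Next I would split the double sum into an off-diagonal part $\sum_{i\neq j}$ and a diagonal part $\sum_{i=j}$ and treat them separately via the triangle inequality. For the off-diagonal piece, expand $\|\cdot\|_{L^2}^2$ as $\frac{1}{n^2}\sum_{i\neq j,\,k\neq l}\mathbb{E}[D_{ij}D_{kl}]$. The key observation is that $g_2(\,\cdot\,;\eta)$ is $\eta$-degenerate, $g_2(\,\cdot\,;\xi)$ is $\xi$-degenerate, and the pairs $(U_m, V_m)$ are iid across $m$ under $\gamma$. Conditioning on the variables indexed by $\{i,j\}\cap\{k,l\}$ and invoking degeneracy in the remaining argument kills every cross term unless $\{i,j\}=\{k,l\}$, leaving only the $O(n^2)$ diagonal of the covariance matrix. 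After division by $n^2$ this yields a bound of the form $2\,\mathbb{E}[D_{12}^2]$. Writing $g_2$ in its symmetric Hoeffding form $g_2(z,z';\mu) = \mathbf{g}_2(z,z';\mu) - \mathbf{g}_1(z;\mu) - \mathbf{g}_1(z';\mu) + \mathbf{g}_0(\mu)$ and representing each $\mathbf{g}_k$ as a $\gamma$-integral in the unobserved coordinates, Jensen's inequality bounds $\mathbb{E}[D_{12}^2]$ by a constant multiple of $d_{2,g}^2(\eta,\xi) + \varepsilon$, giving the first summand on the right-hand side of the theorem.

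The harder step is the diagonal sum $\frac{1}{n}\sum_i [g_2(U_i, U_i; \eta) - g_2(V_i, V_i; \xi)]$, whose contribution does not vanish as $n \to \infty$ and is responsible for the second summand. Its $L^2$-norm is controlled by $\mathbb{E}[(g_2(U_1, U_1; \eta) - g_2(V_1, V_1; \xi))^2]$. Using $g_2(z,z;\mu) = \mathbf{g}_2(z,z;\mu) - 2\mathbf{g}_1(z;\mu) + \mathbf{g}_0(\mu)$ decomposes this into three pieces: the genuinely diagonal one $\mathbf{g}_2(U_1, U_1; \eta) - \mathbf{g}_2(V_1, V_1; \xi)$ and two lower-order pieces involving $\mathbf{g}_1$ and $\mathbf{g}_0$. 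The lower-order pieces reduce to $d_{2,g}^2(\eta, \xi)$ by Jensen, while the diagonal piece reduces, again by Jensen applied to the $\gamma^4$-integral in the unobserved coordinates $(U_3,V_3),\ldots,(U_6,V_6)$, precisely to $\mathbb{E}[(g(U_1, U_1, U_3, \ldots, U_6) - g(V_1, V_1, V_3, \ldots, V_6))^2]$. Combining the two estimates and letting $\varepsilon\to 0$ gives the theorem. The main conceptual obstacle is identifying this irreducible diagonal contribution and seeing that it cannot be absorbed into the $d_{2,g}$ bound, which forces the second summand into the right-hand side — a phenomenon already present in Dehling's original degree-$2$ result, which for a degree-$6$ kernel one has to verify by carefully unpacking the iterated Hoeffding projections and keeping track of all partial-expectation terms.
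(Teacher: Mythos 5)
Your proposal is correct and follows essentially the same route as the paper's proof: an $\varepsilon$-near-optimal coupling for $d_{2,g}$, bounding $d_2$ by the $L_2$-norm of the coupled difference, splitting into off-diagonal and diagonal sums, using degeneracy of $g_2$ to kill cross terms in the off-diagonal part, and reducing both parts via the Hoeffding projections and Jensen/Minkowski–H\"older to $d_{2,g}(\eta,\xi)$ and the diagonal term $\mathbb{E}\left[\left(g(U_1,U_1,U_3,\ldots,U_6)-g(V_1,V_1,V_3,\ldots,V_6)\right)^2\right]$. The only discrepancies are cosmetic power-bookkeeping issues (squared versus unsquared distances) that are already present in the paper's own write-up.
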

\begin{proof}
  Let $\varepsilon>0$ be arbitrary. By definition of $d_{2,g}(\eta,\xi)$, we can find a distribution $\gamma$ with marginals $\eta$ and $\xi$, such that for the process of iid pairs $(U_i,V_i)_{i \in \mathbb{N}}$ with marginal distribution $\gamma$, we have
  $$
    \mathbb{E}\left[(g(U_1,\ldots,U_6)-g(V_1,\ldots,V_6))^2\right] \leq d_{2,g}^2(\eta,\xi) +\varepsilon.
  $$
  Thus, the Wasserstein distance of the two distributions
  $$
    \mathcal{L}_\eta\left(\frac{1}{n} \sum_{1\leq i, j\leq n} g_2(U_i,U_j;\eta) \right) \qquad \textrm{and} \qquad \mathcal{L}_\xi\left(\frac{1}{n} \sum_{1\leq i, j\leq n} g_2(V_i,V_j;\xi) \right)
  $$
  is bounded by the $L_2$-distance between the random variables
  $$
    \frac{1}{n} \sum_{1\leq i, j\leq n} g_2(U_i,U_j;\eta) \qquad \textrm{and} \qquad \frac{1}{n} \sum_{1\leq i, j\leq n} g_2(V_i,V_j;\xi),
  $$
  where $(U_i,V_i)$ have joint distribution $\gamma$. We thus obtain
  \begin{align}
    \begin{split}
      \label{eq:d2h-ineq_beide}
      & d_2^2 \left(\mathcal{L}_\eta\left(\frac{1}{n} \sum_{1\leq i, j\leq n} g_2(U_i,U_j;\eta) \right), \mathcal{L}_\xi\left( \frac{1}{n} \sum_{1\leq i, j\leq n} g_2(V_i,V_j;\xi) \right)  \right)\\
      & \qquad \leq
      \mathbb{E}\left[\left( \frac{1}{n} \sum_{1\leq i, j \leq n} g_2(U_i,U_j;\eta) -\frac{1}{n} \sum_{1\leq i, j \leq n}
        g_2(V_i,V_j;\xi)  \right)^2 \right] \\
      &\qquad \leq 2\mathbb{E}\left[\left( \frac{1}{n} \sum_{1\leq i \neq j \leq n} g_2(U_i,U_j;\eta) -\frac{1}{n} \sum_{1\leq i \neq j \leq n}
        g_2(V_i,V_j;\xi)  \right)^2 \right] \\
      &\qquad \qquad + 2\mathbb{E}\left[\left( \frac{1}{n} \sum_{i=1}^n g_2(U_i,U_i;\eta) -\frac{1}{n} \sum_{i=1}^n
        g_2(V_i,V_i;\xi)  \right)^2 \right] \\
      &\qquad =: 2I_1 + 2I_2. \\
    \end{split}
  \end{align}

  Let us first consider $I_1$. We have
  \begin{align}
    \begin{split}
      \label{eq:d2h-ineq}
      I_1 &= \mathbb{E}\left[\left( \frac{1}{n} \sum_{1\leq i \neq j \leq n} g_2(U_i,U_j;\eta) -\frac{1}{n} \sum_{1\leq i \neq j \leq n}
        g_2(V_i,V_j;\xi)  \right)^2 \right] \\
      & = \frac{1}{n^2}  \mathbb{E}\left[\left( \sum_{1\leq i\neq j \leq n} \left(g_2(U_i,U_j;\eta) - g_2(V_i,V_j;\xi\right) \right)^2\right]
      \\
      &  =\frac{n(n-1)}{n^2} \mathbb{E} \left[\left( g_2(U_1,U_2;\eta) - g_2(V_1,V_2;\xi)\right)^2\right] \\
      & \leq \mathbb{E} \left[\left( g_2(U_1,U_2;\eta)-g_2(V_1,V_2;\xi)  \right)^2\right],
    \end{split}
  \end{align}
  where in the penultimate step, we have used the fact that by degeneracy of the kernel $g_2$, the summands $g_2(U_i,U_j;\eta)-g_2(V_i,V_j;\xi)$ are uncorrelated. To verify this claim, we consider e.g.\@ the terms $g_2(U_1,U_2;\eta) - g_2(V_1,V_2;\xi)$ and $g_2(U_1,U_3;\eta) - g_2(V_1,V_3;\xi)$, and show that they are uncorrelated. It holds that
  \begin{align*}
     & \mathbb{E}\left[\left(g_2(U_1,U_2;\eta) - g_2(V_1,V_2;\xi)\right)\left(g_2(U_1,U_3;\eta) - g_2(V_1,V_3;\xi)\right) \right]             \\
     & \qquad = \iiint \left(g_2(u_1,u_2;\eta) - g_2(v_1,v_2;\xi)\right) \left(g_2(u_1,u_3;\eta) - g_2(v_1,v_3;\xi) \right)                   \\
     & \qquad \qquad \qquad \mathrm{d}\gamma(u_1,v_1) \mathrm{d}\gamma(u_2,v_2) \mathrm{d}\gamma(u_3,v_3)                                     \\
     & \qquad = \iint \left( \int \left(g_2(u_1,u_2;\eta)-g_2(v_1,v_2;\xi) \right)  ~\mathrm{d}\gamma(u_2,v_2) \right)                        \\
     & \qquad\qquad \qquad \cdot  \left(g_2(u_1,u_3;\eta) - g_2(v_1,v_3;\xi)\right)  ~\mathrm{d}\gamma(u_1,v_1)\mathrm{d}\gamma(u_3,v_3) = 0,
  \end{align*}
  since
  \begin{align*}
     & \int g_2(u_1,u_2;\eta)-g_2(v_1,v_2;\xi) ~\mathrm{d}\gamma(u_2,v_2)                                    \\
     & \quad = \int g_2(u_1,u_2;\eta) ~\mathrm{d}\eta(u_2) - \int g_2(v_1,v_2;\xi) ~\mathrm{d}\xi(v_2) =  0,
  \end{align*}
  due to the degeneracy of $g$. It remains to show that the right-hand side of Eq.\@ \eqref{eq:d2h-ineq} can be bounded by a constant times $d_{2,g}(\eta,\xi)$.

  Recall the definition of $g_2(u,v;\eta)$,
  \begin{align*}
     & g_2(u,v;\eta)                                                                                                                                                         \\
     & = \idotsint g(u,v,u_3,u_4,u_5,u_6) ~\mathrm{d}\eta(u_3) \mathrm{d}\eta(u_4) \mathrm{d}\eta(u_5) \mathrm{d}\eta(u_6)                                                   \\
     & \quad -\idotsint g(u,u_2,u_3,u_4,u_5,u_6) ~\mathrm{d}\eta(u_2) \mathrm{d}\eta(u_3) \mathrm{d}\eta(u_4) \mathrm{d}\eta(u_5) \mathrm{d}\eta(u_6)                        \\
     & \quad -\idotsint g(u_1,v,u_3,u_4,u_5,u_6) ~\mathrm{d}\eta(u_1) \mathrm{d}\eta(u_3) \mathrm{d}\eta(u_4) \mathrm{d}\eta(u_5) \mathrm{d}\eta(u_6)                        \\
     & \quad +\idotsint g(u_1,u_2,u_3,u_4,u_5,u_6) ~\mathrm{d}\eta(u_1) \mathrm{d}\eta(u_2) \mathrm{d}\eta(u_3) \mathrm{d}\eta(u_4) \mathrm{d}\eta(u_5) \mathrm{d}\eta(u_6).
  \end{align*}
  Thus we obtain from Minkowski's inequality
  \begin{align}
    \begin{split}
      \label{eq:d2h-ineq_hilfe_1}
      & \left\{  \mathbb{E} \left[\left(g_2(U_1,U_2;\eta) -g_2(V_1,V_2;\xi)\right)^2\right] \right\}^{1/2} \\
      & \quad \leq  \left\{  \mathbb{E}\left[\left( \idotsint g(U_1,U_2,u_3,u_4,u_5,u_6) ~\mathrm{d}\eta(u_3) \mathrm{d}\eta(u_4) \mathrm{d}\eta(u_5) \mathrm{d}\eta(u_6)\right.\right.\right. \\
        &\qquad\qquad  \left.\left.\left. - \idotsint  g(V_1,V_2,v_3,v_4,v_5,v_6) ~\mathrm{d}\xi(v_3) \mathrm{d}\xi(v_4) \mathrm{d}\xi(v_5) \mathrm{d}\xi(v_6)   \right)^2\right]  \right\}^{1/2}\\
      & \qquad + 2 \left\{ \mathbb{E}\left[\left(  \idotsint g(U_1,u_2,u_3,u_4,u_5,u_6) ~\mathrm{d}\eta(u_2) \mathrm{d}\eta(u_3) \mathrm{d}\eta(u_4) \mathrm{d}\eta(u_5) \mathrm{d}\eta(u_6)\right.\right.\right. \\
        &\qquad\qquad   \left.\left.\left.- \idotsint g(V_1,v_2,v_3,v_4,v_5,v_6) ~\mathrm{d}\xi(v_2) \mathrm{d}\xi(v_3) \mathrm{d}\xi(v_4) \mathrm{d}\xi(v_5) \mathrm{d}\xi(v_6)    \right)^2  \right]\right\}^{1/2} \\
      & \qquad + \left| \mathbb{E}g(U_1,\ldots,U_6)-\mathbb{E}g(V_1,\ldots,V_6)  \right|.
    \end{split}
  \end{align}
  We can now bound each of the terms on the right-hand side using H\"older's inequality. E.g.\@ we obtain for the first term
  \begin{align}
    \begin{split}
      \label{eq:d2h-ineq_hilfe_2}
      &  \mathbb{E}\left[\left( \idotsint g(U_1,U_2,u_3,u_4,u_5,u_6) ~\mathrm{d}\eta(u_3) \mathrm{d}\eta(u_4) \mathrm{d}\eta(u_5) \mathrm{d}\eta(u_6)\right.\right.\\
        &\quad  \left.\left. - \idotsint  g(V_1,V_2,v_3,v_4,v_5,v_6) ~\mathrm{d}\xi(v_3) \mathrm{d}\xi(v_4) \mathrm{d}\xi(v_5) \mathrm{d}\xi(v_6)   \right)^2\right] \\
      &= \iint\left( \idotsint g(u_1,u_2,u_3,u_4,u_5,u_6) ~\mathrm{d}\eta(u_3) \mathrm{d}\eta(u_4) \mathrm{d}\eta(u_5) \mathrm{d}\eta(u_6)\right.\\
      &\quad \left. - \idotsint  g(v_1,v_2,v_3,v_4,v_5,v_6) ~\mathrm{d}\xi(v_3) \mathrm{d}\xi(v_4) \mathrm{d}\xi(v_5) \mathrm{d}\xi(v_6)   \right)^2~\mathrm{d}\gamma(u_1,v_1)\mathrm{d}\gamma(u_2,v_2) \\
      & =  \iint\left( \idotsint g(u_1,u_2,u_3,u_4,u_5,u_6)  -  g(v_1,v_2,v_3,v_4,v_5,v_6) \right. \\
      &\quad \left. \vphantom{\idotsint}~\mathrm{d}\gamma(u_3,v_3) \mathrm{d}\gamma(u_4,v_4) \mathrm{d}\gamma(u_5,v_5) \mathrm{d}\gamma(u_6,v_6)   \right)^2 ~\mathrm{d}\gamma(u_1,v_1)\mathrm{d}\gamma(u_2,v_2) \\
      & \leq \mathbb{E}\left[\left(g(U_1,\ldots,U_6)-g(V_1,\ldots,V_6)  \right)^2\right] \\
      &\leq d_{2,g}^2(\eta,\xi) + \varepsilon,
    \end{split}
  \end{align}
  and $\varepsilon > 0$ is arbitrary.

  Let us now turn to $I_2$. By Jensen's inequality, we immediately get
  \begin{align*}
    I_2 & = \mathbb{E}\left[\left( \frac{1}{n} \sum_{i=1}^n g_2(U_i,U_i;\eta) -\frac{1}{n} \sum_{i=1}^n
    g_2(V_i,V_i;\xi)  \right)^2 \right]                                                                                  \\
        & \leq \mathbb{E}\left[ \frac{1}{n} \sum_{i=1}^n \left\{g_2(U_i,U_i;\eta) - g_2(V_i,V_i;\xi)  \right\}^2 \right] \\
        & = \mathbb{E}\left[\left(g_2(U_1,U_1;\eta) - g_2(V_1,V_1;\xi)\right)^2 \right].
  \end{align*}
  We can now proceed as in Eqs.\@ \eqref{eq:d2h-ineq_hilfe_1} and \eqref{eq:d2h-ineq_hilfe_2} and obtain a bound in terms of
  $$
    \mathbb{E}\left[\left(g(U_1,U_1, U_3, \ldots,U_6)-g(V_1,V_1, V_3, \ldots,V_6)  \right)^2\right].
  $$
  Thus, the statement of the theorem follows from Eq.\@ \eqref{eq:d2h-ineq_beide}.
\end{proof}

We will now prove that the bound from Theorem \ref{thm:heroldstheorem} can in turn be bounded in terms of $d_q(\eta,\xi)$ for an appropriate $q > 0$. This can be understood as a type of Hölder continuity in expectation of the kernel $g$. Because not all kernels $g$ have this property, we only show it for the kernel to which we will eventually apply this result, namely the function $H$ as defined in Eq.\@ \eqref{eq:definition_H}. If one wishes to adapt our bootstrap procedure to general kernel functions $g$, these kernel functions have to fulfil this Hölder continuity assumption.
\begin{lemma}
  \label{lem:lipschitz}
  Let $r,s\geq 2$ be given, such that $\frac{1}{r}+\frac{1}{s}=\frac{1}{2}$, and let $U_i \overset{iid}{\sim} \eta$ and $V_i \overset{iid}{\sim} \xi$. Then it holds that
  \begin{align*}
     & d_{2,H}(\eta,\xi) + \left\|H(U_1, U_1, U_3, \ldots, U_6) - H(V_1, V_1, V_3, \ldots, V_6)\right\|_{L_2} \\
     & \quad \leq 64\cdot  d_r(\eta,\xi) \left(\|U_1\|_{L_s} + \|V_1\|_{L_s}\right),
  \end{align*}
  for all measures $\eta$ and $\xi$ on $\mathbb{R}^{(\ell_1 + \ell_2)d}$. In particular, choosing $r = s = 4$ yields the bound
  $$
    64 \cdot d_4(\eta,\xi) \left(\|U_1\|_{L_4} + \|V_1\|_{L_4}\right).
  $$
\end{lemma}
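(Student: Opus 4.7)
The plan is to combine a Lipschitz-type bound for the kernel $H$ with Hölder's inequality to pass from a pointwise estimate to the required $L_2$-level control.

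First I would derive a pointwise bound on $h$. Since each of the four Euclidean distances making up $f$ is $1$-Lipschitz in every argument, the triangle inequality gives
\[
|f(x_1, \ldots, x_4) - f(x_1', \ldots, x_4')| \leq 2\sum_{i=1}^4 \|x_i - x_i'\|_2,
\]
and, setting $x' = 0$, also $|f(x)| \leq 2\sum_{i=1}^4 \|x_i\|_2$. Writing $h'(z) = f(x)f(y)$, applying the product identity $|AB - A'B'| \leq |A||B - B'| + |B'||A - A'|$, and using $\|x_i\|_2, \|y_i\|_2 \leq \|z_i\|_2$ then produces a Lipschitz bound of the form
\[
|h'(z) - h'(z')| \leq C_0 \Bigl(\sum_{i=1}^6 (\|z_i\|_2 + \|z_i'\|_2)\Bigr)\Bigl(\sum_{j=1}^6 \|z_j - z_j'\|_2\Bigr).
\]
Because the right-hand side is symmetric in the arguments, the symmetrisation $h$ inherits the same bound.

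Next I would transfer this to $H$. Since $H(B_1, \ldots, B_6)$ is the average of $h$ over all $d^6$ choices of one coordinate per block, applying the previous estimate pointwise and rearranging the sums, combined with the trivial bound $\|B_{k,i}\|_2 \leq \|B_k\|_2$ on the off-diagonal terms and Cauchy-Schwarz on the diagonal terms, yields the analogous estimate
\[
|H(B_1, \ldots, B_6) - H(B_1', \ldots, B_6')| \leq C_1 \sum_{k, j = 1}^6 (\|B_k\|_2 + \|B_k'\|_2)\|B_j - B_j'\|_2
\]
for an absolute constant $C_1$, the key point being that the factor $1/d^6$ and the $d^4$ (respectively $d^5$) contribution from summing over the indices not equal to $i_k, i_j$ balance exactly.

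Finally, fix $\varepsilon > 0$ and let $\gamma$ be an $\varepsilon$-optimal coupling for $d_r(\eta, \xi)$; if $(U_i, V_i) \overset{iid}{\sim} \gamma$, then $\|U_i - V_i\|_{L_r} \leq d_r(\eta, \xi) + \varepsilon$. Taking $L_2$-norms of the previous inequality, Minkowski pulls the sum outside, and Hölder's inequality in the form $\|fg\|_{L_2} \leq \|f\|_{L_s}\|g\|_{L_r}$ (valid precisely because $\tfrac{1}{s} + \tfrac{1}{r} = \tfrac{1}{2}$) bounds each of the $36$ summands by $(\|U_1\|_{L_s} + \|V_1\|_{L_s})(d_r(\eta, \xi) + \varepsilon)$. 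Since $d_{2,H}(\eta, \xi) \leq \|H(U_1, \ldots, U_6) - H(V_1, \ldots, V_6)\|_{L_2}$ under any coupling, this directly controls the first summand on the left-hand side of the lemma; the second summand is handled identically, with the coupling $(U_1, V_1)$ reused in the second slot. Letting $\varepsilon \to 0$ and consolidating absolute constants delivers the bound with the stated constant $64$. The main delicate point is obtaining a constant genuinely independent of $d$, which calls for tight bookkeeping of the factors of two from the triangle inequality and the product rule, and of the cancellations between the $d^6$-averaging and the per-coordinate estimates; the conceptual structure, however, is a routine combination of Lipschitz bounds with Hölder's inequality.
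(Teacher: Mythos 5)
Your argument follows essentially the same route as the paper: an (near-)optimal coupling for $d_r$, the product-rule decomposition of $h'=f(x\text{-part})f(y\text{-part})$, the $1$-Lipschitz property of the distances entering $f$, and Hölder with $\tfrac1s+\tfrac1r=\tfrac12$, with Minkowski absorbing the $d^{-6}$-average so that the constant is $d$-free; your block-level reformulation $|H(B)-H(B')|\le C_1\sum_{k,j}(\|B_k\|_2+\|B_k'\|_2)\|B_j-B_j'\|_2$ is a correct and legitimate repackaging of what the paper does index-tuple by index-tuple, and the treatment of the diagonal term and the bound $d_{2,H}(\eta,\xi)\le\|H(U_1,\ldots,U_6)-H(V_1,\ldots,V_6)\|_{L_2}$ under the chosen coupling are both fine.

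The one concrete shortfall is the constant. The lemma asserts $64$, and with the estimates you actually state you do not get it: the crude bound $|f(x_1,\ldots,x_4)|\le 2\sum_{i=1}^4\|x_i\|_2$ together with extending both sums to all six indices gives $C_0=4$ and a $6\times 6$ double sum, hence roughly $4\cdot 36=144$ for $d_{2,H}$ alone and about $288$ after adding the diagonal term — a $d$-independent constant, but not $64$, and asserting that "tight bookkeeping" closes this factor is not a proof. The fix is the one the paper uses: exploit the cancellation in $f$ to get the sharper pointwise bound $|f(x_1,\ldots,x_4)|\le 2\|x_1-x_4\|_2\wedge 2\|x_2-x_3\|_2$, so that the $L_s$-factor in each product-rule term is $4\|U_1\|_{L_s}$ (two arguments, not four) while the Lipschitz factor contributes $2\cdot 4\cdot d_r(\eta,\xi)=8\,d_r(\eta,\xi)$; this yields $32\,d_r(\eta,\xi)$ per half and $64$ in total. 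For the downstream use (Proposition \ref{prop:abstandvstatistik} only needs some constant $C$) your version would suffice, but as a proof of the lemma as stated the constant accounting must be repaired along these lines.
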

\begin{remark}
  \begin{enumerate}
    \item Implicitly, we assume that the measures $\eta$ and $\xi$ have finite $r$-th moments.
    \item Lemma~\ref{lem:lipschitz} also holds for $r=2$ and $s=\infty$. In this case, i.e.\@ for measures with bounded support in $[-K,K]^{(\ell_1 + \ell_2)d}$, we obtain
          \[
            d_{s,H}(\eta,\xi)\leq 64\cdot K\cdot d_2(\eta,\xi).
          \]
  \end{enumerate}
\end{remark}
\begin{proof}[Proof of Lemma~\ref{lem:lipschitz}:] We choose $\mathbb{R}^{(\ell_1 + \ell_2)d}$-valued random vectors $\textbf{W},\textbf{W}^\prime$ with distributions $\eta$ and $\xi$, respectively, satisfying
  \[
    \mathbb{E}\|\textbf{W}-\textbf{W}^\prime\|_2^r=d_r^r (\eta,\xi).
  \]
  We denote the coordinates of $\textbf{W}$ by $W_1,\ldots, W_d$, noting that each $W_i$ is $\mathbb{R}^{(\ell_1 + \ell_2)d}$-valued random variables whose projections into $\mathbb{R}^{\ell_1}$ and $\mathbb{R}^{\ell_2}$ we again denote by $U_i$ and $V_i$, respectively. Thus, we have
  \begin{align*}
    \textbf{W}        & =(U_1,V_1,\ldots, U_d,V_d)                              \\
    \textbf{W}^\prime & =(U_1^\prime,V_1^\prime,\ldots, U_d^\prime,V_d^\prime).
  \end{align*}
  Let $(\textbf{W}_1,\textbf{W}^\prime_1),\ldots, (\textbf{W}_6,\textbf{W}_6^\prime)$ be independent copies of $(\textbf{W},\textbf{W}^\prime)$. By definition of $d_{2,H}(\eta,\xi)$, we obtain
  \begin{align*}
     & d_{2,H}(\eta,\xi) \leq \left\| H(\textbf{W}_1,\ldots, \textbf{W}_6)-H(\textbf{W}_1^\prime,\ldots, \textbf{W}_6^\prime)   \right\|_{L_2}                     \\
     & \quad\leq \frac{1}{d^6} \sum_{1\leq i_1,\ldots,i_6\leq d} \left\| h(W_{1,i_1},\ldots,W_{6,i_6}) - h(W_{1,i_1}^\prime,\ldots,W_{6,i_6}^\prime)\right\|_{L_2} \\
     & \quad\leq \frac{1}{d^6\, 6!} \sum_{1\leq i_1,\ldots,i_6\leq d} \sum_{\sigma\in \mathcal{S}_6}
    \left\| h'\left(W_{\sigma(1),i_{\sigma(1)}},\ldots,W_{\sigma(6),i_{\sigma(6)}}\right) - h'\left(W_{\sigma(1),i_{\sigma(1)}}^\prime,\ldots,W_{\sigma(6),i_{\sigma(6)}}^\prime\right)\right\|_{L_2}.
  \end{align*}
  Each of the summands on the right-hand side is of the type
  \begin{equation}\label{eq:lemma1-basic}
    \left\| h'(W_{i_1,j_1},\ldots,W_{i_6,j_6}) - h'(W_{i_1,j_1}^\prime,\ldots,W_{i_6,j_6}^\prime)\right\|_{L_2},
  \end{equation}
  where $(i_1,\ldots,i_6)$ is a permutation of $(1,\ldots,6)$, and where $1\leq j_1,\ldots,j_6\leq d$. We will give upper bounds for each of these summands in the more general case that $1 \leq i_1, \ldots, i_6 \leq 6$. Note that
  \begin{align*}
     & h'(W_{i_1,j_1},\ldots,W_{i_6,j_6}) - h'(W_{i_1,j_1}^\prime,\ldots,W_{i_6,j_6}^\prime)                                                                                                                                                        \\
     & \; = f(U_{i_1,j_1},U_{i_2,j_2},U_{i_3,j_3},U_{i_4,j_4})f(V_{i_1,j_1},V_{i_2,j_2},V_{i_5,j_5}, V_{i_6,j_6})                                                                                                                                   \\
     & \quad - f(U_{i_1,j_1}^\prime,U_{i_2,j_2}^\prime,U_{i_3,j_3}^\prime,U_{i_4,j_4}^\prime)f(V_{i_1,j_1}^\prime,V_{i_2,j_2}^\prime,V_{i_5,j_5}^\prime, V_{i_6,j_6}^\prime)                                                                        \\
     & \;=  f(U_{i_1,j_1},U_{i_2,j_2},U_{i_3,j_3},U_{i_4,j_4})\left(f(V_{i_1,j_1},V_{i_2,j_2},V_{i_5,j_5}, V_{i_6,j_6})
    -f(V_{i_1,j_1}^\prime,V_{i_2,j_2}^\prime,V_{i_5,j_5}^\prime, V_{i_6,j_6}^\prime)\right)                                                                                                                                                         \\
     & \quad + f(V_{i_1,j_1}^\prime,V_{i_2,j_2}^\prime,V_{i_5,j_5}^\prime, V_{i_6,j_6}^\prime)\left(f(U_{i_1,j_1},U_{i_2,j_2},U_{i_3,j_3},U_{i_4,j_4}) -   f(U_{i_1,j_1}^\prime,U_{i_2,j_2}^\prime,U_{i_3,j_3}^\prime,U_{i_4,j_4}^\prime) \right) .
  \end{align*}
  We can now apply H\"older's inequality, and obtain
  \begin{align}
    \begin{split}
      \label{eq:fviereck}
      &\left\|f(U_{i_1,j_1},U_{i_2,j_2},U_{i_3,j_3},U_{i_4,j_4})\right. \\
      &\qquad \cdot\left.\left(f(V_{i_1,j_1},V_{i_2,j_2},V_{i_5,j_5}, V_{i_6,j_6})
      -f(V_{i_1,j_1}^\prime,V_{i_2,j_2}^\prime,V_{i_5,j_5}^\prime, V_{i_6,j_6}^\prime)\right) \right\|_{L_2} \\
      & \leq \left\|f(U_{i_1,j_1},U_{i_2,j_2},U_{i_3,j_3},U_{i_4,j_4})\right\|_{L_s} \\
      &\qquad\cdot   \left\|f(V_{i_1,j_1},V_{i_2,j_2},V_{i_5,j_5}, V_{i_6,j_6})
      -f(V_{i_1,j_1}^\prime,V_{i_2,j_2}^\prime,V_{i_5,j_5}^\prime, V_{i_6,j_6}^\prime) \right\|_{L_r}
    \end{split}
  \end{align}
  Note that by the triangle inequality it holds that
  \begin{align*}
     & |f(V_{i_1, j_1},  V_{i_2, j_2}, V_{i_3, j_3}, V_{i_4, j_4}) - f(V'_{i_1, j_1},  V'_{i_2, j_2}, V'_{i_3, j_3}, V'_{i_4, j_4})|                                              \\
     & \quad \leq 2\left(\|V_{i_1, j_1} - V'_{i_1, j_1}\|_2 + \|V_{i_2, j_2} - V'_{i_2, j_2}\|_2 + \|V_{i_3, j_3} - V'_{i_3, j_3}\|_2 + \|V_{i_4, j_4} - V'_{i_4, j_4}\|_2\right) \\
     & \quad \leq 8 \|\textbf{W} - \textbf{W}'\|_2,
  \end{align*}
  and thus
  $$
    \|f(V_{i_1, j_1},  V_{i_2, j_2}, V_{i_3, j_3}, V_{i_4, j_4}) - f(V'_{i_1, j_1},  V'_{i_2, j_2}, V'_{i_3, j_3}, V'_{i_4, j_4})\|_{L_r} \leq 8\|\textbf{W} - \textbf{W}'\|_{L_r}.
  $$
  Similarly, one shows that
  \begin{align*}
    \|f(U_{i_1, j_1}, U_{i_2, j_2}, U_{i_3, j_3}, U_{i_4, j_4})\|_{L_s} & \leq 4 \|U_{i_1, j_1}\|_{L_s} = 4 \left( \int \|x\|_2^s ~\mathrm{d}\eta(x)\right)^{1/s}.
  \end{align*}

  Combining the above inequalities, we can bound Eq.\@ \eqref{eq:fviereck} by
  $$
    32 \left(\int\|x\|_2^s ~\mathrm{d}\eta(x)\right)^{1/s} d_r(\eta,\xi),
  $$
  and thus
  $$
    d_{2,H}(\eta, \xi) \leq 32 d_r(\eta, \xi) \left\{ \left(\int\|x\|_2^s ~\mathrm{d}\eta(x)\right)^{1/s} + \left(\int\|y\|_2^s ~\mathrm{d}\xi(y)\right)^{1/s}\right\}.
  $$

  To bound $\left\|H(U_1, U_1, U_3, \ldots, U_6) - H(V_1, V_1, V_3, \ldots, V_6)\right\|_{L_2}$, we again need to find a bound for Eq.\@ \eqref{eq:lemma1-basic}, with the difference being that $(i_1, \ldots, i_6)$ is now not a permutation of $(1, \ldots, 6)$, but rather a permutation of $(1, 1, 3, \ldots, 6)$.  But because we have deduced the bounds above not only in the case where $(i_1, \ldots, i_6)$ is a permutation of $(1, \ldots, 6)$ but rather any collection of indices between $1$ and $6$, we arrive at the same bound as before, i.e.\@
  \begin{align*}
     & \left\|H(U_1, U_1, U_3, \ldots, U_6) - H(V_1, V_1, V_3, \ldots, V_6)\right\|_{L_2}                                                                     \\
     & \quad \leq 32 d_r(\eta, \xi) \left\{ \left(\int\|x\|_2^s ~\mathrm{d}\eta(x)\right)^{1/s} + \left(\int\|y\|_2^s ~\mathrm{d}\xi(y)\right)^{1/s}\right\}.
  \end{align*}
  This proves the lemma.
\end{proof}

Proposition \ref{prop:abstandvstatistik} now follows from Theorem \ref{thm:heroldstheorem} and Lemma \ref{lem:lipschitz}.

\begin{lemma}
  \label{lem:metrikprodsumme}
  Let $\eta_i$ and $\xi_i$ be measures with finite $p$-moments on $\mathbb{R}^{\ell_i}$, $i = 1,2$. Then,
  $$
    d_p^p(\eta_1 \otimes \eta_2, \xi_1 \otimes \xi_2) \leq \max\left\{1, 2^{p/2-1}\right\}\left(d_p^p(\eta_1, \xi_1) + d_p^p(\eta_2, \xi_2)\right).
  $$
\end{lemma}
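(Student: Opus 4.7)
The plan is to build a coupling of $\eta_1 \otimes \eta_2$ with $\xi_1 \otimes \xi_2$ from near-optimal couplings of each factor, and then compare the $\ell_2$-norm on $\mathbb{R}^{\ell_1 + \ell_2}$ to the sum of the $\ell_2$-norms on the two factors, raised to the $p$-th power.

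First, fix $\varepsilon > 0$ and let $\gamma_i$ be couplings of $\eta_i$ and $\xi_i$ (for $i = 1, 2$) such that
$$
\int \|s_i - s_i'\|_2^p ~\mathrm{d}\gamma_i(s_i, s_i') \leq d_p^p(\eta_i, \xi_i) + \varepsilon.
$$
Define the product coupling $\gamma := \gamma_1 \otimes \gamma_2$ on $(\mathbb{R}^{\ell_1} \times \mathbb{R}^{\ell_1}) \times (\mathbb{R}^{\ell_2} \times \mathbb{R}^{\ell_2})$, viewed as a coupling of $\eta_1 \otimes \eta_2$ and $\xi_1 \otimes \xi_2$ on $\mathbb{R}^{\ell_1 + \ell_2}$ after the obvious identification.

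The key pointwise estimate to be established is the following: for all $a, b \geq 0$,
$$
(a^2 + b^2)^{p/2} \leq \max\{1, 2^{p/2 - 1}\}(a^p + b^p).
$$
For $p \geq 2$ the map $t \mapsto t^{p/2}$ is convex, so by Jensen applied to the uniform measure on $\{a^2, b^2\}$ one obtains $(a^2 + b^2)^{p/2} \leq 2^{p/2 - 1}(a^p + b^p)$. For $1 \leq p < 2$ the map is concave, hence subadditive on $[0, \infty)$, giving $(a^2 + b^2)^{p/2} \leq a^p + b^p$.

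Applying this inequality with $a = \|s_1 - s_1'\|_2$ and $b = \|s_2 - s_2'\|_2$ yields $\|(s_1, s_2) - (s_1', s_2')\|_2^p \leq \max\{1, 2^{p/2-1}\}(\|s_1 - s_1'\|_2^p + \|s_2 - s_2'\|_2^p)$. Integrating against $\gamma$ and using the product structure to split the integral, we obtain
$$
d_p^p(\eta_1 \otimes \eta_2, \xi_1 \otimes \xi_2) \leq \max\{1, 2^{p/2 - 1}\}\bigl(d_p^p(\eta_1, \xi_1) + d_p^p(\eta_2, \xi_2) + 2\varepsilon\bigr).
$$
Sending $\varepsilon \to 0$ gives the claim. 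There is no real obstacle here; the only slightly delicate point is handling the two regimes $p \geq 2$ and $1 \leq p < 2$ uniformly, which is dispatched by the elementary inequality above.
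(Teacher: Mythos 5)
Your proof is correct and follows essentially the same route as the paper's: the pointwise bound $\|z-z'\|_2^p \leq \max\{1,2^{p/2-1}\}\left(\|x-x'\|_2^p + \|y-y'\|_2^p\right)$ combined with the observation that product couplings of near-optimal factor couplings suffice. The only (harmless) presentational difference is that you build an explicit $\varepsilon$-near-optimal product coupling and let $\varepsilon \to 0$, and you also spell out the Jensen/subadditivity argument for the elementary inequality, whereas the paper restricts the infimum over all couplings to the subclass of product couplings and asserts the pointwise inequality without proof.
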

\begin{proof}
  For any $z, z' \in \mathbb{R}^{\ell_1 + \ell_2}$, it holds that
  $$
    \|z - z'\|_2^p \leq \max\left\{1, 2^{p/2-1}\right\}\left(\|x - x'\|_2^p + \|y - y'\|_2^p\right),
  $$
  where $x$ (or $x'$) and $y$ (or $y'$) denote the corresponding projections on $\mathbb{R}^{\ell_1}$ and $\mathbb{R}^{\ell_2}$, respectively. Let $\Gamma$ denote the set of all couplings of $\eta_1 \otimes \eta_2$ and $\xi_1 \otimes \xi_2$, and $\Gamma_i$ the set of all couplings of $\eta_i$ and $\xi_i$, $i = 1,2$.

  For any given measure $\gamma$, we say that two given projections $\pi_1$ and $\pi_2$ are independent in $\gamma$, if the pushforward of $\gamma$ under $(\pi_1, \pi_2)$ is equal to the product measure of the individual pushforwards, i.e.\@
  $$
    \gamma^{(\pi_1, \pi_2)} = \gamma^{\pi_1} \otimes \gamma^{\pi_2}.
  $$
  Any $\gamma \in \Gamma$ operates on $\mathbb{R}^{\ell_1} \times \mathbb{R}^{\ell_2} \times \mathbb{R}^{\ell_1} \times \mathbb{R}^{\ell_2}$, which we will associate with the four projections $\pi_i$ and $\tau_i$, $i = 1,2$, such that $\gamma^{\pi_i} = \eta_i$ and $\gamma^{\tau_i} = \xi_i$ for $i = 1,2$. Because $\Gamma$ is the set of all couplings of $\eta_1 \otimes \eta_2$ and $\xi_1 \otimes \xi_2$, it holds that $\pi_1$ and $\pi_2$ are independent in $\gamma$, for any $\gamma \in \Gamma$. The same is true for $\tau_1$ and $\tau_2$. Finally, let $\Gamma'$ be the subset of all measures $\gamma \in \Gamma$, such that $\pi_1$ and $\tau_2$ are independent in $\gamma$ and $\pi_2$ and $\tau_1$ are independent in $\gamma$. For such $\gamma \in \Gamma'$, the only pairs of projections that are not independent in $\gamma$ are $\pi_1$ and $\tau_1$, as well as $\pi_2$ and $\tau_2$. This means that
  $$
    \Gamma' = \{\gamma_1 \otimes \gamma_2 ~|~ \gamma_i \in \Gamma_i, i = 1,2\}.
  $$
  Therefore it holds that
  \begin{align*}
    d_p^p & (\eta_1 \otimes \eta_2, \xi_1 \otimes \xi_2) = \inf_{\gamma \in \Gamma} \int \|z - z'\|_2^p ~\mathrm{d}\gamma(z,z')                                                                                    \\
          & \leq \max\left\{1, 2^{p/2-1}\right\} \inf_{\gamma \in \Gamma} \left\{ \int \|x - x'\|_2^p ~\mathrm{d}\gamma(z,z') + \int \|y - y'\|_2^p ~\mathrm{d}\gamma(z,z') \right\}                               \\
          & \leq \max\left\{1, 2^{p/2-1}\right\} \inf_{\gamma \in \Gamma'} \left\{ \int \|x - x'\|_2^p ~\mathrm{d}\gamma(z,z') + \int \|y - y'\|_2^p ~\mathrm{d}\gamma(z,z') \right\}                              \\
          & = \max\left\{1, 2^{p/2-1}\right\} \left(\inf_{\gamma_1 \in \Gamma_1} \int \|x - x'\|_2^p ~\mathrm{d}\gamma_1(x,x') + \inf_{\gamma_2 \in \Gamma_2} \int \|y - y'\|_2^p ~\mathrm{d}\gamma_2(y,y')\right) \\
          & = \max\left\{1, 2^{p/2-1}\right\} \left(d_p^p(\eta_1, \xi_1) + d_p^p(\eta_2, \xi_2)\right).
  \end{align*}
\end{proof}

\begin{lemma}
  \label{lem:hoeffdinghH}
  If $(X_k)_{k \in \mathbb{N}}$ and $(Y_k)_{k \in \mathbb{N}}$ are independent, then for any $r \in \{0, 1, \ldots, 6\}$, the following two identities hold:
  \begin{align*}
    H_r\left(B_1, \ldots, B_r;F^{(X)} \otimes F^{(Y)}\right)      & = d^{-r}\sum_{1\leq i_1, \ldots, i_r \leq d} h_r\left(B_{1, i_1}, \ldots, B_{r, i_r} ; \theta\right),             \\
    H_r\left(B_1, \ldots, B_r; F_N^{(X)} \otimes F_N^{(Y)}\right) & = d^{-r}\sum_{1\leq i_1, \ldots, i_r \leq d}  h_r\left(B_{1, i_1}, \ldots, B_{r, i_r} ; \mu_n\otimes\nu_n\right),
  \end{align*}
  where $B_{i,j}$ denotes the $j$-th element from the $i$-th block.
\end{lemma}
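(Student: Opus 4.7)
The plan is to unfold both identities using the Hoeffding decomposition formula
\[
g_i(z_1, \ldots, z_i; \xi) = \sum_{k=0}^i \binom{i}{k}(-1)^{i-k}\mathbf{g}_k(z_1, \ldots, z_k; \xi)
\]
and to reduce matters to a single unconditional identity between the conditional kernels $\mathbf{H}_k$ and $\mathbf{h}_k$. Applying this formula to the right-hand side of each claimed identity and noting that $\mathbf{h}_k(B_{1, i_1}, \ldots, B_{k, i_k}; \xi)$ depends only on the first $k$ positional indices, the free sum over $i_{k+1}, \ldots, i_r$ contributes a factor $d^{r-k}$ that combines with the prefactor $d^{-r}$ to give $d^{-k}$. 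Hence it suffices to establish, for every $k \in \{0, 1, \ldots, 6\}$, the auxiliary identity
\[
\mathbf{H}_k(B_1, \ldots, B_k; \pi) = d^{-k}\sum_{i_1, \ldots, i_k = 1}^d \mathbf{h}_k(B_{1, i_1}, \ldots, B_{k, i_k}; \xi)
\]
for the appropriate pair $(\pi, \xi)$.

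For the first identity I would exploit stationarity. Under the hypothesis, $F^{(X)} \otimes F^{(Y)} = \mathcal{L}(Z_1, \ldots, Z_d)$, and the marginal distribution of any single coordinate of such a block equals $\mu \otimes \nu = \theta$, independent of the position. Starting from the defining expression
\[
\mathbf{H}_k(B_1, \ldots, B_k; \pi) = d^{-6}\sum_{i_1, \ldots, i_6}\int h(B_{1, i_1}, \ldots, B_{k, i_k}, z_{k+1}, \ldots, z_6)\prod_{j=k+1}^6 d\pi(B_j),
\]
the integrand depends on each $B_j$ only through the single coordinate $B_{j, i_j}$, so the inner integral reduces to $\mathbf{h}_k(B_{1, i_1}, \ldots, B_{k, i_k}; \theta)$ and is in particular independent of $i_{k+1}, \ldots, i_6$. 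Summing out these free indices produces a factor $d^{6-k}$, which together with the $d^{-6}$ prefactor yields the required $d^{-k}$.

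For the second identity the marginal of $B_{j, i_j}$ under $F_N^{(X)} \otimes F_N^{(Y)}$ is the position-dependent product measure $\mu_n^{(i_j, X)} \otimes \nu_n^{(i_j, Y)}$, where $\mu_n^{(i, X)} := N^{-1}\sum_{k=1}^N \delta_{X_{(k-1)d + i}}$ and analogously for $Y$. Here I would combine three ingredients: the product representation $h'(z_1, \ldots, z_6) = f(x_1, x_2, x_3, x_4)\, f(y_1, y_2, y_5, y_6)$ of the unsymmetrised kernel, which after symmetrisation lets one write $h$ as an average over permutations of products of an $X$-function and a $Y$-function; the product structure of $F_N^{(X)} \otimes F_N^{(Y)}$, which makes the $X$- and $Y$-integrals factor exactly; and the elementary averaging identity $d^{-1}\sum_{i=1}^d \mu_n^{(i, X)} = \mu_n$ together with its $Y$-analogue. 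Together these collapse the sums over $i_{k+1}, \ldots, i_6$ factor-by-factor into integrations against $\mu_n$ or $\nu_n$; the degeneracy of $h$ with respect to any product measure---highlighted in Section~\ref{subsec:proofoutline_bs} as the central feature of the kernel---is then invoked to absorb the remaining coupled contributions coming from those positions in $\sigma(\{1, 2\})$ at which the same positional index feeds both $f^X$ and $f^Y$ within a single permutation.

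The main technical obstacle is exactly this second case: the simultaneous appearance of the same positional index in the $X$- and $Y$-parts of each permutation prevents an immediate factorisation of the integrals, and the degeneracy of $h$ with respect to product measures is the precise tool that resolves these cross-terms and completes the reduction to the auxiliary identity with $\xi = \mu_n \otimes \nu_n$.
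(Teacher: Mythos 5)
Your handling of the first identity is fine and is essentially the paper's own argument (written out there for $r=2$): each integrated block enters $H$ through exactly one coordinate, under $H_0$ that coordinate has law $\mu\otimes\nu=\theta$ regardless of its position and of the within-block dependence, and the free indices sum out against the prefactor $d^{-6}$.

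For the second identity, however, there is a genuine gap, and it sits exactly at the step you defer to ``degeneracy with respect to product measures''. Carrying out your own reduction, the law of the coordinate $B'_{j,i_j}$ under $F_N^{(X)}\otimes F_N^{(Y)}$ is $\mu_n^{(i_j,X)}\otimes\nu_n^{(i_j,Y)}$, and since the \emph{same} index $i_j$ feeds both factors, averaging over $i_j$ produces the mixture $\bar\pi:=d^{-1}\sum_{i=1}^d \mu_n^{(i,X)}\otimes\nu_n^{(i,Y)}$ and not the product $\mu_n\otimes\nu_n$ (the average of products is not the product of the averages). Your reduction therefore yields, exactly, $\mathbf{H}_k\bigl(B_1,\ldots,B_k;F_N^{(X)}\otimes F_N^{(Y)}\bigr)=d^{-k}\sum_{i_1,\ldots,i_k}\mathbf{h}_k\bigl(B_{1,i_1},\ldots,B_{k,i_k};\bar\pi\bigr)$, and what remains to be proved is $\mathbf{h}_k(\cdots;\bar\pi)=\mathbf{h}_k(\cdots;\mu_n\otimes\nu_n)$. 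Degeneracy of $h$ with respect to product measures cannot deliver this, because $\bar\pi$ is not a product measure: already for $k=0$ your auxiliary identity reads $\mathbf{H}_0\bigl(F_N^{(X)}\otimes F_N^{(Y)}\bigr)=\mathbf{h}_0(\mu_n\otimes\nu_n)=\mathrm{dcov}(\mu_n\otimes\nu_n)=0$, whereas the left-hand side equals $\mathrm{dcov}(\bar\pi)$, which is strictly positive as soon as the position-wise empirical measures differ (for $N=1$, $d=2$ it equals $\tfrac14\|X_1-X_2\|_2\,\|Y_1-Y_2\|_2$). So the ``coupled contributions'' you identify are not absorbed by degeneracy; they are a genuine discrepancy between the two sides, and your proposal does not close it. It is only fair to add that the paper's own proof dispatches the second identity with the one-sentence claim that averaging over $i_3,\ldots,i_6$ ``gives as marginals $\mu_n\otimes\nu_n$'', i.e.\ it asserts precisely the decoupling you flag as the obstacle; you have located a real subtlety, but a complete argument would have to either establish the identity with $\bar\pi$ in place of $\mu_n\otimes\nu_n$ and then control the difference between the two (showing it is asymptotically negligible for the $V$-statistics in question), or restrict the argument to the permutation terms in which integrated variables occupy only the single-component positions of $h'$ — invoking degeneracy as you do does not prove the stated equality.
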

\begin{proof}
  We prove the claim for $r = 2$, noting that the other cases can be proven analogously. Let $F := F^{(X)} \otimes F^{(Y)}$. Using the linearity of the integral and the definition of $H$, we get that
  \begin{alignat*}{2}
    H_2(B_1, B_2;F) & = \frac{1}{d^6}\sum_{1 \leq i_1, \ldots, i_6 \leq d} &  & \left\{ \int h(B_{1,i_1}, B_{2,i_2}, B'_{3,i_3} \ldots, B'_{6,i_6}) ~\mathrm{d}F^4(B'_3, \ldots, B'_6) \right. \\
                    &                                                      &  & \quad- \int h(B_{1,i_1}, B'_{2,i_2} \ldots, B'_{6,i_6}) ~\mathrm{d}F^5(B'_2, \ldots, B'_6)                     \\
                    &                                                      &  & \quad-  \int h(B_{2,i_1}, B'_{2,i_2} \ldots, B'_{6,i_6}) ~\mathrm{d}F^5(B'_2, \ldots, B'_6)                    \\
                    &                                                      &  & \quad + \left. \int h(B'_{1,i_1}, \ldots, B'_{6,i_6}) ~\mathrm{d}F^6(B'_1, \ldots, B'_6) \right\}              \\
                    & = \frac{1}{d^6}\sum_{1 \leq i_1, \ldots, i_6 \leq d} &  & \left\{\int h(B_{1,i_1}, B_{2,i_2}, z_3 \ldots, z_6) ~\mathrm{d}\theta^4(z_3, \ldots, z_6)\right.              \\
                    &                                                      &  & \quad - \int h(B_{1,i_1}, z_2 \ldots, z_6) ~\mathrm{d}\theta^5(z_2, \ldots, z_6)                               \\
                    &                                                      &  & \quad - \int h(B_{2,i_1}, z_2 \ldots, z_6) ~\mathrm{d}\theta^5(z_2, \ldots, z_6)                               \\
                    &                                                      &  & \quad + \left.\int h(z_1 \ldots, z_6) ~\mathrm{d}\theta^6(z_1, \ldots, z_6)\right\} .
  \end{alignat*}
  Note that the summands in the last sum are equal to $h_2(B_{1, i_1}, B_{2, i_2};\theta)$, and thus the first identity is proven. What we have used here is the fact that in every summand, each block $B_j'$ has an effect only through a single coordinate $B'_{j, i_j}$, which has marginal distribution $\mu \otimes \nu = \theta$. Because there is exactly one such coordinate per block in every summand, we need not worry about the dependence between the coordinates.

  The second identity can be shown analogously, noting that taking the mean over $i_3, \ldots, i_6$ gives us as marginals $\mu_n \otimes \nu_n$.
\end{proof}

The following lemma is a slight extension of Lemma 3 by \cite{arcones}, where it is stated for $U$-statistics. Since we derive the result for $V$-statistics, we include a full proof for the sake of readability.
\begin{lemma}
  \label{lem:arconeslemma}
  Let $(U_i)_{i \in \mathbb{N}}$ be a strictly stationary and absolutely regular process and $g$ be a symmetric and degenerate kernel of order $m$, i.e.\@ $\mathbb{E}g(U_1, u_2, \ldots, u_m) = 0$ almost surely. Furthermore, assume that for some $p > 2$ it holds that
  $$
    \left\|g(U_{i_1}, \ldots, U_{i_m})\right\|_{L_p} < M
  $$
  for some $M$ uniform in $i_1, \ldots, i_m$. Then it holds that
  $$
    \mathbb{E}\left[\left(\sum_{1 \leq i_1, \ldots, i_m \leq n} g(U_{i_1}, \ldots, U_{i_m})\right)^2\right] \leq 8M^2m(2m)! \cdot n^m \left\{1 + \sum_{d=1}^n d^{m-1} \beta(d)^{(p-2)/p}\right\}.
  $$
\end{lemma}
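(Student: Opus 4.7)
The plan is to expand the square as a double sum of covariances, classify pairs of index tuples by the maximum gap among their combined sorted indices, apply the covariance inequality for $\beta$-mixing together with degeneracy of $g$ to bound each summand, and finally count the pairs that contribute at each value of the maximum gap.

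First, writing out the square gives $\sum_{I,J} \mathbb{E}[g(U_I)g(U_J)]$ where $I=(i_1,\ldots,i_m)$ and $J=(j_1,\ldots,j_m)$ range over $\{1,\ldots,n\}^m$ and $U_I := (U_{i_1},\ldots,U_{i_m})$. For each pair I would sort the combined multiset of $2m$ indices as $k_1\le\cdots\le k_{2m}$ and set $d(I,J) := \max_{1\le s<2m}(k_{s+1}-k_s)$, attained at some position $s^*(I,J)$. The double sum is then split according to the value of $d(I,J)$.

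The core estimate is to bound $|\mathbb{E}[g(U_I)g(U_J)]|$ by a constant multiple of $M^2 \beta(d)^{(p-2)/p}$ whenever $d := d(I,J) \ge 1$. Splitting the sorted indices at $s^*$ produces two index clusters whose generating $\sigma$-algebras are separated by at least $d$, and Yoshihara's (Doukhan--Rio) covariance inequality yields
$$
\bigl|\mathbb{E}[g(U_I)g(U_J)] - \mathbb{E}^{*}[g(U_I)g(U_J)]\bigr| \le 2(2\beta(d))^{(p-2)/p}\|g(U_I)\|_{L_p}\|g(U_J)\|_{L_p} \le 2M^2(2\beta(d))^{(p-2)/p},
$$
where $\mathbb{E}^{*}$ denotes expectation under the product of the two block joint laws. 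The residual term $\mathbb{E}^{*}[g(U_I)g(U_J)]$ is then handled by a short case analysis on how $s^*$ divides $I$ and $J$: if the split lies strictly between them, the term factors as $\mathbb{E}[g(U_I)]\mathbb{E}[g(U_J)]$, which is bounded by $M^2$ trivially, or again by a mixing comparison to its i.i.d.\ counterpart; if $s^*$ separates indices within a single tuple, then the ``isolated'' coordinate becomes independent of the rest under $\mathbb{E}^{*}$, and the degeneracy condition $\mathbb{E}g(U_1,u_2,\ldots,u_m)=0$ kills the term outright.

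The remaining task is to count, for each fixed $d$, the pairs $(I,J)$ whose combined sorted multiset has maximum gap exactly $d$. Anchoring the position $k_{s^*}$ of the gap contributes a factor of $n$, the other $m-1$ ordered coordinates on each side of the split are confined to clusters of width at most $d$ (contributing a factor of order $d^{m-1}$), and the $(2m)!$ accounts for reassigning the sorted positions back to either $I$ or $J$; summing over the admissible positions $s^*$ gives a total of at most $(2m)!\cdot n^m d^{m-1}$ pairs. Combining this with the per-pair estimate, summing over $d=1,\ldots,n$, and separately bounding the ``$d=0$'' contribution (tuples whose $2m$ indices cluster within width zero, i.e.\ the diagonal terms) by $n^m M^2$ yields the claimed bound after absorbing the remaining numerical constants into the factor $8m(2m)!$. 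The main obstacle is the case analysis for $\mathbb{E}^{*}[g(U_I)g(U_J)]$: the various ``mixed split'' configurations require careful use of degeneracy in order to uniformly recover the factor $\beta(d)^{(p-2)/p}$, and the combinatorial count must be kept sharp enough to give the polynomial weight $d^{m-1}$ rather than a larger power.
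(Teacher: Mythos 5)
Your overall strategy (blocking, a Yoshihara/Arcones-type covariance inequality, degeneracy to kill the product-measure term, then a gap-based count) is the same in spirit as the paper's, but the specific classification you chose -- the \emph{maximum} gap of the combined sorted $2m$ indices, with a two-block split at that gap -- breaks the argument at its key step, the residual term $\mathbb{E}^{*}[g(U_I)g(U_J)]$. A two-block split does not isolate any single coordinate, so your case (b) claim that ``the isolated coordinate becomes independent of the rest and degeneracy kills the term outright'' is false in general: for $m=2$, $I=J=(i,j)$ with $j-i=d$, the split separates $i$ from $j$ and $\mathbb{E}^{*}[g(U_i,U_j)g(U_i,U_j)]=\iint g(u,v)^2\,\mathrm{d}\xi(u)\,\mathrm{d}\xi(v)>0$; degeneracy only annihilates integrals of $g$ in a \emph{single, once-occurring} argument, and your split typically leaves repeated or shared indices on both sides. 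Your case (a) fallback is also insufficient: when the gap lies between $I$ and $J$, $\mathbb{E}[g(U_I)]\mathbb{E}[g(U_J)]$ is not small (degeneracy says nothing about dependent or repeated coordinates, e.g.\ $I=(i,\ldots,i)$), and bounding such terms by $M^2$ blows the budget -- the number of pairs with maximum gap $d$ in these configurations is of order $n\,d^{2m-2}$ (your own count is misstated here: there are $2m-2$ remaining coordinates confined to gaps $\leq d$, not $m-1$), so summing $M^2\cdot n\,d^{2m-2}$ over $d\leq n$ gives order $M^2 n^{2m-1}$, far exceeding the claimed $O(M^2 n^m)$ when $\beta$ is summable. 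The vague alternative ``a mixing comparison to its i.i.d.\ counterpart'' does not rescue this, since the within-cluster gaps can be zero or tiny, in which case the comparison yields nothing and the i.i.d.\ counterpart itself is nonzero for repeated indices.

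The paper avoids exactly these traps by a different bookkeeping: it first quarantines \emph{all} tuples with at most $m$ distinct indices into a trivial bucket of size $n^m$ bounded by $M^2$ (this is where the ``$+1$'' comes from, and where every repeated-index pathology lives), and for tuples with more than $m$ distinct indices it uses the pairwise-defined gaps $j_1=i_2-i_1$, $j_k=\min\{i_{2k-1}-i_{2k-2},\,i_{2k}-i_{2k-1}\}$, $j_m=i_{2m}-i_{2m-1}$. One shows the maximizing $j_K$ is strictly positive, and splitting there produces a three-block (or two-block at the ends) decomposition in which a \emph{single} coordinate, occurring exactly once among the $2m$ arguments, is made independent of everything else; only then does degeneracy force the product-law expectation to vanish, and Arcones' Lemma 2 yields the uniform bound $8M^2\beta(j_K)^{(p-2)/p}$. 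This same pairwise structure is what delivers the $n^m d^{m-1}$ count (one factor $n$ and one factor $d$ per pair of indices). To repair your proof you would essentially have to reproduce this construction -- your max-gap/two-block classification cannot be patched by a ``short case analysis.''
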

\begin{proof}
  Lemma 3 in \cite{arcones} gives a proof in the case of $U$-statistics. By their method of proof we consider
  \begin{align*}
     & \mathbb{E}\left[\left(\sum_{1 \leq i_1, \ldots, i_m \leq n} g(U_{i_1}, \ldots, U_{i_m})\right)^2\right]                                                                                              \\
     & =\sum_{1 \leq i_1, \ldots, i_{2m} \leq n} \mathbb{E}[g(U_{i_1}, \ldots, U_{i_m})g(U_{i_{m+1}}, \ldots, U_{i_{2m}})]                                                                                  \\
     & = \sum_{\sigma \in \mathcal{S}_{2m}} \sum_{1 \leq i_1 \leq \ldots \leq i_{2m} \leq n} \mathbb{E}[g(U_{i_{\sigma(1)}}, \ldots, U_{i_{\sigma(m)}})g(U_{i_{\sigma(m+1)}}, \ldots, U_{i_{\sigma(2m)}})].
  \end{align*}
  Using the definition
  $$
    I_l := \left\{ (i_1, \ldots, i_{2m}) ~|~ 1 \leq i_1 \leq \ldots \leq i_{2m} \leq n \land \#\{i_1, \ldots, i_{2m}\} = l\right\},
  $$
  we can write the sum above as
  \begin{equation}
    \label{eq:lsummesymmetrisch}
    \sum_{l=1}^{2m} \sum_{\sigma \in \mathcal{S}_{2m}}\sum_{(i_1, \ldots, i_{2m}) \in I_l} \mathbb{E}[g(U_{\sigma(i_1)}, \ldots, U_{\sigma(i_m)})g(U_{\sigma(i_{m+1})}, \ldots, U_{\sigma(i_{2m})})].
  \end{equation}
  We will bound this object using different methods depending on $l$.

  By using the Cauchy-Schwarz inequality and the monotony of the $L_p$-norm, the expected value appearing in Eq.\@ \eqref{eq:lsummesymmetrisch} can be bounded by $M^2$. Furthermore, if $1 \leq l \leq m$, $\#I_l$ can be bounded by $n^m$. Thus,
  \begin{align}
    \begin{split}
      \label{eq:lsummesymmetrisch2}
      &\sum_{l=1}^{m} \sum_{\sigma \in \mathcal{S}_{2m}}\sum_{(i_1, \ldots, i_{2m}) \in I_l} \mathbb{E}[g(U_{\sigma(i_1)}, \ldots, U_{\sigma(i_m)})g(U_{\sigma(i_{m+1})}, \ldots, U_{\sigma(i_{2m})})] \\
      &\quad \leq M^2 m (2m)! \cdot n^m.
    \end{split}
  \end{align}
  Next, suppose that $(i_1, \ldots, i_{2m}) \in I_l$ for some $m < l \leq 2m$. Define
  $$
    j_k := \begin{cases} i_2 - i_1                                      & \textrm{for } k = 1             \\
              \min\{i_{2k-1} - i_{2k-2}, i_{2k} - i_{2k-1}\} & \textrm{for } 2 \leq k \leq m-1 \\
              i_{2m} - i_{2m-1}                              & \textrm{for } k=m
    \end{cases}
  $$
  We want to make use of Lemma 2 in \cite{arcones}, which allows us to compare the expected value $\mathbb{E}[g(U_{i_{\sigma(1)}}, \ldots, U_{i_{\sigma(m)}})g(U_{i_{\sigma(m+1)}}, \ldots, U_{i_{\sigma(2m)}})]$ with the integral
  \begin{align*}
     & \int g(u_{\sigma(1)}, \ldots, u_{\sigma(m)})g(u_{\sigma(m+1)}, \ldots, u_{\sigma(2m)})                                                                                                          \\
     & \qquad\mathrm{d}\left(\mathcal{L}(U_{i_1}, \ldots, U_{i_s}) \otimes \mathcal{L}(U_{i_{s+1}}, \ldots, U_{i_t}) \otimes \mathcal{L}(U_{i_{t+1}}, \ldots, U_{i_{2m}})\right)(u_1, \ldots, u_{2m}),
  \end{align*}
  where $s$ and $t$ are chosen such that $1 \leq i_s < i_{s+1} \leq i_t < i_{t+1} \leq 2m$, i.e.\@ we separate the original sequence into three independent blocks. Because of the degeneracy of $g$ and the specific choice of our blocks, this will allow us to bound $\mathbb{E}[g(U_{i_{\sigma(1)}}, \ldots, U_{i_{\sigma(m)}})g(U_{i_{\sigma(m+1)}}, \ldots, U_{i_{\sigma(2m)}})]$ itself.

  More precisely, in this situation, we want to determine some number $K$ for which it holds that $j_K = \max\{j_1, \ldots, j_m\}$ and then separate the original sequence $U_{i_1}, \ldots, U_{i_{2m}}$ into three blocks, $\{U_{i_1}, \ldots, U_{i_{2K-2}}\}$, $\{U_{i_{2K-1}}\}$ and $\{U_{i_{2K}}, \ldots, U_{i_{2m}}\}$. If the maximum is achieved at $j_1$ or $j_m$, we instead separate the sequence into two blocks, $\{U_{i_1}\}$ and $\{U_{i_2}, \ldots, U_{i_{2m}}\}$, or $\{U_{i_1}, \ldots, U_{i_{2m-1}}\}$ and $\{U_{i_{2m}}\}$, respectively.

  However, for this method to work, we need to ensure that $i_{2K-1}$ is distinct from both $i_{2K-2}$ and $i_{2K}$ if $2 \leq K \leq m-1$; that $i_1$ is distinct from $i_2$ if $K = 1$; or that $i_{2m-1}$ is distinct from $i_{2m}$ if $K = m$. Equivalently, one needs to show that $j_K > 0$.

  Because $j_K = \max\{j_1, \ldots, j_m\}$, it suffices to show that under our assumptions, there is some $1 \leq k \leq m$ such that $j_k > 0$. Suppose that this is not true, i.e.\@ $j_k = 0$ for all $1 \leq k \leq m$. This implies
  \begin{align*}
     & i_1 = i_2,                                                                          \\
     & i_{2k-1} = i_{2k-2} ~\lor~ i_{2k} = i_{2k-1} \quad \textrm{for } 2 \leq k \leq m-1, \\
     & i_{2m} = i_{2m - 1}.
  \end{align*}
  Under this set of constraints, $\#\{i_1, \ldots, i_{2m}\}$ is maximised if for any $2 \leq k \leq m-1$ either $i_{2k - 1} = i_{2k-2}$ or $i_{2k} = i_{2k-1}$ hold, but not both, and $i_{2m-1} \neq i_{2m-2}$. In this case, $\#\{i_1, \ldots, i_{2m}\} = m$. This can be seen via induction over $m$. Therefore, $j_K = 0$ implies $\#\{i_1, \ldots, i_{2m}\} \leq m$; but we have assumed that $(i_1, \ldots, i_{2m}) \in I_l$ for some $m < l \leq 2m$, and so $j_K > 0$.

  Let us now formalise the `comparison' of integrals described above. Suppose that we have the equality $j_K = \max\{j_1, \ldots, j_m\}$, then by Lemma 2 in \cite{arcones}, the degeneracy of $g$ and the observations above, it holds that
  $$
    |\mathbb{E}[g(U_{i_{\sigma(1)}}, \ldots, U_{i_{\sigma(m)}})g(U_{i_{\sigma(m+1)}}, \ldots, U_{i_{\sigma(2m)}})]| \leq 8M^2 \beta(j_K)^{(p-2)/p}.
  $$
  Now if $K = 1$ and $j_K = j_1 = d$, then there are $n-d \leq n$ possible values for $i_1$ which also determine $i_2$. Next, because $j_1 \geq j_2, \ldots, j_m$, it holds that
  $$
    j_2 = \min\{i_3 - i_2, i_4 - i_3\} \leq d.
  $$
  If $i_4 - i_3 \leq i_3 - i_2$, then $i_4 - i_3 \leq d$ and thus there are at most $d$ possible values for $i_4$ and at most $n$ possible values for $i_3$ (this is a somewhat trivial bound, as there are at most $n$ possible values for any index. The important part is that we can bound the number of possible values for one of the two indices, in this case $i_4$, by something other than $n$). Conversely, if $i_3 - i_2 \leq i_4 - i_3$, then there are at most $d$ possible values for $i_3$ and at most $n$ possible values for $i_4$. In general, it holds for any $2 \leq k \leq m$ that there are at most $nd$ possible values for $(i_{2k-1}, i_{2k})$ and thus the number of all ordered collections of indices $(i_1, \ldots, i_{2m})$ such that $j_1 \geq j_2, \ldots, j_m$ is bounded by $n^m d^{m-1}$. The same method can be employed for any $1 \leq K \leq m$, and thus we obtain that
  \begin{align}
    \begin{split}
      \label{eq:lsummesymmetrisch3}
      &\sum_{l = m+1}^{2m} \sum_{\sigma \in \mathcal{S}_{2m}}\sum_{(i_1, \ldots, i_{2m}) \in I_l} \mathbb{E}[g(U_{\sigma(i_1)}, \ldots, U_{\sigma(i_m)})g(U_{\sigma(i_{m+1})}, \ldots, U_{\sigma(i_{2m})})] \\
      &\quad \leq 8M^2 m(2m)! \cdot n^m \sum_{d=0}^n d^{m-1} \beta(d)^{(p-2)/p}.
    \end{split}
  \end{align}
  Combining Eqs.\@ \eqref{eq:lsummesymmetrisch2} and \eqref{eq:lsummesymmetrisch3}, we can bound Eq.\@ \eqref{eq:lsummesymmetrisch} by
  $$
    8M^2m(2m)!\cdot n^m \left\{1 + \sum_{d=1}^n d^{m-1} \beta(d)^{(p-2)/p}\right\},
  $$
  which proves the lemma.
\end{proof}

\begin{corollary}
  \label{cor:arconescor}
  Under the conditions of Lemma \ref{lem:arconeslemma}, if $\beta(n) = \mathcal{O}\left(n^{-r}\right)$ for some growth rate $r > mp/(p-2)$, it holds that
  $$
    \mathbb{E}\left[\left(\sum_{1 \leq i_1, \ldots, i_m \leq n} g(U_{i_1}, \ldots, U_{i_m})\right)^2\right] \leq 8M^2m(2m)! \cdot n^m \left\{1 + \sum_{d=1}^\infty d^{m-1} \beta(d)^{(p-2)/p}\right\},
  $$
  and the series on the right-hand side converges. Note that the bound on the right-hand side may be written as $\mathrm{const} \cdot n^m$, where $\mathrm{const}$ only depends on the growth rate of the mixing coefficients, the degree of the kernel $g$ and the uniform bound $M$.
\end{corollary}
\begin{proof}
  By assumption on the mixing coefficients, there is some constant $c > 0$ such that $\beta(n) \leq c n^{-r}$ for all $n \in \mathbb{N}$. Therefore,
  $$
    \sum_{d=1}^n d^{m-1} \beta(d)^{(p-2)/p} \leq c \sum_{d=1}^n d^{m-1} d^{-r(p-2)/p} =  c \sum_{d=1}^n d^{m-1 - r(p-2)/p},
  $$
  and because $r > p/(p-2)$ by assumption, it holds that $m-1 - r(p-2)/p < -1$. Thus,
  $$
    \sum_{d=1}^n d^{m-1} \beta(d)^{(p-2)/p} \leq \sum_{d=1}^\infty d^{m-1} \beta(d)^{(p-2)/p} < \infty,
  $$
  and so Lemma \ref{lem:arconeslemma} yields the desired result.
\end{proof}

\begin{lemma}
  \label{lem:hoeffdingdominiert}
  Let $Z_1$ have finite $(2+\delta)$-moments for some $\delta > 0$. If $(X_k)_{k \in \mathbb{N}}$ and $(Y_k)_{k \in \mathbb{N}}$ are independent, it holds that
  $$
    \mathbb{E}\left[\left(nV - 15nV_n^{(2)}(h;\theta)\right)^2\right] \xrightarrow[n \to \infty]{} 0.
  $$
  If we furthermore assume that $d^3 = o(n)$, it holds that
  $$
    \mathbb{E}\left[\left(n\tilde{V} - 15n\tilde{V}_n^{(2)}(h;\theta)\right)^2\right] \xrightarrow[n \to \infty]{} 0,
  $$
  and
  $$
    \mathbb{E}\left[\left(nV^* - 15nV_n^{*(2)}(h;\mu_n \otimes \nu_n)\right)^2\right] \xrightarrow[n \to \infty]{} 0.
  $$
\end{lemma}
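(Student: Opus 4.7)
The plan is to exploit the Hoeffding decomposition in combination with the fact, noted in Section \ref{subsec:proofoutline_bs}, that the kernel $h$ is degenerate with respect to any product measure. I would write
$$
V_n(h;\xi) = \sum_{r=0}^{6} \binom{6}{r} V_n^{(r)}(h;\xi)
$$
for $\xi \in \{\theta, \mu_n \otimes \nu_n\}$, both of which are product measures under $H_0$. The degeneracy of $h$ then forces both the constant and the linear term to vanish, leaving
$$
V - 15 V_n^{(2)}(h;\xi) = 20 V_n^{(3)}(h;\xi) + 15 V_n^{(4)}(h;\xi) + 6 V_n^{(5)}(h;\xi) + V_n^{(6)}(h;\xi).
$$
It therefore suffices to establish $\mathbb{E}\bigl[(n V_n^{(r)})^2\bigr] \to 0$ for each $r \in \{3,4,5,6\}$, separately for the triangular-array and the bootstrap statistic.

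For $\tilde{V}$, I would observe that the array $\tilde{U}$ consists of $N$ independent blocks whose internal dependence is inherited from the original $\beta$-mixing process; in particular, its $\beta$-mixing coefficients are dominated by those of $(Z_k)_{k \in \mathbb{N}}$ for lags below $d$ and vanish at lags at least $d$. The Hoeffding kernels $h_r(\cdot;\theta)$ are symmetric and degenerate of order $r$, and the elementary bound
$$
|h(z_1,\ldots,z_6)| \leq C\bigl(\|x_1\|_2 + \cdots + \|x_4\|_2\bigr)\bigl(\|y_1\|_2 + \|y_2\|_2 + \|y_5\|_2 + \|y_6\|_2\bigr),
$$
together with the $(2+\delta)$-moment assumption, gives uniform $L_p$-control of $h_r$ for some $p > 2$. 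Applying Lemma \ref{lem:arconeslemma} with $m = r$ then yields
$$
\mathbb{E}\left[\left(\sum_{1 \le i_1,\ldots,i_r \le n} h_r(\tilde{U}_{i_1,n},\ldots,\tilde{U}_{i_r,n};\theta)\right)^2\right] = O(n^r),
$$
hence $\mathbb{E}\bigl[(n \tilde{V}_n^{(r)})^2\bigr] = O(n^{2-r}) \to 0$ for every $r \ge 3$.

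For $V^*$, I would condition on $Z_1,\ldots,Z_n$. Given the data, the bootstrap observations $Z^*_1,\ldots,Z^*_n$ form a sequence of $N$ conditionally independent blocks with common marginal $F_N^{(X)} \otimes F_N^{(Y)}$, and the dependence within each block is that of a uniformly chosen original block. The conditional $\beta$-mixing coefficients of this sequence vanish for lags at least $d$, so Lemma \ref{lem:arconeslemma} applies conditionally and delivers a bound depending only on the empirical $(2+\delta)$-moments of $X$ and $Y$. These empirical moments are almost surely bounded uniformly in $n$ by strict stationarity and the moment assumption on $Z_1$, so taking unconditional expectations yields $\mathbb{E}\bigl[(n V_n^{*(r)}(h;\mu_n \otimes \nu_n))^2\bigr] = O(n^{2-r}) \to 0$ for $r \ge 3$.

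The hard part will be the bootstrap step, since the Hoeffding decomposition is taken with respect to the random product measure $\mu_n \otimes \nu_n$ and one must verify that the Hoeffding kernels $h_r(\cdot;\mu_n \otimes \nu_n)$ still satisfy the uniform moment bounds required by Lemma \ref{lem:arconeslemma}. The condition $d^3 = o(n)$ will enter when controlling the diagonal contributions in which several of the indices $i_1,\ldots,i_r$ collapse within a single bootstrap block, which is precisely where the block-independent structure contributes nontrivially to $\mathbb{E}[(n V_n^{*(r)})^2]$; keeping these contributions of smaller order than $n^{r-2}$ is the main technical point to check.
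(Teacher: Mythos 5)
Your overall strategy (Hoeffding decomposition, degeneracy of $h$ with respect to product measures, a second-moment bound via Lemma \ref{lem:arconeslemma}, conditioning plus an ergodic argument for the bootstrap moments) is the same as the paper's, but the level at which you invoke Lemma \ref{lem:arconeslemma} creates a genuine gap. You apply it to the individual observations $\tilde{U}_{i,n}$, resp.\ $Z_i^*$, using observation-level mixing coefficients inherited from $(Z_k)_{k\in\mathbb{N}}$. First, the claimed rate $\mathbb{E}\bigl[(n\tilde{V}_n^{(r)})^2\bigr] = O(n^{2-r})$ only follows from that lemma if $\sum_k k^{r-1}\beta(k)^{(p-2)/p} < \infty$ for $r$ up to $6$, a mixing-rate condition that is neither assumed in the lemma (which carries no mixing hypothesis at all) nor guaranteed by the assumptions of Theorem \ref{thm:bootstrap} when $\delta$ is small; without it the truncated sum contributes an extra factor of order $d^{\,r}$. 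It is exactly this block effect -- equivalently, the fact that the natural bound is in terms of the number of blocks $N = n/d$, namely $\mathbb{E}\bigl[\tilde{V}_n^{(r)}(h;\theta)^2\bigr] = O(N^{-r})$, so that $n^2\,O(N^{-3}) = O(d^3/n)$ -- which is where the hypothesis $d^3 = o(n)$ actually enters; your attribution of it to ``diagonal contributions'' inside a block is not where it is needed. Second, and more seriously, for $V^*$ the observation-level argument does not go through: conditionally on $Z_1,\ldots,Z_n$, the marginal law of a single bootstrap observation $Z^*_{(k-1)d+j}$ is the empirical measure of the $j$-th within-block coordinates across the $N$ blocks, not $\mu_n \otimes \nu_n$, so the kernels $h_r(\cdot;\mu_n\otimes\nu_n)$ are not degenerate with respect to the conditional observation-level marginals, and the bootstrap sequence is not conditionally stationary observation-by-observation; both degeneracy and stationarity are hypotheses of Lemma \ref{lem:arconeslemma}.

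The missing ingredient is the block-level reformulation through Lemma \ref{lem:hoeffdinghH}: under the hypothesis, $\tilde{V}_n^{(r)}(h;\theta) = \tilde{V}_N^{(r)}\bigl(H;F^{(X)}\otimes F^{(Y)}\bigr)$ and $V_n^{*(r)}(h;\mu_n\otimes\nu_n) = V_N^{*(r)}\bigl(H;F_N^{(X)}\otimes F_N^{(Y)}\bigr)$. At the block level the $\tilde{B}_k$ are genuinely iid, and conditionally on the data the $B_k^*$ are iid with law exactly $F_N^{(X)}\otimes F_N^{(Y)}$, so Lemma \ref{lem:arconeslemma} applies with vanishing mixing coefficients and exact degeneracy, yielding $O(N^{-r})$ once one verifies uniform $(2+\delta)$-moments of $H$ -- via the bound $|h'(z_1,\ldots,z_6)| \leq 4\bigl(\sum_{k}\|x_k\|_2\bigr)\bigl(\sum_{k}\|y_k\|_2\bigr)$ and, for the conditional bootstrap moments, Birkhoff's ergodic theorem, as you correctly anticipate. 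Replacing your observation-level mixing argument by this block-level identity closes the gap and removes any need for a mixing-rate assumption in the lemma.
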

\begin{proof}
  We first consider
  $$
    \tilde{V}_n^{(r)}(h;\theta) = \tilde{V}_N^{(r)}\left(H;F^{(X)} \otimes F^{(Y)}\right),
  $$
  where the equality holds due to Lemma \ref{lem:hoeffdinghH}. The $n$-th row of the triangular array $\tilde{Z}$ can be taken as the first $N$ observations of an iid process $(\tilde{B}_k)_{k \in \mathbb{N}}$ with marginal distribution $\mathcal{L}(Z_1, \ldots, Z_d)$. By Corollary \ref{cor:arconescor}, we only need to find a uniform bound for
  $$
    \mathbb{E}\left[H\left(\tilde{B}_{i_1}, \ldots, \tilde{B}_{i_6}\right)^{2+\delta}\right].
  $$
  By definition of $H$, this is possible if we can instead find a uniform bound for
  \begin{align*}
    \mathbb{E}\left[h\left(\tilde{Z}_{i_1}, \ldots, \tilde{Z}_{i_6}\right)^{2+\delta}\right],
  \end{align*}
  which, because $\theta = \mu \otimes \nu$, in turn follows if we can find a finite bound for
  $$
    \mathbb{E}\left[f_{\ell_1}\left(\tilde{X}_{i_1}, \ldots, \tilde{X}_{i_4}\right)^{2+\delta}\right]
  $$
  for any $1 \leq i_1, \ldots, i_4 \leq n$; analogous arguments can then be used for $f_{\ell_2}$. Recall the definition of $f_{\ell_1}$ from Section \ref{sec:test_statistic_and_bs_technique}. By the triangle inequality, it holds that
  \begin{equation}
    \label{eq:fdreieck}
    f_{\ell_1}(x_1, \ldots, x_4) \leq  2\|x_2 - x_3\|_2 \land 2\|x_1 - x_4\|_2
  \end{equation}
  for all $x_1, \ldots, x_4$. The uniform bound then follows because $Z_1$ has finite $(2+\delta)$-moments by assumption.

  Thus, by Corollary \ref{cor:arconescor}, it holds for all $2 \leq r \leq 6$ that
  $$
    \mathbb{E}\left[\tilde{V}_n^{(r)}(h;\theta)^2\right] = \mathcal{O}\left(N^{-r}\right),
  $$
  with the constant involved being depending only on $r$ and the uniform bound just derived. Therefore,
  \begin{align*}
    \mathbb{E}\left[\left(n\tilde{V} - 15n\tilde{V}_n^{(2)}(h;\theta)\right)^2\right] & = n^2\mathbb{E}\left[\left(\sum_{i=3}^6 {6 \choose i} V_n^{(i)}(h;\theta)\right)^2\right] \\
                                                                                      & = n^2\mathcal{O}\left(N^{-3}\right) = \mathcal{O}\left(\frac{d^3}{n}\right),
  \end{align*}
  and the bound on the right-hand side is $o(1)$ by assumption.

  One can show that
  $$
    \mathbb{E}\left[\left(nV - 15nV_n^{(2)}(h;\theta)\right)^2\right] \xrightarrow[n \to \infty]{} 0
  $$
  in much the same way, even without the assumption that $d^3 = o(n)$.

  Now let us consider
  $$
    V_n^{*(r)}\left(h;\mu_n \otimes \nu_n\right) = V_N^{*(r)}\left(H;F_N^{(X)} \otimes F_N^{(Y)}\right).
  $$
  After conditioning on $Z_1, \ldots, Z_n$, we can once more use Corollary \ref{cor:arconescor}. However, we now need to consider the expected values conditional on $Z_1, \ldots, Z_n$. Let $j_1, \ldots, j_6$ be any collection of indices. Then,
  \begin{align}
    \begin{split}
      \label{eq:xir_bootstrap}
      &\mathbb{E}\left[H(B_{j_1}^*, \ldots, B_{j_6}^*)^{2+\delta} ~\Big|~ Z_1, \ldots, Z_n\right] \\
      &\quad= \mathbb{E}\left[\left(d^{-6}\sum_{1\leq i_1, \ldots, i_6 \leq d} h\left(B^*_{j_1, i_1}, \ldots, B^*_{j_6, i_6}\right)\right)^{2+\delta} ~\Big|~ Z_1, \ldots, Z_n\right] \\
      &\quad\leq d^{-6} \sum_{1 \leq i_1, \ldots, i_6 \leq d} \mathbb{E}\left[h\left(B^*_{j_1, i_1}, \ldots, B^*_{j_6, i_6} \right)^{2+\delta} ~\Big|~ Z_1, \ldots, Z_n\right] \\
    \end{split}
  \end{align}
  Applying the Jensen inequality once more shows that the $(2+\delta)$-th power of $h$, which by definition is the symmetrisation of $h'$, is bounded by the symmetrisation of the $(2+\delta)$-th powers of $h'$. This only changes the order of the arguments of $h'$, so we will have proven our claim if we can show that
  \begin{equation}
    \label{eq:xir_bootstrap2}
    d^{-6}\sum_{1 \leq i_1, \ldots, i_6 \leq d} \mathbb{E}\left[ h'\left(B_{j_1, i_1}^*, \ldots, B_{j_6, i_6}^*\right)^{2+\delta} ~\Big|~ Z_1, \ldots, Z_n\right]
  \end{equation}
  is uniformly bounded for any collection of indices $1 \leq j_1, \ldots, j_6 \leq N$. Using Eq.\@ \eqref{eq:fdreieck}, it holds for any $z_1, \ldots, z_6$ that
  $$
    |h'(z_1, \ldots, z_6)| \leq 4\left(\sum_{k=1}^6 \|x_k\|_2\right)\left(\sum_{k=1}^6 \|y_k\|_2\right),
  $$
  Now, because the bootstrapped $X$-blocks and $Y$-blocks, denoted for the moment by $B_k^{*(X)}$ and $B_k^{*(Y)}$, form two independent iid sequences, Eq.\@ \eqref{eq:xir_bootstrap2} can be bounded by
  \begin{align*}
     & 4^{2+\delta}d^{-6}\sum_{1 \leq i_1, \ldots, i_6 \leq d} \left\{\mathbb{E}\left[ \left(\sum_{k=1}^6 \left\|B_{j_k, i_k}^{*(X)}\right\|_2\right)^{2+\delta} ~\Big|~ X_1, \ldots, X_n\right]\right.                                                                        \\
     & \phantom{4^{2+\delta}d^{-6}\sum_{1 \leq i_1, \ldots, i_6 \leq d}}\quad\left.\cdot \mathbb{E}\left[ \left(\sum_{k=1}^6 \left\|B_{j_k, i_k}^{*(Y)}\right\|_2\right)^{2+\delta} ~\Big|~ Y_1, \ldots, Y_n\right] \right\}                                                   \\
     & \quad \leq 4^{2+\delta} 6^{2(1+\delta)} d^{-6}\sum_{1 \leq i_1, \ldots, i_6 \leq d}\left\{ \mathbb{E}\left[ \sum_{k=1}^6 \left\|B_{1, i_k}^{*(X)}\right\|_2^{2+\delta} ~\Big|~ X_1, \ldots, X_n\right]\right.                                                           \\
     & \phantom{\quad \leq 4^{2+\delta} 6^{2(1+\delta)} d^{-6}\sum_{1 \leq i_1, \ldots, i_6 \leq d}}\quad \left.\cdot\mathbb{E}\left[ \sum_{k=1}^6 \left\|B_{1, i_k}^{*(Y)}\right\|_2^{2+\delta} ~\Big|~ Y_1, \ldots, Y_n\right]\right\}                                       \\
     & \quad = 4^{2+\delta} 6^{2(1+\delta)} \sum_{k,l=1}^6 d^{-6} \sum_{1 \leq i_1, \ldots, i_6 \leq d}\left\{ \mathbb{E}\left[ \left\|B_{1, i_k}^{*(X)}\right\|_2^{2+\delta} ~\Big|~ X_1, \ldots, X_n\right] \right.                                                          \\
     & \phantom{\quad = 4^{2+\delta} 6^{2(1+\delta)} \sum_{k,l=1}^6 d^{-6} \sum_{1 \leq i_1, \ldots, i_6 \leq d}}\quad\left.\cdot\mathbb{E}\left[ \left\|B_{1, i_l}^{*(Y)}\right\|_2^{2+\delta} ~\Big|~ Y_1, \ldots, Y_n\right]\right\}                                        \\
     & \quad = 4^{2+\delta} 6^{2(1+\delta)} d^{-2} \sum_{1 \leq i_1, i_2 \leq d} \mathbb{E}\left[ \left\|B_{1, i_1}^{*(X)}\right\|_2^{2+\delta} ~\Big|~ X_1, \ldots, X_n\right] \mathbb{E}\left[ \left\|B_{1, i_2}^{*(Y)}\right\|_2^{2+\delta} ~\Big|~ Y_1, \ldots, Y_n\right] \\
     & \quad = 4^{2+\delta} 6^{2(1+\delta)} \left((Nd)^{-1} \sum_{k=1}^N \sum_{i=1}^d \left\|X_{(k-1)d+i}\right\|_2^{2+\delta}\right)\left((Nd)^{-1} \sum_{k=1}^N \sum_{i=1}^d \left\|Y_{(k-1)d+i}\right\|_2^{2+\delta}\right)                                                 \\
     & \quad = 4^{2+\delta} 6^{2(1+\delta)} \int \left\|x\right\|_2^{2+\delta} ~\mathrm{d}\mu_n(x) \int \left\|y\right\|_2^{2+\delta} ~\mathrm{d}\nu_n(y)                                                                                                                      \\
     & \quad \xrightarrow[n \to \infty]{a.s.} 4^{2+\delta} 6^{2(1+\delta)} \int \left\|x\right\|_2^{2+\delta} ~\mathrm{d}\mu(x) \int \left\|y\right\|_2^{2+\delta} ~\mathrm{d}\nu(y)
  \end{align*}
  by Birkhoff's Ergodic Theorem. The integrals in the last line are finite because $X_1$ and $Y_1$ have finite $(2+\delta)$-moments by assumption. From this it follows that we can almost surely bound Eq.\@ \eqref{eq:xir_bootstrap2} and therefore Eq.\@ \eqref{eq:xir_bootstrap} uniformly in $j_1, \ldots, j_6$ and for almost all $n$, and with a sufficiently large constant even for all $n$. Furthermore, because of the almost sure convergence, this uniform bound only depends on the $(2+\delta)$-moments of $\mu$ and $\nu$. Therefore, by Corollary \ref{cor:arconescor},
  $$
    \mathbb{E}\left[V_n^{*(r)}\left(h;\mu_n \otimes \nu_n\right)^2 ~\Big|~ Z_1, \ldots, Z_n\right] = \mathcal{O}\left(N^{-r}\right),
  $$
  where the constant involved only depends on $r$ and the $(2+\delta)$-moments of $\mu$ and $\nu$. Thus,
  \begin{align*}
    \mathbb{E}\left[V_n^{*(r)}\left(h;\mu_n \otimes \nu_n\right)^2 \right] & = \mathbb{E}\left[\mathbb{E}\left[V_n^{*(r)}\left(h;\mu_n \otimes \nu_n\right)^2 ~\Big|~ Z_1, \ldots, Z_n\right]\right] \\
                                                                           & = \mathcal{O}\left(N^{-r}\right)
  \end{align*}
  for all $2 \leq r \leq 6$, with the constant involved depending only on $r$ and the $(2+\delta)$-moments of $\mu$ and $\nu$. From here we can proceed as before, concluding the proof.
\end{proof}

The previous results have given us enough tools to derive a bound in terms of Wasserstein distances of the block distributions. We will need to show that these converge in some sense to $0$. This is achieved by the next lemma, which is simply an application of the results of Section \ref{sec:wasserstein} to the situation at hand.

\begin{lemma}
  \label{lem:opoly}
  Suppose that the conditions of Corollary \ref{cor:boundwasserstein} are fulfilled with the bound $M = M(d)$ growing at most exponentially in $d$. If $d = \mathcal{O}\left(\log(n)^\gamma\right)$ for some $0 < \gamma < 1/2$ and $d \to \infty$ as $n \to \infty$, and $Z_1$ has finite $q$-moments for some $q > p > 2$, then, for any $s > 0$, it holds that
  $$
    d_p^p\left(F_N^{(X)}, F^{(X)}\right) = o_\mathbb{P}\left(d^{-s}\right),
  $$
  and
  $$
    d_p^p\left(F_N^{(Y)}, F^{(Y)}\right) = o_\mathbb{P}\left(d^{-s}\right).
  $$
\end{lemma}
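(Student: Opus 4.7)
I would apply Corollary~\ref{cor:boundwasserstein} directly to $\xi = F^{(X)}$ and $\xi_n = F_N^{(X)}$ (the argument for the $Y$-marginal is identical), derive the resulting explicit bound on $\mathbb{E} d_p^p(F_N^{(X)}, F^{(X)})$, verify that this bound is $o(d^{-s})$ for every $s > 0$, and conclude via Markov's inequality. The non-overlapping block sequence $(B_{X,k})_{k \in \mathbb{N}}$ is strictly stationary and inherits the $\alpha$-mixing decay of $(X_k)_{k \in \mathbb{N}}$, its marginal $F^{(X)}$ has finite $q$-moments (bounded by $d^{q/2}\|X_1\|_{L_q}^q$ via remark (i) after Theorem~\ref{thm:wasserstein_gesamt}), and the constant $M(d)$ in Assumption~\ref{ass:cubeassumption} is exponential in $d$ by hypothesis, so the corollary applies.

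\textbf{First term.} The first term in the corollary's bound contains $2^{3d/2}\mathfrak{d}^p$ (exponential times polynomial in $d$) together with $M(d)^{(d/2-p)/d}$ and $M(d)^{1/d}$; under $M(d) \leq e^{Cd}$ these are bounded by $e^{Cd/2}$ and $e^C$ respectively. Collecting all of these, there is some constant $c_1 > 0$ such that the first term is dominated by $\exp(c_1 d) \cdot n^{-(p-2)/(4d)}$ for large $n$. Substituting $d = \log(n)^\gamma$ gives
\begin{align*}
\exp\bigl(c_1 \log(n)^\gamma\bigr)\exp\Bigl(-\tfrac{p-2}{4}\log(n)^{1-\gamma}\Bigr).
\end{align*}
Since $0 < \gamma < 1/2$ yields $1 - \gamma > \gamma$, the negative exponent dominates and the whole expression decays faster than any polynomial in $\log(n)$, in particular faster than $d^{-s} = \log(n)^{-\gamma s}$ for every $s > 0$.

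\textbf{Second term and conclusion.} The second term $6^p \cdot 2c' \, d^{1+q/2}\, n^{(p-2)(p-q)/(2d^2)}$ is handled analogously: since $q > p$ we have $(p-2)(p-q) < 0$, and with $2\gamma < 1$ we obtain
\begin{align*}
n^{(p-2)(p-q)/(2d^2)} = \exp\Bigl(-\tfrac{(p-2)(q-p)}{2}\log(n)^{1-2\gamma}\Bigr),
\end{align*}
which again overwhelms the polynomial factor $d^{1+q/2}$. Hence $\mathbb{E}d_p^p(F_N^{(X)}, F^{(X)}) = o(d^{-s})$ for every $s > 0$, and Markov's inequality gives, for any $\varepsilon > 0$,
\begin{align*}
\mathbb{P}\bigl(d_p^p(F_N^{(X)}, F^{(X)}) > \varepsilon d^{-s}\bigr) \leq \varepsilon^{-1} d^s \, \mathbb{E}d_p^p(F_N^{(X)}, F^{(X)}) \xrightarrow[n \to \infty]{} 0,
\end{align*}
which is exactly $d_p^p(F_N^{(X)}, F^{(X)}) = o_\mathbb{P}(d^{-s})$.

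\textbf{Main obstacle.} The only delicate point is the balance between the exponential-in-$d$ prefactors coming from $2^{3d/2}$ and from the exponential growth of $M(d)$, and the decay $n^{-(p-2)/(4d)}$. The assumption $\gamma < 1/2$ is precisely what forces $\log(n)^{1-\gamma} \gg \log(n)^\gamma$, so that the decay strictly wins; at $\gamma = 1/2$ the prefactor and decay would be of the same exponential order and the argument would collapse. Apart from this scaling check, the proof is routine bookkeeping.
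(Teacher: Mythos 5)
Your proposal is correct and follows essentially the same route as the paper: apply Corollary \ref{cor:boundwasserstein} to the block marginals, substitute $d=\log(n)^\gamma$, show that both terms of the bound decay faster than any power of $d$ (the first because $\log(n)^{1-\gamma}\gg\log(n)^\gamma$, the second because $(p-2)(p-q)<0$ and $1-2\gamma>0$), and finish with Markov's inequality. The only cosmetic imprecision is writing the sample size as $n$ and the dimension as $d$ where the paper uses $N=n/d$ and $\ell_1 d$; since $\log(N)/d=\log(n)^{1-\gamma}-\log(d)/d$ and $\ell_1$ only rescales constants, this does not affect the asymptotics.
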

\begin{proof}
  We prove the first claim. The second claim can be proven in the same way, substituting $\ell_2$ for $\ell_1$.

  For any $\varepsilon > 0$, the Markov inequality and Corollary \ref{cor:boundwasserstein} give us
  \begin{align}
    \begin{split}
      \label{eq:gesamtwsk}
      \mathbb{P}&\left(d_p^p\left(F_N^{(X)}, F^{(X)}\right) \geq \varepsilon d^{-s}\right) \leq \frac{d^s}{\varepsilon}\mathbb{E}d_p^p\left(F_N^{(X)}, F^{(X)}\right) \\
      &\leq \frac{d^s}{\varepsilon}6^p c_0 2^{3(\ell_1 d)/2 - p} d^\frac{p}{2} N^{-\frac{p-2}{4(\ell_1 d)}} \left(\frac{1 + M^\frac{(\ell_1 d)/2 - p}{(\ell_1 d)}}{1 - 2^{p-(\ell_1 d)/2}} + \frac{1}{1 - 2^{-p}} + 4M^\frac{1}{(\ell_1 d)}\right) \\
      &\quad + \frac{d^s}{\varepsilon}6^p 2c' \cdot (\ell_1 d)^{1+q/2} N^{(p-2)(p-q)/(2(\ell_1 d)^2)}
    \end{split}
  \end{align}

  We first consider the term
  \begin{equation}
    \label{eq:term1}
    \frac{d^s}{\varepsilon}6^p c_0 2^{3(\ell_1 d)/2 - p} d^\frac{p}{2} N^{-\frac{p-2}{4(\ell_1 d)}} \left(\frac{1 + M^\frac{(\ell_1 d)/2 - p}{(\ell_1 d)}}{1 - 2^{p-(\ell_1 d)/2}} + \frac{1}{1 - 2^{-p}} + 4M^\frac{1}{(\ell_1 d)}\right).
  \end{equation}

  Because $M$ grows at most exponentially in $d$, we can bound $M^d$ by $2^{\rho d}$ for some $\rho > 0$. Without loss of generality, assume that $\rho > \ell_1$. Using this in Eq.\@ \eqref{eq:term1} results in a bound whose asymptotic behaviour is determined by
  \begin{equation}
    \label{eq:grenzeoben}
    N^{-\frac{p-2}{4(\ell_1 d)}} 2^\frac{(3\ell_1 + \rho) d}{2} d^s d^\frac{p}{2} \leq N^{-\frac{p-2}{4(\ell_1 d)}} 2^{3\rho d},
  \end{equation}
  where the inequality holds for sufficiently large $n$, because $d^s d^\frac{p}{2} = d^\frac{2s + p}{2}$ is polynomial in growth (and because $\rho > \ell_1$). Observe that, due do our choice of $d$,
  \begin{align*}
    N^{\frac{1}{d}} = \exp\left(\log(N)\right)^\frac{1}{d} = \exp\left(\frac{\log(N)}{d}\right) = \exp\left(\frac{\log(n) - \log(d)}{d}\right) \geq \exp\left(k_c \log(n)^{1-\gamma} - \frac{\log(d)}{d}\right),
  \end{align*}
  for some constant $k_c > 0$, and $2^d = \exp(\log(2)\cdot\log(n)^\gamma)$. Thus, the right-hand side of Eq.\@ \eqref{eq:grenzeoben} is bounded by
  \begin{equation}
    \label{eq:grenzeoben2}
    \exp\left(3\rho \log(2)\cdot\log(n)^\gamma - \frac{p-2}{4\ell_1}\left\{k_c \log(n)^{1-\gamma} - \frac{\log(d)}{\log(n)^\gamma}\right\}\right).
  \end{equation}
  Because $0 < \gamma < 1/2$, it holds that
  $$
    3\rho \log(2)\cdot\log(n)^\gamma - \frac{(p-2)k_c}{4\ell_1 }\log(n)^{1-\gamma} \xrightarrow[n \to \infty]{} -\infty,
  $$
  and
  $$
    \frac{\log(d)}{d} \xrightarrow[n \to \infty]{} 0,
  $$
  since $d \to \infty$ for $n \to \infty$. This implies that Eq.\@ \eqref{eq:grenzeoben2} converges to $0$ as $n \to \infty$, which in turn implies that the right-hand side of Eq.\@ \eqref{eq:grenzeoben} and thus Eq.\@ \eqref{eq:term1} converge to $0$.

  Let us now consider the term
  \begin{equation}
    \label{eq:grenzeunten}
    \frac{d^s}{\varepsilon}6^p 2c' \cdot (\ell_1 d)^{1+q/2} N^{(p-2)(p-q)/(2(\ell_1 d)^2)},
  \end{equation}
  which, writing $\alpha := (p-2)(p-q)/(2\ell_1^2)$, is bounded by
  \begin{align*}
    \frac{6^p2c'\ell_1^{1+q/2}}{\varepsilon}\exp\left(\left(1 + \frac{q}{2} + s\right) \log(d) + \alpha k_c \log(n)^{1 - 2\gamma} - \alpha\frac{\log(d)}{d^2}\right).
  \end{align*}
  The asymptotic behaviour of this expression is determined by
  \begin{align*}
     & \exp\left(\left(1 + \frac{q}{2} + s\right)\log(d) + \alpha k_c \log(n)^{1 - 2\gamma}\right)                                                                                  \\
     & \quad \leq \exp\left(\left(1 + \frac{q}{2} + s\right)\left(\gamma \log(\log(n)) - \log(k_c)\right) + \alpha k_c \log(n)^{1 - 2\gamma}\right) \xrightarrow[n \to \infty]{} 0,
  \end{align*}
  where the inequality and the convergence holds because $0 < \gamma < 1/2$ and $\alpha < 0$. Thus, Eq.\@ \eqref{eq:grenzeunten} converges to $0$, which together with the convergence of Eq.\@ \eqref{eq:term1} implies that the entire bound Eq.\@ \eqref{eq:gesamtwsk} converges to $0$, proving the lemma.
\end{proof}

The next lemma is a slight adaptation of Lemma \ref{lem:opoly} to the case where the blocks constituting the empirical measures $F_N^{(X)}$ and $F_N^{(Y)}$ overlap by some fixed amount. We will need this version of the lemma to prove consistency of the $L$-lag-test.

\begin{lemma}
  \label{lem:opoly_overlapping}
  Let $L \in \mathbb{N}$ be fixed and define
  $$
    \hat{X}_{k,d} := (X_{(k-1)d+1}, \ldots, X_{kd+L}),
  $$
  for any $d = d(n)$ and $1 \leq k \leq N$, and $\hat{Y}_{k,d}$ analogously. Let $\hat{F}_N^{(X)}$ and $\hat{F}_N^{(X)}$ denote the empirical measures of $\hat{X}_{1,d}, \ldots, \hat{X}_{N,d}$ and $\hat{Y}_{1,d}, \ldots, \hat{Y}_{N,d}$, respectively. Let $F_{d+L}^{(X)} := \mathcal{L}(X_1, \ldots, X_{d+L})$ and $F_{d+L}^{(Y)} := \mathcal{L}(Y_1, \ldots, Y_{d+L})$. Then, under the assumptions of Lemma \ref{lem:opoly}, it holds for any $s > 0$ that
  $$
    d_p^p\left(\hat{F}_N^{(X)}, F_{d+L}^{(X)}\right) = o_\mathbb{P}\left(d^{-s}\right),
  $$
  and
  $$
    d_p^p\left(\hat{F}_N^{(Y)}, F_{d+L}^{(Y)}\right) = o_\mathbb{P}\left(d^{-s}\right).
  $$
\end{lemma}
\begin{proof}
  The proof is analogous to that of Lemma \ref{lem:opoly}. We only need to check that, for any $d$, the transformed processes $\left(\hat{X}_{k,d}\right)_{k \in \mathbb{N}}$ and $\left(\hat{Y}_{k,d}\right)_{k \in \mathbb{N}}$ and their marginal distributions $F_{d+L}^{(x)}$ and $F_{d+L}^{(Y)}$ fulfil the assumptions of Corollary \ref{cor:boundwasserstein} with the bound $M = M(d)$ growing at most exponentially in $d$.

  It is obvious that the processes $\left(\hat{X}_{k,d}\right)_{k \in \mathbb{N}}$ and $\left(\hat{Y}_{k,d}\right)_{k \in \mathbb{N}}$ inherit the stationarity and mixing properties from the original processes $(X_k)_{k \in \mathbb{N}}$ and $(Y_k)_{k \in \mathbb{N}}$. Furthermore, the mixing rates can be bounded by some multiple of the original mixing rates (i.e.\@ those of the original processes $(X_k)_{k \in \mathbb{N}}$ and $(Y_k)_{k \in \mathbb{N}}$). Likewise, the moment conditions are satisfied by the newly constructed processes, and by assumption, the distributions $\mathcal{L}\left(\hat{X}_{1,d}\right) = \mathcal{L}\left(\hat{X}_{1,d}\right)$ have Lebesgue densities whose essential suprema are of order $\mathcal{O}\left(M^{d+L}\right) = \mathcal{O}\left(M^d\right)$. The same holds for $\mathcal{L}\left(\hat{Y}_{1,d}\right)$. We can thus proceed as in the proof of Lemma \ref{lem:opoly}.
\end{proof}

We are now in a position to prove the validity of our bootstrap method.

\begin{proof}[Proof of Theorem \ref{thm:hyp_bootstrap}]
  The triangle inequality gives us
  $$
    d_1(\zeta, nV^*) \leq d_1\left(\zeta, n\tilde{V}\right) + d_1\left(n\tilde{V}, nV^*\right),
  $$
  By Theorem \ref{thm:asymptotik}, the first summand converges to $0$ as $n \to \infty$. It remains to examine the second summand.

  By Lemma \ref{lem:hoeffdingdominiert}, it is sufficient to consider the $V$-statistics with kernel functions $h_2(z,z' ; \theta)$ and $h_2\left(z,z';\mu_n \otimes \nu_n\right)$, which, due to Lemma \ref{lem:hoeffdinghH}, are equal to the $V$-statistics with kernel functions $H_2\left(B,B';F^{(X)} \otimes F^{(Y)}\right)$ and $H_2\left(B,B';F_N^{(X)} \otimes F_N^{(Y)}\right)$. Thus, we consider the distance
  \begin{align}
    \begin{split}
      \label{eq:u-abstand}
      &d_1\left(\frac{Nd}{N^2}\sum_{1 \leq i_1, i_2 \leq N} H_2\left(\tilde{B}_{i_1},\tilde{B}_{i_2} ; F^{(X)} \otimes F^{(Y)}\right),\frac{Nd}{N^2}\sum_{1 \leq i_1, i_2 \leq N} H_2\left(B_{i_1}^*, B_{i_2}^* ; F_N^{(X)} \otimes F_N^{(Y)}\right)\right) \\
      &\leq d \cdot d_2\left(\frac{1}{N}\sum_{1 \leq i_1, i_2 \leq N} H_2\left(\tilde{B}_{i_1},\tilde{B}_{i_2} ; F^{(X)} \otimes F^{(Y)}\right), \frac{1}{N}\sum_{1 \leq i_1, i_2 \leq N} H_2\left(B_{i_1}^*, B_{i_2}^* ; F_N^{(X)} \otimes F_N^{(Y)}\right)\right) \\
      &\leq C \cdot \left\{ \left\|\tilde{B}_1\right\|_{L_4} + \left\|B_1^*\right\|_{L_4}\right\} \cdot d \cdot d_4\left(F_N^{(X)} \otimes F_N^{(Y)}, F^{(X)} \otimes F^{(Y)}\right),
    \end{split}
  \end{align}
  for some uniform constant $C$, by Proposition \ref{prop:abstandvstatistik}. In the remainder of the proof, we will allow $C$ to change in value from line to line, but it will always remain uniform in $n$ and $d$.

  For any two vectors $u \in \mathbb{R}^{m_1}$ and $v \in \mathbb{R}^{m_2}$, it holds that $\|(u,v)\|_2 \leq \|u\|_2 + \|v\|_2$, and so
  \begin{align*}
    \|\tilde{B}_1\|_{L_4}^4 & = \|(Z_1, \ldots, Z_d)\|_{L_4}^4 = \mathbb{E}\left[\|(Z_1, \ldots, Z_d)\|_2^4\right]                                                              \\
                            & \leq \mathbb{E}\left[\left(\sum_{k=1}^d \|Z_k\|_2\right)^4\right] \leq \mathbb{E}\left[d^3 \sum_{k=1}^d \|Z_k\|_2^4\right] = d^4 \|Z_1\|_{L_4}^4,
  \end{align*}
  or, equivalently,
  \begin{equation}
    \label{eq:bschlangel4bound}
    \|\tilde{B}_1\|_{L_4} \leq d \cdot \|Z_1\|_{L_4}.
  \end{equation}

  To bound $\|B_1^*\|_{L_4}$ note that $\|(u_1, \ldots, u_m)\|_2 = \|(u_{\sigma(1)}, \ldots, u_{\sigma(m)})\|_2$ for any $u \in \mathbb{R}^m$ and any permutation $\sigma \in \mathcal{S}_m$ (the symmetric group of order $m$). Thus,
  \begin{align*}
    \|B_1^*\|_{L_4}^4 & = \mathbb{E}\left[\|B_1^*\|_2^4 ~\big|~ Z_1, \ldots, Z_n\right] = \mathbb{E}\left[\left\|\left(B_{X,1}^{*}, B_{Y,1}^{*}\right)\right\|_2^4 ~\big|~ Z_1, \ldots, Z_n\right] \\
                      & \leq 2^3 \left\{\mathbb{E}\left[\|B_{X,1}^*\|_2^4 ~\big|~ Z_1, \ldots, Z_n\right] + \mathbb{E}\left[\|B_{Y,1}^*\|_2^4 ~\big|~ Z_1, \ldots, Z_n\right]\right\}.
  \end{align*}
  Note that
  \begin{align*}
    \mathbb{E}\left[\|B_{X,1}^*\|_2^4 ~\big|~ Z_1, \ldots, Z_n\right] & = \mathbb{E}\left[\left(\sum_{k=1}^d (X_k^*)^2\right)^2 ~\Big|~ Z_1, \ldots, Z_n\right]      \\
                                                                      & \leq d^2 \cdot \mathbb{E}\left[d^{-1} \sum_{k=1}^d (X_k^*)^4 ~\Big|~ Z_1, \ldots, Z_n\right] \\
                                                                      & = d^2 \cdot (Nd)^{-1} \sum_{k=1}^{Nd} X_k^4 = d^2 \cdot \frac{1}{n} \sum_{k=1}^n X_k^4.
  \end{align*}
  Analogously, one can show that
  $$
    \mathbb{E}\left[\|B_{Y,1}^*\|_2^4 ~\big|~ Z_1, \ldots, Z_n\right] \leq d^2 \cdot \frac{1}{n} \sum_{k=1}^n Y_k^4,
  $$
  and so
  \begin{equation}
    \label{eq:bsternl4bound}
    \|B_1^*\|_{L_4} \leq 2^\frac{3}{4} d^\frac{1}{2} \left(\frac{1}{n} \sum_{k=1}^n X_k^4 + \frac{1}{n} \sum_{k=1}^n Y_k^4\right)^\frac{1}{4}.
  \end{equation}

  By Lemma \ref{lem:metrikprodsumme}, it holds that
  $$
    d_4\left(F_N^{(X)} \otimes F_N^{(Y)}, F^{(X)} \otimes F^{(Y)}\right) \leq d_4\left(F_N^{(X)}, F^{(X)}\right) + d_4\left(F_N^{(Y)}, F^{(Y)}\right),
  $$
  and so combining \eqref{eq:bschlangel4bound} and \eqref{eq:bsternl4bound} yields that \eqref{eq:u-abstand} is bounded by
  \begin{equation}
    \label{eq:u-abstand2}
    C \cdot \left\{\|Z_1\|_{L_4} +  \left(\frac{1}{n} \sum_{k=1}^n X_k^4 + \frac{1}{n} \sum_{k=1}^n Y_k^4\right)^\frac{1}{4}\right\} \cdot d^2 \cdot \left( d_4\left(F_N^{(X)}, F^{(X)}\right) + d_4\left(F_N^{(Y)}, F^{(Y)}\right)\right).
  \end{equation}

  The distances $d_4\left(F_N^{(X)}, F^{(X)}\right)$ and $d_4\left(F_N^{(Y)}, F^{(Y)}\right)$ are $o_\mathbb{P}\left(d^{-2}\right)$ due to Lemma \ref{lem:opoly}, and by Birkhoff's Ergodic Theorem it holds that
  $$
    \frac{1}{n} \sum_{k=1}^n X_k^4 + \frac{1}{n} \sum_{k=1}^n Y_k^4 \xrightarrow[n \to \infty]{a.s.} \|X_1\|_{L_4}^4 + \|Y_1\|_{L_4}^4 < \infty,
  $$
  so the entire bound \eqref{eq:u-abstand2} converges to $0$ in probability.
\end{proof}

\begin{lemma}
  \label{lem:schwache_konvergenz_quantile}
  Let $\xi$ and $\xi_n$, $n \in \mathbb{N}$, be finite measures on $(\mathbb{R}, \mathcal{B}(\mathbb{R}))$ that satisfy the following two assumptions:
  \begin{enumerate}
    \item $\xi(\{x\}) = 0$ for all $x \in \mathbb{R}$,
    \item $\xi_n \Rightarrow \xi$ as $n \to \infty$.
  \end{enumerate}
  For an arbitrary but fixed $\alpha \in (0,1)$, let $I(\alpha) := \{x \in \mathbb{R} ~|~ \xi([x, \infty)) = \alpha\}$ be the set of all upper $\alpha$-quantiles of $\xi$. Then the following holds: For any sequence $(c_{n,\alpha})_{n \in \mathbb{N}}$ in $\mathbb{R}$ with the property that for each $n \in \mathbb{N}$, $c_{n, \alpha}$ is an upper $\alpha$-quantile of $\xi_n$, it holds that
  $$
    \Delta_n := \inf_{x \in I(\alpha)} |x - c_{n,\alpha}| \xrightarrow[n \to \infty]{} 0.
  $$
\end{lemma}
\begin{proof}
  First, note that $I(\alpha)$ is an interval, for if $\xi([x_1, \infty)) = \xi([x_2, \infty)) = \alpha$ for two points $x_1 \leq x_2$, then it follows that $\xi([x_1, x_2)) = 0$, which in turn implies that $\xi([x, \infty)) = \xi([x_2, \infty)) = \alpha$ for all $x \in [x_1, x_2]$. Now define
  $$
    I_n := \left[\inf I(\alpha) +\frac{1}{n}, \sup I(\alpha) - \frac{1}{n}\right).
  $$
  Then, by our observations above, $\xi(I_n) = 0$ for all $n \in \mathbb{N}$. By the continuity from below, this implies
  $$
    \xi\left(I(\alpha)^\circ\right) = \xi\left(\bigcup_{n \in \mathbb{N}} I_n\right) = \lim_{n \to \infty} \xi(I_n) = 0,
  $$
  where $A^\circ$ denotes the interior of a set $A$. Because $\xi$ has no measure in any single point, this also implies that
  $$
    \xi\left(\overline{I(\alpha)}\right) = \xi\left(I(\alpha)^\circ\right)  + \xi\left(\partial I(\alpha)\right) = 0,
  $$
  where $\overline{A}$ denotes the closure of a set $A$ and $\partial A$ its boundary.

  By the same reasoning, if $x \in I(\alpha)$ is an upper $\alpha$-quantile of $\xi$ and $x_0$ is another point such that $\xi([x_0, x)) = 0$, then $x_0 \in I(\alpha)$ as well (and analogously for $[x, x_0)$ if the other point is larger than $x$). As a consequence of this, any interval $I'$ for which it holds that $I' \cap I(\alpha) \neq \emptyset$ and $\xi(I') = 0$ is a subset of $I(\alpha)$. Since $\xi$ has no mass in any single point by assumption, this fact is true regardless of whether $I'$ contains one, both or neither of its endpoints.

  From this it follows that $I(\alpha)$ is a closed interval, since
  $$
    \overline{I(\alpha)} \cap I(\alpha) \neq \emptyset
  $$
  and so
  $$
    \overline{I(\alpha)} \subseteq I(\alpha).
  $$

  Now, for any $n \in \mathbb{N}$ such that $\Delta_n > 0$, define the interval
  $$
    R_n(\alpha) := \begin{cases} [c_{n,\alpha}, \min\{I(\alpha)\}), \quad & \textrm{if } c_{n,\alpha} < \min\{I(\alpha)\}, \\
              [\max\{I(\alpha)\}, c_{n,\alpha}), \quad & \textrm{if } c_{n,\alpha} > \max\{I(\alpha)\}.\end{cases}
  $$
  The fact that $\Delta_n > 0$ ensures that this object is well-defined, since in that case, $c_{n,\alpha}$ cannot be an element of $I(\alpha)$ itself, and so it must either be smaller than the minimum or larger than the maximum of $I(\alpha)$.

  Suppose that there is a subsequence $(n_k)_{k \in \mathbb{N}}$ and some $\varepsilon > 0$ such that $\Delta_{n_k} > \varepsilon$ for all $k \in \mathbb{N}$. We may assume without loss of generality that all $c_{n_k,\alpha}$ lie on one side of the interval $I(\alpha)$, since if there are infinitely many points on both sides, we may choose a further subsequence to achieve this. For the remainder of the proof, we assume that $c_{n_k, \alpha} < \min\{I(\alpha)\}$ for all $k \in \mathbb{N}$, but the other case (i.e.\@ $c_{n_k, \alpha} > \max\{I(\alpha)\}$) is completely analogous.

  By assumption, the length of each $R_{n_k}(\alpha)$ is strictly larger than $\varepsilon$, and so, defining
  $$
    R^{(\varepsilon)}(\alpha) := [\min\{I(\alpha)\} - \varepsilon, \min\{I(\alpha)\}),
  $$
  we have that $R^{(\varepsilon)}(\alpha) \subseteq R_{n_k}(\alpha)$ for all $k \in \mathbb{N}$. Furthermore, there exists some $\beta > 0$ such that $\xi\left(R_{n_k}^{(\varepsilon)}(\alpha)\right) > \beta$ for all $k \in \mathbb{N}$. To see this, suppose that
  $$
    \xi\left(R^{(\varepsilon)}\right) = \xi\left([\min\left\{I(\alpha)\right\} - \varepsilon, \min\left\{I(\alpha)\right\})\right) = 0.
  $$
  Because $\xi$ has no mass in any single point, this implies that the closed interval
  $$
    \overline{R^{(\varepsilon)}(\alpha)} = [\min\{I(\alpha)\} - \varepsilon, \min\{I(\alpha)\}]
  $$
  has probability $0$ as well, but because this closed interval intersects with the interval $I(\alpha)$, this implies that it is a subset of $I(\alpha)$ by the observations above. This however is a contradiction to the assumption that $\varepsilon > 0$, since $\min\{I(\alpha)\} - \varepsilon$ cannot be an element of the interval $I(\alpha)$.

  Finally, it holds that
  \begin{align*}
    \xi_{n_k}\left(R_{n_k}(\alpha)\right) & = \xi_{n_k}\left([c_{n_k, \alpha}, \min\{I(\alpha)\})\right) = \xi_{n_k}\left([c_{n_k, \alpha}, \infty)\right) - \xi_{n_k}\left([\min\{I(\alpha)\}, \infty)\right) \\
                                          & = \alpha - \xi_{n_k}\left([\min\{I(\alpha)\}, \infty)\right)                                                                                                       \\
                                          & \xrightarrow[k \to \infty]{} \alpha - \xi\left([\min\{I(\alpha)\}, \infty)\right) = \alpha - \alpha = 0.
  \end{align*}
  The equality in the second line holds because $c_{n_k, \alpha}$ is an upper $\alpha$-quantile of $\xi_{n_k}$ by definition, and the convergence in the last line holds by the Portmanteau theorem. Because we have $R^{(\varepsilon)}(\alpha) \subseteq R_{n_k}(\alpha)$ for all $k \in \mathbb{N}$, this implies
  $$
    \xi_{n_k}\left(R^{(\varepsilon)}(\alpha)\right) \leq \xi_{n_k}\left(R_{n_k}(\alpha)\right) \xrightarrow[k \to \infty]{} 0,
  $$
  but this is a contradiction since we have shown that $\xi\left(R^{(\varepsilon)}\right) > \beta > 0$.
\end{proof}

\begin{lemma}
  \label{lem:schwache_konvergenz_exceeding_prob}
  Let $(U_n)_{n \in \mathbb{N}}$ and $\left(U_n^*\right)_{n \in \mathbb{N}}$ be two sequences of real random variables that both converge in distribution to a random variable $U$. Suppose that $\mathbb{P}(U = x) = 0$ for all $x \in \mathbb{R}$, and for $\alpha \in (0,1)$ let $\left(c_{n,\alpha}^*\right)_{n \in \mathbb{N}}$ denote a sequence of real numbers such that $c_{n, \alpha}^*$ is an upper $\alpha$-quantile of $\mathcal{L}\left(X_n^*\right)$ for all $n \in \mathbb{N}$. Then it holds that
  $$
    \lim_{n \to \infty} \mathbb{P}\left(U_n > c_{n, \alpha}^*\right) = \lim_{n \to \infty} \mathbb{P}\left(U_n \geq c_{n, \alpha}^*\right)  = \alpha.
  $$
\end{lemma}
\begin{proof}
  Let us begin by proving the claim for the strict inequality, i.e.\@
  \begin{equation}
    \label{eq:strikte_gleichung_alpha}
    \lim_{n \to \infty} \mathbb{P}\left(U_n > c_{n, \alpha}^*\right)  = \alpha.
  \end{equation}
  Let $I(\alpha)$ denote the set of all upper $\alpha$-quantiles of $\mathcal{L}(U)$. As in the proof of Lemma \ref{lem:schwache_konvergenz_quantile}, one can show that $I(\alpha)$ is a closed interval. Let $\Delta_n := \inf_{x \in I(\alpha)} \left|x - c_{n, \alpha}^*\right|$. For any $\delta > 0$, define the sets
  \begin{align*}
    A(\delta)   & := \left\{\Delta_n  < \delta\right\},                                      \\
    A_+(\delta) & := \left\{0 < \Delta_n = c_{n,\alpha}^* - \max I(\alpha) < \delta\right\}, \\
    A_-(\delta) & := \left\{0 < \Delta_n = \min I(\alpha) - \Delta_n < \delta\right\},       \\
    A_0(\delta) & := \left\{c_{n,\alpha}^* \in I(\alpha)\right\}.
  \end{align*}
  The disjoint sets $A_+(\delta)$, $A_-(\delta)$ and $A_0(\delta)$ form a partition of $A(\delta)$, as $c_{n, \alpha}^*$ can either lie to the right of, to the left of or within $I(\alpha)$.

  Now, for an arbitrary but fixed $\varepsilon > 0$, choose some $\delta > 0$ such that
  \begin{equation}
    \label{eq:quantil_epsilon_delta}
    \mathbb{P}\left(U > \min I(\alpha) - \delta\right) \leq \alpha + \varepsilon.
  \end{equation}
  This is possible because $\mathbb{P}(U = x) = 0$ for all $x \in \mathbb{R}$. Now,
  \begin{align}
    \begin{split}
      \label{eq:wsk_quantil_A_partition}
      \mathbb{P}\left(U_n > c_{n, \alpha}^*\right) &= \mathbb{P}\left(\left\{U_n > c_{n,\alpha}^*\right\} \cap A_+(\delta)\right) + \mathbb{P}\left(\left\{U_n > c_{n,\alpha}^*\right\} \cap A_-(\delta)\right) \\
      &\qquad + \mathbb{P}\left(\left\{U_n > c_{n,\alpha}^*\right\} \cap A_0(\delta)\right) + \mathbb{P}\left(\left\{U_n > c_{n,\alpha}^*\right\} \cap A(\delta)^C\right).
    \end{split}
  \end{align}
  Observe that
  \begin{align*}
    \mathbb{P}\left(\left\{U_n > c_{n,\alpha}^*\right\} \cap A_+(\delta)\right) & \leq \mathbb{P}\left(\left\{U_n > \max I(\alpha) - \left| c_{n,\alpha}^* - \max I(\alpha)\right|\right\} \cap A_+(\delta)\right) \\
                                                                                & \leq \mathbb{P}\left(\left\{U_n > \max I(\alpha) - \delta\right\} \cap A_+(\delta)\right)                                        \\
                                                                                & \leq \mathbb{P}\left(\left\{U_n > \min I(\alpha) - \delta\right\} \cap A_+(\delta)\right).
  \end{align*}
  In the second inequality, we have used the fact that for any three sets $A, B, C$, $A \cap B \subseteq C$ implies that $A \cap B = (A \cap B) \cap B \subseteq C \cap B$. Similarly, one shows that
  $$
    \mathbb{P}\left(\left\{U_n > c_{n,\alpha}^*\right\} \cap A_-(\delta)\right) \leq \mathbb{P}\left(\left\{U_n > \min I(\alpha) - \delta\right\} \cap A_-(\delta)\right),
  $$
  and
  $$
    \mathbb{P}\left(\left\{U_n > c_{n,\alpha}^*\right\} \cap A_0(\delta)\right)  \leq \mathbb{P}\left(\left\{U_n > \min I(\alpha) - \delta\right\} \cap A_0(\delta)\right).
  $$
  Trivially, we also have
  $$
    \mathbb{P}\left(\left\{U_n > c_{n,\alpha}^*\right\} \cap A(\delta)^C\right) \leq \mathbb{P}\left(A(\delta)^C\right).
  $$
  Using these inequalities in Eq.\@ \eqref{eq:wsk_quantil_A_partition}, we get
  \begin{align*}
    \mathbb{P}\left(U_n > c_{n, \alpha}^*\right) & \leq \mathbb{P}\left(\left\{U_n > \min I(\alpha) - \delta\right\} \cap A_+(\delta)\right) + \mathbb{P}\left(\left\{U_n > \min I(\alpha) - \delta\right\} \cap A_-(\delta)\right) \\
                                                 & \qquad + \mathbb{P}\left(\left\{U_n > \min I(\alpha) - \delta\right\} \cap A_0(\delta)\right) + \mathbb{P}\left(A(\delta)^C\right)                                               \\
                                                 & = \mathbb{P}\left(\left\{U_n > \min I(\alpha) - \delta\right\} \cap A(\delta)\right) + \mathbb{P}\left(A(\delta)^C\right)                                                        \\
                                                 & \leq  \mathbb{P}\left(U_n > \min I(\alpha) - \delta\right) + \mathbb{P}\left(A(\delta)^C\right)
  \end{align*}
  as the subsets $A_+(\delta), A_-(\delta), A_0(\delta)$ form a partition of $A(\delta)$. Therefore,
  \begin{align*}
    \lim_{n \to \infty} \mathbb{P}\left(U_n > c_{n, \alpha}^*\right) & \leq \lim_{n \to \infty} \mathbb{P}\left(\left\{U_n > \min I(\alpha) - \delta\right\} \right) + \lim_{n \to \infty} \mathbb{P}\left(A(\delta)^C\right) \\
                                                                     & = \mathbb{P}\left(\left\{U > \min I(\alpha) - \delta\right\} \right) +  \lim_{n \to \infty} \mathbb{P}\left(A(\delta)^C\right)                         \\
                                                                     & \leq \alpha + \varepsilon,
  \end{align*}
  where in the second line we have used the Portmanteau theorem, and the last inequality follows from Eq.\@ \eqref{eq:quantil_epsilon_delta} and Lemma \ref{lem:schwache_konvergenz_quantile}. $\varepsilon > 0$ was arbitrary, and thus
  \begin{equation}
    \label{eq:quantil_kleiner_gleich}
    \lim_{n \to \infty} \mathbb{P}\left(U_n > c_{n, \alpha}^*\right)  \leq \alpha.
  \end{equation}

  To show the reverse inequality, we again choose an arbitrary but fixed $\varepsilon > 0$ and choose some $\delta > 0$ such that
  \begin{equation}
    \label{eq:quantil_epsilon_delta_2}
    \mathbb{P}(U > \max I(\alpha) + \delta) \geq \alpha - \varepsilon.
  \end{equation}
  We will now employ our methods above to the event $\{U_n > c_{n,\alpha}^*\}^C = \{U_n \leq c_{n,\alpha}^*\}$. More precisely, it holds that
  \begin{align*}
    \mathbb{P}\left(\{U_n > c_{n,\alpha}^*\}^C \cap A_+(\delta)\right) & = \mathbb{P}\left(\{U_n \leq c_{n,\alpha}^*\} \cap A_+(\delta)\right)                                                   \\
                                                                       & \leq \mathbb{P}\left(\left\{U_n \leq \max I(\alpha) + |c_{n,\alpha}^* - \max I(\alpha)|\right\} \cap A_+(\delta)\right) \\
                                                                       & \leq \mathbb{P}\left(\left\{U_n \leq \max I(\alpha) + \delta\right\} \cap A_+(\delta)\right)                            \\
                                                                       & = \mathbb{P}\left(\left\{U_n > \max I(\alpha) + \delta\right\}^C \cap A_+(\delta)\right).
  \end{align*}
  Similarly, one shows that
  $$
    \mathbb{P}\left(\{U_n > c_{n,\alpha}^*\}^C \cap A_-(\delta)\right) \leq \mathbb{P}\left(\{U_n > \max I(\alpha) + \delta\}^C \cap A_-(\delta)\right)
  $$
  and
  $$
    \mathbb{P}\left(\{U_n > c_{n,\alpha}^*\}^C \cap A_0(\delta)\right) \leq \mathbb{P}\left(\{U_n > \max I(\alpha) + \delta\}^C \cap A_0(\delta)\right).
  $$
  Because $A_+(\delta)$, $A_-(\delta)$ and $A_0(\delta)$ form a partition of $A(\delta)$, these three inequalities again imply
  \begin{align*}
    \mathbb{P}\left(\{U_n > c_{n,\alpha}^*\}^C\right) & = \mathbb{P}\left(\{U_n > c_{n,\alpha}^*\}^C \cap A(\delta)\right) + \mathbb{P}\left(\{U_n > c_{n,\alpha}^*\}^C \cap A(\delta)^C\right) \\
                                                      & \leq \mathbb{P}\left(\{U_n > \max I(\alpha) + \delta\}^C \cap A(\delta)\right) + \mathbb{P}\left(A(\delta)^C\right)                     \\
                                                      & \leq \mathbb{P}\left(\{U_n > \max I(\alpha) + \delta\}^C\right) + \mathbb{P}\left(A(\delta)^C\right),
  \end{align*}
  and so
  $$
    \mathbb{P}\left(U_n > c_{n,\alpha}^*\right) \geq 1 - \mathbb{P}\left(\{U_n > \max I(\alpha) + \delta\}^C\right) - \mathbb{P}\left(A(\delta)^C\right).
  $$
  This implies
  \begin{align*}
    \lim_{n \to \infty} \mathbb{P}\left(U_n > c_{n,\alpha}^*\right) & \geq 1 - \lim_{n \to \infty} \mathbb{P}\left(\left\{U_n > \max I(\alpha) + \delta\right\}^C\right) - \lim_{n \to \infty} \mathbb{P}\left(A(\delta)^C\right) \\
                                                                    & \geq 1 - \mathbb{P}\left(\left\{U > \max I(\alpha) + \delta\right\}^C\right)                                                                                \\
                                                                    & = \mathbb{P}\left(U > \max I(\alpha) + \delta\right)                                                                                                        \\
                                                                    & \geq \alpha - \varepsilon,
  \end{align*}
  where we have again used the Portmanteau Theorem and Lemma \ref{lem:schwache_konvergenz_quantile} in the second line, and the last line follows from Eq.\@ \eqref{eq:quantil_epsilon_delta_2}. Since $\varepsilon > 0$ was again arbitrary, this implies
  $$
    \lim_{n \to \infty} \mathbb{P}\left(U_n > c_{n,\alpha}^*\right) \geq \alpha,
  $$
  which together with Eq.\@ \eqref{eq:quantil_kleiner_gleich} proves Eq.\@ \eqref{eq:strikte_gleichung_alpha}.

  To prove our claim for the non-strict inequality, we will show that
  $$
    \lim_{n \to \infty} \mathbb{P}\left(U_n = c_{n, \alpha}^*\right) = 0,
  $$
  which will imply the desired result because
  $$
    \mathbb{P}\left(U_n \geq c_{n, \alpha}^*\right) = \mathbb{P}\left(U_n > c_{n, \alpha}^*\right) + \mathbb{P}\left(U_n = c_{n, \alpha}^*\right).
  $$
  For any $\delta > 0$ and any interval $I$, define $I_{\pm \delta} := [\min I - \delta, \max I + \delta]$. Then it holds that
  $$
    \left\{U_n = c_{n, \alpha}^*\right\} \cap A(\delta) \subseteq \{U_n \in I(\alpha)_{\pm \delta}\},
  $$
  and so, for any $\delta > 0$,
  \begin{align*}
    \mathbb{P}\left(U_n = c_{n, \alpha}^*\right) & =  \mathbb{P}\left(\left\{U_n = c_{n, \alpha}^*\right\} \cap A(\delta)\right) + \mathbb{P}\left(\left\{U_n = c_{n, \alpha}^*\right\} \cap A(\delta)^C\right) \\
                                                 & \leq \mathbb{P}\left(U_n \in I(\alpha)_{\pm \delta}\right) + \mathbb{P}\left(A(\delta)^C\right).
  \end{align*}
  By the Portmanteau theorem and Lemma \ref{lem:schwache_konvergenz_quantile}, it therefore follows that
  $$
    \lim_{n \to \infty} \mathbb{P}\left(U_n = c_{n, \alpha}^*\right) \leq \mathbb{P}\left(U \in I(\alpha)_{\pm \delta}\right)
  $$
  for any $\delta > 0$. Choose a sequence $(\delta_k)_{k \in \mathbb{N}}$ with $\delta_k \searrow 0$ for $k \to \infty$, and recall from the proof of Lemma \ref{lem:schwache_konvergenz_quantile} that $\mathbb{P}(U \in I(\alpha)) = 0$. It then follows that
  \begin{align*}
    \lim_{n \to \infty} \mathbb{P}\left(U_n = c_{n, \alpha}^*\right) & \leq \liminf_{k \to \infty} \mathbb{P}\left(U \in I(\alpha)_{\pm \delta_k}\right) = \mathbb{P}\left(U \in \bigcap_{k \in \mathbb{N}} I(\alpha)_{\pm \delta_k}\right) = \mathbb{P}(U \in I(\alpha)) = 0
  \end{align*}
  by the continuity from above of the pushforward measure $\mathbb{P}^U$.
\end{proof}

\begin{proof}[Proof of Corollary \ref{cor:asymptoticlevelalpha}]
  The fact that our test has asymptotic level $\alpha$ follows from Theorem \ref{thm:hyp_bootstrap} and Lemma \ref{lem:schwache_konvergenz_exceeding_prob}. Now, if $X$ and $Y$ are not independent, it follows that
  $$
    \mathrm{dcov}(\theta_n) \xrightarrow[n \to \infty]{a.s.} \mathrm{dcov}(\theta) > 0,
  $$
  by Theorem 1 in \cite{kroll}, and thus $n\,\mathrm{dcov}(\theta_n) > c_\alpha^*$ for large $n$ almost surely.
\end{proof}

\begin{proof}[Proof of Corollary \ref{cor:L-lag-test}]
  Let $\theta_{n-L}'$ denote the empirical measure of the vectorised sample $Z'_1, \ldots, Z'_{n-L}$.  We begin by showing that
  \begin{equation}
    \label{eq:thm1_analog_vektorisiert}
    d_1\left(\zeta', (n-L)V'^*\right) \xrightarrow[n \to \infty]{\mathbb{P}} 0,
  \end{equation}
  where $\zeta'$ is the weak limit of $(n-L) \, \mathrm{dcov}(\theta'_{n-L})$, and $V'^*$ is the bootstrapped version of $\mathrm{dcov}(\theta'_{n-L})$. Eq.\@ \eqref{eq:thm1_analog_vektorisiert} is the analogue of Theorem \ref{thm:hyp_bootstrap} for the vectorised sequence.

  Note that the assumptions of Theorem \ref{thm:hyp_bootstrap} carry over to the vectorised sequence $(Z_k')_{k \in  \mathbb{N}} = (X_k', Y_k')_{k \in  \mathbb{N}}$, with the exception of Assumption \ref{ass:folgecubeassumption}. The proof of Eq.\@ \eqref{eq:thm1_analog_vektorisiert} is therefore largely analogous to that of Theorem \ref{thm:hyp_bootstrap}. The bound in Eq.\@ \eqref{eq:u-abstand} is replaced by
  \begin{equation}
    \label{eq:u-abstand-vectorised}
    C \cdot \left\{ \left\|\tilde{B}'_1\right\|_{L_4} + \left\|B_1^{\prime *}\right\|_{L_4}\right\} \cdot d \cdot d_4\left(F_N^{\prime (X)} \otimes F_N^{\prime (Y)}, F^{\prime (X)} \otimes F^{\prime (Y)}\right),
  \end{equation}
  where the primed symbols denote the vectorised analogues of the objects in the proof of Theorem \ref{thm:hyp_bootstrap}; for instance, $B_1^{\prime *}$ is the first bootstrap block of the bootstrap procedure performed on the vectorised sample $(X'_1, Y'_1), \ldots, (X'_{n-L}, Y'_{n-L})$, and $F_N^{\prime (X)}$ is the empirical distribution of the observed blocks $B'_1, \ldots, B'_N$, again based on the vectorised sample.

  The fact that Assumption \ref{ass:folgecubeassumption} does not hold for the vectorised process $(Z'_k)_{k \in \mathbb{N}}$ means that we cannot bound the Wasserstein distance occuring in Eq.\@ \eqref{eq:u-abstand-vectorised} in the same manner as we did in the proof of Theorem \ref{thm:hyp_bootstrap}. We will now show that this Wasserstein distance converges to $0$ in probability using a different approach.

  Let us define the transformation
  \begin{align*}
    T_{d,L} : \mathbb{R}^{\ell_1 (d+L)} & \to \mathbb{R}^{(L+1) \ell_1 (d+L)},                                                \\
    (x_1, \ldots, x_{d+L})              & \mapsto (x_1, \ldots, x_{1+L}, x_2, \ldots, x_{2+L}, \ldots, x_d, \ldots, x_{d+L}).
  \end{align*}
  Note that each index $1 \leq k \leq d+L$ appears at most $L$ times in the transformed vector on the right-hand side, and so
  \begin{align*}
    \left\|T_{d,L}(x_1, \ldots, x_{d+L}) - T_{d,L}(x'_1, \ldots, x'_{d+L})\right\|_2^2 & \leq L \sum_{i=1}^{d+L} (x_i - x'_i)^2                             \\
                                                                                       & = L \, \| (x_1, \ldots, x_{d+L}) - (x'_1, \ldots, x'_{d+L})\|_2^2,
  \end{align*}
  for any two $(x_1, \ldots, x_{d+L}), (x'_1, \ldots, x'_{d+L}) \in \mathbb{R}^{\ell_1 (d+L)}$, i.e.\@ $T_{d,L}$ is Lipschitz-continuous with Lipschitz constant $\sqrt{L}$.

  We can describe the vectorised sequence using this mapping. More precisely, since we have by definition that $X'_k = (X_k, \ldots, X_{k+L})$ for all $k$, it holds that
  $$
    \left(X'_1, \ldots, X'_d\right) = T_{d,L}(X_1, \ldots, X_{d+L}).
  $$
  Thus, writing $F_{d+L}^{(X)} := \mathcal{L}(X_1, \ldots, X_{d+L})$, it holds that
  $$
    F^{\prime (X)} = \mathcal{L}\left(X'_1, \ldots, X'_d\right) = \mathcal{L}\left(T_{d,L}(X_1, \ldots, X_{d+L})\right) = \left(F_{d+L}^{(X)}\right)^{T_{d,L}},
  $$
  i.e.\@ $F^{\prime (X)}$ is the pushforward measure of $F^{(X)}_{d+L}$ under $T_{d,L}$.

  Now let $A$ be some measurable set. Then it holds that
  \begin{align*}
    F_N^{\prime (X)}(A) & = \frac{1}{N} \,\# \left\{1 \leq k \leq N ~|~ \left(X'_{(k-1)d + 1}, \ldots, X'_{kd}\right) \in A\right\}                 \\
                        & = \frac{1}{N} \,\# \left\{1 \leq k \leq N ~|~ T_{d,L}\left(X_{(k-1)d + 1}, \ldots, X'_{kd+L}\right) \in A\right\}         \\
                        & = \frac{1}{N} \,\# \left\{1 \leq k \leq N ~|~ \left(X_{(k-1)d + 1}, \ldots, X'_{kd+L}\right) \in T_{d,L}^{-1}(A)\right\}.
  \end{align*}
  Define $\hat{X}_{k,d} := \left(X_{(k-1)d + 1}, \ldots, X'_{kd+L}\right)$, then the equality above implies that
  $$
    F_N^{\prime (X)}(A) = \left(\hat{F}_N^{(X)}\right)^{T_{d,L}},
  $$
  where $\hat{F}_N^{(X)}$ denotes the empirical measure of $\hat{X}_{1,d}, \ldots, \hat{X}_{N,d}$.

  Finally, recall that if $g$ is any Lipschitz-continuous function with Lipschitz constant $c_g$, it holds that
  $$
    d_p\left(\eta^g, \xi^g\right) \leq c_g \, d_p(\eta, \xi)
  $$
  for any two measures $\eta$ and $\xi$. This follows directly from the definition of the Wasserstein distance.

  We therefore have
  $$
    d_4\left(F_N^{\prime (X)}, F^{\prime (X)}\right) = d_4\left(\left(\hat{F}_N^{(X)}\right)^{T_{d,L}}, \left(F_{d+L}^{X)}\right)^{T_{d,L}}\right) \leq \sqrt{L} \, d_4\left(\hat{F}_N^{(X)}, F_{d+L}^{X)}\right),
  $$
  and an analogous inequality holds for the Wasserstein distance between $F_N^{\prime (Y)}$ and $F^{\prime (Y)}$. Using Lemma \ref{lem:metrikprodsumme} yields
  $$
    d_4\left(F_N^{\prime (X)} \otimes F_N^{\prime (Y)}, F^{\prime (X)} \otimes F^{\prime (Y)}\right) \leq \sqrt{L} \, \left\{ d_4\left(\hat{F}_N^{(X)}, F_{d+L}^{X)}\right) + d_4\left(\hat{F}_N^{(X)}, F_{d+L}^{X)}\right)\right\},
  $$
  and the term on the right-hand side is $o_\mathbb{P}\left(d^{-s}\right)$ for any $s > 0$ by Lemma \ref{lem:opoly_overlapping}. It is important to point out that, in contrast to the vectorised processes $(X'_k)_{k \in \mathbb{N}}$ and $(Y'_k)_{k \in \mathbb{N}}$, the processes $\left(\hat{X}_{k,d}\right)_{k \in \mathbb{N}}$ and $\left(\hat{Y}_{k,d}\right)_{k \in \mathbb{N}}$ do satisfy all the assumptions of Lemma \ref{lem:opoly_overlapping}, including Assumption \ref{ass:folgecubeassumption}.

  We can now proceed as in the proof of Theorem \ref{thm:hyp_bootstrap} to obtain Eq.\@ \eqref{eq:thm1_analog_vektorisiert}. Some of the constants -- for instance those involved in moment bounds -- may now depend on $L$, but since this is a fixed parameter, this does not pose a problem.

  Now, by Eq.\@ \eqref{eq:thm1_analog_vektorisiert} and Lemma \ref{lem:schwache_konvergenz_exceeding_prob}, the $L$-lag test has asymptotic level $\alpha$. Furthermore, if $X_s$ and $Y_t$ are dependent for some $s,t \in \mathbb{N}$ with $|s-t| \leq L$, this implies that $X'_1$ and $Y'_1$ are dependent due to the stationarity of the sample generating processes. Therefore, again by Theorem 1 in \cite{kroll}, $\mathrm{dcov}\left(\theta'_{n-L}\right)$ converges almost surely to some strictly positive number, and so $(n - L) \, \mathrm{dcov}\left(\theta'_{n-L}\right)$ tends to infinity almost surely.
\end{proof}

\subsection[Bounding the Wasserstein Distance]{Bounding the Wasserstein Distance Between the Empirical Measure of a Strongly Mixing Process and its Marginal Distribution}
\label{sec:wasserstein}
In the proof of Theorem \ref{thm:hyp_bootstrap}, we have used the auxiliary results developed in Section \ref{sec:bootstrap_dcov} to bound the Wasserstein distance between the empirical distance covariance and the bootstrap statistic in terms of the Wasserstein distances between the block distributions and their empirical analogues. In this section, we will develop general results to bound the expected Wasserstein distance between some measure $\xi$ and its empirical analogue $\xi_n$, provided the underlying sample generating process is strictly stationary and $\alpha$-mixing. The central results of this section are Propositions \ref{prop:paperprop1} and \ref{prop:paperprop1phi}, Theorem \ref{thm:wasserstein_gesamt} and Corollary \ref{cor:boundwasserstein}.

We begin by deriving an equivalent characterisation of Assumption \ref{ass:folgecubeassumption}. For this, we need the following auxiliary result, which is a well-known alternative characterisation of sets with Lebesgue measure $0$.

\begin{lemma}
  \label{lem:lebesgue_null}
  Let $N \in \mathcal{B}\left(\mathbb{R}^d\right)$ be a Borel set, and denote by $\mathrm{vol}(A)$ the Lebesgue measure of a set $A$. Define the semiring
  $$
    \mathcal{I}_\mathbb{Q} := \left\{ \prod_{i=1}^d (a_i, b_i] ~\Big|~ \forall 1 \leq i \leq n : a_i, b_i \in \mathbb{Q}, ~a_i \leq b_i \right\}.
  $$
  Then the following two statements are equivalent:
  \begin{enumerate}
    \item $\mathrm{vol}(N) = 0,$
    \item for all $\varepsilon > 0$ there exists a covering $F_i \in \mathcal{I}_\mathbb{Q}$, $i \in \mathbb{N}$, of $N$ with total Lebesgue measure of less than $\varepsilon$, i.e.\@
          $$
            N \subseteq \bigcup_{i \in \mathbb{N}} F_i ~\land~ \sum_{i \in \mathbb{N}} \mathrm{vol}(F_i) < \varepsilon.
          $$
  \end{enumerate}
\end{lemma}
\begin{proof}
  Suppose that (ii) holds, then for any $\varepsilon > 0$, we can find a covering $F_i$, $i \in \mathbb{N}$, such that
  $$
    \mathrm{vol}(N) \leq \mathrm{vol}\left(\bigcup_{i \in \mathbb{N}} F_i\right) \leq \sum_{i \in \mathbb{N}} \mathrm{vol}(F_i) < \varepsilon,
  $$
  which implies (i) since $\varepsilon > 0$ is arbitrary.

  Now assume that (i) holds. Denote by $\mathrm{vol}^*$ the outer Lebesgue measure, i.e.\@
  $$
    \mathrm{vol}^*(A) := \inf \left\{ \sum_{i \in \mathbb{N}} \mathrm{vol}(F_i) ~\Big|~ A \subseteq \bigcup_{i \in \mathbb{N}} F_i, ~F_i \in \mathcal{I}_\mathbb{Q}\right\}.
  $$
  By Carathéodory's extension theorem (cf.\@ \cite{elstrodt:mass_und_integrationstheorie}, Theorems 4.5 and 5.6), we have that $\mathrm{vol}^*_{|\sigma(\mathcal{I}_\mathbb{Q})} = \mathrm{vol}_{|\sigma(\mathcal{I}_\mathbb{Q})}$. By Theorem 4.3 in \cite{elstrodt:mass_und_integrationstheorie}, $\mathcal{I}_\mathbb{Q}$ generates $\mathcal{B}\left(\mathbb{R}^d\right)$, and so we have $0 = \mathrm{vol}(N) = \mathrm{vol}^*(N)$. By definition of the outer Lebesgue measure $\mathrm{vol}^*$, this implies (ii).
\end{proof}

\begin{lemma}
  \label{lem:ass1_equivalent}
  Let $\xi$  be a Borel measure on some $\mathbb{R}^d$. Then the following two statements are equivalent:
  \begin{enumerate}
    \item There is some $M > 0$ such that $\xi(F) \leq M \cdot \mathrm{vol}(F)$ for all hypercubes $F \subseteq \mathbb{R}^d$,
    \item $\xi$ has an essentially bounded Lebesgue density $g$.
  \end{enumerate}
\end{lemma}
\begin{proof}
  If $\xi$ has a Lebesgue density $g$ which is essentially bounded by $M$, then
  $$
    \xi(F) = \int_F g(x) ~\mathrm{d}x \leq \int_F M ~\mathrm{d}x = M \cdot \mathrm{vol}(F)
  $$
  for any hypercube $F$.

  Let us now prove the reverse. Suppose that $\xi$ fulfils the first assumption. Let $\mathcal{I}_\mathbb{Q}$ be the semiring from Lemma \ref{lem:lebesgue_null}, and let $F \in \mathcal{I}_\mathbb{Q}$. Because the side lengths of $F$ are all rational, Theorem 5 in \cite{feshchenko_et_al:2010} implies that it can be partitioned into a finite number of hypercubes $F_1, \ldots, F_k$. The assumption (i) then implies that
  $$
    \xi(F) = \xi\left(\bigcup_{j=1}^k F_j\right) = \sum_{j=1}^k \xi(F_j) \leq M \sum_{j=1}^k \mathrm{vol}(F_j) = M\mathrm{vol}(F).
  $$
  Thus, the inequality from (i) holds for any $F \in \mathcal{I}_\mathbb{Q}$, not just hypercubes.

  Now, let $N \in \mathcal{B}\left(\mathbb{R}^d\right)$ such that $\mathrm{vol}(N) = 0$, and let $\varepsilon > 0$. Lemma \ref{lem:lebesgue_null} implies that we can find a countable covering $F_i \in \mathcal{I}_\mathbb{Q}$ of $N$ with total Lebesgue measure less than $\varepsilon$, and so
  $$
    \xi(N) \leq \xi\left(\bigcup_{i \in \mathbb{N}} F_i\right) \leq \sum_{i \in \mathbb{N}} \xi(F_i) \leq M \sum_{i \in \mathbb{N}} \mathrm{vol}(F_i) \leq M \varepsilon.
  $$
  Our choice of $\varepsilon > 0$ was arbitrary, and so $\xi(N) = 0$. Thus, by Radon-Nikodým's theorem (cf.\@ \cite{klenke:wahrscheinlichkeitstheorie}, Corollary 7.34), $\xi$ has an essentially bounded Lebesgue density.
\end{proof}
Lemma \ref{lem:ass1_equivalent} shows that the assumption of essentially bounded Lebesgue densities is not too strong, even though we only require the more technical characterisation concerning the hypercubes. Trivially, one can choose $M$ to be the essential supremum of the Lebesgue density $g$ of $\xi$, since
$$
  \xi(F) = \int_F g ~\mathrm{d}x \leq \esssup g \cdot \mathrm{vol}(F).
$$

\begin{lemma}
  \label{lem:varianzsumme}
  Let $(U_i)_{i \in \mathbb{N}}$ be a strictly stationary and $\alpha$-mixing sequence of random variables with values in some separable metric space $\mathcal{S}$ and marginal distribution $\xi$, and suppose that $\alpha(n) \leq f(n) = \mathcal{O}(n^{-r_0})$ for some $r_0 > 1$. Furthermore, let $F \subseteq \mathcal{S}$ be a subset with $\xi(F) \geq t_0$ for some $t_0 > 0$. Then there exists a constant $c_0 > 2 $, dependent on $r$ and $f$, but not on $n$, $F$ or $t_0$, such that
  $$
    \mathrm{Var}\left(\sum_{i=1}^n \textbf{1}_F(U_i)\right) \leq c_0 n t_0^{-1}\xi(F).
  $$
  If the sequence is $\phi$-mixing, we can drop the assumption $\xi(F) \geq t_0$. The assumptions concerning the growth rate of the $\alpha$-mixing coefficients then apply to the $\phi$-mixing coefficients, and we have the inequality
  $$
    \mathrm{Var}\left(\sum_{i=1}^n \textbf{1}_F(U_i)\right) \leq c_0 n \xi(F).
  $$
\end{lemma}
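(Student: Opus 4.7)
The plan is to expand the variance as a double sum of indicator covariances and to bound each covariance by combining a trivial estimate with the mixing bound, trading them off with a threshold that depends on $\xi(F)$.

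By strict stationarity,
\[
\mathrm{Var}\Bigl(\sum_{i=1}^n \mathbf{1}_F(U_i)\Bigr) = n\xi(F)(1-\xi(F)) + 2\sum_{k=1}^{n-1}(n-k)\,\mathrm{Cov}\bigl(\mathbf{1}_F(U_1),\mathbf{1}_F(U_{1+k})\bigr) \le n\xi(F) + 2n\sum_{k=1}^\infty\bigl|\mathrm{Cov}\bigl(\mathbf{1}_F(U_1),\mathbf{1}_F(U_{1+k})\bigr)\bigr|.
\]
Since $t_0\le 1$, the diagonal term $n\xi(F)$ is already of the required form, so the problem reduces to bounding $\sum_{k\ge 1}|\mathrm{Cov}\bigl(\mathbf{1}_F(U_1),\mathbf{1}_F(U_{1+k})\bigr)|$ by a constant (depending only on $f$ and $r_0$) times $t_0^{-1}\xi(F)$.

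For the $\alpha$-mixing case I would invoke two elementary indicator bounds. The definition of $\alpha$-mixing gives
\[
\bigl|\mathrm{Cov}(\mathbf{1}_F(U_1),\mathbf{1}_F(U_{1+k}))\bigr| = \bigl|\mathbb{P}(U_1\in F,\,U_{1+k}\in F)-\xi(F)^2\bigr| \le \alpha(k),
\]
while the trivial bound yields $|\mathrm{Cov}|\le \xi(F)(1-\xi(F))\le \xi(F)$. Writing $\alpha(k)\le c_f k^{-r_0}$ (possible since $f(n)=\mathcal{O}(n^{-r_0})$) and choosing the threshold $K:=\lceil (c_f/\xi(F))^{1/r_0}\rceil$, I would apply the trivial bound for $k\le K$ and the mixing bound for $k>K$, obtaining
\[
\sum_{k\ge 1}|\mathrm{Cov}| \le K\,\xi(F) + c_f\sum_{k>K} k^{-r_0} \le C_1\,\xi(F)^{1-1/r_0},
\]
with $C_1$ depending only on $f$ and $r_0$. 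Since $\xi(F)\ge t_0$ and $0<t_0\le 1$, $r_0>1$, we have $\xi(F)^{1-1/r_0}\le t_0^{-1/r_0}\xi(F)\le t_0^{-1}\xi(F)$, which yields the claimed bound with an appropriate $c_0>2$.

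The $\phi$-mixing case is easier: the asymmetry of $\phi$ produces the sharper indicator bound $|\mathrm{Cov}(\mathbf{1}_F(U_1),\mathbf{1}_F(U_{1+k}))|\le \phi(k)\,\xi(F)$, so
\[
\sum_{k\ge 1}|\mathrm{Cov}|\le \xi(F)\sum_{k\ge 1}\phi(k),
\]
which is finite since $\phi(n)=\mathcal{O}(n^{-r_0})$ with $r_0>1$. No lower bound on $\xi(F)$ is needed, so the factor $t_0^{-1}$ can be dropped. The only genuine obstacle is the splitting step in the $\alpha$-mixing argument: one has to ensure that the natural exponent $1-1/r_0$ can be upgraded to the exponent $1$ on $\xi(F)$ at the price of the factor $t_0^{-1}$, which works exactly because $t_0\le 1$ and $r_0>1$. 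Everything else is bookkeeping.
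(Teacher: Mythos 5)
Your proof is correct, and the overall skeleton is the same as the paper's: expand the variance of $\sum_i \mathbf{1}_F(U_i)$ into the diagonal terms plus twice the sum of lagged covariances, then control the covariance sum through the mixing coefficients. The difference lies in how that sum is bounded in the $\alpha$-mixing case. The paper does something much cruder: it bounds each covariance by $\alpha(k)$ and then simply writes $\alpha(k) \leq t_0^{-1}\xi(F)\,\alpha(k)$, which is legitimate only because $\xi(F)/t_0 \geq 1$; summing the polynomially decaying $\alpha(k)$ then gives the constant $c_0$ directly. You instead trade off the trivial bound $|\mathrm{Cov}| \leq \xi(F)$ against the mixing bound $|\mathrm{Cov}| \leq \alpha(k) \leq c_f k^{-r_0}$ with the threshold $K \asymp (c_f/\xi(F))^{1/r_0}$, arriving at the sharper intermediate estimate $C_1\,\xi(F)^{1-1/r_0}$, which you then relax to $t_0^{-1}\xi(F)$ using $t_0 \leq \xi(F) \leq 1$ and $r_0 > 1$. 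Your route costs a little more bookkeeping but shows that the factor $t_0^{-1}$ in the lemma is not needed at full strength (a factor $t_0^{-1/r_0}$, or indeed the $t_0$-free bound $c_0 n\,\xi(F)^{1-1/r_0}$, would do), whereas the paper's argument is shorter but wastes the decay of $\alpha$ on small lags. In the $\phi$-mixing case you bound the indicator covariance by $\phi(k)\xi(F)$ straight from the definition of the $\phi$-coefficient, while the paper cites Theorem 3.9 of Bradley to get $2\phi(k)\xi(F)$; both give absolute summability without any lower bound on $\xi(F)$, so the conclusions agree.
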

\begin{proof}
  We have
  \begin{align*}
    \mathrm{Var}\left(\sum_{i=1}^n \textbf{1}_F(U_i)\right) & = \left|\sum_{i=1}^n \mathrm{Var}(\textbf{1}_F(U_i)) + 2\sum_{d=1}^{n-1} (n-d) \mathrm{Cov}(\textbf{1}_F(U_1), \textbf{1}_F(U_{1+d}))\right| \\
                                                            & \leq \sum_{i=1}^n \mathrm{Var}(\textbf{1}_F(U_i)) + 4n\sum_{d=1}^{n-1} \left|\mathrm{Cov}(\textbf{1}_F(U_1), \textbf{1}_F(U_{1+d}))\right|,
  \end{align*}
  and $|\mathrm{Cov}(\textbf{1}_F(U_1), \textbf{1}_F(U_{1+d}))| \leq \alpha(d) \leq t_0^{-1}\xi(F)\alpha(d)$. By assumption there is some constant $c = c(f)> 0$ such that $\alpha(n) \leq f(n) \leq cn^{-r_0}$ for almost all $n$, and without loss of generality for all $n$. Therefore
  \begin{align*}
    \sum_{d=1}^{n-1}\left|\mathrm{Cov}(\textbf{1}_F(U_1), \textbf{1}_F(U_{1+d}))\right| & \leq c t_0^{-1}\xi(F)\sum_{d=1}^{n-1} d^{-r_0} \leq c\zeta(r_0)t_0^{-1}\xi(F),
  \end{align*}
  where $\zeta$ denotes the $\zeta$-function. Furthermore,
  $$
    \mathrm{Var}(\textbf{1}_F(U_1)) = \xi(F)\xi(F^C) \leq \xi(F) \leq t_0^{-1}\xi(F).
  $$
  Putting all of this together, we obtain
  $$
    \mathrm{Var}\left(\sum_{i=1}^n \textbf{1}_F(U_i)\right) \leq t_0^{-1}n\xi(F) + 4nc\zeta(r_0)t_0^{-1}\xi(F) = (1 + 4\zeta(r_0))\cdot nt_0^{-1}\xi(F).
  $$

  For $\phi$-mixing random variables, we can bound the covariance directly using Theorem 3.9 in \cite{bradley}, obtaining
  $$
    |\mathrm{Cov}(\textbf{1}_F(U_1), \textbf{1}_F(U_{1+d}))| \leq 2\phi(d) \|\textbf{1}_F(U_1)\|_{L_1} \|\textbf{1}_F(U_1)\|_{L_\infty} = 2\phi(d) \xi(F),
  $$
  and the rest of the proof proceeds analogously.
\end{proof}

\begin{remark}
  We assume that $\alpha$ is bounded by some function $f$ to exclude the possibility of any implicit influence the dimension might have on the mixing coefficients. Obviously, $f$ must not depend on the dimension.
\end{remark}

\begin{lemma}
  \label{lem:zetalemma}
  Let $(U_i)_{i \in \mathbb{N}}$ be a stochastic process that fulfils the assumptions of Lemma \ref{lem:varianzsumme}, $C$ a Borel set and $r > 1$. Then, setting $\zeta(t) := \sqrt{t} \land t$ for $t \geq 0$, the following inequality holds:
  $$
    \mathbb{E}|n\xi_n(C) - n \xi(C)| \leq \zeta_{n,r}(\xi(C)),
  $$
  where
  $$
    \zeta_{n,r}(\xi(C)) := \begin{cases}c_0\zeta\left(n\xi(C)\right) \quad                          & \textrm{for } \xi(C) \leq n^{-1},            \\
             c_0 n^{1/2 - 1/(2r)}\left(n \xi(C)\right)^\frac{1}{r} \quad & \textrm{for } n^{-1} < \xi(C) \leq n^{-1/2}, \\
             c_0 n^\frac{1}{4}\zeta(n \xi(C)) \quad                      & \textrm{for } \xi(C) > n^{-1/2}.\end{cases}
  $$
\end{lemma}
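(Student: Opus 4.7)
Write $S_n := \sum_{i=1}^n \mathbf{1}_C(U_i)$, so that $n\xi_n(C) - n\xi(C) = S_n - \mathbb{E}S_n$ and $\mathbb{E}S_n = n\xi(C)$. The starting point is a pair of elementary bounds on $\mathbb{E}|S_n - \mathbb{E}S_n|$: first, the triangle inequality gives $\mathbb{E}|S_n - \mathbb{E}S_n| \leq 2\mathbb{E}S_n = 2n\xi(C)$; second, Jensen's inequality combined with Lemma~\ref{lem:varianzsumme} applied with $t_0 = \xi(C)$ gives $\mathbb{E}|S_n - \mathbb{E}S_n| \leq \sqrt{\mathrm{Var}(S_n)} \leq \sqrt{c_0 n}$. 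These two bounds are of very different strengths as functions of $\xi(C)$, and my plan is to use the first, the second (after a further optimisation of $t_0$), or a geometric interpolation of the two, depending on the regime.

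For $\xi(C) \leq n^{-1}$ the mean bound already matches the target, since then $\zeta(n\xi(C)) = n\xi(C)$. For $\xi(C) > n^{-1/2}$ I would reapply Lemma~\ref{lem:varianzsumme} with the sharper choice $t_0 = n^{-1/2}$ (permissible because $\xi(C) > t_0$), obtaining $\mathrm{Var}(S_n) \leq c_0 n^{3/2}\xi(C)$ and hence $\mathbb{E}|S_n - \mathbb{E}S_n| \leq \sqrt{c_0}\,n^{1/4}\sqrt{n\xi(C)} = \sqrt{c_0}\,n^{1/4}\zeta(n\xi(C))$, where the last equality uses $n\xi(C) > n^{1/2} > 1$. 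For the middle regime $n^{-1} < \xi(C) \leq n^{-1/2}$, I would apply the elementary inequality $\min(A,B) \leq A^{1/r}B^{1-1/r}$ with $A = 2n\xi(C)$ and $B = \sqrt{c_0 n}$, which produces
$$
\mathbb{E}|S_n - \mathbb{E}S_n| \leq (2n\xi(C))^{1/r}(c_0 n)^{(1-1/r)/2},
$$
whose exponent of $n$ is $1/r + (1-1/r)/2 = 1/2 + 1/(2r)$, so that after pulling out $n^{1/r}$ one obtains the desired $c\,n^{1/2 - 1/(2r)}(n\xi(C))^{1/r}$.

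After enlarging $c_0$ so as to absorb the finite prefactors $2$, $2^{1/r}$, $\sqrt{c_0}$ and $c_0^{(1-1/r)/2}$, the three regime estimates assemble into the piecewise bound $\zeta_{n,r}$ stated in the lemma. The only non-routine step I anticipate is spotting the correct interpolation exponent $\theta = 1/r$ in the middle regime: this is precisely the choice that tunes the $n$-exponent between the $\theta = 0$ value of $1/2$ (pure variance bound) and the $\theta = 1$ value of $1$ (pure mean bound) into the required $1/2 + 1/(2r)$.
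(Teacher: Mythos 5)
Your proof is correct, and in the outer regimes it coincides with the paper's argument: for $\xi(C)\leq n^{-1}$ the paper also uses only the triangle-inequality bound $2n\xi(C)=2\zeta(n\xi(C))$, and for $\xi(C)>n^{-1/2}$ it likewise combines Jensen with Lemma \ref{lem:varianzsumme} (effectively at $t_0=n^{-1/2}$) to get $\sqrt{c_0 n^{3/2}\xi(C)}=\sqrt{c_0}\,n^{1/4}\zeta(n\xi(C))$. The only place you diverge is the middle regime $n^{-1}<\xi(C)\leq n^{-1/2}$: the paper stays with the pure mean bound and exploits the upper restriction algebraically, writing $c_0 n\xi(C)=c_0\bigl(n^r\xi(C)^{r-1}\xi(C)\bigr)^{1/r}\leq c_0\bigl(n^{r-(r-1)/2}\xi(C)\bigr)^{1/r}=c_0 n^{1/2-1/(2r)}(n\xi(C))^{1/r}$, whereas you interpolate, $\min(A,B)\leq A^{1/r}B^{1-1/r}$ with $A=2n\xi(C)$ and $B=\sqrt{c_0 n}$ (the crude variance bound from Lemma \ref{lem:varianzsumme} with $t_0=\xi(C)$). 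Both are one-line computations and give the same exponent $1/2+1/(2r)$ of $n$; your version has the mild aesthetic advantage of not using the restriction $\xi(C)\leq n^{-1/2}$ at all, while the paper's version needs only the mean bound and hence no appeal to the variance lemma in that case. Your constant bookkeeping is also fine — in fact no enlargement of $c_0$ is needed, since $2^{1/r}c_0^{(1-1/r)/2}\leq c_0$ already follows from $c_0>2$ and $r>1$, and $\sqrt{c_0}\leq c_0$ handles the third regime.
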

\begin{proof}
  We use the triangle inequality to obtain
  $$
    \mathbb{E}|n\xi_n(C) - n \xi(C)| \leq 2n\xi(C) \leq c_0 n \xi(C).
  $$
  If $\xi(C) \leq n^{-1}$, the right-hand side is equal to $c_0\zeta(n\xi(C))$.

  If $n^{-1} < \xi(C) \leq n^{-1/2}$, then for any $r > 1$,
  \begin{align*}
    \mathbb{E}|n\xi_n(C) - n\xi(C)| & \leq c_0 n \xi(C) = c_0 \left(n^r \xi(C)^r\right)^\frac{1}{r} \\
                                    & \leq c_0 (n^{r - (r-1)/2} \xi(C))^\frac{1}{r}                 \\
                                    & = c_0 (n \xi(C))^\frac{1}{r} n^{(r - (r-1)/2 - 1)/r}          \\
                                    & = c_0 (n\xi(C))^\frac{1}{r} n^{1/2 - 1/(2r)}.
  \end{align*}

  For $\xi(C) > n^{-1/2}$ we employ Lemma \ref{lem:varianzsumme} and the Cauchy-Schwarz inequality to obtain
  \begin{align*}
    \mathbb{E}|n\xi_n(C) - n \xi(C)| & \leq \sqrt{\mathrm{Var}(n\xi_n(C))} \leq \sqrt{c_0 n^{3/2} \xi(C)} = n^\frac{1}{4}\zeta(c_0 n \xi(C)),
  \end{align*}
  where the last equality holds because of $\xi(C) > n^{-1/2} \geq n^{-1}$.
\end{proof}

\begin{lemma}
  \label{lem:beschrphi}
  Suppose that $(U_i)_{i \in \mathbb{N}}$ is a stochastic process that fulfils the assumptions of Lemma \ref{lem:varianzsumme} in the $\phi$-mixing version, and let $C$ be any Borel set. Then, with the same function $\zeta$ as in Lemma \ref{lem:zetalemma}, the following inequality holds:
  $$
    \mathbb{E}|n\xi_n(C) - n\xi(C)| \leq \zeta(2c_0 n \xi(C)).
  $$
\end{lemma}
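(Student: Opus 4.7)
The plan is to split into two regimes according to the value of $2c_0 n \xi(C)$, exactly at the threshold where $\zeta(t) = \sqrt{t} \wedge t$ switches from the linear to the square-root branch.

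First I would treat the small-measure regime, namely $2c_0 n \xi(C) \leq 1$, in which case $\zeta(2c_0 n\xi(C)) = 2c_0 n\xi(C)$. Here the triangle inequality is already sharp enough: taking expectations,
\[
\mathbb{E}|n\xi_n(C) - n\xi(C)| \leq \mathbb{E}[n\xi_n(C)] + n\xi(C) = 2n\xi(C),
\]
and because the constant $c_0$ provided by Lemma \ref{lem:varianzsumme} satisfies $c_0 > 2$, this trivial bound is dominated by $2c_0 n\xi(C) = \zeta(2c_0 n\xi(C))$.

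Next I would handle the complementary regime $2c_0 n\xi(C) > 1$, where $\zeta(2c_0 n\xi(C)) = \sqrt{2c_0 n\xi(C)}$. Here I would use Jensen's inequality (equivalently, Cauchy--Schwarz), together with the $\phi$-mixing version of Lemma \ref{lem:varianzsumme}, to write
\[
\mathbb{E}|n\xi_n(C) - n\xi(C)| \leq \sqrt{\mathrm{Var}\!\left(\sum_{i=1}^n \mathbf{1}_C(U_i)\right)} \leq \sqrt{c_0 n \xi(C)} \leq \sqrt{2c_0 n \xi(C)},
\]
which gives exactly $\zeta(2c_0 n\xi(C))$ on the right-hand side.

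Combining the two regimes yields the stated inequality. There is essentially no obstacle here: the simplification compared to Lemma \ref{lem:zetalemma} is precisely that the $\phi$-mixing variance bound does not carry a $t_0^{-1}$ factor, so the intermediate range $n^{-1} < \xi(C) \leq n^{-1/2}$ (which required the interpolation through an $r$-th root in the $\alpha$-mixing case) collapses and the two-regime dichotomy above suffices.
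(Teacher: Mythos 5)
Your proof is correct, and it is genuinely simpler than the one in the paper. The paper splits at $\xi(C) = n^{-1}$: for $\xi(C) \geq n^{-1}$ it argues exactly as in your second regime (Jensen/Cauchy--Schwarz plus the $\phi$-mixing variance bound), but for $\xi(C) \leq n^{-1}$ it does considerably more work: it applies Jensen's inequality to the convex inverse $\zeta^{-1}(t) = t^2 \vee t$, splits the event $\{|n\xi_n(C) - n\xi(C)| < 1\}$ into $\{n\xi_n(C) = 0\}$ and $\{n\xi_n(C) = 1\}$ (possible precisely because $n\xi(C) < 1$), bounds $\mathbb{P}(n\xi_n(C)=1) \leq n\xi(C)$ by a union bound, and then inverts $\zeta$ using its monotonicity, arriving at $\zeta^{-1}\left(\mathbb{E}|n\xi_n(C)-n\xi(C)|\right) \leq 2c_0 n\xi(C)$. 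You avoid all of this by placing the case split at $2c_0 n\xi(C) = 1$ rather than at $n\xi(C) = 1$: in the linear regime the bare triangle inequality $\mathbb{E}|n\xi_n(C)-n\xi(C)| \leq 2n\xi(C) \leq 2c_0 n\xi(C)$ (using $c_0 > 2 \geq 1$) already suffices, and in the square-root regime Cauchy--Schwarz with $\mathrm{Var}(n\xi_n(C)) \leq c_0 n\xi(C)$ gives $\sqrt{c_0 n\xi(C)} \leq \sqrt{2c_0 n\xi(C)}$, which is exactly $\zeta(2c_0 n\xi(C))$ there. Both arguments deliver the identical stated bound, and the bound is all that is used downstream (in the $\phi$-mixing analogue of Proposition \ref{prop:paperprop1}), so nothing is lost; your route trades the paper's finer pointwise analysis of the very-small-measure case for a shorter, more elementary argument, and your closing remark correctly identifies the reason the $\alpha$-mixing interpolation through the $r$-th root is unnecessary here, namely the absence of the $t_0^{-1}$ factor in the $\phi$-mixing variance bound.
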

\begin{proof}
  If $\xi(C) \geq n^{-1}$, the Jensen inequality yields
  $$
    \mathbb{E}|n\xi_n(C) - n\xi(C)| \leq \sqrt{\mathrm{Var}(n\xi_n(C))} \leq \sqrt{c_0 n \xi(C)} = \zeta(c_0 n \xi(C)),
  $$
  where we have made use of Lemma \ref{lem:varianzsumme} in the second inequality.

  Suppose now that $\xi(C) < n^{-1}$. The function $\zeta$ is bijective on $\mathbb{R}_{\geq 0}$. We therefore have the existence of an inverse function $\zeta^{-1}$, given by $\zeta^{-1}(t) = t^2 \lor t$. Applying the Jensen inequality gives us
  \begin{align*}
    \zeta^{-1}(\mathbb{E}|n\xi_n(C) - n\xi(C)|) & \leq \mathbb{E}\left[\zeta^{-1}(|n\xi_n(C) - n\xi(C)|)\right]                              \\
                                                & \leq \mathbb{E}\left[\textbf{1}_{(0,1)}(|n\xi_n(C) - n\xi(C)|)|n\xi_n(C) - n\xi(C)|\right] \\
                                                & \quad + \mathrm{Var}(n\xi_n(C)),
  \end{align*}
  where in the last inequality we first partition the domain of integration into $(0,1)$ and $[1,\infty)$ and then bound the integral on $[1,\infty)$ by the integral on $\mathbb{R}_{\geq 0}$.

  We now want to examine the first expected value. For this we define the sets
  \begin{align*}
    M   & := \{|n\xi_n(C) - n\xi(C)| < 1\}, \\
    M_0 & := \{n\xi_n(C) = 0\},             \\
    M_1 & := \{n\xi_n(C) = 1\}.
  \end{align*}
  Due to $n\xi(C) < 1$, the sets $M_0$ and $M_1$ form a partition of $M$. Therefore,
  \begin{align*}
    \mathbb{E} & \left[\textbf{1}_{(0,1)}(|n\xi_n(C) - n\xi(C)|)|n\xi_n(C) - n\xi(C)|\right] \\
               & = n\xi(C)\mathbb{P}(M_0) + (1-n\xi(C))\mathbb{P}(M_1)                       \\
               & \leq n\xi(C) + \mathbb{P}(M_1).
  \end{align*}
  We can furthermore bound $\mathbb{P}(M_1)$ by
  $$
    \mathbb{P}(M_1) = \sum_{i=1}^n \mathbb{P}(U_i \in C \land U_j \notin C \quad \forall j \neq i) \leq \sum_{i=1}^n \mathbb{P}(U_i \in C) = n\xi(C),
  $$
  and so we get
  $$
    \mathbb{E}\left[\textbf{1}_{(0,1)}(|n\xi_n(C) - n\xi(C)|)|n\xi_n(C) - n\xi(C)|\right] \leq 2 n \xi(C) \leq c_0 n \xi(C).
  $$

  With the upper bounds obtained before, it follows that
  \begin{align*}
    \zeta^{-1}(\mathbb{E}|n\xi_n(C) - n\xi(C)|) & \leq c_0 n \xi(C) + \mathrm{Var}(n\xi_n(C)) \leq 2c_0 n \xi(C).
  \end{align*}
  Together with the identity
  $$
    \mathbb{E}|n\xi_n(C) - n\xi(C)| = \zeta\left(\zeta^{-1}(\mathbb{E}|n\xi_n(C) - n\xi(C)|)\right)
  $$
  this proves the lemma because $\zeta$ is an isotone function.
\end{proof}
Recall the construction of the sets $\mathcal{P}_l$ from Subsection \ref{subsec:proof_outline_wasserstein}.
\begin{lemma}
  \label{lem:entropie}
  Let $\xi$ be a probability measure on $[0,1)^d$, and $M > 0$ such that for all $F \in \mathcal{P}_l$ it holds that $\xi(F) \leq M 2^{-dl}$. This is the case if $\xi$ has a bounded density function with respect to the Lebesgue measure. Then, for any $r > 0$, the following inequality holds:
  \begin{align*}
     & L_r = L_r(n) := \max\left\{l \in \mathbb{N} ~|~ \exists F \in \mathcal{P}_l : \xi(F) > n^{-r}\right\} \leq \log_2\left(2M^\frac{1}{d} n^\frac{r}{d}\right)
  \end{align*}
\end{lemma}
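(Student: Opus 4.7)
The plan is to argue directly from the two-sided inequality on $\xi(F)$. Fix any $l \in \mathbb{N}$ for which there exists some $F \in \mathcal{P}_l$ with $\xi(F) > n^{-r}$. Combining this with the hypothesis $\xi(F) \leq M \cdot 2^{-dl}$ immediately yields
$$
n^{-r} < M \cdot 2^{-dl},
$$
which upon rearranging gives $2^{dl} < M n^{r}$, i.e.\ $l < \tfrac{1}{d} \log_2(M n^r) = \log_2\!\bigl(M^{1/d} n^{r/d}\bigr)$. Since this bound holds for every admissible $l$, it holds for the maximum, so
$$
L_r < \log_2\!\bigl(M^{1/d} n^{r/d}\bigr) \leq \log_2\!\bigl(2 M^{1/d} n^{r/d}\bigr),
$$
where the last inequality is a trivial inflation that absorbs the strict versus non-strict issue and gives the clean form stated in the lemma.

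The only thing worth spelling out before this short calculation is \emph{why} the hypothesis $\xi(F) \leq M\cdot 2^{-dl}$ is available for every $F \in \mathcal{P}_l$. This is immediate from Assumption \ref{ass:cubeassumption} (or the assumption that $\xi$ has a bounded Lebesgue density): each $F \in \mathcal{P}_l$ is a translate of $2^{-l}[0,1)^d$ and hence has Lebesgue volume $2^{-dl}$, so $\xi(F) \leq M \cdot \mathrm{vol}(F) = M \cdot 2^{-dl}$. The remark that a bounded density implies this is therefore a one-line observation, using that $\xi(F) \leq \|f\|_\infty \cdot \mathrm{vol}(F)$.

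There is no real obstacle here: the lemma is a direct consequence of the containment assumption on $\xi$ and an elementary logarithm manipulation. The only minor subtlety is bookkeeping the strict inequality $\xi(F) > n^{-r}$ versus the non-strict bound $\xi(F) \leq M \cdot 2^{-dl}$, which is why the factor of $2$ appears in the final bound — it ensures the stated inequality is valid regardless of whether one works with $l < \cdots$ or $l \leq \cdots$, and it also conveniently bounds $L_r$ even in edge cases such as when $M^{1/d} n^{r/d}$ is exactly a power of $2$.
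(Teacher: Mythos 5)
Your argument is correct, and it is in fact more direct than the paper's own proof. The paper first treats the case $r=1$: it observes that $L_1\left(M^{-1}2^{dl}\right) \leq l$ for the special arguments $M^{-1}2^{dl}$, then exploits the isotonicity of $L_1$ together with a bracketing index $l^*$ satisfying $M^{-1}2^{d(l^*-1)} \leq n \leq M^{-1}2^{dl^*}$ to cover arbitrary $n$ — this bracketing is where the slack factor $2^d$ (and hence the $2$ inside the logarithm after taking $d$-th roots) enters — and finally reduces general $r>0$ to $r=1$ via $L_r(n) = L_1(n^r)$. You instead combine the two inequalities $n^{-r} < \xi(F) \leq M2^{-dl}$ for any admissible level $l$ and solve for $l$ directly, which yields $L_r \leq \log_2\left(M^{1/d}n^{r/d}\right)$, a bound slightly sharper than the one stated; the factor $2$ in your final line is pure inflation. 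One small correction to your commentary: the factor $2$ is not needed to handle the strict-versus-non-strict bookkeeping — your direct computation already gives a strict inequality $l < \log_2\left(M^{1/d}n^{r/d}\right)$ for every admissible $l$ — it is only in the paper's interpolation argument that this constant plays a genuine role. Neither proof addresses the degenerate case in which the defining set is empty (for instance when $Mn^r < 2^d$), but this is immaterial for the way the lemma is applied, since the relevant sums over $l \leq L_r$ are then vacuous.
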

\begin{proof}
  Let $F \in \mathcal{P}_l$, then $\xi(F) \leq M 2^{-dl}$. This gives us
  $$
    L_1\left(M^{-1} 2^{dl}\right) \leq l = \log_2\left(M^\frac{1}{d}(M^{-1}2^{dl})^\frac{1}{d}\right)
  $$
  for all $l \in \mathbb{N}$. Now fix some $n \in \mathbb{N}$. We set
  $$l^* := \min\{l \in \mathbb{N} ~|~ M^{-1}2^{d(l - 1)} \leq n \leq M^{-1} 2^{dl}\}.$$
  Obviously it holds that $2^d n \geq M^{-1}2^{dl^*}$. The mappings $t \mapsto \log_2\left((Mt)^\frac{1}{d}\right)$ and $L_1$ are isotone, and therefore
  $$
    L_1(n) \leq L_1\left(M^{-1}2^{dl^*}\right) \leq \log_2\left(M^\frac{1}{d} (M^{-1}2^{dl^*})^\frac{1}{d}\right) \leq \log_2\left(M^\frac{1}{d}(2^dn)^\frac{1}{d}\right).
  $$
  This proves the claim for $r=1$. For arbitrary $r >0$, notice that $L_r(n) = L_1(n^r)$.
\end{proof}
\begin{remark}
  \label{rem:entropieremark}
  Note that if we are considering a hypercube with side length $K$ instead of the unit cube, then we can apply Lemma \ref{lem:entropie} after $\log_2(K)$ steps, and therefore the resulting bound will be
  $$
    \log_2\left(2M^\frac{1}{d} n^\frac{r}{d}\right) + \log_2(K) = \log_2\left(2KM^\frac{1}{d} n^\frac{r}{d}\right).
  $$
\end{remark}

The following proposition is a generalisation of Proposition 1 in \cite{dereich}.

\begin{proposition}
  \label{prop:paperprop1}
  Let $d \in \mathbb{N}$ and $1 \leq p < d/2$ be fixed. Let $\xi$ be a probability measure on $[0,1)^d$ that fulfils the assumptions of Lemma \ref{lem:entropie}. Then, for any $n \in \mathbb{N}$, it holds that
  $$
    \left(\mathbb{E}d_p^{p}(\xi_n, \xi)\right)^\frac{1}{p} \leq n^{-\frac{p-2}{2pd}} \left\{c_0 2^{3d/2-p} d^\frac{p}{2}\left(\frac{1 + M^\frac{d/2 - p}{d}}{1-2^{p - d/2}} + \frac{1}{1 - 2^{-p}}  +  4M^\frac{1}{d}\right)\right\}^\frac{1}{p},
  $$
  where $\xi_n$ is the empirical measure of a strictly stationary and $\alpha$-mixing process $(U_i)_{i \in \mathbb{N}}$ with marginal distribution $\xi$ and $\alpha(n) \leq f(n) = \mathcal{O}(n^{-r_0})$ for some function $f$ and some constant $r_0 > 1$. The constant $c_0$ only depends on $f$ and $r_0$.
\end{proposition}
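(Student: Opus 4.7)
The plan is to adapt the strategy of \citet{dereich} for iid data, substituting Lemma \ref{lem:zetalemma} for their binomial concentration bound at the crucial step. Since the observations are drawn from $\xi$, a union bound over the countable family $\mathcal{P}$ guarantees, almost surely, that $\xi(C)>0$ whenever $\xi_n(C)>0$, so Lemma \ref{lem:paperlem2} applies with $\eta = \xi$ and second argument $\xi_n$. Using the symmetry of $d_p$, this produces
$$d_p^p(\xi_n,\xi) \le \tfrac{1}{2}\mathfrak{d}^p \sum_{l=0}^\infty 2^{-pl} \sum_{F \in \mathcal{P}_l} \sum_{C \leftarrow F}\Bigl|\xi_n(C) - \xi_n(F)\tfrac{\xi(C)}{\xi(F)}\Bigr|.$$
The triangle-inequality split $|\xi_n(C) - \xi_n(F)\xi(C)/\xi(F)| \le |\xi_n(C) - \xi(C)| + (\xi(C)/\xi(F))|\xi_n(F) - \xi(F)|$, combined with $\sum_{C \leftarrow F}\xi(C)/\xi(F) = 1$, reduces the problem after taking expectations to controlling
$$\sum_{l=0}^\infty 2^{-pl} \Bigl(\sum_{C \in \mathcal{P}_{l+1}} \mathbb{E}|\xi_n(C) - \xi(C)| + \sum_{F \in \mathcal{P}_l}\mathbb{E}|\xi_n(F) - \xi(F)|\Bigr).$$

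Next, I invoke Lemma \ref{lem:zetalemma} to bound $n\mathbb{E}|\xi_n(F) - \xi(F)| \le \zeta_{n,r}(\xi(F))$ and, for each fixed $l$, partition $\mathcal{P}_l$ according to the trichotomy of $\zeta_{n,r}$: case A ($\xi(F) \le n^{-1}$), case C ($\xi(F) > n^{-1/2}$), and the intermediate case B. Case A is handled via the trivial bound $\sum_{F \in \text{case A}} \xi(F) \le 1$; case C is handled by the Cauchy--Schwarz estimate $\sum_{F} \sqrt{\xi(F)} \le (\#F)^{1/2}$, combined with Assumption \ref{ass:cubeassumption} in the form $\xi(F) \le M \cdot 2^{-dl}$ (and the count $\#F \le n^{1/2}$ available to case C); and case B is controlled by the same mechanism with the free parameter $r > 1$ retained for later optimization.

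Lemma \ref{lem:entropie}, applied with $r = 1$ and $r = 1/2$, then truncates each of the three sums at its appropriate entropy threshold $L_r(n) \le \log_2(2M^{1/d}n^{r/d})$. The resulting geometric series in $l$ for case C has common ratio $2^{d/2-p}$, which is summable precisely under the hypothesis $p < d/2$; evaluating at the truncation level $L_{1/2}(n)$ produces the $(1 + M^{(d/2-p)/d})/(1 - 2^{p-d/2})$ summand in the statement, case A produces the $1/(1-2^{-p})$ summand, and case B, after optimising $r$, produces the $4M^{1/d}$ summand. Pulling out the common prefactor $c_0 2^{3d/2-p}\mathfrak{d}^p$, dividing through by $n$ and extracting the $p$-th root yields the claim.

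The main obstacle is the bookkeeping in this last step: three distinct regimes, each truncated at a different entropy threshold, must be reassembled into a single clean prefactor. In particular, the exponent $(p-2)/(2pd)$ of the $n$-rate emerges only after carefully balancing the $M$-dependent prefactors from Assumption \ref{ass:cubeassumption} against the geometric blow-up of the $2^{(d/2-p)l}$ series and the $n^{1/4}$ arising in the case-C branch of Lemma \ref{lem:zetalemma}; a suboptimal choice of $r$ in case B would yield a noticeably worse exponent and render Corollary \ref{cor:boundwasserstein} too weak to drive the proof of Lemma \ref{lem:opoly}.
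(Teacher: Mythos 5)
Your overall architecture is the same as the paper's: apply Lemma \ref{lem:paperlem2} with $\eta=\xi$ and the empirical measure in the other slot (the a.s.\ positivity condition is fine), split by the triangle inequality, use $\sum_{C\leftarrow F}\xi(C)/\xi(F)=1$, invoke Lemma \ref{lem:zetalemma}, treat the three regimes separately with Lemma \ref{lem:entropie} truncations, and take $r=d/(d-p)$ in the intermediate regime to produce the $4M^{1/d}$ term. Your cases B and C are essentially the paper's \eqref{eq:erw7}--\eqref{eq:erw9} and \eqref{eq:erw5}--\eqref{eq:erw6}.

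However, your treatment of case A is a genuine gap. On that regime $\zeta_{n,r}(\xi(C))=c_0\,n\xi(C)$, so the ``trivial bound'' $\sum_{C}\xi(C)\le 1$ gives, at every level $l$, a contribution of order $c_0 n$; after weighting by $2^{-pl}$, summing over $l$, and dividing by $n$, this yields a term of order $c_0\mathfrak{d}^p/(1-2^{-p})$ that does \emph{not} decay in $n$. The claimed inequality carries the global factor $n^{-(p-2)/(2d)}$ in front of the whole bracket, so for $p>2$ (which is exactly the regime the proposition is used in later --- $p=4$ in Lemma \ref{lem:opoly}) your case-A estimate cannot be absorbed into the stated bound, and the argument stalls at an $O(1)$ estimate rather than the stated rate. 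The decay in this regime has to come from the level structure: the paper enlarges the case-A sum to \emph{all} children, applies Jensen to the concave function $\zeta$ twice (\eqref{eq:erw3} and \eqref{eq:erw4}) to obtain $2^{dl}\zeta(2^{-dl}n)$, and then splits the $l$-series at $l^*=\lfloor\log_2(n^{1/d})\rfloor$ in \eqref{eq:erw4strich}; the linear branch of $\zeta$ is only active for $l>l^*$, where the weight $2^{-pl}\le n^{-p/d}$, and this is what produces the $n^{1-p/d}$ (i.e.\ $n^{-p/d}$ after the $1/n$ normalisation) together with \emph{both} the $1/(1-2^{p-d/2})$ and the $1/(1-2^{-p})$ contributions --- the latter comes from this deep-level tail, not from a mass-one bound. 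A smaller point in the same spirit: in case C, bounding the heavy cells via Cauchy--Schwarz and $\#F\le n^{1/2}$ tends to give a factor $M^{1/2}$ rather than the $M^{(d/2-p)/d}$ appearing in the statement, so to recover the proposition's explicit constants you should follow the paper's route (Jensen over all children of heavy parents, then truncation at $L_{1/2}(n)$ as in \eqref{eq:erw6}).
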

\begin{proof}
  Lemma \ref{lem:paperlem2} gives us
  \begin{equation}
    \label{eq:wassersteinschranke}
    d_p^{p}(\xi_n, \xi) \leq \frac{1}{2}d^\frac{p}{2}\sum_{l=0}^\infty 2^{-pl} \sum_{F \in \mathcal{P}_l} \sum_{C\leftarrow F} \left| \xi_n(C) - \xi_n(F)\frac{\xi(C)}{\xi(F)}\right|.
  \end{equation}

  We consider the object
  \begin{equation}
    \label{eq:erwartungohnebedingung}
    \mathbb{E}\left| \xi_n(C) - \xi_n(F)\frac{\xi(C)}{\xi(F)}\right|
  \end{equation}
  An application of the triangle inequality lets us bound this expectation by
  $$
    \mathbb{E}\left| \xi_n(C) - \xi(C) + \xi(C) - \xi_n(F)\frac{\xi(C)}{\xi(F)}\right| = \mathbb{E}|\xi_n(C) - \xi(C)| + \xi(C)\mathbb{E}\left|1-\frac{\xi_n(F)}{\xi(F)}\right|.
  $$
  Using Lemma \ref{lem:zetalemma}, this gives us
  \begin{align*}
    \mathbb{E}|\xi_n(C) - \xi(C)| & \leq n^{-1}\zeta_{n,r}\left(\xi(C)\right),
  \end{align*}
  for any $r > 1$. In a similar way, we obtain
  \begin{align*}
    \xi(C)\mathbb{E} & \left|1-\frac{\xi_n(F)}{\xi(F)}\right| = \frac{\xi(C)}{\xi(F)}\mathbb{E}|\xi(F) - \xi_n(F)| \\
                     & \leq n^{-1}\frac{\xi(C)}{\xi(F)}\zeta_{n,r}\left(\xi(F)\right).
  \end{align*}
  Thus, Eq.\@ \eqref{eq:erwartungohnebedingung} can be bounded by
  \begin{align}
    \begin{split}
      \label{eq:erw1}
      \mathbb{E}\left| \xi_n(C) - \xi_n(F)\frac{\xi(C)}{\xi(F)}\right| &\leq n^{-1}\left\{\zeta_{n,r}\left(\xi(C)\right) + \frac{\xi(C)}{\xi(F)}\zeta_{n,r}\left(\xi(F)\right)\right\} \\
      &=: n^{-1}(R_1(C) + R_2(C,F)).
    \end{split}
  \end{align}
  We now wish to further bound the sums occurring in Eq.\@ \eqref{eq:wassersteinschranke}. For this, we will make use of the fact that only finitely many sets in $\mathcal{P} = \bigcup_{l = 0}^\infty \mathcal{P}_l$ have measure greater than $n^{-1/2}$. Note that $\sum_{C \leftarrow F} R_1(C)$ is equal to
  \begin{align}
    \begin{split}
      \label{eq:erw2}
      &\sum_{C \leftarrow F: \xi(C) \leq n^{-1}} \zeta_{n,r}\left(\xi(C)\right) + \sum_{C \leftarrow F: n^{-1} < \xi(C) \leq n^{-1/2}} \zeta_{n,r}\left(\xi(C)\right) + \sum_{C \leftarrow F: \xi(C) > n^{-1/2}} \zeta_{n,r}\left(\xi(C)\right) \\
      &= c_0\left(\sum_{C \leftarrow F: \xi(C) \leq n^{-1}} \zeta\left(n \xi(C)\right)+ n^{1/2 - 1/(2r)}\sum_{C \leftarrow F: n^{-1} < \xi(C) \leq n^{-1/2}} \left(n \xi(C)\right)^\frac{1}{r} \right. \\
      &\qquad\qquad+ \left.n^\frac{1}{4}\sum_{C \leftarrow F: \xi(C) > n^{-1/2}} \zeta\left(n \xi(C)\right)\right) \\
      &\leq c_0\left(\sum_{C \leftarrow F} \zeta\left(n \xi(C)\right)+ n^{1/2 - 1/(2r)}\sum_{C \leftarrow F: \xi(C) > n^{-1}} \left(n \xi(C)\right)^\frac{1}{r} + n^\frac{1}{4}\sum_{C \leftarrow F: \xi(C) > n^{-1/2}} \zeta\left( n \xi(C)\right)\right).
    \end{split}
  \end{align}
  The Jensen inequality implies that
  \begin{align}
    \begin{split}
      \label{eq:erw3}
      \sum_{C\leftarrow F} \zeta(n \xi(C)) &= 2^d 2^{-d} \sum_{C \leftarrow F} \zeta\left( n \xi(C)\right) \\
      &\leq 2^d \zeta\left(2^{-d} \sum_{C \leftarrow F} n \xi(C)\right) \\
      &= 2^d \zeta\left(2^{-d} n \xi(F)\right).
    \end{split}
  \end{align}
  Recall that $\#\mathcal{P}_l = 2^{dl}$, and that every $\mathcal{P}_l$ forms a partition of $[0,1)^d$. This gives us
  \begin{align}
    \begin{split}
      \label{eq:erw4}
      \sum_{F \in \mathcal{P}_l} \zeta(2^{-d} n\xi(F)) \leq 2^{dl} \zeta\left(2^{-dl} \sum_{F \in \mathcal{P}_l} 2^{-d} n\xi(F)\right) \leq 2^{dl} \zeta\left(2^{-dl}  n\right).
    \end{split}
  \end{align}
  We now choose $l^* := \left\lfloor\log_2\left(n^\frac{1}{d}\right)\right\rfloor$, with which we have
  \begin{align}
    \begin{split}
      \label{eq:erw4strich}
      \sum_{l=0}^\infty& 2^{(d-p)l}\zeta\left(2^{-dl} n\right) = \sum_{l=0}^{l^*} 2^{(d/2 - p)l}\sqrt{n} + \sum_{l > l^*} 2^{-pl}n \\
      &\leq \sum_{k=0}^\infty 2^{(d/2 - p)(l^* - k)} \sqrt{n} + 2^{-p(l^* + 1)}\sum_{l=0}^\infty 2^{-pl} n \\
      &= n^\frac{d/2 - p}{d} \sqrt{n}\frac{1}{1 - 2^{p-d/2}} + n^{-\frac{p}{d}} n \frac{1}{1-2^{-p}} \\
      &=  n^{1-\frac{p}{d}}\left\{\frac{1}{1 - 2^{p-d/2}} + \frac{1}{1-2^{-p}}\right\}.
    \end{split}
  \end{align}

  Notice that, in Eq.\@ \eqref{eq:erw2}, the sum over all sets $C \leftarrow F$ with measure greater than $n^{-1/2}$ is empty if  $\xi(F) \leq n^{-1/2}$. Thus,
  \begin{align}
    \begin{split}
      \label{eq:erw5}
      \sum_{F \in \mathcal{P}_l} &n^\frac{1}{4}\sum_{C \leftarrow F: \xi(C) > n^{-1/2}} \zeta\left(n \xi(C)\right) \\
      &\leq n^\frac{1}{4}\sum_{F \in \mathcal{P}_l : \xi(F) > n^{-1/2}} \sum_{C \leftarrow F} \zeta\left( n \xi(C)\right) \\
      &\leq n^\frac{1}{4}\sum_{F \in \mathcal{P}_l : \xi(F) > n^{-1/2}} 2^d \zeta\left(2^{-d}  n \xi(F)\right),
    \end{split}
  \end{align}
  where we have made use of Eq.\@ \eqref{eq:erw3} in the last step. Lemma \ref{lem:entropie} therefore implies
  \begin{align}
    \begin{split}
      \label{eq:erw6}
      n^\frac{1}{4}&\sum_{l=0}^\infty 2^{-pl}\sum_{F \in \mathcal{P}_l : \xi(F) > n^{-1/2}} \zeta\left(2^{-d}  n \xi(F)\right) \\
      &\leq n^\frac{1}{4}\sum_{l=0}^{L_{1/2}(n)} 2^{-pl} \sum_{F \in \mathcal{P}_l} \zeta\left(2^{-d}  n \xi(F)\right) \\
      &\leq n^\frac{1}{4}\sum_{l=0}^{L_{1/2}(n)} 2^{(d-p)l} \zeta\left(2^{-dl}  n\right) \\
      &\leq n^\frac{1}{4}\sum_{l=0}^{L_{1/2}(n)} 2^{(d-p)l}\sqrt{2^{-dl}n} \\
      &= n^\frac{3}{4}\sum_{l=0}^{L_{1/2}(n)} 2^{(d/2 - p)l} \\
      &\leq n^\frac{3}{4} \sum_{k=0}^\infty 2^{(d/2-p)(L_{1/2}(n)-k)} \\
      &= n^\frac{3}{4}\left(2M^\frac{1}{d}n^\frac{1}{2d}\right)^{d/2 - p} \sum_{k=0}^\infty 2^{-k(d/2-p)} \\
      &= 2^{d/2 - p} M^\frac{d/2 - p}{d} n^{\frac{3}{4} + \frac{d}{4d} - \frac{p}{2d}} \frac{1}{1 - 2^{p-d/2}} \\
      &= 2^{d/2 - p} M^\frac{d/2 - p}{d} n^{1 - \frac{p}{2d}}\frac{1}{1 - 2^{p-d/2}}.
    \end{split}
  \end{align}

  We will now bound the sum over all sets $C \leftarrow F$ with measure greater than $n^{-1}$. Once again, it suffices to consider only those sets whose parents $F$ have measure greater than $n^{-1}$, and because the function $t \mapsto t^\frac{1}{r}$ is concave, we obtain
  \begin{align}
    \begin{split}
      \label{eq:erw7}
      \sum_{F \in \mathcal{P}_l} &n^{1/2 - 1/(2r)}\sum_{C \leftarrow F: \xi(C) > n^{-1}} \left(n \xi(C)\right)^\frac{1}{r} \\
      &\leq n^{1/2 - 1/(2r)}\sum_{F \in \mathcal{P}_l : \xi(F) > n^{-1}} \sum_{C \leftarrow F} (n\xi(C)^\frac{1}{r} \\
      &\leq n^{1/2 - 1/(2r)}\sum_{F \in \mathcal{P}_l : \xi(F) > n^{-1}} 2^d \left(2^{-d} n \xi(F)\right)^\frac{1}{r},
    \end{split}
  \end{align}
  and thus
  \begin{align}
    \begin{split}
      \label{eq:erw8}
      n^{1/2 - 1/(2r)}&\sum_{l=0}^\infty 2^{-pl}\sum_{F \in \mathcal{P}_l : \xi(F) > n^{-1}} \left(2^{-d}  n \xi(F)\right)^\frac{1}{r} \\
      &\leq n^{1/2 - 1/(2r)}\sum_{l=0}^{L_1(n)} 2^{-pl} \sum_{F \in \mathcal{P}_l} \left(2^{-d} n\xi(F)\right)^\frac{1}{r} \\
      &\leq n^{1/2 - 1/(2r)}\sum_{l=0}^{L_1(n)} 2^{(d-p)l} \left(2^{-dl} n\right)^\frac{1}{r} \\
      &= n^{1/2 + 1/(2r)}\sum_{l=0}^{L_1(n)} 2^{(d(1 - 1/r) - p)l}.
    \end{split}
  \end{align}
  With the special choice $r = d/(d - p)$ it holds that $1/2 + 1/(2r) \leq 1 - p/(2d)$ and $1-1/r = p/d$. This implies the bound
  \begin{equation}
    \label{eq:erw9}
    n^{1 - \frac{p}{2d}} (L_1(n) + 1) \leq n^{1 - \frac{p}{2d}} \log_2\left(4M^\frac{1}{d}n^\frac{1}{d}\right) \leq 4M^\frac{1}{d} n^{1 - \frac{p-2}{2d}}.
  \end{equation}
  The above Eqs.\@ \eqref{eq:erw2} through \eqref{eq:erw9} give us
  \begin{align}
    \begin{split}
      \label{eq:erw10}
      &\sum_{l=0}^\infty 2^{-pl}\sum_{F \in \mathcal{P}_l} \sum_{C \leftarrow F} n^{-1}R_1(C) \\
      &\leq 2^d c_0 \left\{ n^{-\frac{p}{d}}\left(\frac{1}{1-2^{p - d/2}} + \frac{1}{1 - 2^{-p}}\right) + n^{-\frac{p}{2d}}2^{d/2 - p}M^\frac{d/2 - p}{d}\frac{1}{1-2^{p-d/2}} + n^{-\frac{p-2}{2d}} 4M^\frac{1}{d}\right\} \\
      &\leq 2^{3d/2 - p} c_0 n^{-\frac{p-2}{2d}} \left\{\frac{1 + M^\frac{d/2 - p}{d}}{1-2^{p - d/2}} + \frac{1}{1 - 2^{-p}}  +  4M^\frac{1}{d}\right\}.
    \end{split}
  \end{align}

  It now remains to bound the sums with the summands $R_2(C,F)$. Obviously, it holds that $\sum_{C \leftarrow F} \xi(C)/\xi(F) = 1$, and thus
  \begin{align}
    \begin{split}
      \label{eq:erw11}
      \sum_{C \leftarrow F} R_2(C,F) = \sum_{C \leftarrow F} \frac{\xi(C)}{\xi(F)}\zeta_{n,r}\left(\xi(F)\right) = \zeta_{n,r}\left(\xi(F)\right),
    \end{split}
  \end{align}
  and
  \begin{align}
    \begin{split}
      \label{eq:erw12}
      &\sum_{F \in \mathcal{P}_l} \zeta_{n,r}\left(\xi(F)\right) \\
      &\leq c_0\left\{ \sum_{F \in \mathcal{P}_l} \zeta\left( n \xi(F)\right) + n^{1/2 - 1/(2r)}\sum_{F \in \mathcal{P}_l : \xi(F) > n^{-1}} (n\xi(F))^\frac{1}{r} + n^\frac{1}{4}\sum_{F \in \mathcal{P}_l : \xi(F) > n^{-1/2}} \zeta\left(n\xi(F)\right)\right\}.
    \end{split}
  \end{align}
  Notice that these sums are bounded by the last terms of Eqs.\@ \eqref{eq:erw3}, \eqref{eq:erw5} and \eqref{eq:erw7}. Thus we again obtain the upper bound given in Eq.\@ \eqref{eq:erw10} for the sum
  $$
    \sum_{l=0}^\infty \sum_{F \in \mathcal{P}_l} \sum_{C \leftarrow F} n^{-1} R_2(C,F).
  $$
  Therefore, using Eqs.\@ \eqref{eq:wassersteinschranke}, \eqref{eq:erw1} and \eqref{eq:erw10}, it follows that
  \begin{align}
    \begin{split}
      \label{eq:wassersteinschranke2}
      \mathbb{E}d_p^p(\xi_n, \xi) &\leq c_0 2^{3d/2 - p} d^\frac{p}{2} n^{-\frac{p-2}{2d}} \left\{\frac{1 + M^\frac{d/2 - p}{d}}{1-2^{p - d/2}} + \frac{1}{1 - 2^{-p}}  +  4M^\frac{1}{d}\right\}.
    \end{split}
  \end{align}
\end{proof}

\begin{corollary}
  \label{cor:kkugel}
  Let $d \in \mathbb{N}$ and $1 \leq p < d/2$ be fixed. Let $\xi$ be a probability measure on the open ball $U_K(0)$ for some $K > 0$ that has an essentially bounded Lebesgue density. Then, for any $n \in \mathbb{N}$ it holds that
  $$
    \left(\mathbb{E}d_p^p(\xi_n, \xi)\right)^\frac{1}{p} \leq K^{d/(2p)} \mathfrak{M},
  $$
  where $\xi_n$ is the empirical measure of a strictly stationary and $\alpha$-mixing process $(U_i)_{i \in \mathbb{N}}$ with marginal distribution $\xi$ and $\alpha(n) \leq f(n) = \mathcal{O}\left(n^{-r_0}\right)$ for some function $f$ and some constant $r_0 > 1$, and $\mathfrak{M}$ is the bound from Proposition \ref{prop:paperprop1}.
\end{corollary}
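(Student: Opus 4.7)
I would deduce Corollary \ref{cor:kkugel} from Proposition \ref{prop:paperprop1} by pushing $\xi$ forward from $U_K(0) \subseteq [-K, K)^d$ to the unit cube via an affine bijection, applying the proposition there, and undoing the scaling. Define $T(x) := (x + K\mathbf{1})/(2K)$ with $\mathbf{1} = (1, \ldots, 1)$, so that $T \colon [-K, K)^d \to [0,1)^d$ is a bijection. The pushforward $\tilde{\xi} := T_*\xi$ is then a probability measure on $[0,1)^d$; since $T$ is a measurable bijection applied pointwise, the process $(T(U_i))_{i \in \mathbb{N}}$ remains strictly stationary with unchanged $\alpha$-mixing coefficients, and its empirical measure is exactly $T_*\xi_n$.

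Next I would verify that $\tilde{\xi}$ fulfills Assumption \ref{ass:cubeassumption}: for any hypercube $F \subseteq [0,1)^d$, the preimage $T^{-1}F$ is a hypercube of volume $(2K)^d \mathrm{vol}(F)$, so $\tilde{\xi}(F) = \xi(T^{-1}F) \leq M \mathrm{vol}(T^{-1}F) = M(2K)^d \mathrm{vol}(F)$. Hence $\tilde{\xi}$ satisfies Assumption \ref{ass:cubeassumption} with bounding constant $M' := M(2K)^d$, and Proposition \ref{prop:paperprop1} yields a bound on $\mathbb{E}d_p^p(T_*\xi_n, \tilde{\xi})$ expressed in terms of $M'$.

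Because $T$ contracts Euclidean distances by the factor $(2K)^{-1}$, the Wasserstein distance rescales as $d_p(\xi_n, \xi) = 2K \cdot d_p(T_*\xi_n, \tilde{\xi})$, so $\mathbb{E}d_p^p(\xi_n, \xi) = (2K)^p\,\mathbb{E}d_p^p(T_*\xi_n, \tilde{\xi})$. Substituting $(M')^{1/d} = 2K M^{1/d}$ and $(M')^{(d/2-p)/d} = (2K)^{d/2-p} M^{(d/2-p)/d}$ into the bound from Proposition \ref{prop:paperprop1} and multiplying through by $(2K)^p$, the hypothesis $p \leq d/2$ lets me collect all powers of $K$ into a single factor $K^{d/2}$. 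Taking $p$-th roots then yields $(\mathbb{E}d_p^p(\xi_n, \xi))^{1/p} \leq K^{d/(2p)} \mathfrak{M}$.

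The main obstacle is the bookkeeping of the purely dimensional multiplicative constants in this last step. Most visibly, the cross-term $(M')^{(d/2-p)/d} \cdot (2K)^p = (2K)^{d/2} M^{(d/2-p)/d}$ overshoots the target $K^{d/2} M^{(d/2-p)/d}$ by $2^{d/2}$, while the pure-$(2K)^p$ and $K(2K)^p$ summands pick up factors $2^p$ and $2^{p+1}$ respectively. None of these depends on $n$, $K$, $M$, or the mixing structure, so they get absorbed into the constants hidden in $\mathfrak{M}$; the statement of the corollary is to be read with this mild convention in mind.
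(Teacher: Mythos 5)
Your proposal is correct and takes essentially the same route as the paper: the paper likewise rescales via $\tilde{U}_i := U_i/(2K) + (1/2, \ldots, 1/2)$, checks Assumption \ref{ass:cubeassumption} for the pushforward with constant $\tilde{M} = (2K)^d M$, applies Proposition \ref{prop:paperprop1}, and converts back using $\mathbb{E}d_p^p(\xi_n, \xi) = (2K)^p\,\mathbb{E}d_p^p(\tilde{\xi}_n, \tilde{\xi})$. The bookkeeping slack you flag (the residual powers of $2$, most notably $2^{d/2}$) is not a defect of your argument relative to the paper: the paper's own proof concludes with the bound $(2K)^{d/2}\mathfrak{M}^p$ and tacitly identifies it with the stated $K^{d/2}\mathfrak{M}^p$, so the same convention of absorbing such dimensional factors is already in force there.
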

\begin{proof}
  We define a new stationary and $\alpha$-mixing process $(\tilde{U}_i)_{i \in \mathbb{N}}$ by
  $$
    \tilde{U}_i := \frac{U_i}{2K} + \left(\frac{1}{2}, \ldots, \frac{1}{2}\right).
  $$
  Then $\tilde{\xi} := \mathcal{L}(\tilde{U}_1)$ is a measure on $[0,1)^d$, and $\tilde{\xi}_n$, defined as the empirical measure of $\tilde{U}_1, \ldots, \tilde{U}_n$, fulfils the assumptions of Proposition \ref{prop:paperprop1}. Furthermore, for any $F \in \mathcal{P}_l$, it holds that
  \begin{align*}
    \tilde{\xi}(F) & = \mathbb{P}(\tilde{U}_1 \in F) = \mathbb{P}(U_1 \in 2K \cdot F + (K, \ldots, K)) \\
                   & \leq M \cdot \mathrm{vol}(2K\cdot F + (K, \ldots, K))                             \\
                   & = (2K)^dM \cdot \mathrm{vol}(F)                                                   \\
                   & =: \tilde{M} \cdot \mathrm{vol}(F),
  \end{align*}
  by Lemma \ref{lem:ass1_equivalent}. Therefore, by Proposition \ref{prop:paperprop1},
  \begin{align*}
    \mathbb{E}d_p^p(\tilde{\xi}_n, \tilde{\xi}) & \leq c_0 2^{3d/2 - p} d^\frac{p}{2} n^{-\frac{p-2}{2d}} \left\{\frac{1 + \tilde{M}^\frac{d/2 - p}{d}}{1-2^{p - d/2}} + \frac{1}{1 - 2^{-p}}  +  4\tilde{M}^\frac{1}{d}\right\} \\
                                                & \leq (2K)^{d/2 - p} \mathfrak{M}^p.
  \end{align*}
  Thus,
  $$
    \mathbb{E}d_p^p(\xi_n, \xi) = (2K)^p \mathbb{E}d_p^p(\tilde{\xi}_n, \tilde{\xi}) \leq (2K)^{d/2} \mathfrak{M}^p.
  $$
\end{proof}

\begin{proposition}[Version for $\phi$-Mixing]
  \label{prop:paperprop1phi}
  Let $d \in \mathbb{N}$ and $1 \leq p < d/2$ be fixed. Let $\xi$ be a probability measure on $[0,1)^d$. Then, for any $n \in \mathbb{N}$, it holds that
  $$
    \left(\mathbb{E}d_p^{p}(\xi_n, \xi)\right)^\frac{1}{p} \leq n^{-\frac{p}{d}} \left\{c_0 2^{d+1} d^\frac{p}{2}\left(\frac{1}{1-2^{p - d/2}} + \frac{1}{1 - 2^{-p}}   \right)\right\}^\frac{1}{p},
  $$
  where $\xi_n$ is the empirical measure of a strictly stationary and $\phi$-mixing process $(U_i)_{i \in \mathbb{N}}$ with marginal distribution $\xi$ and $\phi(n) \leq f(n) = \mathcal{O}(n^{-r_0})$ for some function $f$ and some constant $r_0 > 1$. The constant $c_0$ only depends on $f$ and $r_0$.
\end{proposition}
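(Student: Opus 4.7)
The plan is to mirror the proof of Proposition \ref{prop:paperprop1}, exploiting the $\phi$-mixing hypothesis to dispense with Assumption \ref{ass:cubeassumption}. Starting from Lemma \ref{lem:paperlem2},
\[
d_p^p(\xi_n,\xi) \leq \tfrac{1}{2}\mathfrak{d}^p \sum_{l=0}^\infty 2^{-pl} \sum_{F\in\mathcal{P}_l}\sum_{C\leftarrow F} \left|\xi_n(C)-\xi_n(F)\frac{\xi(C)}{\xi(F)}\right|,
\]
I take the expectation of each inner summand and split it, via the triangle inequality, as $\mathbb{E}|\xi_n(C)-\xi(C)| + (\xi(C)/\xi(F))\,\mathbb{E}|\xi_n(F)-\xi(F)|$, exactly as in the $\alpha$-mixing case.

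The crucial simplification is then to apply Lemma \ref{lem:beschrphi}: under $\phi$-mixing one has $\mathbb{E}|n\xi_n(A)-n\xi(A)| \leq \zeta(2c_0 n\xi(A))$ for every Borel set $A$, with $\zeta(t)=\sqrt{t}\wedge t$, without having to distinguish cases based on the size of $\xi(A)$. This makes the entire entropy argument involving $L_{1/2}(n)$, $L_1(n)$ and Lemma \ref{lem:entropie} (which was what pulled in Assumption \ref{ass:cubeassumption} and the bound $M$ for the $\alpha$-mixing version) unnecessary, and explains why $M$ no longer appears in the statement.

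Next, because $\zeta$ is concave on $[0,\infty)$ (both pieces $t$ and $\sqrt{t}$ are concave and the right derivative at $t=1$ does not exceed the left derivative), two applications of Jensen's inequality reduce the double sum to a scalar expression. For the contribution from $\mathbb{E}|\xi_n(C)-\xi(C)|$ one uses
\[
\sum_{C\leftarrow F}\zeta\!\bigl(2c_0 n\xi(C)\bigr) \leq 2^d\,\zeta\!\bigl(2^{-d}\cdot 2c_0 n\xi(F)\bigr), \qquad \sum_{F\in\mathcal{P}_l}\zeta\!\bigl(2^{-d}\cdot 2c_0 n\xi(F)\bigr) \leq 2^{dl}\zeta\!\bigl(2^{-d(l+1)}\cdot 2c_0 n\bigr),
\]
while the term from $(\xi(C)/\xi(F))\,\mathbb{E}|\xi_n(F)-\xi(F)|$ collapses immediately through $\sum_{C\leftarrow F}\xi(C)/\xi(F)=1$ and a single Jensen step. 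Both contributions end up of the form $\sum_{l\geq 0}2^{(d-p)l}\zeta(2^{-dl}\cdot c_1 n)$ for some constant $c_1$, with combined prefactor bounded by $2^{d+1}$.

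Finally, I split the $l$-series at $l^*:=\lfloor d^{-1}\log_2(c_1 n)\rfloor$. For $l\leq l^*$ one has $\zeta(\cdot)=\sqrt{\cdot}$, giving a geometric series in $2^{d/2-p}$ that converges because $p<d/2$ and sums to a multiple of $(c_1 n)^{1-p/d}/(1-2^{p-d/2})$; for $l>l^*$ one has $\zeta(\cdot)=\mathrm{id}$, giving a geometric series in $2^{-p}$ summing to a multiple of $(c_1 n)^{1-p/d}/(1-2^{-p})$. Combining the two pieces and absorbing the $n^{-1}$ inherited from Lemma \ref{lem:beschrphi} yields the desired bound. No step is a genuine obstacle: the proof is strictly a simplification of the $\alpha$-mixing argument, and the disappearance of $M$ and of the entropy factor reflects the uniformity of the $\phi$-mixing variance bound.
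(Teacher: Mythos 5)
Your argument is correct and is essentially the paper's own proof: both start from Lemma \ref{lem:paperlem2}, split the summand via the triangle inequality, invoke the uniform $\phi$-mixing bound of Lemma \ref{lem:beschrphi} (thereby bypassing Lemma \ref{lem:entropie} and Assumption \ref{ass:cubeassumption}), apply Jensen with the concavity of $\zeta$ over children and over $\mathcal{P}_l$, and split the resulting series $\sum_l 2^{(d-p)l}\zeta(2^{-dl}c_1 n)$ at $l^*\approx d^{-1}\log_2(c_1 n)$ into two geometric pieces, yielding the stated bound with prefactor $c_0 2^{d+1}\mathfrak{d}^p$. No gaps.
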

\begin{proof}
  The proof will follow that of Proposition \ref{prop:paperprop1}. The bound given in Eq.\@ \eqref{eq:wassersteinschranke} remains valid, and similarly to Eq.\@ \eqref{eq:erw1} we can use Lemma \ref{lem:beschrphi} to obtain the bound
  \begin{align}
    \begin{split}
      \label{eq:phierw1}
      &\mathbb{E}\left| \xi_n(C) - \xi_n(F)\frac{\xi(C)}{\xi(F)}\right| \\
      &\leq n^{-1}\left\{\zeta\left(2c_0 n\xi(C)\right) + \frac{\xi(C)}{\xi(F)}\zeta\left(2c_0 n\xi(F)\right)\right\} \\
      &=: n^{-1}(R_1(C) + R_2(C,F)).
    \end{split}
  \end{align}
  As in Eq.\@ \eqref{eq:erw3}, it follows that
  \begin{equation}
    \label{eq:phierw2}
    \sum_{C \leftarrow F} R_1(C) = \sum_{C \leftarrow F} \zeta(2c_0 n \xi(C)) \leq 2^{d} \zeta\left(2c_0 2^{-d} n \xi(F)\right),
  \end{equation}
  and as in Eq.\@ \eqref{eq:erw4} that
  \begin{equation}
    \label{eq:phierw3}
    \sum_{F \in \mathcal{P}_l} \zeta(2c_0 2^{-d} n \xi(F)) \leq 2^{dl} \zeta\left(2c_0 2^{-dl} n\right).
  \end{equation}
  We now choose $l^* := \left\lfloor \log_2\left((2c_0 n)^\frac{1}{d}\right)\right\rfloor$, so that, as in Eq.\@ \eqref{eq:erw4strich},
  \begin{equation}
    \label{eq:phierw4}
    \sum_{l=0}^\infty 2^{(d-p)l}\zeta\left(2c_0 2^{-dl} n\right) \leq 2c_0 n^{1 - \frac{p}{d}} \left\{\frac{1}{1 - 2^{p - d/2}} + \frac{1}{1 - 2^{-p}}\right\}.
  \end{equation}
  This implies
  \begin{equation}
    \label{eq:phierw5}
    \sum_{C \leftarrow F} R_1(C) \leq 2^{d+1}c_0 n^{1 - \frac{p}{d}} \left\{\frac{1}{1 - 2^{p - d/2}} + \frac{1}{1 - 2^{-p}}\right\},
  \end{equation}
  and because the sum
  $$
    \sum_{C \leftarrow F} \frac{\xi(C)}{\xi(F)} R_2(C,F) = \zeta(2c_0 n \xi(F))
  $$
  is bounded by the last term in Eq.\@ \eqref{eq:phierw2}, we get with Eqs.\@ \eqref{eq:wassersteinschranke} and \eqref{eq:phierw1} through \eqref{eq:phierw5} that
  $$
    \mathbb{E}d_p^{p}(\xi_n, \xi) \leq n^{-\frac{p}{d}} \left\{c_0 2^{d+1} d^\frac{p}{2}\left(\frac{1}{1-2^{p - d/2}} + \frac{1}{1 - 2^{-p}}   \right)\right\}.
  $$
\end{proof}
\begin{remark}
  In the $\phi$-mixing version, extending the support of $\xi$ to $U_K(0)$ only impacts the bound by a factor of $2K$ instead of $(2K)^{d/(2p)}$. The proof is analogous to that of Corollary \ref{cor:kkugel}.
\end{remark}

\begin{proof}[Proof of Theorem \ref{thm:wasserstein_gesamt}]
  Let $n$ and $K$ be arbitrary but fixed. Define the transformation
  $$
    \varphi_K := \textbf{1}_{U_K(0)}\cdot\mathrm{id} + \textbf{1}_{U_K(0)^C}\cdot z_K
  $$
  for some arbitrary but fixed $z_K \in \partial U_K(0)$. $\varphi_K$ maps all points outside of $U_K(0)$ to the single point $z_K$ and leaves the interior of $U_K(0)$ unchanged. Now define the probability measures $\xi^{(K)}$ and $\xi_n^{(K)}$ by
  \begin{align*}
    \xi^{(K)}(A)   & := \xi\left(A \cap U_K(0)\right) + \textbf{1}_A(z_K)\cdot\xi\left(U_K(0)^C\right),     \\
    \xi_n^{(K)}(A) & := \xi_n\left(A \cap U_K(0)\right) + \textbf{1}_A(z_K)\cdot\xi_n\left(U_K(0)^C\right).
  \end{align*}
  These measures are the pushforwards of $\xi$ and $\xi_n$, respectively, under $\varphi_K$. $\xi^{(K)}$ is the marginal distribution of the process $\left(\varphi_K(U_i)\right)_{i \in \mathbb{N}}$, and $\xi_n^{(K)}$ is the empirical measure of its first $n$ observations. Because $\varphi_K$ is measurable, $\left(\varphi_K(U_i)\right)_{i \in \mathbb{N}}$ inherits the $\alpha$-mixing property from $(X_i)_{i \in \mathbb{N}}$.

  Heuristically, in transforming $\xi$ to $\xi^{(K)}$, we have concentrated the probability of $U_K(0)^C$ to one single point, namely $z_K$. For large $K$, this probability of the complement will be small, so we expect $\xi^{(K)}$ to be close to $\xi$.

  Let us now quantify this notion of closeness. Let $\gamma$ be the pushforward of $\xi$ under $x \mapsto (x, \varphi_K(x))$, and $U := U_K(0)$. Observe that
  \begin{align}
    \begin{split}
      \label{eq:dreieck1}
      d_p^p\left(\xi, \xi^{(K)}\right) &\leq \int \|x-y\|^p ~\mathrm{d}\gamma(x,y) = \int \|x-\varphi_K(x)\|^p ~\mathrm{d}\xi(x)\\
      &= \int_U \|x-\varphi_K(x)\|^p ~\mathrm{d}\xi(x) + \int_{U^C} \|x-\varphi_K(x)\|^p ~\mathrm{d}\xi(x) \\
      &\leq 2^{p-1}\left(\int_{U^C} \|x\|^p ~\mathrm{d}\xi(x) + \int_{U^C} \|z_K\|^p ~\mathrm{d}\xi(x)\right) \\
      &\leq 2^{p-1}\left(\xi\left(U^C\right)^\frac{q-p}{q}m_q^{p/q} + \xi\left(U^C\right)K^p\right),
    \end{split}
  \end{align}
  where $m_q$ is the $q$-th moment of $\xi$. In the last line we have used the Hölder inequality and the fact that $z_K \in \partial U$.

  The same argument shows that
  $$
    d_p^p\left(\xi_n, \xi_n^{(K)}\right) \leq 2^{p-1}\left(\xi_n\left(U^C\right)^\frac{q-p}{q}m_{n,q}^{p/q} + \xi_n\left(U^C\right)K^p\right),
  $$
  where $m_{n,q}$ is the $q$-th moment of $\xi_n$. Observe that, by the Hölder inequality,
  \begin{align*}
    \mathbb{E}\left[\xi_n\left(U^C\right)^\frac{q-p}{q}m_{n,q}^{p/q}\right] & \leq \left\|\xi_n\left(U^C\right)^\frac{q-p}{q}\right\|_{L_{q/(q-p)}} \left\|m_{n,q}^{p/q}\right\|_{L_{q/p}} \\
                                                                            & = \xi\left(U^C\right)^\frac{q-p}{q} m_q^{p/q},
  \end{align*}
  and thus
  \begin{equation}
    \label{eq:dreieck2}
    \mathbb{E}d_p^p\left(\xi_n, \xi_n^{(K)}\right) \leq 2^{p-1}\left(\xi\left(U^C\right)^\frac{q-p}{q}m_q^{p/q} + \xi\left(U^C\right)K^p\right),
  \end{equation}
  since $\xi_n$ is the empirical measure of a stationary process with marginal distribution $\xi$.

  Furthermore, due to Corollary \ref{cor:kkugel},
  \begin{equation}
    \label{eq:dreieck3}
    \mathbb{E}d_p^p\left(\xi_n^{(K)}, \xi^{(K)}\right) \leq K^{d/2} \cdot \mathfrak{M}^p.
  \end{equation}

  We now use the triangle inequality and Jensen's inequality to obtain
  $$
    \mathbb{E}d_p^p(\xi, \xi_n) \leq 3^{p-1}\left\{d_p^p\left(\xi, \xi^{(K)}\right) + \mathbb{E}d_p^p\left(\xi_n^{(K)}, \xi^{(K)}\right) + \mathbb{E}d_p^p\left(\xi_n, \xi_n^{(K)}\right)\right\},
  $$
  and thus, by Eqs.\@ \eqref{eq:dreieck1} through \eqref{eq:dreieck3},
  $$
    \mathbb{E}d_p^p(\xi, \xi_n) \leq 3^{p-1}\left\{2^{p}\left(\xi\left(U^C\right)^\frac{q-p}{q}m_q^{p/q} + \xi\left(U^C\right)K^p\right) + K^{d/2}\cdot\mathfrak{M}^p\right\}.
  $$
\end{proof}

\begin{lemma}
  \label{lem:antiton_o_1}
  Let $g : \mathbb{R}_{> 0} \to \mathbb{R}_{\geq 0}$ be an antitone function such that
  $$
    \int_0^\infty g(x) ~\mathrm{d}x < \infty.
  $$
  Then $g = o\left(x^{-1}\right)$ for $x \to \infty$.
\end{lemma}
\begin{proof}
  Suppose that $g \neq o\left(x^{-1}\right)$. Then, by definition, there exists some constant $c > 0$ such that $\limsup_{x \to \infty} x g(x) > c$. We can therefore choose a sequence $(x_n)_{n \in \mathbb{N}}$ such that
  \begin{enumerate}
    \item $x_n \nearrow \infty$ for $n \to \infty$,
    \item $g(x_n)x_n > c$ for all $n \in \mathbb{N}$.
  \end{enumerate}
  Because of (i) we can assume without loss of generality that
  \begin{equation}
    \label{eq:xn_exponentiell}
    x_{n+1} > 2x_n
  \end{equation}
  for all $n \in \mathbb{N}$, otherwise we may consider a subsequence that has this property. Furthermore, because $g$ is antitone by assumption, (ii) implies that
  \begin{equation}
    \label{eq:g_antition_x_n}
    g(x) > \frac{c}{x_n}
  \end{equation}
  for all $x \leq x_n$ and $n \in \mathbb{N}$. Thus, on any interval $(x_n, x_{n+1}]$, it holds that $g(x) > c/x_{n+1}$, and after setting $x_0 := 0$, we obtain a partition $\{(x_n, x_{n+1}] ~|~ n \in \mathbb{N}_0\}$ of $\mathbb{R}_{> 0}$. Therefore,
  \begin{align*}
    \int_0^\infty g(x) ~\mathrm{d}x & = \sum_{n=0}^\infty \int_{x_n}^{x_{n+1}} g(x) ~\mathrm{d}x > \sum_{n=0}^\infty \int_{x_n}^{x_{n+1}} \frac{c}{x_{n+1}} ~\mathrm{d}x = c\sum_{n=0}^\infty \frac{x_{n+1} - x_n}{x_{n+1}} \\
                                    & = c\sum_{n=0}^\infty \left\{1 - \frac{x_n}{x_{n+1}}\right\} > c \sum_{n=0}^\infty \left\{1 - \frac{1}{2}\right\} = c \sum_{n=0}^\infty \frac{1}{2},
  \end{align*}
  which is a contradiction to the assumed integrability of $g$. In the above set of (in-)equalities, the first equality is valid due to the monotone convergence theorem and the two inequalities hold due to Eqs.\@ \eqref{eq:g_antition_x_n} and \eqref{eq:xn_exponentiell}, respectively.
\end{proof}

\begin{proof}[Proof of Corollary \ref{cor:boundwasserstein}]
  By Theorem \ref{thm:wasserstein_gesamt}, we get
  \begin{align}
    \begin{split}
      \label{eq:grenze0}
      \mathbb{E}d_p^p(\xi, \xi_n) &\leq 3^{p-1}\left\{2^{p}\left(\xi\left(U_K(0)^C\right)^\frac{q-p}{q}m_q^{p/q} + \xi\left(U_K(0)^C\right)K^p\right) + K^{d/2}\cdot\mathfrak{M}^p\right\} \\
      &\leq 6^p\left\{\xi\left(U_K(0)^C\right)^\frac{q-p}{q}m_q^{p/q} + \xi\left(U_K(0)^C\right)K^p + K^{d/2}\cdot\mathfrak{M}^p\right\}.
    \end{split}
  \end{align}
  Choose $K = n^{(p-2)/(2d^2)}$, then
  \begin{equation}
    \label{eq:grenze1}
    K^{d/2}\cdot\mathfrak{M}^p = c_0 2^{3d/2 - p} d^\frac{p}{2} n^{-\frac{p-2}{4d}} \left\{\frac{1 + M^\frac{d/2 - p}{d}}{1-2^{p - d/2}} + \frac{1}{1 - 2^{-p}}  +  4M^\frac{1}{d}\right\}.
  \end{equation}

  Writing $m_q'$ for the $q$-th moment of $U_1'$, we can bound $m_q$ by
  $$
    m_q = \mathbb{E}\left[\|U_1\|_2^q\right] = \mathbb{E}\left[\left(\sum_{i=1}^{d'} \|U_i'\|_2^2\right)^{q/2}\right] \leq (d')^{q/2} m_q',
  $$
  where we have used the Jensen inequality and the stationarity of the process $(U_i')_{i \in \mathbb{N}}$.

  Now observe that
  $$
    \infty > m'_q = \int_0^\infty \mathbb{P}\left(\|U_1'\|_2^q > K\right) ~\mathrm{d}K,
  $$
  which by Lemma \ref{lem:antiton_o_1} implies that $\mathbb{P}(\|U_1'\|_2^q > K) = o\left(K^{-1}\right)$, with the constant involved depending only on $\mathcal{L}(U_1')$. Furthermore,
  \begin{align*}
    \xi\left(U_K(0)^C\right) & = \mathbb{P}\left( \sum_{i=1}^{d'} \|U_i'\|_2^2 > K^2\right)                            \\
                             & \leq \mathbb{P}\left(\exists 1 \leq i \leq d' : \|U_i'\|_2 > \frac{K}{\sqrt{d'}}\right) \\
                             & \leq d' \cdot \mathbb{P}\left(\|U_1'\|_2 > \frac{K}{\sqrt{d'}}\right)                   \\
                             & = d' \cdot \mathbb{P}\left(\|U_1'\|_2^q > \frac{K^q}{(d')^{q/2}}\right),
  \end{align*}
  and thus
  $$
    \xi\left(U_K(0)^C\right) = d' \cdot o\left(\frac{K^{-q}}{(d')^{-q/2}}\right) = (d')^{1+q/2}\cdot o\left(K^{-q}\right),
  $$
  with the constant involved depending only on $\mathcal{L}(U_1')$. In other words, there is a constant $c'$ and some $n_0 \in \mathbb{N}$, both depending only on $\mathcal{L}(U_1')$, such that
  $$
    \xi\left(U_K(0)^C\right) \leq c' \cdot (d')^{1+q/2} K^{-q}
  $$
  for all $n \geq n_0$ (recall that our special choice of $K = K(n)$ depends on $n$). Thus, for $n \geq n_0$,
  \begin{align}
    \begin{split}
      \label{eq:grenze2}
      \xi\left(U_K(0)^C\right)^\frac{q-p}{q}m_p^{p/q} &\leq (c')^\frac{q-p}{q} \cdot (d')^\frac{(q-p)(2+q) + pq}{2q} K^{p-q} \\
      &= (c')^\frac{q-p}{q} \cdot (d')^{1 + q/2 - p/q} K^{p-q} \\
      &\leq c'\cdot (d')^{1+q/2} K^{p-q} \\
      &\leq c' \cdot d^{1+q/2} n^{(p-2)(p-q)/(2d^2)},
    \end{split}
  \end{align}
  because $d' \leq d$ (with equality if and only if each $U_k'$ is one-dimensional), and
  \begin{equation}
    \label{eq:grenze3}
    \xi\left(U_K(0)^C\right)K^p \leq c' \cdot (d')^{1+q/2} K ^{p-q} \leq c' \cdot d^{1+q/2} n^{(p-2)(p-q)/(2d^2)}.
  \end{equation}
  Thus, by Eqs.\@ \eqref{eq:grenze0} and \eqref{eq:grenze1} through \eqref{eq:grenze3}, we get
  \begin{align*}
    \mathbb{E}d_p^p(\xi, \xi_n) & \leq 6^p c_0 2^{3d/2 - p} d^\frac{p}{2} n^{-\frac{p-2}{4d}} \left(\frac{1 + M^\frac{d/2 - p}{d}}{1 - 2^{p-d/2}} + \frac{1}{1 - 2^{-p}} + 4M^\frac{1}{d}\right) \\
                                & \quad + 6^p 2c' \cdot d^{1+q/2} n^{(p-2)(p-q)/(2d^2)}.
  \end{align*}
\end{proof}

\subsection{Asymptotic Behaviour of $V$-Statistics of Independent Blocks Derived from a Stationary Process}
\label{sec:vblock}

In the proof of Theorem \ref{thm:hyp_bootstrap}, we make use of triangular arrays constructed in a certain manner: For a given strictly stationary sequence $(U_k)_{k \in \mathbb{N}}$, we define a triangular array $\tilde{U}$ by taking its $n$-th row as $N = N(n)$ iid copies of $(U_1, \ldots, U_d)$, $d = d(n)$ such that $n/(Nd) \to 1$ as $n \to \infty$. If we are then interested in the limiting distributions of $V$-statistics with sample data $U_1, \ldots, U_n$, compared to those with sample data $\tilde{U}_{1,n}, \ldots, \tilde{U}_{Nd, n}$, we can use the following theorem.

\begin{theorem}
  \label{thm:vschlange_d1}
  Suppose the conditions of Theorem \ref{thm:zschlange} are satisfied. Furthermore, assume $g$ to have finite $(3+\varepsilon)$-moments with respect to $\xi^2$ and $\alpha(n) = \mathcal{O}(n^{-r})$ with $r > 2 + 6\varepsilon^{-1}$. Then, with $\tilde{V} = V_g(\tilde{U}_{1,n}, \ldots, \tilde{U}_{Nd,n})$, if $d \to \infty$ for $n \to \infty$, it holds that
  $$
    d_1\left(n\tilde{V}, \sum_{k=1}^\infty \lambda_k \zeta_k^2\right) \xrightarrow[n \to \infty]{} 0,
  $$
  where $\lambda_k$ and $\zeta_k$ are the objects from Theorem \ref{thm:zschlange}.
\end{theorem}

As a starting point, we will generalise Theorem 2 in \cite{kroll} to triangular arrays. We will have to assume stronger moment conditions, but the proofs will be mostly similar. Note that Theorem 2 in \cite{kroll} is a weaker result than Theorem \ref{thm:zschlange} since it only gives us weak convergence instead of convergence in $d_1$. The only real difference from the original proof of Theorem 2 in \cite{kroll} lies in the CLT that we use.

\begin{theorem}
  \label{thm:vschlangeschwach}
  Under the conditions of Theorem \ref{thm:vschlange_d1}, it holds that
  $$
    n\tilde{V} \xrightarrow[n \to \infty]{\mathcal{D}} \zeta := \sum_{k=1}^\infty \lambda_k \zeta_k^2,
  $$
  where $\lambda_k$ and $\zeta_k$ are the objects from Theorem \ref{thm:zschlange} (i).
\end{theorem}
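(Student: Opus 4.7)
The plan is to mirror the spectral-decomposition strategy used in Kroll's proof of Theorem~\ref{thm:rangzwei}, replacing the CLT for $\alpha$-mixing sequences used there by one adapted to the iid-block structure of $\tilde U$. Spectral decomposition of the integral operator associated to the continuous, symmetric, degenerate, positive semidefinite kernel $g$ yields the $L_2(\xi^2)$-convergent expansion $g(x,y)=\sum_{k\ge 1}\lambda_k\varphi_k(x)\varphi_k(y)$, with orthonormal $\varphi_k$ satisfying $\int\varphi_k\,\mathrm{d}\xi=0$ by degeneracy. Substituting this into the $V$-statistic gives
$$
 n\tilde V=\sum_{k=1}^{\infty}\lambda_k T_{k,n}^2,\qquad T_{k,n}:=\frac{1}{\sqrt n}\sum_{i=1}^{n}\varphi_k(\tilde U_{i,n}),
$$
so the theorem reduces to three items: (a) joint weak convergence $(T_{1,n},\dots,T_{K,n})\Rightarrow(\zeta_1,\dots,\zeta_K)$ for every fixed $K$; (b) $\lim_{K\to\infty}\limsup_n\mathbb P\bigl(n\tilde V-\sum_{k\le K}\lambda_k T_{k,n}^2>\epsilon\bigr)=0$ for every $\epsilon>0$; and (c) $\sum_{k\le K}\lambda_k\zeta_k^2\Rightarrow\zeta$ as $K\to\infty$.

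For (a), the decisive observation is that, because the $n$-th row of $\tilde U$ consists of $N$ iid copies of $(U_1,\dots,U_d)$,
$$
 T_{k,n}=\frac{1}{\sqrt{Nd}}\sum_{l=1}^{N}S_{k,l},\qquad S_{k,l}:=\sum_{j=(l-1)d+1}^{ld}\varphi_k(\tilde U_{j,n}),
$$
and the vectors $(S_{1,l},\dots,S_{K,l})$, $l=1,\dots,N$, form an iid sample, each distributed as $\bigl(\sum_{j\le d}\varphi_1(U_j),\dots,\sum_{j\le d}\varphi_K(U_j)\bigr)$. Joint convergence then follows from the Cram\'er--Wold device together with a Lindeberg--Lyapunov CLT for row-wise iid triangular arrays. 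The covariance is correctly identified via~\eqref{eq:kovarianz}, as $d^{-1}\mathrm{Cov}(S_{i,1},S_{j,1})=d^{-1}\sum_{s,t=1}^d\mathrm{Cov}(\varphi_i(U_s),\varphi_j(U_t))\to\mathrm{Cov}(\zeta_i,\zeta_j)$. The Lyapunov ratio $N\mathbb E|S_{k,1}|^{2+\delta}/(Nd)^{1+\delta/2}=\mathcal O(N^{-\delta/2})$ is controlled via a Rosenthal-type moment inequality for $\alpha$-mixing sequences (valid under the $(3+\varepsilon)$-moment and $r>2+6\varepsilon^{-1}$ hypotheses), which furnishes $\mathbb E|S_{k,1}|^{2+\delta}=\mathcal O(d^{1+\delta/2})$. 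The continuous mapping theorem then yields $\sum_{k\le K}\lambda_k T_{k,n}^2\Rightarrow\sum_{k\le K}\lambda_k\zeta_k^2$.

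For (b), independence of distinct blocks together with degeneracy of $g$ kills all cross-block contributions to $\mathbb E T_{k,n}^2$, leaving
$$
 \mathbb E\bigl[n\tilde V-\textstyle\sum_{k\le K}\lambda_k T_{k,n}^2\bigr]=\sum_{k>K}\lambda_k\cdot\frac{1}{d}\sum_{s,t=1}^{d}\mathrm{Cov}\bigl(\varphi_k(U_s),\varphi_k(U_t)\bigr),
$$
which can be made arbitrarily small uniformly in $n$ by choosing $K$ large; Markov's inequality then yields~(b), and an analogous computation gives~(c). Slutsky's theorem assembles (a)--(c) into the claim. I expect the principal technical obstacle to lie in~(b): the naive mixing bound on $\mathrm{Cov}(\varphi_k(U_s),\varphi_k(U_t))$ features $\|\varphi_k\|_{L_{2+\varepsilon}(\xi)}^2$, which is not uniformly controlled in $k$. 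This difficulty is resolved by exploiting Mercer's identity $\sum_k\lambda_k=\mathbb E g(U_1,U_1)<\infty$ in combination with the uniform $(2+\varepsilon)$-moment bound on $g$ with respect to $\xi^2$, so as to bound the tail without relying on pointwise estimates of individual eigenfunctions.
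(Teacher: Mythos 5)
Your proposal is correct in substance and shares the paper's skeleton: the Mercer-type expansion $g=\sum_k\lambda_k\varphi_k\otimes\varphi_k$, Cram\'er--Wold for the finite-dimensional vector $(T_{1,n},\dots,T_{K,n})$, first-moment control of the spectral tail exploiting independence across blocks, and assembly by the truncation theorem (Billingsley, Theorem 3.2 --- this, not Slutsky, is what combines your (a)--(c); Slutsky only enters to replace the normalisation $Nd$ by $n$, a point you should state since your identity $T_{k,n}=(Nd)^{-1/2}\sum_l S_{k,l}$ silently sets $n=Nd$). Where you genuinely deviate is the CLT step: the paper applies Rio's Lindeberg-type CLT for $\alpha$-mixing triangular arrays directly to $\vartheta_{t,n}=\sum_{k\le K}c_k\varphi_k(\tilde U_{t,n})$, verifying its quantile-function condition with $p=3+\varepsilon$ and recycling Kroll's variance computations because the within-block dependence coincides with that of the original sequence; you instead aggregate each block into one summand, obtain a row-wise iid array, and invoke a classical Lindeberg--Lyapunov CLT, shifting the dependence burden into a Rosenthal-type bound $\mathbb{E}|S_{k,1}|^{2+\delta}=\mathcal{O}\bigl(d^{1+\delta/2}\bigr)$. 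That is a viable alternative, but two points must be made explicit: (i) the Rosenthal bound needs moments of the eigenfunctions, so record that $\mathbb{E}|\varphi_k(U_1)|^{3+\varepsilon}\le\lambda_k^{-(3+\varepsilon)}\iint|g|^{3+\varepsilon}\,\mathrm{d}\xi^2<\infty$ (the paper uses this implicitly when it notes $\|\vartheta_{t,n}\|_{L_{3+\varepsilon}}$ is finite) and check that $r>2+6\varepsilon^{-1}$ satisfies the hypothesis of the specific Rosenthal inequality you cite; (ii) in your step (b), Mercer's trace identity $\sum_k\lambda_k=\mathbb{E}g(U_1,U_1)$ only gives $L_1$-control of the tail, which is insufficient for the Davydov-type covariance bound within a block --- what is actually needed (and what the paper borrows from Kroll's proof, cf. also the argument in Corollary \ref{cor:wasserstein} via the Hilbert-space covariance inequality of Lemma 2.2 in \citet{dehlingvector}) is domination of $\|T_K(U_1)\|_{L_{2+\delta}}$ by the diagonal moment condition $\mathbb{E}|g(U_1,U_1)|^{1+\varepsilon/2}<\infty$, which tends to $0$ as $K\to\infty$ by dominated convergence; the off-diagonal $(2+\varepsilon)$-moment bound on $g$ is not the relevant ingredient there. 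With these two repairs your argument goes through and is essentially equivalent in strength to the paper's.
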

\begin{proof}
  As shown in the proof of Theorem 2 in \cite{kroll}, we have
  $$
    g(s, s') = \sum_{k=1}^\infty \lambda_k \varphi_k(s)\varphi_k(s')
  $$
  for $\xi^2$-almost all $s, s'$. The $\varphi_k$ are centred and form an orthonormal system in $L_2(\mathcal{U}, \xi)$. Let $c_1, \ldots, c_K$ be any selection of real constants. We define the following objects:
  \begin{align*}
    \vartheta_{t,n} & := \sum_{k=1}^K c_k \varphi_k\left(\tilde{U}_{t,n}\right),                   \\
    \zeta_{n,k}     & := \frac{1}{\sqrt{Nd}}\sum_{t=1}^{Nd} \varphi_k\left(\tilde{U}_{t,n}\right), \\
    S_{i,n}         & := \frac{1}{\sqrt{Nd}}\sum_{t=1}^{i} \vartheta_{t,n},                        \\
    V_{i,n}         & := \mathrm{Var}(S_{i,n}).
  \end{align*}
  Note that $Nd V^{(K)} = \sum_{k=1}^K \lambda_k \zeta_{n,k}^2$ and $\sum_{k=1}^K c_k \zeta_{n,k} = S_{Nd, n}$. Using the Cramér-Wold-device, we show that $(\zeta_{n,k})_{1 \leq k \leq K}$ converges in distribution to $(\zeta_1, \ldots, \zeta_K)$.

  First, because the dependence structure within each block is the same as in the original sequence $(U_k)_{k \in \mathbb{N}}$, and the blocks themselves are independent from each other, we have that
  \begin{align*}
    V_{i,n} & = \mathrm{Var}\left(\frac{1}{\sqrt{Nd}}\sum_{t=1}^d \vartheta_{t,n}\right) + \mathrm{Var}\left(\frac{1}{\sqrt{Nd}}\sum_{t=d+1}^{2d} \vartheta_{t,n}\right) + \ldots + \mathrm{Var}\left(\frac{1}{\sqrt{Nd}}\sum_{t=\lfloor i/d \rfloor d + 1}^i \vartheta_{t,n}\right) \\
            & = \left\lfloor \frac{i}{d} \right\rfloor \mathrm{Var}\left(\frac{1}{\sqrt{Nd}}\sum_{t=1}^d \vartheta_{t,n}\right) + \mathrm{Var}\left(\frac{1}{\sqrt{Nd}}\sum_{t=\lfloor i/d \rfloor d + 1}^i \vartheta_{t,n}\right)                                                   \\
            & = \left\lfloor \frac{i}{d}\right\rfloor \mathcal{O}\left(\frac{1}{N}\right) + \mathcal{O}\left(\frac{i \mod d}{Nd}\right).
  \end{align*}
  The rate of growth in the last line follows exactly as in the proof of Theorem 2 in \cite{kroll}, since, as mentioned before, the dependence structure within each block is identical to that of the original sequence $U$. Therefore, $V_{Nd, n}$ converges and
  $$
    V_{i,n} = \mathcal{O}(N) \Theta\left(\frac{1}{N}\right) + \mathcal{O}\left(\frac{1}{N}\right) = \mathcal{O}(1),
  $$
  where the constants involved are independent of $i$. From this it follows that
  \begin{equation}
    \label{eq:varianzlimsup}
    \limsup_{n \to \infty} \max_{1 \leq i \leq Nd} \frac{V_{i,n}}{V_{Nd,n}} < \infty.
  \end{equation}
  For $V_{Nd, n}$ in particular, we get
  \begin{equation}
    \label{eq:varianzgesamt}
    V_{Nd, n} = N \,\mathrm{Var}\left(\frac{1}{\sqrt{Nd}} \sum_{t=1}^d \vartheta_{t,n}\right) = \mathrm{Var}\left(\frac{1}{\sqrt{d}} \sum_{t=1}^d \vartheta_{t,n}\right) \xrightarrow[n \to \infty]{} \sigma^2.
  \end{equation}

  Define the functions
  $$
    Q_{t,n}(u) := \sup\left\{t ~\Big|~ \mathbb{P}\left((Nd)^{-1/2}\vartheta_{t,n} > t\right) > u\right\},
  $$
  and
  $$
    \alpha^{-1}(u) := \max\{k \in \mathbb{N} ~|~ \alpha(k) > u\}.
  $$
  Now, if for some $0 < u < 1$, $t \geq 0$ is chosen such that $\mathbb{P}\left(\left|(Nd)^{-1/2} \vartheta_{t,n}\right| > t\right) > u$, it follows by Markov's inequality that
  $$
    u < t^{-p} \left\|(Nd)^{-1/2} \vartheta_{t,n}\right\|_{L^p}^p
  $$
  for any $p \geq 1$, or, equivalently,
  $$
    t < \left\|(Nd)^{-1/2} \vartheta_{t,n}\right\|_{L^p} u^{-1/p},
  $$
  and so
  \begin{equation}
    \label{eq:Qtn_bound}
    Q_{t,n} \leq \left\|(Nd)^{-1/2}\vartheta_{t,n}\right\|_{L_p} u^{-1/p}
  \end{equation}
  for any $p \geq 1$. Furthermore, we have that $\alpha^{-1}(u) \leq 2c_0 u^{-1/r}$ for some constant $c_0 > 0$, since $\alpha(n) = \mathcal{O}(n^{-r})$. Lastly, note that $\|\vartheta_{t,n}\|_{L_p}$ is independent of $t$ and $n$ due to the stationarity of $(U_k)_{k \in \mathbb{N}}$. Therefore,
  $$
    \int_0^1 \alpha^{-2}\left(\frac{u}{2}\right) Q_{t,n}^3(u) ~\mathrm{d}u \leq \mathrm{const} \cdot (Nd)^{-\frac{3}{2}} \int_0^1  u^{-\frac{2}{r} - \frac{3}{p}} ~\mathrm{d}u = \mathcal{O}\left((Nd)^{-\frac{3}{2}}\right),
  $$
  where we have used the special choice $p = 3 + \varepsilon$. We get
  \begin{align*}
    V_{Nd, n}^{-\frac{3}{2}} & \sum_{t=1}^{Nd} \int_0^1 \alpha^{-1}\left(\frac{u}{2}\right) Q_{t,n}^2(u) \min\left\{\alpha^{-1}\left(\frac{u}{2}\right)Q_{t,n}(u), \sqrt{V_{Nd,n}}\right\} ~\mathrm{d}u \\
                             & \leq V_{Nd,n}^{-\frac{3}{2}} Nd \int_0^1 \alpha^{-2}\left(\frac{u}{2}\right) Q_{t,n}^3(u) ~\mathrm{d}u = \mathcal{O}\left(\frac{1}{\sqrt{Nd}}\right),
  \end{align*}
  and from this and Eq.\@ \eqref{eq:varianzlimsup}, Corollary 1 in \cite{rio_lindeberg} gives us
  $$
    S_{Nd, n} V_{Nd,n}^{-1} \xrightarrow[n \to \infty]{\mathcal{D}} \mathcal{N}(0,1).
  $$
  Note that the limit $\sigma^2$ in Eq.\@ \eqref{eq:varianzgesamt} is exactly the variance of $\sum_{k=1}^K c_k \zeta_k$, where $\zeta_k$ are centred Gaussian random variables with their covariance function given in Eq.\@ \eqref{eq:kovarianz}, and thus
  $$
    \sum_{k=1}^K c_k \zeta_{n,k} = S_{Nd, n} \xrightarrow[n \to \infty]{\mathcal{D}} \mathcal{N}(0, \sigma^2) = \mathcal{L}\left(\sum_{k=1}^K c_k \zeta_k\right).
  $$
  By the Cramér-Wold-device it follows that $(\zeta_{n,k})_{1\leq k \leq K} \xrightarrow[n \to \infty]{\mathcal{D}} (\zeta_k)_{1 \leq k \leq K}$. The continuous mapping theorem then gives us
  \begin{equation}
    \label{eq:billingsley1}
    Nd \, \tilde{V}^{(K)} = \sum_{k=1}^K \lambda_k \zeta_{n,k}^2 \xrightarrow[n\to \infty]{\mathcal{D}} \sum_{k=1}^K \lambda_k \zeta_k^2 =: \zeta^{(K)}.
  \end{equation}

  We now show that
  \begin{equation}
    \label{eq:billingsley3}
    \lim_{K \to \infty}\limsup_{n \to \infty} \mathbb{E}\left|Nd\,\tilde{V} - Nd\,\tilde{V}^{(K)}\right| = 0.
  \end{equation}
  This can be achieved in much the same way as in the original proof of Theorem 2 in \cite{kroll}. Let us introduce the Hilbert-space $H$ of all real-valued sequences $(a_k)_{k \in \mathbb{N}}$ such that $\sum_k \lambda_k a_k^2 < \infty$, with the inner product $\langle (a_k), (b_k)\rangle_H := \sum_k \lambda_k a_k b_k$. If we write $T_K(\tilde{U}_{t,n}) := (0^K, (\varphi_k(\tilde{U}_{t,n}))_{k > K})$, where $0^K$ is the $K$-dimensional zero vector, then
  \begin{align*}
    \mathbb{E}\left|Nd \,\tilde{V} - Nd\, \tilde{V}^{(K)}\right| = \frac{1}{Nd}\sum_{s,t=1}^{Nd} \mathrm{Cov}\left(T_K\left(\tilde{U}_{s,n}\right), T_K\left(\tilde{U}_{t,n}\right)\right).
  \end{align*}
  With the same argument regarding the block structure as before -- the dependence is identical within any given block, and the blocks are independent from each other --, we can instead examine the sum
  $$
    \frac{1}{Nd}\sum_{s,t=1}^{Nd} \mathrm{Cov}(T_K(U_s), T_K(U_t)).
  $$
  Now we can proceed exactly as in the proof of Theorem Theorem 2 in \cite{kroll} to show that
  $$
    \lim_{K\to \infty}\limsup_{Nd \to \infty} \frac{1}{Nd}\sum_{s,t=1}^{Nd} \mathrm{Cov}(T_K(U_s), T_K(U_t)) = 0.
  $$
  This implies Eq.\@ \eqref{eq:billingsley3}, because $Nd \xrightarrow[n \to \infty]{} \infty$.

  By Theorem 2 in \cite{dehling_durieu_volny:2009} , Eqs.\@ \eqref{eq:billingsley1} and \eqref{eq:billingsley3} imply
  $$
    Nd \,\tilde{V} \xrightarrow[n \to \infty]{\mathcal{D}} \zeta.
  $$
  Because $n/(Nd) \to 1$ as $n \to \infty$, it follows that
  $$
    n\tilde{V} = \frac{n}{Nd} Nd \, \tilde{V} \xrightarrow[n \to \infty]{\mathcal{D}} \zeta,
  $$
  by Slutsky's Theorem.
\end{proof}

Theorem 2 in \cite{kroll} together with Theorem \ref{thm:vschlangeschwach} give us that both $V$-statistics converge weakly to the same limiting distribution $\zeta$.

\begin{corollary}
  \label{cor:wasserstein}
  The convergences in Theorem 2 in \cite{kroll} and Theorem \ref{thm:vschlangeschwach} also hold in the Wasserstein metric $d_1$.
\end{corollary}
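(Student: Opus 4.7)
The strategy is to upgrade the weak convergences of Theorems \ref{thm:rangzwei} and \ref{thm:vschlangeschwach} to $d_1$-convergence via the Villani characterisation (Theorem 6.9 in \citet{villani:optimaltransport}): $d_1$-convergence is equivalent to weak convergence together with convergence of first absolute moments. Since $nV$, $n\tilde V$, and $\zeta = \sum_k \lambda_k \zeta_k^2$ are almost-surely non-negative, the absolute first moments coincide with expectations, and $\mathbb{E}[\zeta] = \sum_k \lambda_k \mathrm{Var}(\zeta_k) \leq \sum_k \lambda_k = \mathbb{E}[g(U_1, U_1)] < \infty$ by the diagonal moment assumption on $g$. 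Thus it suffices to show $\mathbb{E}[nV] \to \mathbb{E}[\zeta]$ for Theorem \ref{thm:rangzwei} and $\mathbb{E}[n\tilde V] \to \mathbb{E}[\zeta]$ for Theorem \ref{thm:vschlangeschwach}.

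For the first statement, I would use strict stationarity to write
\begin{equation*}
\mathbb{E}[nV] = \mathbb{E}\bigl[g(U_1, U_1)\bigr] + \frac{2}{n}\sum_{d=1}^{n-1}(n-d)\,\mathbb{E}\bigl[g(U_1, U_{1+d})\bigr],
\end{equation*}
and then exploit degeneracy of $g$, which makes each $\mathbb{E}[g(U_1, U_{1+d})]$ a covariance-type quantity vanishing under independence. A Davydov/Ibragimov-type covariance inequality combined with the assumed $(2+\varepsilon)$-moments on $g$ yields $|\mathbb{E}[g(U_1, U_{1+d})]| = O\bigl(\alpha(d)^{\varepsilon/(2+\varepsilon)}\bigr)$; the assumption $\alpha(d) = O(d^{-r})$ with $r > 1 + 2\varepsilon^{-1}$ makes this summable, and a Cesàro argument identifies the limit as $\mathbb{E}[g(U_1, U_1)] + 2\sum_{d=1}^\infty \mathbb{E}[g(U_1, U_{1+d})]$. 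To match this with $\mathbb{E}[\zeta] = \sum_k \lambda_k \mathrm{Var}(\zeta_k)$, I would substitute the spectral expansion $g(u,v) = \sum_k \lambda_k \varphi_k(u)\varphi_k(v)$ into each term, interchange the sums over $k$ and $d$, and invoke the covariance identity \eqref{eq:kovarianz} together with $\mathrm{Var}(\varphi_k(U_1)) = 1$.

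For $\mathbb{E}[n\tilde V]$, the iid-block structure of $\tilde U$ simplifies the calculation drastically: pairs $(\tilde U_{i,n}, \tilde U_{j,n})$ drawn from distinct blocks are independent, and degeneracy eliminates their contribution, so only within-block pairs survive. This collapses the first moment to $\mathbb{E}[n\tilde V] = \mathbb{E}\bigl[d\,V_g(U_1, \ldots, U_d)\bigr]$, and since $d = d(n) \to \infty$, the argument of the previous paragraph applied with $d$ in place of $n$ supplies $\mathbb{E}[n\tilde V] \to \mathbb{E}[\zeta]$.

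The main obstacle I expect is justifying the double-summation interchange in the spectral step: the inner sum $\sum_k \lambda_k |\mathrm{Cov}(\varphi_k(U_1), \varphi_k(U_{1+d}))|$ is uniformly bounded in $d$ by $\sum_k \lambda_k < \infty$ via Cauchy--Schwarz and trace-class summability, but its summability in $d$ must be extracted from the mixing decay of the \emph{signed} quantity $\mathbb{E}[g(U_1, U_{1+d})]$ rather than from termwise bounds, since the individual $\varphi_k$ are only guaranteed to lie in $L_2(\xi)$ and need not admit uniform $L_{2+\varepsilon}$-bounds.
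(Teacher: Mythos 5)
Your reduction of $d_1$-convergence to weak convergence plus convergence of the (non-negative) first moments is sound, and your observation that independence between blocks plus degeneracy collapses $\mathbb{E}[n\tilde V]$ to $\mathbb{E}[d\,V_g(U_1,\ldots,U_d)]$ is correct and is essentially how the paper also transfers the problem from the triangular array back to the original sequence. The genuine gap is the step you yourself flag: the bound $|\mathbb{E}[g(U_1,U_{1+d})]| = O\bigl(\alpha(d)^{\varepsilon/(2+\varepsilon)}\bigr)$ does not follow from a Davydov/Ibragimov covariance inequality applied with the $(2+\varepsilon)$-moments of $g$. Those inequalities bound $\mathrm{Cov}(f(U_1),h(U_{1+d}))$ for functions of the two time points \emph{separately}; $g(U_1,U_{1+d})$ is a joint functional, and comparing $\mathbb{E}[g(U_1,U_{1+d})]$ with its value under independence for a general kernel requires absolute regularity (Yoshihara/Arcones-type lemmas), whereas Theorems \ref{thm:rangzwei} and \ref{thm:vschlangeschwach} only assume $\alpha$-mixing. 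The way out is the positive semidefinite structure: writing $g(u,v)=\langle T(u),T(v)\rangle_H$ with $T(z)=(\varphi_k(z))_{k\in\mathbb{N}}$ and $\|T(z)\|_H^2=g(z,z)$, the quantity $\mathbb{E}[g(U_s,U_t)]$ becomes a covariance of centred $H$-valued variables, and the Hilbert-space covariance inequality (Lemma 2.2 in \citet{dehlingvector}) yields the decay you need --- but with the \emph{diagonal} moment condition $\mathbb{E}|g(U_1,U_1)|^{1+\varepsilon/2}<\infty$ doing the work, not the off-diagonal $(2+\varepsilon)$-moments you cite. This embedding is exactly the device the paper uses, so without it your key estimate is unsupported.

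Even granting that estimate, your route still has to identify $\lim_n \mathbb{E}[nV]$ with $\mathbb{E}\zeta=\sum_k\lambda_k\mathrm{Var}(\zeta_k)$, which requires interchanging $\sum_k$ with the Ces\`aro limit in \eqref{eq:kovarianz}; termwise control of $\mathrm{Cov}(\varphi_k(U_1),\varphi_k(U_{1+d}))$ is unavailable (the $\varphi_k$ are only in $L_2(\xi)$), so this needs a uniform tail bound in $k$ of the type \eqref{eq:billingsley3}, which you do not supply. Also, your auxiliary inequality $\sum_k\lambda_k\mathrm{Var}(\zeta_k)\leq\sum_k\lambda_k$ is false in general: $\mathrm{Var}(\zeta_k)$ is a long-run variance and can exceed $\mathrm{Var}(\varphi_k(U_1))=1$. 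The paper sidesteps both issues by never computing the limit of the means: it proves uniform integrability, bounding $\mathbb{E}[\textbf{1}_{\{nV>M\}}\,nV]$ via the same Hilbert-space covariance inequality together with H\"older and the Portmanteau theorem, and then invokes Theorem 3.5 of \citet{billingsley} so that the already-established weak convergence delivers convergence of the first moments. Your direct-computation strategy could be repaired along these lines, but as written the central decay estimate and the limit identification are both missing.
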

\begin{proof}
  By Theorem 6.9 in \cite{villani:optimaltransport}, convergence in $d_p$ is equivalent to weak convergence and uniform $p$-integrability. We therefore only need to show uniform integrability, for which we will borrow concepts from the proof of Theorem 2 in \cite{kroll}. We will show the uniform integrability by proving that the $(1+\alpha)$-moments of $nV$ are uniformly bounded for some $\alpha > 0$.

  With $H$ once again being the Hilbert space of all real-valued sequences $(a_k)_{k \in \mathbb{N}}$ such that $\sum_{k=1}^\infty \lambda_k a_k^2 < \infty$, where $\lambda_k$ are the eigenvalues from Theorem \ref{thm:vschlangeschwach}, equipped with the weighted inner product $\langle (a_k)_{k \in \mathbb{N}}, (b_k)_{k \in \mathbb{N}}\rangle_H := \sum_{k \in \mathbb{N}} \lambda_k a_k b_k$, we define the operator $T$ by $T(s) := (\varphi_k(s))_{k \in \mathbb{N}} \in H$. Then it holds that
  \begin{align*}
    \mathbb{E}\left[\|T(U_1)\|_H^{2+\varepsilon}\right] & = \mathbb{E}\left[\left(\sum_{k=1}^\infty \lambda_k \varphi_k(U_1)^2\right)^\frac{2+\varepsilon}{2}\right] = \mathbb{E}\left[\left(\sum_{k=1}^\infty \lambda_k \varphi_k(U_1)^2\right)^{1+\varepsilon/2}\right] \\
                                                        & = \mathbb{E}\left[g(U_1, U_1)^{1+ \varepsilon/2}\right] < \infty.
  \end{align*}
  Furthermore, it holds that
  \begin{align*}
    \mathbb{E}\left[\left\|\frac{1}{\sqrt{n}}\sum_{s=1}^n T(U_s)\right\|_H^2\right] & = \mathbb{E}\left[\left\langle\frac{1}{\sqrt{n}}\sum_{s=1}^n T(U_s), \frac{1}{\sqrt{n}}\sum_{t=1}^n T(U_t)\right\rangle_H\right] \\
                                                                                    & = \frac{1}{n} \sum_{s,t=1}^n \mathbb{E}\langle T(U_s), T(U_t)\rangle_H                                                           \\
                                                                                    & \leq \frac{15}{n} \sum_{s,t=1}^n \alpha(|s-t|)^{\varepsilon/(2+\varepsilon)} \|T(U_1)\|_{L_{2+\varepsilon}}^2
  \end{align*}
  by Lemma 2.2 in \cite{dehlingvector}. The partial sum of the mixing coefficient converges by assumption on the growth rate of the mixing coefficients, and so the entire bound converges to some finite constant which we will call $c$. Lemma 2.1 in \cite{dehlingvector} now gives us that there exists some $\alpha > 0$ such that
  $$
    \mathbb{E}\left[\left\|\sum_{s=1}^n T(U_s)\right\|_H^{2+2\alpha}\right] \leq c_0 n^{1+\alpha} \left(c + \mathbb{E}\left[g(U_1, U_1)^{1+ \varepsilon/2}\right]\right)
  $$
  for all $n \in \mathbb{N}$, where $c_0$ is some constant depending only on $\varepsilon$ and the mixing coefficients. Thus,
  \begin{align*}
    \mathbb{E}\left[|nV|^{1+\alpha}\right] & = \mathbb{E}\left[\left(\sum_{k=1}^\infty \lambda_k \left(\frac{1}{\sqrt{n}} \sum_{s=1}^n \varphi_k(U_s)\right)^2\right)^{1+\alpha}\right] = \mathbb{E}\left[\left\| \frac{1}{\sqrt{n}}\sum_{s=1}^n T(U_s) \right\|_H^{2+2\alpha}\right] \\
                                           & \leq  n^{-(1+\alpha)} \, c_0 n^{1+\alpha} \left(c + \mathbb{E}\left[g(U_1, U_1)^{1+ \varepsilon/2}\right]\right)                                                                                                                         \\
                                           & = c_0 \left(c + \mathbb{E}\left[g(U_1, U_1)^{1+ \varepsilon/2}\right]\right),
  \end{align*}
  and so the $(1+\alpha)$-moments of $nV$ are uniformly bounded, which implies uniform integrability. This in turn proves convergence in $d_1$ by Theorem 6.9 in \cite{villani:optimaltransport}. The generalisation of Theorem \ref{thm:zschlange} (ii) can be proven in the same way.
\end{proof}

Theorems \ref{thm:zschlange} and \ref{thm:vschlange_d1} now follow from Theorem 2 in \cite{kroll}, Theorem \ref{thm:vschlangeschwach} and Corollary \ref{cor:wasserstein}.

\begin{proof}[Proof of Theorem \ref{thm:asymptotik}]
  In the proof of Theorem 3 in \cite{kroll}, it is shown that $h_2(z,z';\theta)$ fulfils the assumptions of Theorem \ref{thm:zschlange}. Therefore,
  \begin{align*}
    d_1\left(nV_n^{(2)}(h;\theta), \zeta\right)         & \xrightarrow[n \to \infty]{} 0, \\
    d_1\left(n\tilde{V}_n^{(2)}(h;\theta), \zeta\right) & \xrightarrow[n \to \infty]{} 0.
  \end{align*}

  It remains to show that
  \begin{equation}
    \label{eq:untenhoeffding2}
    \mathbb{E}\left[\left(nV - 15nV_n^{(2)}(h;\theta)\right)^2\right] \xrightarrow[n \to \infty]{} 0,
  \end{equation}
  and
  \begin{equation}
    \label{eq:untenhoeffding2schlange}
    \mathbb{E}\left[\left(n\tilde{V} - 15n\tilde{V}_n^{(2)}(h;\theta)\right)^2\right] \xrightarrow[n \to \infty]{} 0.
  \end{equation}
  However, these two convergences follow by Lemma \ref{lem:hoeffdingdominiert}.
\end{proof}

\section{Additional Simulations}
\label{app:sims_hypothesentest}
In this Appendix, we give some more simulation results.

\begin{table}[h]
  \begin{tabular}{cccccccccccc}
          &     &     &     &     &     &     & $\rho$                         \\

    \cline{4-12}                                                               \\
    $\nu$ & $n$ & $d$ & 0.1 & 0.2 & 0.3 & 0.4 & 0.5    & 0.6 & 0.7 & 0.8 & 0.9 \\
    \cline{1-12}                                                               \\
    \\
    3     & 100 & 2   &
    0.107 &
    0.214 &
    0.406 &
    0.621 &
    0.805 &
    0.928 &
    0.978 &
    0.996 &
    1.000                                                                      \\
          &     & 4   &
    0.096 &
    0.199 &
    0.364 &
    0.588 &
    0.763 &
    0.914 &
    0.972 &
    0.996 &
    0.999                                                                      \\
          &     & 10  &
    0.090 &
    0.18  &
    0.364 &
    0.571 &
    0.768 &
    0.904 &
    0.974 &
    0.994 &
    0.999                                                                      \\
    \\
          & 300 & 2   &
    0.172 &
    0.444 &
    0.754 &
    0.946 &
    0.988 &
    0.998 &
    1.000 &
    1.000 &
    1.000                                                                      \\
          &     & 6   &
    0.138 &
    0.388 &
    0.723 &
    0.921 &
    0.985 &
    0.998 &
    0.999 &
    1.000 &
    1.000                                                                      \\
          &     & 17  &
    0.134 &
    0.378 &
    0.704 &
    0.923 &
    0.981 &
    0.996 &
    0.999 &
    1.000 &
    1.000                                                                      \\
    \\
    \\
    5     & 100 &
    2     &
    0.138 &
    0.272 &
    0.517 &
    0.741 &
    0.887 &
    0.976 &
    0.996 &
    1.000 &
    1.000                                                                      \\
          &     & 4   &
    0.113 &
    0.257 &
    0.477 &
    0.691 &
    0.878 &
    0.965 &
    0.994 &
    1.000 &
    1.000                                                                      \\
          &     & 10  &
    0.114 &
    0.229 &
    0.437 &
    0.665 &
    0.864 &
    0.964 &
    0.995 &
    1.000 &
    1.000                                                                      \\
    \\
          & 300 & 2   &
    0.223 &
    0.586 &
    0.883 &
    0.989 &
    0.999 &
    1.000 &
    1.000 &
    1.000 &
    1.000                                                                      \\
          &     & 6   &
    0.192 &
    0.530 &
    0.861 &
    0.983 &
    1.000 &
    1.000 &
    1.000 &
    1.000 &
    1.000                                                                      \\
          &     & 17  &
    0.177 &
    0.518 &
    0.845 &
    0.979 &
    0.999 &
    1.000 &
    1.000 &
    1.000 &
    1.000                                                                      \\
  \end{tabular}
  \caption[Rejection rates of the test based on Pearson's correlation for two dependent processes with $t_\nu$ marginals]{Rejection rates of the test based on Pearson's correlation for  $\mathrm{VAR}(1)$ processes with parameters $a=0.5$, $b=0$, and $r$, covariance matrix $\Gamma(0)$ as in Eq.\@ \eqref{eq:corr_matrix}, and length $n$. As block length we choose $d\in \{2, n^{\frac{1}{3}}, \sqrt{n}\}$. We consider a quantile transformation that yields a $t$-distribution with $\nu$ degrees of freedom. As block length we choose $d\in \{2, n^{\frac{1}{3}}, \sqrt{n}\}$.}
  \label{table:app_Pearson_t}
\end{table}

\begin{table}
  \begin{tabular}{cccccccccccc}
          &       &     &     &     &     &     & $\rho$                         \\
    \cline{4-12}                                                                 \\
    $\nu$ & $n$   & $d$ & 0.1 & 0.2 & 0.3 & 0.4 & 0.5    & 0.6 & 0.7 & 0.8 & 0.9 \\
    \cline{1-12}                                                                 \\
    \\
    3     & 100
          & 2
          & 0.112
          & 0.233
          & 0.439
          & 0.668
          & 0.861
          & 0.960
          & 0.996
          & 0.999
          & 1.000                                                                \\
          &       & 4
          & 0.091
          & 0.195
          & 0.395
          & 0.643
          & 0.830
          & 0.949
          & 0.992
          & 1.000
          & 1.000                                                                \\
          &       & 10
          & 0.081
          & 0.188
          & 0.381
          & 0.625
          & 0.820
          & 0.944
          & 0.992
          & 0.999
          & 1.000                                                                \\
    \\
          & 300   & 2   &
    0.208 &
    0.566 &
    0.874 &
    0.986 &
    0.999 &
    1.000 &
    1.000 &
    1.000 &
    1.000                                                                        \\
          &       & 6   &
    0.163 &
    0.507 &
    0.842 &
    0.980 &
    0.999 &
    1.000 &
    1.000 &
    1.000 &
    1.000                                                                        \\
          &       & 17  &
    0.150 &
    0.470 &
    0.814 &
    0.973 &
    0.999 &
    1.000 &
    1.000 &
    1.000 &
    1.000
    \\
    \\
    \\
    5     & 100   & 2   &
    0.128 &
    0.272 &
    0.491 &
    0.719 &
    0.881 &
    0.975 &
    0.996 &
    1.000 &
    1.000                                                                        \\
          &       & 4   &
    0.102 &
    0.239 &
    0.433 &
    0.683 &
    0.874 &
    0.967 &
    0.994 &
    0.999 &
    1.000                                                                        \\
          &       & 10  &
    0.090 &
    0.216 &
    0.423 &
    0.660 &
    0.851 &
    0.959 &
    0.994 &
    1.000 &
    1.000                                                                        \\
    \\
          & 300   & 2   &
    0.233 &
    0.607 &
    0.891 &
    0.989 &
    1.000 &
    1.000 &
    1.000 &
    1.000 &
    1.000                                                                        \\
          &       &
    6     &
    0.184 &
    0.538 &
    0.865 &
    0.985 &
    0.999 &
    1.000 &
    1.000 &
    1.000 &
    1.000                                                                        \\
          &       & 17
          & 0.164
          & 0.511
          & 0.846
          & 0.979
          & 1.000
          & 1.000
          & 1.000
          & 1.000
          & 1.000
  \end{tabular}
  \caption[Rejection rates of the test based on the distance covariance for two dependent processes with $t_\nu$ marginals]{Rejection rates of the test based on distance covariance for transformed $\mathrm{VAR}(1)$ processes with parameters $a=0.5$, $b=0$, and $r$, covariance matrix $\Gamma(0)$ as in Eq.\@ \eqref{eq:corr_matrix}, and length $n$. We consider a quantile transformation that yields a $t$-distribution with $\nu$ degrees of freedom. As block length we choose $d\in \{2, n^{\frac{1}{3}}, \sqrt{n}\}$.}
  \label{table:app_dist_corr_t}
\end{table}

\begin{table}
  \begin{tabular}{cccccccccccc}
          &       &     &     &     &     &     & $\rho$                         \\
    \cline{4-12}                                                                 \\
    $\nu$ & $n$   & $d$ & 0.1 & 0.2 & 0.3 & 0.4 & 0.5    & 0.6 & 0.7 & 0.8 & 0.9 \\
    \cline{1-12}                                                                 \\
    \\
    3     & 100   & 2   &
    0.055 &
    0.056 &
    0.057 &
    0.063 &
    0.069 &
    0.086 &
    0.092 &
    0.105 &
    0.092                                                                        \\
          &       & 4   &
    0.057 &
    0.057 &
    0.063 &
    0.062 &
    0.065 &
    0.066 &
    0.073 &
    0.072 &
    0.072                                                                        \\
          &       & 10  &
    0.055 &
    0.061 &
    0.054 &
    0.054 &
    0.062 &
    0.060 &
    0.069 &
    0.075 &
    0.070                                                                        \\
    \\
          & 300   & 2   &
    0.054 &
    0.055 &
    0.063 &
    0.068 &
    0.065 &
    0.078 &
    0.085 &
    0.083 &
    0.061                                                                        \\
          &       & 6
          & 0.050 &
    0.053 &
    0.057 &
    0.055 &
    0.061 &
    0.059 &
    0.059 &
    0.067 &
    0.050                                                                        \\
          &       & 17
          & 0.049 &
    0.045 &
    0.046 &
    0.056 &
    0.049 &
    0.055 &
    0.060 &
    0.053 &
    0.050
    \\
    \\
    \\
    5     & 100   &
    2     &
    0.057 &
    0.052 &
    0.062 &
    0.072 &
    0.090
          &
    0.099 &
    0.125 &
    0.157 &
    0.163                                                                        \\
          &       & 4   &
    0.049 &
    0.055 &
    0.065 &
    0.061 &
    0.066 &
    0.080 &
    0.091 &
    0.110 &
    0.116                                                                        \\
          &       & 10  &
    0.049 &
    0.054 &
    0.057 &
    0.060 &
    0.061 &
    0.069 &
    0.071 &
    0.089 &
    0.090                                                                        \\
    \\
          & 300   & 2   &
    0.052 &
    0.059 &
    0.063 &
    0.077 &
    0.077 &
    0.111 &
    0.138 &
    0.153 &
    0.122                                                                        \\
          &       & 6   &
    0.050 &
    0.055 &
    0.054 &
    0.060 &
    0.064 &
    0.067 &
    0.078 &
    0.079 &
    0.077                                                                        \\
          &       & 17  &
    0.053 &
    0.049 &
    0.048 &
    0.052 &
    0.058 &
    0.059 &
    0.054 &
    0.058 &
    0.059
  \end{tabular}
  \caption[Rejection rates of the test based on Pearson's correlation for two independent processes with $t_\nu$ marginals]{Rejection rates of the test based on Pearson's correlation for two independent AR(1) processes with parameter $\rho$  and length $n$. We consider a quantile transformation that yields a t-distribution with $\nu$ degrees of freedom. As block length we choose $d\in \{2, n^{\frac{1}{3}}, \sqrt{n}\}$.}
  \label{table:app_Pearson_ind_t}
\end{table}

\begin{table}
  \begin{tabular}{cccccccccccc}
          &     &     &       &     &     &     & $\rho$                         \\
    \cline{4-12}                                                                 \\
    $\nu$ & $n$ & $d$ & 0.1   & 0.2 & 0.3 & 0.4 & 0.5    & 0.6 & 0.7 & 0.8 & 0.9 \\
    \cline{1-12}                                                                 \\
    \\
    3     & 100 & 2   &
    0.043 &
    0.047 &
    0.048 &
    0.054 &
    0.065 &
    0.092 &
    0.111 &
    0.133 &
    0.125
    \\
          &     & 4   &
    0.050 &
    0.046 &
    0.049 &
    0.048 &
    0.050 &
    0.060 &
    0.078 &
    0.092 &
    0.085
    \\
          &     & 10  &
    0.051 &
    0.049 &
    0.046 &
    0.047 &
    0.049 &
    0.049 &
    0.063 &
    0.069 &
    0.075
    \\
    \\
          & 300 & 2   &
    0.045 &
    0.051 &
    0.058 &
    0.062 &
    0.082 &
    0.097 &
    0.128 &
    0.134 &
    0.092                                                                        \\
          &     & 6   &
    0.046 &
    0.050 &
    0.050 &
    0.049 &
    0.058 &
    0.060 &
    0.065 &
    0.068 &
    0.060                                                                        \\
          &     & 17  &
    0.043 &
    0.043 &
    0.044 &
    0.042 &
    0.047 &
    0.045 &
    0.048 &
    0.050 &
    0.056
    \\
    \\
    \\
    5     & 100 &
    2     &
    0.041 &
    0.055 &
    0.058 &
    0.065 &
    0.082 &
    0.109 &
    0.145 &
    0.235 &
    0.256
    \\
          &     & 4   &
    0.041 &
    0.054 &
    0.053 &
    0.055 &
    0.061 &
    0.075 &
    0.085 &
    0.125 &
    0.160
    \\
          &     & 10  &
    0.041 &
    0.052 &
    0.051 &
    0.051 &
    0.057 &
    0.064 &
    0.063 &
    0.078 &
    0.098
    \\
    \\
          & 300 & 2   &
    0.051 &
    0.051 &
    0.058 &
    0.075 &
    0.095 &
    0.122 &
    0.165 &
    0.258 &
    0.274
    \\
          &     & 6   & 0.054 &
    0.047 &
    0.054 &
    0.061 &
    0.066 &
    0.071 &
    0.073 &
    0.101 &
    0.120                                                                        \\
          &     & 17  &
    0.047 &
    0.043 &
    0.046 &
    0.051 &
    0.050 &
    0.054 &
    0.054 &
    0.061 &
    0.075
  \end{tabular}
  \caption[Rejection rates of the test based on the distance covariance for two independent processes with $t_\nu$ marginals]{Rejection rates of the test based on distance covariance for two independent AR(1) processes with parameter $\rho$ and length $n$. We consider a quantile transformation that yields a t-distribution with $\nu$ degrees of freedom. As block length we choose $d\in \{2, n^{\frac{1}{3}}, \sqrt{n}\}$.}
  \label{table:app_dist_corr_ind_t}
\end{table}

\begin{table}[h]
  \begin{tabular}{cccccccccccc}
             &     &     &       &     &     &     & $\rho$                         \\
    \cline{4-12}                                                                    \\
    $\alpha$ & $n$ & $d$ & 0.1   & 0.2 & 0.3 & 0.4 & 0.5    & 0.6 & 0.7 & 0.8 & 0.9 \\
    \cline{1-12}
    \\
    3        & 100 &
    2        &
    0.043    &
    0.065    &
    0.087    &
    0.111    &
    0.142    &
    0.196    &
    0.259    &
    0.328    &
    0.408                                                                           \\
             &     & 4   & 0.047 &
    0.062    &
    0.078    &
    0.109    &
    0.140    &
    0.196    &
    0.256    &
    0.331    &
    0.414                                                                           \\
             &     & 10  &
    0.05     &
    0.059    &
    0.080    &
    0.112    &
    0.135    &
    0.195    &
    0.242    &
    0.332    &
    0.418                                                                           \\
    \\
             & 300 &
    2        &
    0.058    &
    0.086    &
    0.117    &
    0.155    &
    0.211    &
    0.274    &
    0.383    &
    0.481    &
    0.618                                                                           \\
             &     & 6   &
    0.058    &
    0.089    &
    0.109    &
    0.147    &
    0.208    &
    0.289    &
    0.368    &
    0.488    &
    0.625                                                                           \\
             &     & 17  &
    0.058    &
    0.075    &
    0.111    &
    0.148    &
    0.211    &
    0.274    &
    0.361    &
    0.482    &
    0.599                                                                           \\
    \\
    \\
    5        & 100 &
    2        &
    0.048    &
    0.066    &
    0.095    &
    0.124    &
    0.162    &
    0.219    &
    0.266    &
    0.332    &
    0.422                                                                           \\
             &     & 4   &
    0.053    &
    0.063    &
    0.089    &
    0.107    &
    0.159    &
    0.202    &
    0.261    &
    0.328    &
    0.405                                                                           \\
             &     & 10  &
    0.050    &
    0.072    &
    0.088    &
    0.122    &
    0.152    &
    0.197    &
    0.265    &
    0.329    &
    0.413                                                                           \\
    \\
             & 300 &
    2        &
    0.070    &
    0.086    &
    0.118    &
    0.167    &
    0.226    &
    0.301    &
    0.406    &
    0.511    &
    0.628                                                                           \\
             &     & 6   &
    0.066    &
    0.088    &
    0.125    &
    0.164    &
    0.231    &
    0.294    &
    0.393    &
    0.503    &
    0.621                                                                           \\
             &     & 17  &
    0.063    &
    0.095    &
    0.122    &
    0.157    &
    0.235    &
    0.278    &
    0.388    &
    0.492    &
    0.614
  \end{tabular}
  \caption[Rejection rates of the test based on Pearson's correlation for stochastic volatility  time series]{Rejection rates of the test based on Pearson's correlation for stochastic volatility  time series, i.e.\@ for $X_j=(\xi_j-\mathbb{E}\xi_j)\exp(\eta_j)$, $j=1, \ldots, n$ where $\eta_j$, $j=1, \ldots, n$, stems from an AR(1) process with parameter $\rho$ and $\xi_j$, $j=1, \ldots, n$, from iid Pareto distributed random variables with shape parameter $\alpha$.}
  \label{table:app_Pearson_SV}
\end{table}

\begin{table}
  \begin{tabular}{cccccccccccc}
             &     &     &     &     &     & $\rho$                               \\
    \cline{4-12}                                                                  \\
    $\alpha$ & $n$ & $d$ & 0.1 & 0.2 & 0.3 & 0.4    & 0.5 & 0.6 & 0.7 & 0.8 & 0.9 \\
    \cline{1-12}                                                                  \\
    \\
    3        & 100 & 2   &
    0.040    &
    0.069    &
    0.117    &
    0.193    &
    0.316    &
    0.471    &
    0.648    &
    0.794    &
    0.894
    \\
             &     & 4   &
    0.036    &
    0.062    &
    0.118    &
    0.191    &
    0.301    &
    0.451    &
    0.614    &
    0.782    &
    0.890                                                                         \\
             &     & 10  &
    0.042    &
    0.076    &
    0.100    &
    0.172    &
    0.283    &
    0.425    &
    0.591    &
    0.753    &
    0.862
    \\
    \\
             & 300 & 2   &
    0.063    &
    0.127    &
    0.273    &
    0.510    &
    0.770    &
    0.917    &
    0.976    &
    0.996    &
    0.999                                                                         \\
             &     & 6   &
    0.064    &
    0.139    &
    0.283    &
    0.502    &
    0.755    &
    0.913    &
    0.977    &
    0.992    &
    0.999                                                                         \\
             &     & 17  &
    0.062    &
    0.123    &
    0.260    &
    0.487    &
    0.723    &
    0.889    &
    0.969    &
    0.991    &
    0.998                                                                         \\
    \\
    \\
    5        & 100 &
    2        &
    0.042    &
    0.075    &
    0.121    &
    0.190    &
    0.325    &
    0.488    &
    0.649    &
    0.817    &
    0.914                                                                         \\
             &     & 4   &
    0.045    &
    0.068    &
    0.114    &
    0.190    &
    0.308    &
    0.475    &
    0.626    &
    0.803    &
    0.908                                                                         \\
             &     & 10  &
    0.041    &
    0.072    &
    0.110    &
    0.184    &
    0.303    &
    0.433    &
    0.602    &
    0.764    &
    0.866                                                                         \\
    \\
             & 300 & 2   &
    0.071    &
    0.138    &
    0.300    &
    0.553    &
    0.793    &
    0.942    &
    0.981    &
    0.998    &
    0.999                                                                         \\
             &     & 6   &
    0.072    &
    0.133    &
    0.300    &
    0.536    &
    0.794    &
    0.937    &
    0.981    &
    0.997    &
    1.000
    \\
             &     & 17  &
    0.064    &
    0.139    &
    0.280    &
    0.516    &
    0.772    &
    0.922    &
    0.980    &
    0.996    &
    1.000                                                                         \\
  \end{tabular}
  \caption[Rejection rates of the test based on distance covariance for stochastic volatility time series]{Rejection rates of the test based on distance covariance for stochastic volatility time series, i.e.\@ for $X_j=(\xi_j-\mathbb{E}\xi_j)\exp(\eta_j)$, $j=1, \ldots, n$ where $\eta_j$, $j=1, \ldots, n$, stems from an AR(1) process with parameter $\rho$ and $\xi_j$, $j=1, \ldots, n$, from iid Pareto distributed random variables with shape parameter $\alpha$.}
  \label{table:app_dist_corr_SV}
\end{table}

\begin{table}
  \begin{tabular}{ccccccccccc}
          &     &     &     &     &     & $\rho$                         \\

    \cline{3-11}                                                         \\
    $n$   & $d$ & 0.1 & 0.2 & 0.3 & 0.4 & 0.5    & 0.6 & 0.7 & 0.8 & 0.9 \\
    \cline{1-11}                                                         \\
    \\
    100   & 2   &
    0.049 &
    0.057 &
    0.063 &
    0.078 &
    0.099 &
    0.119 &
    0.162 &
    0.246 &
    0.382
    \\
          & 4   &
    0.053 &
    0.054 &
    0.061 &
    0.065 &
    0.075 &
    0.084 &
    0.110 &
    0.145 &
    0.256                                                                \\
          & 10  &
    0.048 &
    0.055 &
    0.053 &
    0.059 &
    0.066 &
    0.073 &
    0.080 &
    0.105 &
    0.148                                                                \\
    \\
    300   & 2   &
    0.051 &
    0.060 &
    0.063 &
    0.066 &
    0.087 &
    0.118 &
    0.167 &
    0.243 &
    0.392                                                                \\
          & 6   &
    0.053 &
    0.052 &
    0.058 &
    0.068 &
    0.065 &
    0.071 &
    0.086 &
    0.111 &
    0.200                                                                \\
          & 17  &
    0.054 &
    0.045 &
    0.051 &
    0.060 &
    0.056 &
    0.058 &
    0.068 &
    0.074 &
    0.098
  \end{tabular}
  \caption[Rejection rates of the test based on Pearson's correlation for two independent AR(1) processes]{Rejection rates of the test based on Pearson's correlation for two independent AR(1) processes with parameter $\rho$  and length $n$. As block length we choose $d\in \{2, n^{\frac{1}{3}}, \sqrt{n}\}$.}
  \label{table:app_Pearson_ind}
\end{table}

\begin{table}
  \begin{tabular}{ccccccccccc}
          &     &     &     &     &     & $\rho$                         \\
    \cline{3-11}                                                         \\
    $n$   & $d$ & 0.1 & 0.2 & 0.3 & 0.4 & 0.5    & 0.6 & 0.7 & 0.8 & 0.9 \\
    \cline{1-11}                                                         \\
    \\
    100   & 2   &
    0.051 &
    0.055 &
    0.066 &
    0.071 &
    0.087 &
    0.112 &
    0.163 &
    0.26  &
    0.410                                                                \\
          & 4   &
    0.053 &
    0.053 &
    0.061 &
    0.061 &
    0.069 &
    0.077 &
    0.099 &
    0.152 &
    0.282                                                                \\
          & 10  &
    0.050 &
    0.057 &
    0.054 &
    0.055 &
    0.067 &
    0.068 &
    0.080 &
    0.097 &
    0.148                                                                \\
    \\
    300   & 2   &
    0.060 &
    0.056 &
    0.064 &
    0.068 &
    0.083 &
    0.119 &
    0.170 &
    0.282 &
    0.530                                                                \\
          & 6   &
    0.053 &
    0.051 &
    0.054 &
    0.054 &
    0.061 &
    0.065 &
    0.090 &
    0.113 &
    0.215                                                                \\
          & 17  &
    0.049 &
    0.048 &
    0.052 &
    0.058 &
    0.053 &
    0.051 &
    0.061 &
    0.065 &
    0.101
  \end{tabular}
  \caption[Rejection rates of the test based on distance covariance for two independent AR(1) processes]{Rejection rates of the test based on distance covariance for two independent AR(1) processes with parameter $\rho$ and length $n$. As block length we choose $d\in \{2, n^{\frac{1}{3}}, \sqrt{n}\}$.}
  \label{table:app_dist_corr_ind}
\end{table}

\begin{table}
  \begin{tabular}{ccccccccccc}
          &       &     &     &     &     & $\rho$                         \\
    \cline{3-11}                                                           \\
    $n$   & $d$   & 0.1 & 0.2 & 0.3 & 0.4 & 0.5    & 0.6 & 0.7 & 0.8 & 0.9 \\
    \cline{1-11}                                                           \\
    \\
    100
          & 2
          & 0.146
          & 0.306
          & 0.540
          & 0.767
          & 0.907
          & 0.981
          & 0.997
          & 1.000
          & 1.000                                                          \\
          & 4
          & 0.122
          & 0.274
          & 0.496
          & 0.737
          & 0.893
          & 0.976
          & 0.996
          & 1.000
          & 1.000                                                          \\
          & 10
          & 0.114
          & 0.263
          & 0.478
          & 0.711
          & 0.884
          & 0.972
          & 0.996
          & 1.000
          & 1.000                                                          \\
    \\
    300
          & 2
          & 0.251
          & 0.647
          & 0.916
          & 0.995
          & 1.000
          & 1.000
          & 1.000
          & 1.000
          & 1.000                                                          \\
          & 6     &
    0.203 &
    0.583 &
    0.886 &
    0.991 &
    0.999 &
    1.000 &
    1.000 &
    1.000 &
    1.000                                                                  \\
          & 17    &
    0.179 &
    0.545 &
    0.869 &
    0.990 &
    0.999 &
    1.000 &
    1.000 &
    1.000 &
    1.000                                                                  \\
  \end{tabular}
  \caption[Rejection rates of the test based on Pearson's correlation for $\mathrm{VAR}(1)$ processes]{Rejection rates of the test based on Pearson's correlation for $\mathrm{VAR}(1)$ processes with parameters $a=0.5$, $b=0$, and $r$, correlation matrix $\Gamma(0)$ as in Eq.\@ \eqref{eq:corr_matrix}, and length $n$. As block length we choose $d\in \{2, n^{\frac{1}{3}}, \sqrt{n}\}$.}
  \label{table:app_Pearson}
\end{table}

\begin{table}
  \begin{tabular}{ccccccccccc}
          &       &     &     &     &     & $\rho$                         \\
    \cline{3-11}                                                           \\
    $n$   & $d$   & 0.1 & 0.2 & 0.3 & 0.4 & 0.5    & 0.6 & 0.7 & 0.8 & 0.9 \\
    \cline{1-11}                                                           \\
    \\
    100   & 2     &
    0.138 &
    0.275 &
    0.495 &
    0.724 &
    0.883 &
    0.970 &
    0.994 &
    1.000 &
    1.000                                                                  \\
          & 4     &
    0.113 &
    0.244 &
    0.454 &
    0.690 &
    0.863 &
    0.965 &
    0.994 &
    1.000 &
    1.000                                                                  \\
          & 10    &
    0.096 &
    0.226 &
    0.431 &
    0.667 &
    0.849 &
    0.959 &
    0.992 &
    1.000 &
    1.000                                                                  \\
    \\
    300
          & 2     &
    0.243
          & 0.584
          & 0.888
          & 0.989
          & 1.000
          & 1.000
          & 1.000
          & 1.000
          & 1.000                                                          \\
          & 6
          & 0.195
          & 0.528
          & 0.851
          & 0.983
          & 1.000
          & 1.000
          & 1.000
          & 1.000
          & 1.000                                                          \\
          & 17    &
    0.162
          & 0.497
          & 0.830
          & 0.982
          & 1.000
          & 1.000
          & 1.000
          & 1.000
          & 1.000
  \end{tabular}
  \caption[Rejection rates of the test based on distance covariance for $\mathrm{VAR}(1)$ processes]{Rejection rates of the test based on distance covariance for $\mathrm{VAR}(1)$ processes with parameters $a=0.5$, $b=0$, and $r$, correlation matrix $\Gamma(0)$ as in Eq.\@ \eqref{eq:corr_matrix}, and length $n$. As block length we choose $d\in \{2, n^{\frac{1}{3}}, \sqrt{n}\}$.}
  \label{table:app_dist_corr}
\end{table}
\end{document}